\let\MYcaption\@makecaption
\let\@makecaption\MYcaption
\theoremstyle{plain}
\newtheorem{thm}{Theorem}[section]
\newtheorem{prop}[thm]{Proposition}
\newtheorem{lem}[thm]{Lemma}
\newtheorem{cor}[thm]{Corollary}
\newtheorem*{thm*}{Theorem}
\newtheorem{prob}{Problem}[section]
\theoremstyle{definition}
\newtheorem{defi}[thm]{Definition}
\newtheorem{assu}{Assumption}
\newtheorem{nota}[thm]{Notation}
\newtheorem*{nota*}{Notation}
\theoremstyle{remark}
\newtheorem{ex}[thm]{Example}
\numberwithin{equation}{subsection}
\newcommand{\Z}{\mathbb{Z}}
\newcommand{\Q}{\mathbb{Q}}
\newcommand{\C}{\mathbb{C}}
\newcommand{\A}{\mathbb{A}}
\renewcommand{\P}{\mathbb{P}}
\newcommand{\F}{\mathbb{F}}
\newcommand{\vp}{\varphi}
\newcommand{\mc}{\mathcal}
\newcommand{\ol}{\overline}
\newcommand{\ul}{\underline}
\newcommand{\wt}{\widetilde}
\DeclareMathOperator{\Coker}{Coker}
\DeclareMathOperator{\Aut}{Aut}
\DeclareMathOperator{\Pic}{\mathrm{Pic}}
\DeclareMathOperator{\eu}{\chi_{\mathrm{top}}}
\DeclareMathOperator{\Sing}{\mathrm{Sing}\:}
\DeclareMathOperator{\ra}{\rangle}
\DeclareMathOperator{\la}{\langle}
\title[Compactifications in $\mathrm{Bl}_C\P^3$ with  $K+D_{1}+D_{2}=0$]
{Compactifications of affine homology $3$-cells into blow-ups of the projective $3$-space \\with trivial log canonical divisors}
\author[M. NAGAOKA]{Masaru Nagaoka}
\address{Graduate School of Mathematical Sciences\\The University of Tokyo\\3-8-1 Komaba\\Meguro-ku, Tokyo 153-8914, Japan}
\email{nagaoka@ms.u-tokyo.ac.jp}
\subjclass[2010]{14J10, 14J30, 14M27, 14R10.}
\keywords{Affine homology threefolds; compactifications; blow-ups of the projective $3$-space.}
\begin{document}
\begin{abstract}
In this paper we classify all the compactifications of affine homology $3$-cells into the blow-ups of the projective $3$-space along smooth curves such that the log canonical divisors are linearly trivial.
As a result, we prove that each embedded affine $3$-fold is isomorphic to the affine $3$-space except one example.
\end{abstract}
\maketitle
\tableofcontents
\setcounter{section}{0}
\section{Introduction}\label{sec:1}

Throughout the paper we work over the field of complex numbers $\C$.
In \cite{Nag1}, we investigate about the following problem:

\begin{prob}[{\cite{Kis}}]\label{Q:main}
Let $V$ be a smooth Fano 3-fold with the second Betti number $B_{2}(V)=2$, $U$ a contractible affine $3$-fold which is embedded into V, and $D_{1}$ and $D_{2}$ irreducible hypersurfaces such that $V \setminus U = D_{1} \cup D_{2}$.  Classify such triplets $(V, U, D_{1} \cup D_{2})$.
\end{prob}

We also showed that there are exactly $14$ deformation equivalence classes of $V$ as in Problem \ref{Q:main} when the log canonical divisor $K_{V}+D_{1}+D_{2}$ is linearly trivial. 
Among them, exactly six equivalence classes parametrize the blow-ups of the projective $3$-space $\P^{3}$ along smooth curves.

The aim of this paper is to give the complete solution to Problem \ref{Q:main} when $(V, U, D_{1} \cup D_{2})$ as in Problem \ref{Q:main} satisfies the following conditions:
\begin{enumerate}
\item $K_V + D_{1} + D_{2} \sim 0$.
\item There is a blow-up morphism $\vp \colon V \to \P^3$ along a smooth curve $C \subset \P^3$.
\end{enumerate}
In the above situation, \cite[Proposition 3.4]{Nag1} shows that we may assume that $\vp_{*} D_{1}$ is a hyperplane not containing $C$ and $\vp_{*} D_{2}$ is a cubic surface containing $C$ with multiplicity one.
This assertion still holds if we drop the assumption that $V$ is Fano $3$-fold, and if we replace the contractibility of $U$ as that $U$ is \textit{an affine homology $3$-cell}, i.e., a smooth affine $3$-fold with $H_{i}(U, \Z) = 0$ for $i \geq 1$ (see Lemma \ref{lem:linequiv}).
For this reason, we consider the following problem instead of Problem \ref{Q:main}:

\begin{prob}\label{prob:main}
Let $C \subset \P^{3}_{[x:y:z:t]}$ be a smooth curve, $S_{1}$ a hyperplane not containing $C$, and $S_{2}$ a cubic surface containing $C$ with multiplicity one.
Take $\vp \colon V \rightarrow \P^{3}$ as the blow up along $C$ and write $D_{i}$ as the strict transform of $S_{i}$ in $V$ for $i=1,2$. 
Write $U \coloneqq V \setminus (D_{1} \cup D_{2}).$
Then classify $(C, S_{1}, S_{2}, U)$ such that $U$ is an affine homology $3$-cell. 
\end{prob}

The following example shows that $U$ as in Problem \ref{prob:main} may not be isomorphic to $\A^{3}$ even if $U$ is an affine homology $3$-cell.

\begin{ex}\label{ex:nonA3}
In $\P^{3}_{[x:y:z:w]}$, let $C=\{x=y=-z\}$, $S_{1}=\{z=0\}$ and $S_{2}=\{x^{2}z+y^{3}=0\}$. 
Take $\vp \colon V \rightarrow \P^3$ as the blow up along $C$ and write $D_{i}$ as the strict transform of $S_{i}$ in $V$ for $i=1,2$.
Then $V$ is the Fano $3$-fold of No.\ 33 in \cite[Table 2]{M-M} and $U \coloneqq V \setminus (D_{1} \cup D_{2})$ is isomorphic to $\A^{1} \times W(3, 2)$, where
\begin{align}
W(3, 2)  \coloneqq \left\{\frac{(zx+1)^2-(zy+1)^3-y}{y}=0\right\} \subset \A^{3}_{(x,y,z)}
\end{align}
is a contractible affine surface with the logarithmic Kodaira dimension one constructed in \cite{tDP}. In particular, $\A^{1} \times W(3,2) \not \cong \A^{3}$ because the Zariski cancellation problem has an affirmative answer in dimension two \cite{Fuj79, M-S80}.
\end{ex}

Our main result consists of two theorems.
One is Theorem \ref{thm:main3tuple}, which determines all the $3$-tuple $(C, S_{1}, S_{2})$ as in Problem \ref{prob:main} when $U$ is an affine homology $3$-cell.
Throughout the statement of Theorem \ref{thm:main3tuple}, $G_{i}$ and $R_{i}$ for $i \geq 1$ denote the cubic surfaces defined as in Definition \ref{defi:normrat} and Theorem \ref{thm:nnorm1} respectively.

\begin{thm}\label{thm:main3tuple}
We use the notation as in Problem \ref{prob:main}.
Then $U$ is an affine homology $3$-cell if and only if one of the following holds:
\begin{enumerate}
\item[\textup{(a)}] 
The curve $C$ is a smooth elliptic curve of degree three or four.
The surface $S_{2}$ is the cone over an elliptic curve whose vertex $S_{1}$ contains.
Moreover, $\sharp (C \cap S_{1})=B_{2}(S_{1} \cap S_{2})$.

\item[\textup{(b)}] The curve $C$ is a smooth rational curve of degree three or four.
The pair $(S_{1}, S_{2})$ is projectively equivalent to $(\{y= \gamma x\}, R_{1})$ for some $\gamma \in \C$.
Moreover, $\sharp (C \cap S_{1})=B_{2}(S_{1} \cap S_{2})$.

\item[\textup{(c)}] The curve $C$ is a smooth rational curve of degree three or four.
The pair $(S_{1}, S_{2})$ is projectively equivalent to $(\{y= \gamma x\}, R_{2})$ for some $ \gamma \in \P^{1}$.
Moreover, $\sharp (C \cap S_{1})=B_{2}(S_{1} \cap S_{2})+1$.

\item[\textup{(d)}] The curve $C$ is a smooth rational curve and $(S_{1}, S_{2})$ is projectively equivalent to one of the following:
\begin{align*}
&(\{y=0\}, G_{1}), (\{z=\gamma y\}, G_{5})\text{ for some } \gamma \in \P^{1},\\
&(\{t=0\}, G_{6}), (\{y=0\}, G_{9}), (\{y=0\}, G_{10}), (\{x=t\}, G_{11}),\nonumber\\
&(\{x=0\}, R_{1}), (\{y=\gamma x\}, R_{3}) \text{ for some } \gamma \in \P^{1}, \text{and } (\{x=0\}, R_{4}).\nonumber
\end{align*} 
Moreover, $\sharp(C \cap S_{1})=1$. 
\label{case:main-2}

\item[\textup{(e)}] The curve $C$ is a smooth rational curve and $(S_{1}, S_{2})$ is projectively equivalent to one of the following:
\begin{align*}
 (\{y=0\}, G_{2}), (\{y=0\}, G_{4}), \text{and } (\{y= \gamma x\}, R_{4}) \text{ for some } \gamma \in \A^{1}.
\end{align*}
Moreover, the inclusion $C \setminus (C \cap S_{1}) \hookrightarrow S_{2} \setminus (S_{2} \cap S_{1})$ induces an isomorphism $H_{1}(C \setminus (C \cap S_{1}), \Z) \cong H_{1}(S_{2} \setminus (S_{2} \cap S_{1}), \Z)$.

\item[\textup{(f)}] The triplet $(C, S_{1}, S_{2})$ is projectively equivalent to the subvarieties constructed as in Example \ref{ex:nonA3}.
\end{enumerate}
\end{thm}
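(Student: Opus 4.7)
The plan is to convert the condition that $U$ is an affine homology $3$-cell into discrete topological invariants on $(C,S_{1},S_{2})$, to classify the possible cubic surfaces $S_{2}$ (together with the smooth curves $C$ they can contain with multiplicity one), and finally to determine for each such $(C,S_{2})$ which hyperplanes $S_{1}$ are admissible.

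First I would set up the numerical framework on $V=\Bl_{C}\P^{3}$: with $H=\vp^{*}\mc O_{\P^{3}}(1)$ and exceptional divisor $E$, one has $D_{1}\sim H$ and $D_{2}\sim 3H-E$, so $D_{1}+D_{2}\sim -K_{V}$. Demanding that $U$ be affine amounts to ampleness of some positive combination supported on $D_{1}\cup D_{2}$, which through Nakai--Moishezon yields a priori bounds on $\deg C$ and $g(C)$ and restricts $C$ to the short list appearing in the statement (degree at most four; genus zero or one). Second, I would use the long exact sequence of the pair $(V,D_{1}\cup D_{2})$ together with Alexander--Lefschetz duality $H_{i}(V,D_{1}\cup D_{2};\Z)\cong H^{6-i}(U;\Z)$ and the Mayer--Vietoris sequence for the decomposition $D_{1}\cup D_{2}$. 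Since $D_{1}$ is the blow-up of $\P^{2}$ along the scheme $C\cap S_{1}$, and $D_{1}\cap D_{2}$ is the strict transform of the plane cubic $S_{1}\cap S_{2}\subset S_{1}\cong\P^{2}$, all terms except those involving $D_{2}$ are controlled by the single integer $\sharp(C\cap S_{1})$. The vanishing $H_{\geq 1}(U;\Z)=0$ then becomes a system of explicit identities linking $\sharp(C\cap S_{1})$, $B_{2}(S_{1}\cap S_{2})$, and the Betti numbers of $D_{2}$; these are precisely the numerical conditions appearing in cases (a)--(c) and (e).

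Next I would stratify by the geometric type of $S_{2}$. A cubic hypersurface in $\P^{3}$ containing a smooth curve $C$ with multiplicity one falls into one of: (i) a cone over a smooth plane cubic, corresponding to case (a); (ii) an irreducible normal cubic surface with canonical singularities, exhausted by the surfaces $G_{i}$ of Definition \ref{defi:normrat}; or (iii) a non-normal irreducible cubic, exhausted by the surfaces $R_{i}$ of Theorem \ref{thm:nnorm1}; reducible cubics are excluded because $D_{2}$ must be irreducible. For each admissible pair $(C,S_{2})$---finitely many projective-equivalence classes once the classification of cubic surface singularities is combined with the constraint $\mult_{C}S_{2}=1$---I would enumerate the hyperplanes $S_{1}$ whose intersection numbers with $C$ match the topological identities of the previous paragraph. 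This produces cases (b)--(e) of the statement, while case (f) arises in type (iii) and is read off directly from Example \ref{ex:nonA3}.

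The main obstacle will be the verification step in the ``only if'' direction. Sufficiency is essentially algorithmic: given explicit equations for $G_{i}$, $R_{i}$ and a compatible hyperplane, one computes $H_{*}(U;\Z)$ by Mayer--Vietoris on an affine cover of $U$ and confirms the cell property. Necessity is subtler: ruling out configurations which almost satisfy the topological identities but still fail to yield an affine homology cell---for instance those where $-K_{V}$ is big but not ample along a subcurve of $C$, or those where the Betti numbers match but $H_{1}(U;\Z)$ carries torsion---requires a delicate case-by-case analysis exploiting the explicit equations of the degenerate cubics $G_{i}$ and $R_{i}$, and the restricted position of $C$ within $S_{2}$ forced by $\mult_{C}S_{2}=1$.
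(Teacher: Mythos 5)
Your overall architecture (convert the homology condition into discrete invariants, stratify by the type of the cubic $S_{2}$, then enumerate the admissible hyperplanes) matches the paper's, and your trichotomy cone / normal rational / non-normal is exactly how \S\ref{sec:ell}--\S\ref{sec:nonnormal} are organized. But two of your steps have genuine problems. First, the Nakai--Moishezon argument: affineness of $U$ only says that $D_{1}\cup D_{2}$ is the complement of an affine open set, not that a positive combination of $D_{1},D_{2}$ is ample, and the paper deliberately does \emph{not} assume $V$ is Fano. Moreover the conclusion you draw from it --- that $\deg C\leq 4$ always --- contradicts the statement you are proving: in cases \textup{(d)} and \textup{(e)} the curve $C$ is only required to be a smooth rational curve, and the curve classification in \S\ref{sec:curve} (e.g.\ Lemma \ref{lem:G4curve}) explicitly allows classes such as $6H-2E_{1}-2E_{2}-2E_{3}-2E_{4}-4E_{i}$ of degree $6$. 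The actual bounds in the paper are: $p_{a}(C)\leq 1$ from the Euler-number identity of Lemma \ref{lem:2.10} combined with $\eu(F)\leq 4$ for a plane cubic $F=S_{1}\cap S_{2}$, and degree bounds only in the cone cases, coming from the genus formula on $\wt S_{2}$ (Lemmas \ref{lem:3.5}, \ref{lem:3.6}).

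Second, and more importantly, your plan omits the central technical engine: the Kaliman--Zaidenberg theory of affine modifications. The paper's pivot is that $U$ is the affine modification of $\P^{3}\setminus S_{1}\cong\A^{3}$ with locus $\bigl(C\setminus(C\cap S_{1})\subset S_{2}\setminus F\bigr)$, so by \cite[Theorem 3.1, Corollary 3.1]{K-Z} the condition ``$U$ is an affine homology $3$-cell'' is \emph{equivalent} to the inclusion $C\setminus(C\cap S_{1})\hookrightarrow S_{2}\setminus F$ inducing an isomorphism on homology. This is exactly why the conditions in cases \textup{(d)} and \textup{(e)} take the form $\sharp(C\cap S_{1})=1$ and ``$\Delta$ is an isomorphism on $H_{1}$'', and it is what makes the sufficiency direction work (Propositions \ref{prop:A2}--\ref{prop:cone}). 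Euler-number and Betti-number bookkeeping via Mayer--Vietoris, which is all your second paragraph provides, cannot detect the $H_{1}$-isomorphism condition of case \textup{(e)} (the map could have degree $2$, as actually happens in the excluded configuration inside the proof of Lemma \ref{lem:G4deg3}), nor does it give sufficiency. The same machinery, applied through \cite[Lemma 2.4]{Nag1}, is what yields the full homology of $S_{2}\setminus(F\cup\Sing S_{2})$ in Lemma \ref{lem:5.1}, whence the rationality of $S_{2}$, the smoothness of $S_{2}\setminus F$ (Lemma \ref{lem:5.2}), the bound $B_{2}(S_{2})\leq 3$ (Lemma \ref{lem:5.3}), and the surjectivity of the lattice map $\Theta$ (Lemma \ref{lem:theta}) used to kill the remaining candidate pairs by $\Coker\Theta$ computations. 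Without this input your case analysis has no mechanism for ruling out the ``almost admissible'' configurations you correctly flag as the main obstacle.
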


The other is Theorem \ref{thm:mainisom}, which determines the isomorphism classes of $U$ as in Problem \ref{prob:main}.

\begin{thm}\label{thm:mainisom} 
We use the notation as in Problem \ref{prob:main}. 
Suppose that one of the cases \textup{(a)}--\textup{(e)} of Theorem \ref{thm:main3tuple} holds.
Then $U \cong \A^{3}$.
\end{thm}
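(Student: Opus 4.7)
The plan is to treat the five cases \textup{(a)}--\textup{(e)} of Theorem \ref{thm:main3tuple} separately and, in each, exhibit an explicit isomorphism $U\cong \A^{3}$ by introducing adapted coordinates and an auxiliary fibration. The uniform setup is to identify $\P^{3}\setminus S_{1}\cong \A^{3}$, which makes $\vp$ an isomorphism on $\vp^{-1}(\P^{3}\setminus S_{1})\setminus E$ and, since $C\sb S_{2}$, identifies $\vp^{-1}(\A^{3}\setminus (S_{2}\cap \A^{3}))$ with $\A^{3}\setminus (S_{2}\cap \A^{3})$ as a dense open subset of $U$. The remaining piece of $U$ lives in the exceptional divisor $E\cong \P(\mc N_{C/\P^{3}})\to C$ and equals $E\setminus ((E\cap D_{1})\cup (E\cap D_{2}))$, where $E\cap D_{1}$ is the union of fibres of $E\to C$ over the finite set $C\cap S_{1}$ and $E\cap D_{2}$ is the section of $E\to C$ determined by the tangent directions of $S_{2}$ along $C$. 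The problem in each case reduces to recognizing this glued threefold as $\A^{3}$.

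In cases \textup{(b)}--\textup{(e)}, where $C$ is rational, the cubic surfaces $R_{i}, G_{j}$ from Definition \ref{defi:normrat} and Theorem \ref{thm:nnorm1} are specific reducible or singular cubics, each carrying a distinguished line or conic in its singular locus. I would use a pencil of planes through this distinguished subvariety to produce a cylinder structure on $\P^{3}\setminus (S_{1}\cup S_{2})$ and, after blowing up $C$ and removing $D_{2}$, obtain a regular $\A^{1}$-fibration $U\to Y$ over an affine surface $Y$. The Betti-number matching hypothesis $\sharp(C\cap S_{1})=B_{2}(S_{1}\cap S_{2})$ in cases \textup{(a)}, \textup{(b)} (or its shift by one in \textup{(c)} and the analogous $H_{1}$-condition in \textup{(e)}, or $\sharp(C\cap S_{1})=1$ in \textup{(d)}) is designed precisely to force $Y\cong\A^{2}$ and the fibration to be Zariski-locally trivial; then $H^{1}(\A^{2},\mathcal{O})=0$ trivializes it, yielding $U\cong \A^{2}\times\A^{1}=\A^{3}$.

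Case \textup{(a)}, with $C$ a smooth elliptic curve, is the subtlest. Here $S_{2}$ is a cone over an elliptic curve $E_{0}$ with vertex $v\in S_{1}$, so the projection from $v$ provides a rational map $\P^{3}\dra E_{0}$ that becomes a morphism on $V$ after the blow-up resolves it. My plan is to use the \emph{additional} removal of $S_{1}$ (which contains $v$) to show that this elliptic fibration on $U$ collapses all its nontrivial topology: the hypothesis $\sharp(C\cap S_{1})=B_{2}(S_{1}\cap S_{2})$ matches the cycles introduced by the blow-up of $C\cap S_{1}$ with the topology of $S_{1}\cap S_{2}$ in such a way that $U$ in fact fibers over $\A^{2}$ with $\A^{1}$-fibres. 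The main obstacle throughout is the one highlighted by Example \ref{ex:nonA3}: being an affine homology $3$-cell is \emph{a priori} far weaker than being $\A^{3}$, so the proof must explicitly pinpoint how the rigidity selected in \textup{(a)}--\textup{(e)} excludes exotic phenomena like $\A^{1}\times W(3,2)$, and I expect the delicate bookkeeping in case \textup{(a)} to be the principal technical hurdle.
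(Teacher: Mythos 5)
Your overall strategy---reduce to a fibration and trivialize it---is in the right spirit, but two essential ingredients are missing and one step is wrong as stated. First, the fibrations that actually arise here are not $\A^{1}$-fibrations over a surface $Y\cong\A^{2}$: in case (a) with $\deg C=4$ the paper uses the quadric fibration $\psi\colon V\to\P^{1}$ defined by $|\vp^{*}\mc{O}_{\P^{3}}(2)-E_{\vp}|$ (not the projection from the vertex, which has elliptic base and does not restrict to anything useful on $U$), and in cases (b), (c) and (e) for $R_{4}$ one blows up the conductor line to get a $\P^{2}$-bundle over $\P^{1}$ and restricts to a morphism $U\to\A^{1}$ whose fibres are all $\A^{2}$. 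Recognizing a contractible smooth affine $3$-fold fibred in $\A^{2}$'s over $\A^{1}$ as $\A^{3}$ is a deep fact---Kaliman's theorem that polynomials with general $\C^{2}$-fibres are variables---and cannot be replaced by the vanishing $H^{1}(\A^{2},\mc{O})=0$, which would only trivialize a locally trivial $\A^{1}$-bundle and does not apply to these $\A^{2}$-fibrations (nor is local triviality established anywhere in your sketch). Moreover, in case (e) the relevant surface $S_{2}\setminus F$ is $\A^{1}\times\C^{*}$, so your claim that the Betti-number hypotheses force the base to be $\A^{2}$ is false there.

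Second, case (d) requires the Abhyankar--Moh--Suzuki theorem: $U$ is the affine modification of $\A^{3}$ along the pair $(C^{0}\subset S^{0})$ with $S^{0}\cong\A^{2}$ and $C^{0}\cong\A^{1}$, and one must first rectify $C^{0}$ to a coordinate line before concluding; your pencil-of-planes argument omits this and cannot bypass it. Finally, your treatment of case (a) is only a heuristic (``the hypothesis matches the cycles\ldots so $U$ fibers over $\A^{2}$''): for $\deg C=3$ the paper instead exhibits $U$ as the explicit hypersurface $\{wf(1,y,z)+t=0\}\subset\A^{4}$, and for $\deg C=4$ it needs the full quadric-fibration analysis, a non-normality result for $D_{2}$ from the author's earlier work on quadric fibrations, and again Kaliman's theorem. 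As written, the proposal does not close any case beyond those that reduce to a direct coordinate computation.
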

Hence we obtain the following as a corollary of Theorems \ref{thm:main3tuple} and \ref{thm:mainisom}:
\begin{cor}
In the notation as in Problem \ref{prob:main}, 
$U$ is an affine homology $3$-cell if and only if $U \cong \A^{3}$ or $\A^{1} \times W(3, 2)$.
If the latter holds, then $(C, S_{1}, S_{2})$ is projectively equivalent to the subvarieties constructed as in Example \ref{ex:nonA3}.
\end{cor}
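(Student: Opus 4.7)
The Corollary is a direct bookkeeping consequence of Theorem~\ref{thm:main3tuple} and Theorem~\ref{thm:mainisom} together with Example~\ref{ex:nonA3}, so the ``proof'' is really just the organization of these inputs. My plan is to treat the two directions of the biconditional separately, then extract the final projective equivalence statement.

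For the easy direction, I would first verify that $\A^{3}$ and $\A^{1}\times W(3,2)$ are both affine homology $3$-cells. The case $U\cong\A^{3}$ is immediate. For $U\cong\A^{1}\times W(3,2)$, I invoke the fact recorded in Example~\ref{ex:nonA3} that $W(3,2)$ is a contractible affine surface (constructed in \cite{tDP}); then $\A^{1}\times W(3,2)$ is a smooth affine $3$-fold and is contractible as a product of contractible spaces, so all its reduced integral homology vanishes, making it an affine homology $3$-cell by definition. Of course, in order to apply this in the setting of Problem~\ref{prob:main} one needs to know that the triple $(C,S_{1},S_{2})$ of Example~\ref{ex:nonA3} actually does yield such a $U$; this is asserted inside Example~\ref{ex:nonA3} itself.

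For the nontrivial direction, suppose $U$ is an affine homology $3$-cell. Theorem~\ref{thm:main3tuple} says that $(C,S_{1},S_{2})$ falls into exactly one of the six classes \textup{(a)}--\textup{(f)}. If class \textup{(a)}, \textup{(b)}, \textup{(c)}, \textup{(d)} or \textup{(e)} occurs, Theorem~\ref{thm:mainisom} yields $U\cong\A^{3}$ directly. If class \textup{(f)} occurs, then by definition of that case $(C,S_{1},S_{2})$ is projectively equivalent to the triple of Example~\ref{ex:nonA3}, and then the computation in Example~\ref{ex:nonA3} gives $U\cong\A^{1}\times W(3,2)$. This proves both the biconditional and the additional projective-equivalence statement in the ``latter'' case.

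The only place one might hesitate is in ensuring that the ``only if'' direction is exhaustive. Since Theorem~\ref{thm:main3tuple} is phrased as an if-and-only-if covering every affine-homology-$3$-cell $U$ arising in Problem~\ref{prob:main}, and Theorem~\ref{thm:mainisom} covers exactly the five non-exceptional cases, the logical partition is complete with no remaining possibilities, so no extra argument is needed. Thus there is no real obstacle; the corollary is essentially a restatement of the two main theorems, together with the explicit identification of the exceptional $U$ given in Example~\ref{ex:nonA3}.
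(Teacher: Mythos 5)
Your proposal is correct and follows exactly the route the paper intends: the paper states the corollary as an immediate consequence of Theorems \ref{thm:main3tuple} and \ref{thm:mainisom} together with Example \ref{ex:nonA3}, which is precisely the case-by-case bookkeeping you describe (cases \textup{(a)}--\textup{(e)} give $U \cong \A^{3}$ by Theorem \ref{thm:mainisom}, case \textup{(f)} gives $U \cong \A^{1} \times W(3,2)$ by Example \ref{ex:nonA3}, and the converse direction is the trivial observation that both spaces are contractible smooth affine $3$-folds). No gaps.
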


We now give an outline of the paper using the notation of Problem \ref{prob:main}.

\S \ref{sec:euler}--\S \ref{sec:curve} present some preliminaries.
In \S \ref{sec:euler}, we show that if the complement of a reduced member $D' \in |-K_{V}|$ is an affine homology $3$-cell, then we can determine the linear equivalence classes of irreducible components of $D'$.
After that, we set up notation and prove a lemma on topological Euler numbers.
In \S \ref{sec:cubicsurface}, we recall some facts on projective equivalence classes of cubic surfaces.
In \S \ref{sec:curve}, we summarize facts on curves in certain cubic surfaces.

We start proving main theorems from \S \ref{sec:ell}.

In \S \ref{sec:ell}, we prove Theorems \ref{thm:main3tuple} and \ref{thm:mainisom} when $C \not \cong \P^{1}$. 
In the remainder of this paper, we assume that $C \cong \P^{1}$.

In \S \ref{sec:normal} and \S \ref{sec:nonnormal}, we determine the projective equivalence class of $(S_{1}, S_{2})$ assuming that $U$ is an affine homology $3$-cell.
\S \ref{sec:normal} (resp.\ \S \ref{sec:nonnormal}) deals with the case where $S_{2}$ is normal (resp.\ non-normal).

In \S \ref{sec:cont}, we look closely at the contractibility of $U$ for each $(S_{1}, S_{2})$ which we determined in \S \ref{sec:normal} and \S \ref{sec:nonnormal}.
Combining the results of \S \ref{sec:ell}--\ref{sec:cont}, we complete the proof of Theorem \ref{thm:main3tuple}.
We also show that $U \cong \A^{3}$ when the case \textup{(d)} of Theorem \ref{thm:main3tuple} holds.

In \S \ref{sec:isomclass}, we prove that $U \cong \A^{3}$ when one of the cases \textup{(b)}, \textup{(c)} and \textup{(e)} of Theorem \ref{thm:main3tuple} holds to complete the proof of Theorem \ref{thm:mainisom}.

\begin{nota*}
Throughout this paper, we use the following notation:
\begin{itemize}
\item $\F_{d}$: the Hirzebruch surface of degree $d$.
\item $f_{d}$: a fiber of $\F_{d}$.
\item $\Sigma_{d}$: the minimal section of $\F_{d}$.
\item $\Q^{2}_{0}$: the quadric cone in $\P^{3}$.
\item $E_{f}$: the exceptional divisor of a birational morphism $f$.
\item $\Sing X$: the singular locus of a variety $X$.
\item $Y_{\wt X}$: the strict transform of a closed subscheme $Y$ of a normal variety $X$ in a birational model $\wt X$ of $X$.
\item $B_{i}(X)$: the $i$-th Betti number of a topological space $X$.
\item $\eu (X)$: the topological Euler number of a topological space $X$.
\item $\deg R$: the degree of a curve $R$ in the ambient projective space.
\item $p_{a}(R)$: the arithmetic genus of a projective curve $R$.
\item $\Aut(X)$: the automorphism group of $X$.
\end{itemize}
\end{nota*}

\section{Topological Euler numbers}\label{sec:euler}

In the beginning of this section, we confirm that the acyclicity of $U$ and the linear triviality of $K_{V}+D_{1}+D_{2}$ as in Problem \ref{Q:main} determine the linear equivalence classes of $D_{1}$ and $D_{2}$ even if $V$ is not Fano but the blow-up of $\P^{3}$ along a smooth curve.
After that, we set up notation and prove a lemma needed in the remainder of the paper.

\begin{lem}\label{lem:linequiv}
Let $C \subset \P^{3}$ be a smooth curve and $\vp \colon V \to \P^{3}$ the blow-up along $C$.
Let $D_{1}$ and $D_{2}$ be prime divisors such that $V \setminus (D_{1} \cup D_{2})$ is an affine homology $3$-cell and $K_{V}+D_{1}+D_{2} \sim 0$.
Then $D_{i} \sim \vp^{*} \mc{O}_{\P^{3}}(1)$ for some $i \in \{1,2\}$.
\end{lem}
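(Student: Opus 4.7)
The plan is to pin down $[D_{1}]$ and $[D_{2}]$ in $\Pic(V)$ from two ingredients: the linear equivalence $K_{V}+D_{1}+D_{2} \sim 0$ given by hypothesis, and the vanishing $\Pic(U)=0$ that follows from $U$ being an affine homology $3$-cell.

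First I would recall that $\Pic(V)$ is freely generated by $H \coloneqq \vp^{*}\mc{O}_{\P^{3}}(1)$ and the exceptional divisor $E$, with $K_{V}=-4H+E$. Every prime divisor on $V$ is either $E$ itself (with class $E$) or the strict transform of an irreducible surface $S \subset \P^{3}$ of some degree $d \geq 1$ containing $C$ with multiplicity $m \geq 0$, in which case its class is $dH-mE$.

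Next I would establish $\Pic(U)=0$. Since $U$ is smooth affine, $H^{1}(U, \mc{O}_{U})=0$, and the acyclicity $H_{i}(U, \Z)=0$ for $i \geq 1$ together with universal coefficients gives $H^{2}(U, \Z)=0$. The exponential exact sequence then forces $\Pic(U)=0$, and the excision-type exact sequence $\Z \cdot D_{1} \oplus \Z \cdot D_{2} \to \Pic(V) \to \Pic(U) \to 0$ shows that $\{[D_{1}], [D_{2}]\}$ generates the rank-two lattice $\Pic(V)$, hence forms a $\Z$-basis. Equivalently, the $2 \times 2$ integer matrix of coefficients of $([D_{1}], [D_{2}])$ in the basis $(H, E)$ has determinant $\pm 1$.

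Finally I would run the case analysis using $[D_{1}]+[D_{2}]=-K_{V}=4H-E$. If one of the $D_{i}$ equals $E$, the other has class $4H-2E$ and the corresponding determinant is $-4$, a contradiction. Otherwise both $D_{i}$ are strict transforms; writing $[D_{i}]=d_{i}H-m_{i}E$ with $d_{i} \geq 1$ and $m_{i} \geq 0$, we have $d_{1}+d_{2}=4$ and $m_{1}+m_{2}=1$, so after swapping indices we may take $(m_{1}, m_{2})=(0, 1)$. Then the determinant reduces to $\pm d_{1}$, forcing $d_{1}=1$ and $[D_{1}]=H$, as claimed. The only conceptually delicate step is the vanishing $\Pic(U)=0$; the rest is a short case analysis on a rank-two lattice.
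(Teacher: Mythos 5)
Your overall strategy coincides with the paper's: both proofs reduce the lemma to the statement that $([D_{1}],[D_{2}])$ is a $\Z$-basis of $\Pic(V)\cong\Z^{2}$ --- equivalently, that the $2\times 2$ matrix of coefficients in the basis $(H,E)$ has determinant $\pm 1$ --- and then run the same short case analysis using $[D_{1}]+[D_{2}]=4H-E$. The paper gets the unimodularity directly from \cite[Corollary 1.20]{Fuj} and gets nonnegativity of the $H$-coefficients by intersecting with a general line disjoint from $C$, where you instead use the description of prime divisors as $E$ or strict transforms (degree $\geq 1$, multiplicity $\geq 0$ along $C$); these differences are cosmetic, and your case analysis is complete and correct.

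The one step I would not accept as written is the deduction of $\Pic(U)=0$ from the exponential sequence. The exponential sequence is an analytic construction: combined with Cartan's Theorem B for the Stein space $U^{\mathrm{an}}$ and $H^{2}(U,\Z)=0$ it gives vanishing of the \emph{analytic} Picard group $\Pic(U^{\mathrm{an}})$, but for a non-compact variety the natural map $\Pic(U)\to\Pic(U^{\mathrm{an}})$ need not be injective, so this does not yet kill the algebraic Picard group. (For instance, $E\setminus\{p\}$ for an elliptic curve $E$ is smooth affine with $H^{2}=0$ and trivial analytic Picard group, yet $\Pic(E\setminus\{p\})\cong E(\C)$; that example is of course not a homology cell, but it shows the exponential-sequence step by itself does not do what you claim.) In the present situation the conclusion is nevertheless true: since $V$ is rational, $H^{1}(V,\mc{O}_{V})=H^{2}(V,\mc{O}_{V})=0$, so $\Pic(V)\cong H^{2}(V,\Z)$, and the long exact sequence of the pair $(V,U)$ together with Lefschetz duality $H^{2}(V,U;\Z)\cong H_{4}(D_{1}\cup D_{2};\Z)=\Z[D_{1}]\oplus\Z[D_{2}]$ shows that $[D_{1}],[D_{2}]$ generate $H^{2}(V,\Z)$ because $H^{2}(U,\Z)=0$. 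This is exactly the content of the Fujita corollary the paper cites, so the gap is repairable; but the repair goes through topology on $V$, not the exponential sequence on $U$.
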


\begin{proof}
For $i \in \{1, 2\}$, write $D_{i} \sim \vp^{*} \mc{O}_{\P^{3}}(a_{i})-b_{i}E_{\vp}$ with $a_{i}, b_{i} \in \Z$.
Then $a_{1}b_{2}-a_{2}b_{1}=\pm 1$ by \cite[Corollary 1.20]{Fuj}.
Since $K_{V}+D_{1}+D_{2} \sim 0$, we have $a_{1}+a_{2}=4$ and $b_{1}+b_{2}=1$.
On the other hand, let $L \subset \P^{3}$ be a line disjoint from $C$ and not contained in $\vp(D_{1} \cup D_{2})$.
Then $0 \leq (L_{V} \cdot D_{i})=a_{i}$ for $i \in \{1,2\}$.

If $a_{1}=0$, then  $\pm 1=-a_{2}b_{1}=-4b_{1}$, a contradiction.
Hence $a_{1} \neq 0$.
Similarly, we obtain $a_{2} \neq 0$.
On the other hand, if $a_{1}=a_{2}=2$, then $\pm 1=2(b_{2}-b_{1})$, a contradiction.
Hence $(a_{1}, a_{2})=(1, 3)$ or $(3,1)$.
If the former holds, then $\pm 1=a_{1}b_{2}-a_{2}b_{1}=1-4b_{1}$.
Therefore $b_{1}=0$ and $D_{1} \sim \vp^{*}\mc{O}_{\P^{3}}(1)$.
Similarly, if the latter holds, then $D_{2} \sim \vp^{*}\mc{O}_{\P^{3}}(1)$.
\end{proof}

\begin{nota}\label{not:2.8}
In the remainder of this paper, we use the notation as in Problem \ref{prob:main}.
We also use the following notation in addition:
\begin{itemize} 
\item $F \coloneqq S_{2}|_{S_{1}} \in |\mc{O}_{\P^{2}}(3)|$.
\item $\sigma \colon \ol S_{2} \to S_{2}$: the normalization.
\item $\tau \colon \wt S_{2} \to \ol S_{2}$: the minimal resolution.
\item $\mu \coloneqq \sigma \circ \tau$.
\item $E \subset S_{2}$ (resp.\ $\ol E \subset \ol S_{2}$): the conductor locus of $\sigma$.
\item $\wt{E} \coloneqq \ol E_{\wt S_{2}} \subset \wt S_{2}$.
\item $J_{i} \coloneqq \{ \vp$-exceptional curves in $ D_{i} \cap E_{\vp} \}, N_{i} \coloneqq \sharp J_{i}$ for $i=1,2$.
\item $J_{1 \cap 2} \coloneqq \{ \vp$-exceptional curves in $(D_{1} \cap D_{2}) \cap E_{\vp} \},  N_{1 \cap 2} \coloneqq \sharp J_{1 \cap 2}$.
\end{itemize}
We note that $E$, $\ol E$ and $\wt E$ are empty when $S_{2}$ is normal. 
We can also interpret $N_{1}$, $N_{2}$ and $N_{1 \cap 2}$ as $\sharp (C \cap S_{1})$, $\sharp (C \cap \Sing  S_{2})$ and $\sharp (C \cap S_{1} \cap \Sing  S_{2})$ respectively.
\end{nota}

\begin{lem}\label{lem:2.10}
If $U$ is an affine homology $3$-cell, then the following holds:
\begin{enumerate}
\item[\textup{(1)}] $\eu(F) = \eu(S_{2}) +2p_{a}(C)+ N_{1} +N_{2} - N_{1 \cap 2}-2$.
\item[\textup{(2)}] $N_{1} + N_{2} -N_{1 \cap 2} \geq 1$. 
Moreover, the equality holds if and only if $(N_{1}, N_{2}, N_{1 \cap 2})=(1,0,0)$ or $(1,1,1)$.
\end{enumerate}
\end{lem}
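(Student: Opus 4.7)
The plan is to compute $\eu(V)$, $\eu(D_1)$, $\eu(D_2)$, and $\eu(D_1\cap D_2)$ separately from the blow-up structure and then combine them via the additivity of the topological Euler number. Since $U$ is an affine homology $3$-cell, $\eu(U)=1$, and the decomposition $V=U\sqcup(D_1\cup D_2)$ together with inclusion--exclusion yields
\[
\eu(V) \;=\; 1 + \eu(D_1) + \eu(D_2) - \eu(D_1\cap D_2).
\]
Statement (1) will follow by substituting explicit expressions for each Euler number, and statement (2) will be a short numerical consequence of the interpretation of $N_1,N_2,N_{1\cap 2}$ given in Notation \ref{not:2.8}.

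For the four Euler numbers I would argue as follows. Since $\varphi\colon V\to\P^3$ is the blow-up of a smooth curve, $E_\varphi\to C$ is a $\P^1$-bundle, so $\eu(V)=\eu(\P^3)-\eu(C)+2\eu(C)=6-2p_a(C)$. Because $S_1$ is a hyperplane meeting $C$ transversally in $N_1$ points, the strict transform $D_1$ is the blow-up of $S_1\cong\P^2$ at those $N_1$ points, giving $\eu(D_1)=3+N_1$. For $D_2$ I would stratify by $E_\varphi$: the map $\varphi|_{D_2}\colon D_2\to S_2$ is an isomorphism away from $E_\varphi$, while the fibre of $E_\varphi\to C$ over $p\in C$ is contained in $D_2$ exactly when $p\in\Sing S_2$ and otherwise meets $D_2$ transversally in a single point (the tangent direction of $S_2$). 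Additivity of $\eu$ on constructible sets then gives $\eu(D_2)=\eu(S_2)+N_2$. A parallel local analysis, noting that a fibre of $E_\varphi$ sits in $D_1$ iff its base point lies in $C\cap S_1$ and in $D_1\cap D_2$ iff the base point lies in $C\cap S_1\cap\Sing S_2$, together with the observation $F\cap C=S_1\cap C$, produces $\eu(D_1\cap D_2)=\eu(F)+N_{1\cap 2}$.

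Plugging the four formulas into the inclusion--exclusion identity and solving for $\eu(F)$ gives statement (1). For (2), I would use the interpretation $N_1=\sharp(C\cap S_1)$: since $S_1$ is a hyperplane and $C$ is a curve of positive degree in $\P^3$ not contained in $S_1$, the set $C\cap S_1$ is nonempty, so $N_1\geq 1$. Also $N_{1\cap 2}=\sharp(C\cap S_1\cap\Sing S_2)\leq N_1$, hence $N_1+N_2-N_{1\cap 2}\geq N_1\geq 1$. Equality forces $N_1=1$ and $N_2=N_{1\cap 2}$; combined with $N_{1\cap 2}\leq N_1=1$, this isolates exactly the two triples $(1,0,0)$ and $(1,1,1)$.

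The main subtlety I anticipate lies in the local verification that, over $p\in C$, the exceptional fibre of $E_\varphi$ is contained in $D_2$ \emph{precisely} when $p\in\Sing S_2$. This is where the hypothesis that $C$ appears in $S_2$ with multiplicity exactly one enters in an essential way: writing a local equation of $S_2$ in the form $yA(x,y,z)+zB(x,y,z)$ with $C=\{y=z=0\}$, the strict transform on the standard affine chart of the blow-up is $A+vB$, whose restriction to the fibre $\{x=y=0\}$ is linear in $v$ and vanishes identically exactly when $A(0)=B(0)=0$, i.e.\ exactly when $p\in\Sing S_2$. Once this local picture of $D_2\cap E_\varphi$ and $D_1\cap D_2\cap E_\varphi$ is pinned down, the remaining Euler-number bookkeeping is purely arithmetic.
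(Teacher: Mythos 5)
Your proof is correct and follows essentially the same route as the paper: both compute $\eu(V)$, $\eu(D_{i})$ and $\eu(D_{1}\cap D_{2})$ by stratifying along the $\vp$-exceptional fibres and combine them by inclusion--exclusion using $\eu(U)=1$, and part (2) is the same arithmetic with $N_{1}\geq 1$ and $N_{1\cap 2}\leq\min\{N_{1},N_{2}\}$. The only cosmetic caveat is that $S_{1}$ need not meet $C$ transversally, but your formula $\eu(D_{1})=3+N_{1}$ survives because the set-theoretic stratification of $D_{1}$ by exceptional fibres --- which is all the Euler number sees --- is unaffected by tangencies.
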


\begin{proof}
(1) For $i=1,2$, an easy computation shows that
\begin{align}\label{eq:2.10.1}
\eu(D_i)
&=\eu(D_i\setminus \bigcup_{L \in J_i}L)+\eu(\bigcup_{L \in J_i}L)\\
&=\eu(S_i\setminus \bigcup_{L \in J_i}\vp(L))+N_i\eu(\P^{1})\nonumber\\
&=\eu(S_i)-N_i+2N_i=\eu(S_i)+N_i.\nonumber
\end{align}
Substituting $\eu(S_{1})=\eu(\P^{2})=3$ into this, we obtain 
\begin{align}\label{eq:euler1}
\eu(D_{1})=N_{1}+3 \text{ and } \eu(D_{2}) = N_{2}+\eu(S_{2}).
\end{align}
In the same manner we obtain 
\begin{align}\label{eq:euler2}
\eu(D_{1} \cap D_{2})= \eu(F)+N_{1 \cap 2}.
\end{align}
On the other hand, the Mayer-Vietoris exact sequence gives
\begin{align}\label{eq:euler3}
\eu(D_{1} \cap D_{2})&= \eu(D_{1}) +\eu(D_{2})-\eu(D_{1} \cup D_{2}),\\
\eu(D_{1} \cup D_{2})& = \eu(V) -1=5-2p_{a}(C)\label{eq:euler4}
\end{align}
since $\eu(U)=1$.
Combining (\ref{eq:euler1})--(\ref{eq:euler4}), we get the assertion.

\noindent (2) The inequalities $N_{1}\geq 1$ and $N_{1 \cap 2} \leq \min\{N_{1}, N_{2}\}$ implies
the first assertion.
Now suppose that the equality holds. Then $0 \leq N_{2} - N_{1 \cap 2} = 1- N_{1} \leq 0$.
Hence $N_{2}=N_{1 \cap 2} \leq N_{1}=1$, which implies the second assertion.
\end{proof}

\section{Projective equivalence classes of cubic surfaces}\label{sec:cubicsurface}

In this section, we compile some results on projective equivalence classes of cubic surfaces in $\P^{3}$.

\subsection{Normal and rational cubic surfaces}\label{subsec:normalcubic}

We recall that cubic surfaces are canonically parametrized by $\P^{19}$ since $H^{0}(\P^{3}, \mc{O}_{\P^{3}}(3))=\C^{20}$, and there is an $\Aut(\P^{3})$-action on $\P^{19}$ such that its $\Aut(\P^{3})$-orbits correspond to projective equivalence classes of cubic surfaces.
Brundu-Logar \cite{B-L} roughly classified projective equivalence classes of normal and rational cubic surfaces as follows:

\begin{thm}[{\cite[Theorem 1.1]{B-L}}]\label{thm:2.11}
There is a finite disjoint union of quasi-projective varieties
\begin{align}
Q \coloneqq \bigsqcup_{i=1}^{13} T_i  \sqcup \bigsqcup_{(d,s) \in J} U_{d, s} \subset \P^{19}
\end{align}
such that each normal and rational cubic surface is projectively equivalent to the cubic surface corresponding to some point in $Q$, where
\begin{itemize}
\item $J \coloneqq \{(4, 0), (3, 1), (2, 2), (2, 1), (1, 3), (1, 2), (1, 1), (0, 4), (0, 3)\}$.
\item $T_{i}$ is a point for $i=1, \dots , 12$ and an open subset of a line in $\P^{19}$ for $i=13$. 
\item For each $(d, s) \in J$, $U_{d, s}$ is an open subset of a $d$-dimensional linear subspace in $\P^{19}$ which represents cubic surfaces having $s$ singular points and $r$ lines with $r=s+(d+2)(d+5)/2$. 
\end{itemize}
\end{thm}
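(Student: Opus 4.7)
The plan is to reduce Theorem \ref{thm:2.11} to the classical Schl\"afli classification of cubic surfaces by singularity type. First I would note that every normal cubic surface $S \subset \P^{3}$ either has only du Val (ADE) singularities or is a projective cone over a smooth plane cubic; in the latter case $S$ is non-rational, so the \emph{normal rational} cubic surfaces are precisely those whose singularities are all du Val. This reduces the problem to enumerating the du Val configurations which actually arise on a cubic surface, for which there is a finite explicit list (the total Milnor number is at most $6$, with further combinatorial constraints coming from the fact that the minimal resolution must be a weak del Pezzo of degree three).

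For each admissible configuration $\tau$, my plan is to exhibit a projective normal form: after placing the singular points at a standard subset of coordinate vertices and aligning selected lines of $S$ with coordinate axes, the defining cubic can be written with only a few coefficients free, and the residual action of $\Aut(\P^{3})$ cuts the parameter space down to a well-defined dimension $d_{\tau}$. Grouping configurations by the pair $(d,s)$ of family dimension and number of singular points yields the strata $U_{d,s}$, which are Zariski-open subsets of $d$-dimensional linear subspaces of $\P^{19}$; configurations whose normal forms are too rigid to fit such a pattern contribute the isolated orbits $T_{1},\ldots, T_{12}$ and the $1$-parameter family $T_{13}$.

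The formula $r = s + (d+2)(d+5)/2$ should then be verified stratum by stratum via the minimal resolution $\pi\colon \wt S \to S$: this $\wt S$ is a weak del Pezzo surface of degree three, and the $(-1)$-curves on $\wt S$ correspond to roots of $E_{6}$ orthogonal to the classes of the components of $\Exc(\pi)$. A direct count in the $E_{6}$ root lattice, combined with the $s$ extra lines of $S$ that pass through singular points (each pulling back to a single $(-1)$-curve plus exceptional curves), yields the claimed value of $r$.

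The main obstacle is the bookkeeping rather than any single conceptual difficulty: for every admissible du Val configuration one must verify that the chosen normal form exhausts a Zariski-open subset of the claimed linear subspace, that distinct parameter values give non-projectively-equivalent surfaces, and that the resulting stratification of $Q$ is globally disjoint. Since Theorem \ref{thm:2.11} is only used as an input to the later sections, in practice I would simply quote \cite{B-L} for the explicit defining equations rather than redo the entire casework, and only reopen the normal-form calculation when a specific stratum needs refined information in \S \ref{sec:normal}.
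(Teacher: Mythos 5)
The paper does not prove this statement at all: it is imported verbatim as \cite[Theorem 1.1]{B-L}, and your final paragraph reaches the same operative conclusion (quote \cite{B-L} and only use the explicit normal forms downstream), so your approach matches the paper's. Your accompanying sketch of how the classification would actually be established is broadly the right strategy (du Val reduction via the cone dichotomy, normal forms modulo $\Aut(\P^{3})$, line counts on the weak del Pezzo resolution), but if you ever carried it out you would need to fix two imprecisions: the lines of $S$ correspond to $(-1)$-classes on $\wt S$ (square $-1$, degree $1$ against $-K$), not to roots of $E_{6}$ (which have square $-2$ and are the classes of the exceptional components); and the pair $(d,s)$ does not by itself determine a stratum, since distinct singularity configurations can share the same $(d,s)$ (e.g.\ $G_{7}$ with $2A_{2}+A_{1}$ versus $G_{14}=U_{0,3}$ with $A_{2}+2A_{1}$ in Table \ref{tb:sing}), which is precisely why \cite{B-L} separates the exceptional orbits $T_{1},\dots,T_{13}$ from the $U_{d,s}$.
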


\begin{defi}\label{defi:normrat}
Let $G_{i, p}$ denote the cubic surface which corresponds to a point $p$ in $T_{i}$ when $1 \leq i \leq 13$, in $U_{0, 3}$ when $i=14$ and in $U_{0,4}$ when $i=15$.
When $i \neq 13$ in addition, we often write it $G_{i}$ for short because $p$ is the unique point in $T_{i}$, $U_{0,3}$ or $U_{0,4}$.
\end{defi}

\begin{cor}\label{cor:B-L}
Each normal and rational cubic surface $S$ with $B_{2}(S) \leq 3$ is projectively equivalent to $G_{i}$ for some $i=1, \ldots, 12, 14, 15$, or $G_{13, p}$ for some $p \in T_{13}$.
\end{cor}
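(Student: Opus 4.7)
The plan is to apply Theorem \ref{thm:2.11} directly and to eliminate the families $U_{d,s}$ with $(d,s) \notin \{(0,3),(0,4)\}$ by a Betti-number computation. By Theorem \ref{thm:2.11}, every normal rational cubic surface $S$ is projectively equivalent to one parametrized either by some $T_i$, which by Definition \ref{defi:normrat} means $S \cong G_{i,p}$ (and $G_i$ for $i \neq 13$), or by some $U_{d,s}$ with $(d,s) \in J$. It therefore remains to prove that for such a $U_{d,s}$ the inequality $B_2(S) \leq 3$ forces $(d,s) \in \{(0,3),(0,4)\}$.

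First I would bound $B_2(S)$ in terms of the singularities of $S$. Since $S$ is a normal rational cubic, it is Gorenstein with $-K_S$ ample; the elliptic Gorenstein alternative (an elliptic cone) is irrational and hence ruled out, so every singularity of $S$ is a rational double point. The minimal resolution $\pi \colon \wt S \to S$ is thus a weak del Pezzo surface of degree $3$, homeomorphic to a six-point blow-up of $\P^2$, so $\eu(\wt S)=9$ and $B_2(\wt S)=7$. Writing $n$ for the total number of irreducible $\pi$-exceptional curves, contraction of these trees to points gives $\eu(S)=9-n$, while $S$ is simply connected (being a rational surface with rational singularities) and satisfies $B_4(S)=1$, so
\begin{align*}
B_2(S) = \eu(S) - 2 = 7 - n.
\end{align*}

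Next I would match the invariant $n$ with the pair $(d,s)$ coming from Theorem \ref{thm:2.11}. Using the Bruce-Wall classification of singular cubic surfaces, the numerical data $(s,r)$ determine the ADE type of the singularity configuration uniquely, and a direct case-check over the nine entries of $J$ shows that $n = 4-d$ in every case. Combining this with the Betti computation yields $B_2(S) = 3 + d$, so the hypothesis $B_2(S) \leq 3$ indeed forces $d = 0$, giving $(d,s) \in \{(0,3),(0,4)\}$ and $S \cong G_{14}$ or $G_{15}$.

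The main obstacle lies in the last step: Theorem \ref{thm:2.11} records only $(d,s)$ and the line count $r$, not the singularity type, so identifying the ADE configuration for each family is where the work is concentrated. A cleaner alternative would be to note that $d$ is the expected dimension $4 - \mu$ of the equisingular stratum modulo projective equivalence, where $\mu$ is the total Milnor number, and that for ADE singularities this Milnor number coincides with $n$; granting this gives $n = 4 - d$ at once, without any case analysis.
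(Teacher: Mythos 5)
Your argument is correct, and it reaches the conclusion by a slightly different numerical route than the paper. Both proofs begin the same way: reduce via Theorem \ref{thm:2.11} to a surface parametrized by some $U_{d,s}$ and then use the Bruce--Wall data to rule out $d\geq 1$. The paper does this by converting the hypothesis $B_2(S)\leq 3$ into an upper bound on the number of lines (at most $9$, citing \cite[p.~255]{B-W}) and comparing with the line count $r=s+(d+2)(d+5)/2$ recorded in Theorem \ref{thm:2.11}; since $r\geq 10$ whenever $d\geq 1$, only $(0,3)$ and $(0,4)$ survive. You instead compute $B_2(S)=7-n$ from $\eu(\wt S)=9$ and the contraction of the exceptional trees, and then verify $n=4-d$ stratum by stratum, so that $B_2(S)=3+d$. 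The two routes rest on the same classification input: your identification of the ADE configuration of each $U_{d,s}$ from the pair $(s,r)$ is exactly the Bruce--Wall table that the paper invokes, so neither proof is more elementary, though yours makes explicit \emph{why} $B_2\leq 3$ is restrictive (it forces total Milnor number at least $4$) rather than hiding that in a citation. Two small points: your formula $B_2(S)=\eu(S)-2$ tacitly uses $B_3(S)=0$ as well as $B_1(S)=0$ (this is harmless here, since even without it one gets $B_2(S)\geq 7-n$, which suffices for the implication you need); and the ``cleaner alternative'' in your last paragraph, identifying $d$ with an expected dimension $4-\mu$, is a heuristic rather than a proof --- the case-check, or the paper's line count, should remain the actual argument.
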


\begin{proof}
We may assume that $S$ corresponds to a point in $U_{d, s}$ for some $(d, s) \in J$.
Then $S$ contains exactly $s+(d+2)(d+5)/2$ lines.
Combining \cite[p.\ 255]{B-W} and $B_{2}(S) \leq 3$, we obtain $s+(d+2)(d+5)/2 \leq 9$. 
Hence $(d, s)=(0, 3), (0, 4)$ and $S$ is projectively equivalent to $G_{14}$ or $G_{15}$.
\end{proof}

For each $G_{i, p}$, Brundu-Logar also determined its defining equation as in Tables \ref{tb:norm} and its singularity as in Tables \ref{tb:sing}.
Moreover, we can write down all the lines in $G_{i, p}$ as in Table \ref{tb:line}.

\begin{table}[t]
\centering
\caption{The defining equations of $G_{i, p}$}
\label{tb:norm}
\begin{tabular}{|c|l|} \hline
$i=$         & The defining equation of $G_{i, p}$ in $\P^3_{[x,y,z,t]}$                         \\ \hline
$1$        & $xy^2+yt^2 +z^3$                                                              \\ \hline
$2$        & $xyt+xz^2+y^3$                                                                 \\ \hline
$3$        & $xyt-xzt+y^3$                                                                   \\ \hline
$4$        & $x^2y-x^2z-xy^2+xz^2+y^3-y^2t+yzt$                                   \\ \hline
$5$        & $x^2y+xz^2+y^2t$                                                               \\ \hline
$6$        & $xy^2+xyt+xzt+yt^2$                                                           \\ \hline
$7$        & $2x^2y-x^2z+xy^2-xyz-xyt-y^2t+yzt$                                   \\ \hline
$8$        & $x^2y-x^2z-2xy^2+2xyz-xyt-xz^2+xzt+y^2t$                          \\ \hline
$9$        & $x^2y-x^{2}z+xy^2-xyz+xz^2-y^2t$                                         \\ \hline
${10}$     & $x^2y-x^2z-2xyz+xz^2+y^2t$                                               \\ \hline
${11}$     & $x^2y +x^2z-xy^2+xyt-xzt-yt^2$                                          \\ \hline   
${12}$     & $x^2y+xyz-2xyt-xz^2+xzt-y^2t+yzt$                                     \\ \hline
${13}$     & $axy(x-t)+b(y-z)(x^2-xy-xz+2yt)$ for some $[a:b] \in \P^1$ \\ \hline
${14}$    & $x^2y-2x^2z-xy^2+xyz-y^2t+yzt+yt^2$                                  \\ \hline
${15}$    & $x^2y-xy^2+xz^2-yt^2$                                                       \\ \hline
\end{tabular}
\end{table}

\begin{table}[htpb]
\begin{threeparttable}
\centering
\caption{The singularity of $G_{i, p}$}
\label{tb:sing}
\begin{tabular}{|c|l|} \hline
$i=$         & The singularity of $G_{i, p}$  \\ \hline
$1$      & $([1\!:\!0\!:\!0\!:\!0],E_{6})$                                        \\ \hline
$2$      &  $([0\!:\!0\!:\!0\!:\!1],A_5), ([1\!:\!0\!:\!0\!:\!0],A_1)$                           \\ \hline
$3$      &  $([0\!:\!0\!:\!0\!:\!1],A_2), ([0\!:\!0\!:\!1\!:\!0],A_2), ([1\!:\!0\!:\!0\!:\!0],A_2)$                \\ \hline
$4$      &  $([0\!:\!0\!:\!0\!:\!1],A_5)$                                         \\ \hline
$5$      &  $([0\!:\!0\!:\!0\!:\!1],D_5)$                                         \\ \hline
$6$      &  $([0\!:\!0\!:\!1\!:\!0],A_4), ([1\!:\!0\!:\!0\!:\!0],A_1)$                           \\ \hline
$7$      &  $([0\!:\!0\!:\!0\!:\!1],A_2), ([0\!:\!1\!:\!1\!:\!0],A_2), ([0\!:\!0\!:\!1\!:\!0],A_1)$                  \\ \hline
$8$      &  $([0\!:\!0\!:\!0\!:\!1],A_3), ([1\!:\!0\!:\!0\!:\!1],A_1), ([0\!:\!0\!:\!1\!:\!1],A_1)$                \\ \hline
$9$      &  $([0\!:\!0\!:\!0\!:\!1],D_4)$                                         \\ \hline
${10}$   &  $([0\!:\!0\!:\!0\!:\!1],D_4)$                                         \\ \hline
${11}$   &  $([0\!:\!0\!:\!1\!:\!0],A_4)$                                         \\ \hline   
${12}$   &  $([0\!:\!0\!:\!0\!:\!1],A_1), ([0\!:\!1\!:\!1\!:\!0],A_3)$                              \\ \hline
${13}$   &  $([0\!:\!0\!:\!0\!:\!1],A_2), ([0\!:\!1\!:\!1\!:\!0],A_2)$                              \\ \hline
${14}$  &  $([0\!:\!1\!:\!1\!:\!0],A_2), ([0\!:\!0\!:\!1\!:\!0],A_1), ([0\!:\!0\!:\!1\!:\!-1],A_1)$                   \\ \hline
${15}$  &  $([1\!:\!0\!:\!0\!:\!1],A_1), ([0\!:\!1\!:\!1\!:\!0],A_1), ([1\!:\!0\!:\!0\!:\!-1],A_1), ([0\!:\!1\!:\!-1\!:\!0],A_1)$   \\ \hline
\end{tabular}
\begin{tablenotes}
\item The symbol $(\alpha, \beta)$ in the right column means that $\alpha \in G_{i, p}$ is a DuVal singularity of type $\beta$.
\end{tablenotes}
\end{threeparttable}
\end{table}

\begin{table}[htpb]
\begin{threeparttable}
\centering
\caption{The lines in $G_{i, p}$}
\label{tb:line}
\begin{tabular}{|c|l|} \hline
$i=$ & All the lines contained in $G_{i, p}$ \\ \hline
$1$     &  $ \la y,z \ra     $                                                                         \\ 
\hline
$2$     &  $ \la y,z \ra ,  \la x,y \ra $                                                                     \\ 
\hline
$3$     &  $ \la y,z \ra ,  \la x,y \ra ,  \la y,t \ra $                                                             \\ 
\hline
$4$     &  $ \la y,z \ra ,  \la x,y \ra ,  \la y,x-z \ra   $                                                       \\ 
\hline
$5$     &  $ \la y,z \ra ,  \la x,y \ra ,  \la x,t \ra ,  $                                                          \\ 
\hline
$6$     &  $ \la y,z \ra ,  \la x,y \ra ,  \la x,t \ra ,  \la y, t \ra  $                                                   \\ 
\hline
$7$     &  $ \la y,z \ra ,  \la x,y \ra ,  \la x,t \ra ,  \la x-t, y-z \ra ,  \la x, y-z \ra  $                                \\ 
\hline
$8$     &  $ \la y,z \ra ,  \la x,y \ra ,  \la x,t \ra ,  \la x-t, y-z \ra ,  \la y, x+z-t \ra    $                           \\ 
\hline
$9$     &  $ \la y,z \ra ,  \la x,y \ra ,  \la x,t \ra ,  \la x-t, y-z \ra ,  \la y, x-z \ra ,  $\\
                 &  $\la x-t, z-t \ra  $   \\ 
\hline                
${10}$  &  $ \la y,z \ra ,  \la x,y \ra ,  \la x,t \ra ,  \la x-t, y-z \ra ,  \la y, x-z \ra , $  \\
                 &  $ \la x-t, y-z+t \ra  $   \\ 
\hline            
${11}$  &  $ \la y,z \ra ,  \la x,y \ra ,  \la x,t \ra ,  \la x-y, y-t \ra ,  \la y,x-t \ra , $ \\
                 &  $  \la x-y,z+t \ra  $   \\ 
\hline                  
${12}$  &  $ \la y,z \ra ,  \la x,y \ra ,  \la x,t \ra ,  \la x-t, y-z \ra ,  \la y,z-t \ra ,  $\\
                 &  $ \la x, y-z \ra , \la x-t, y-z+t \ra  $   \\ 
\hline
${13}$  &  $ \la y,z \ra ,  \la x,y \ra ,  \la x,t \ra ,  \la x-t, y-z \ra ,  \la y,x-z \ra ,  $\\
                 & $\la x, y-z \ra ,  \la x-t, y-z+t \ra   $  \\ 
\hline
${14}$ &  $ \la y,z \ra ,  \la x,y \ra ,  \la x,t \ra ,  \la x-t, y-z \ra ,  \la x-y, z+t \ra ,  $ \\
             &  $\la x-y, z+t \ra , \la x-y, y-t \ra ,  \la x+t, y-z \ra $  \\ 
\hline
${15}$ &  $ \la y,z \ra ,  \la x,y \ra ,  \la x,t \ra ,  \la x-t, y-z \ra ,  \la x-y, z+t \ra ,  $  \\
              & $\la x-y, z-t \ra , \la x-t, y+z \ra ,  \la x+t, y-z \ra ,  \la x+t, y+z \ra $    \\ 
\hline
\end{tabular}
\begin{tablenotes}
\item The symbol $\la a, b \ra$ means the line defined by $a=b=0.$
\end{tablenotes}
\end{threeparttable}
\end{table}

\subsection{Non-normal cubic surfaces}\label{subsec:non-normal cubic}

Lee-Park-Schenzel \cite{lps11} classified non-normal cubic surfaces up to projective equivalence as follows:

\begin{thm}[{\cite[Theorem 3.1]{lps11}}]\label{thm:nnorm1}
Each non-normal cubic surface in $\P^3_{[x:y:z:t]}$ is projectively equivalent to one of the following:
\begin{align*}
R_{1} &=\{ x^2 z+y^3=0\},\\
R_{2} &= \{x^2 z +y^3 +y^2 z=0\},\\
R_{3} &= \{ x^2 z + y^3 +y^2 t=0\},\\
R_{4} &= \{x^2z + y^3 +x y t=0\}.
\end{align*}
\end{thm}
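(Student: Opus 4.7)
The plan is to locate a line in the singular locus of $S$, bring it to a standard position via $\Aut(\P^3)$, and then normalize the residual data using the stabilizer of that line.

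First, since $S \subset \P^3$ is a hypersurface, non-normality is equivalent to $\dim \Sing S \geq 1$. I would argue that some irreducible component of $\Sing S$ is a line: if $\Gamma \subset \Sing S$ has dimension one, a general hyperplane section $H \cap S$ is a plane cubic singular at each of the $\deg \Gamma$ points of $H \cap \Gamma$, and a short analysis of plane cubics (at most one singular point when irreducible, and a bounded number otherwise) forces $\deg \Gamma = 1$. After choosing coordinates with $\ell = \{x = y = 0\}$, the condition that $S$ be singular along $\ell$ forces the defining cubic to lie in $(x, y)^2$, so
\begin{align*}
F = x^2 A + xy B + y^2 C
\end{align*}
for linear forms $A, B, C \in \C[x,y,z,t]_1$. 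This presentation is unique up to the tautological substitution $(A, B, C) \mapsto (A + \lambda_1 y,\: B - \lambda_1 x + \lambda_2 y,\: C - \lambda_2 x)$, which is easily tracked alongside the action of the stabilizer of $\ell$.

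The pencil of planes $\{\alpha x - \beta y = 0\}$ through $\ell$ meets $S$ in $2\ell + L'_{[\alpha:\beta]}$, exhibiting $S$ as a $\P^1$-family of residual lines with $\ell$ as a double directrix. I would then split into two cases depending on whether this family has a base point. If it does, $S$ is a cone; placing the vertex at $[0:0:0:1] \in \ell$ and taking a transversal slice, one obtains a singular plane cubic whose unique singular point lies on $\ell$, and normalizing this plane cubic by $\PGL_3$ to its standard cuspidal or nodal form reproduces exactly $R_1$ or $R_2$. If the family has no base point, $S$ is a non-conical scroll, and one uses the parabolic subgroup $P \subset \Aut(\P^3)$ stabilizing $\ell$, together with the tautological substitution above, to normalize $(A, B, C)$; the remaining orbits yield $R_3$ and $R_4$.

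The main obstacle is the scroll case. The parabolic $P$ is substantial, so organizing the normalization of $(A, B, C)$ so that every admissible triple lands in exactly one family requires care. A useful invariant is the discriminant $B^2 - 4 A C$ of the binary quadratic form $F$ in $(x, y)$, viewed as a quartic on $\P^3$ and restricted to $\ell$: its zero pattern governs how the rulings degenerate, and should cut out the four families up to the $P$-action. To finish, one verifies pairwise inequivalence of $R_1, \ldots, R_4$ by comparing discrete projective invariants such as whether $S$ is a cone, or the type of the conductor locus of the normalization $\ol{S} \to S$.
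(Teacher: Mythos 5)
The paper does not prove this statement; it is imported verbatim from \cite[Theorem 3.1]{lps11}, so there is no internal proof to compare against. Your outline is, in substance, the argument of that source: for a reduced irreducible cubic hypersurface, non-normality is equivalent to $\dim \Sing S \geq 1$ by Serre's criterion; Bertini (irreducibility of the general hyperplane section, which is then an irreducible plane cubic with at most one singular point) forces the one-dimensional part of $\Sing S$ to be a single line $\ell$; the equation then lies in $I_{\ell}^{2}$ and may be written $x^{2}A+xyB+y^{2}C$; and one normalizes under the stabilizer of $\ell$, separating the cone case (slice and reduce to the cuspidal/nodal plane cubic, giving $R_{1}$, $R_{2}$) from the scroll case (giving $R_{3}$, $R_{4}$). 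The invariants you propose do distinguish the four forms: being a cone, together with the root pattern on $\ell$ of $B^{2}-4AC$ (identically zero for $R_{1}$, a double root for $R_{2}$ and $R_{4}$, two simple roots for $R_{3}$, the last two cases matching the irreducible/reducible conductor dichotomy of Lemma \ref{lem:2.16}). The only substantive step you have not executed is the verification that the stabilizer of $\ell$, combined with the tautological substitution, acts transitively on each of the two non-cone strata; this is a finite and routine computation, but as written your argument is an outline rather than a complete proof at precisely that point.
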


We note that $R_{i}$ is the cone over a curve if and only if $i \in \{1, 2\}$.
On the other hand, there is the classification of non-normal Gorenstein del Pezzo surfaces \cite{Reid94, A-F}. 
In particular, they classified non-normal cubic surfaces as follows:

\begin{thm}[{\cite[Theorem 1.5]{A-F}}]\label{thm:nnorm2}
Let $S$ be a non-normal cubic surface with the normalization $\sigma \colon \ol S \to S$. 
Let $D \subset S$ (resp.\ $\ol D \subset \ol S$) be the conductor locus. 
Then $D$ is a line and one of the following holds:
\begin{enumerate}
\item[\textup{(C)}] $\ol S \cong \F_{1}$, $\sigma^{*}(-K_{S}) \sim \Sigma_{1}+2f_{1}$ and $\ol D \sim \Sigma_{1} +f_{1}$.
\item[\textup{(E1)}] $\ol S$ is isomorphic to the cone $S_{3} \subset \P^{4}$ over the twisted cubic in $\P^{3}$.
Moreover, $\sigma^{*}(-K_{S}) \sim \mc{O}_{\P^{4}}(1)|_{S_{3}}$ and $\ol D$ is the sum of two rulings or a non-reduced ruling of length two.
\end{enumerate}
\end{thm}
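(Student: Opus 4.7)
The plan is to combine the adjunction formula for the conductor of the normalization $\sigma \colon \ol S \to S$ with the classification of rational surfaces, following Reid's strategy for non-normal Gorenstein del Pezzo surfaces.

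Since $S \subset \P^{3}$ is a cubic hypersurface, it is Gorenstein with $-K_{S} \sim \mc{O}_{S}(1)$ of degree $3$. Standard conductor adjunction yields effective Weil divisors $D \subset S$ and $\ol{D} \subset \ol{S}$ with
\begin{equation}
K_{\ol{S}} + \ol{D} \sim \sigma^{*} K_{S},
\end{equation}
so $\sigma^{*}(-K_{S})$ is semi-ample and big with self-intersection $3$, and the associated linear system induces $\sigma$. I would first verify that $D$ is a line: an irreducible cubic hypersurface has multiplicity at most $2$ along a $1$-dimensional singular locus (multiplicity $3$ along a line would force $S$ to split as three planes through that line), so $\sigma|_{\ol D}$ is generically $2{:}1$ and $\sigma^{*}(-K_{S}) \cdot \ol{D} = 2\deg D$. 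Combined with $(\sigma^{*}(-K_{S}))^{2}=3$ and the Hodge index theorem on $\ol S$, this rules out $\deg D \geq 2$.

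Next I would classify $\ol S$. Since $\sigma^{*}(-K_{S})$ is big and nef and $\ol D$ is effective, $-K_{\ol S}$ is big, so $\ol S$ has Kodaira dimension $-\infty$ and is rational. The conductor sequence together with Gorenstein-ness of $S$ forces $\ol S$ to have at worst rational Gorenstein singularities; its minimal resolution $\tau \colon \wt S \to \ol S$ therefore admits a birational morphism onto some Hirzebruch surface $\F_{n}$. Writing $\sigma^{*}(-K_{S}) = a\Sigma_{n} + b f_{n}$ upstairs and imposing $(\sigma^{*}(-K_{S}))^{2}=3$, $\sigma^{*}(-K_{S}) \cdot \ol D = 2$, nefness, and that $\sigma^{*}(-K_{S})$ contracts only $(-2)$-curves, one extracts exactly two outcomes. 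In the first, $\ol S = \F_{1}$ with $\sigma^{*}(-K_{S}) \sim \Sigma_{1} + 2f_{1}$, and adjunction reads $\ol D \sim \Sigma_{1} + f_{1}$, giving case \textup{(C)}. In the second, $\wt S = \F_{3}$ with $\Sigma_{3}$ contracted, so $\ol S$ is the cone $S_{3} \subset \P^{4}$ over the twisted cubic with $\sigma^{*}(-K_{S}) \sim \mc{O}_{\P^{4}}(1)|_{S_{3}}$; the numerical condition $\ol D \cdot f = 2$ on $S_{3}$ then forces $\ol D$ to be either the sum of two distinct rulings or a non-reduced ruling of length two, giving case \textup{(E1)}.

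The main obstacle is exhaustiveness: ruling out $\F_{n}$ for $n \notin \{1, 3\}$ (as well as the $\P^{2}$ case) and showing that the only singular surface arising is the cone $S_{3}$. This amounts to a careful numerical analysis on the Hirzebruch surface, combined with positivity constraints coming from the big-and-nef class $\sigma^{*}(-K_{S})$ and the requirement that $\sigma$ be birational onto a cubic surface in $\P^{3}$; the $2{:}1$ structure of $\sigma|_{\ol D}$ over $D \cong \P^{1}$ sharply constrains the pair $(\ol D \cdot f_{n}, \ol D \cdot \Sigma_{n})$, and this is what pins down $n$.
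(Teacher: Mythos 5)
This statement is quoted verbatim from Abe--Furushima \cite[Theorem 1.5]{A-F} (ultimately Reid's classification of non-normal del Pezzo surfaces); the paper gives no proof, so your argument can only be judged on its own terms. Your skeleton --- conductor adjunction $K_{\ol S}+\ol D \sim \sigma^{*}K_{S}$, the numbers $(\sigma^{*}(-K_{S}))^{2}=3$ and $\sigma^{*}(-K_{S})\cdot \ol D=2\deg D$, then a run through minimal models of $\ol S$ --- is indeed the standard route, but two steps do not hold up. First, the Hodge index theorem does not rule out $\deg D\geq 2$: it only gives $3\,\ol D^{2}\leq (2\deg D)^{2}$, and conductor adjunction yields $\ol D\cdot(K_{\ol S}+\ol D)=-2\deg D$ identically, so no contradiction materializes. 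The clean argument is elementary: a general plane section of $S$ is an irreducible plane cubic (treat the cone over a singular plane cubic separately), hence has at most one singular point, so the non-normal locus meets a general plane in one point and $D$ is a line. Equivalently, a plane through $D$ cuts $S$ in $2D$ plus a residual line, exhibiting $S$ as a scroll or a cone --- which also supplies the rationality of $\ol S$ that you deduce only from bigness of $-K_{\ol S}$ (bigness gives $\kappa=-\infty$, not $q=0$).

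Second, and more seriously, the claim that $\ol S$ has rational \emph{Gorenstein} singularities and that $\sigma^{*}(-K_{S})$ contracts only $(-2)$-curves is false, and taken literally it would exclude exactly the case \textup{(E1)} the theorem asserts. The vertex of the cone $S_{3}$ over the twisted cubic is a $\tfrac13(1,1)$ quotient singularity: log terminal but not Gorenstein ($K_{S_{3}}=-\tfrac53 H$ is not Cartier, consistent with $\ol D$ being a non-Cartier Weil divisor equal to two rulings), its minimal resolution is $\F_{3}$, and the contracted curve is the $(-3)$-section $\Sigma_{3}$, on which the pullback of $\sigma^{*}(-K_{S})$, namely $\Sigma_{3}+3f_{3}$, has degree $0$. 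Your own conclusion ``$\wt S=\F_{3}$ with $\Sigma_{3}$ contracted'' therefore contradicts the premise you impose. The correct framework is that $-(K_{\ol S}+\ol D)\sim\sigma^{*}(-K_{S})$ is ample, so $(\ol S,\ol D)$ is a log del Pezzo \emph{pair}; one must admit the non-Gorenstein log terminal point and redo the numerics on $\F_{n}$ accordingly, which is how Reid and Abe--Furushima actually arrive at the dichotomy \textup{(C)}/\textup{(E1)}.
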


Let us check the correspondence of the notation of non-normal cubic surfaces in Theorems \ref{thm:nnorm1} and \ref{thm:nnorm2}.

\begin{lem}\label{lem:2.16}
Fix $i \in \{1,2,3,4\}$ and let $\sigma \colon \ol R_{i} \to R_{i}$ be the normalization.
Write $D \subset R_{i}$ (resp.\ $\ol D \subset \ol R_{i}$) as the conductor locus. 
Then:
\begin{enumerate}
\item[\textup{(1)}] $R_{i}$ belongs to the class \textup{(E1)} if $i \leq 2$ and the class \textup{(C)} if $i \geq 3$.
\item[\textup{(2)}] $\ol D$ is irreducible if $i=3$ and reducible if $i=4$.
\end{enumerate}
\end{lem}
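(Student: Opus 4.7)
The plan is to verify both parts case by case through direct local analysis. For part (1), since the defining equations of $R_1$ and $R_2$ do not involve $t$, each is a cone with vertex $[0:0:0:1]$: $R_1$ is the cone over the cuspidal cubic $\{x^2z+y^3=0\} \subset \P^2$ and $R_2$ the cone over the nodal cubic $\{x^2z+y^3+y^2z=0\} \subset \P^2$. In each case the base plane cubic has normalization $\P^1$, and this normalization factors through the twisted cubic $\P^1 \hookrightarrow \P^3$ via projection from an appropriate point. Taking cones, the normalization of $R_i$ ($i=1,2$) is the cone $S_3 \subset \P^4$ over the twisted cubic, so these lie in class \textup{(E1)}.

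For $R_3$ and $R_4$, I would first verify they are not cones by checking that at every candidate vertex $v=[0:0:c:d]$ in the singular line $\{x=y=0\}$, a direct expansion of the defining polynomial shows vanishing of order exactly $2$ rather than $3$, so no such $v$ can be a vertex. To conclude class \textup{(C)}, I would apply the structural observation that any surface in class \textup{(E1)} is itself a cone: if $\ol S \cong S_3$ with vertex $v$, the rulings of $S_3$ all pass through $v$ and cover $S_3$, so their $\sigma$-images are lines through $\sigma(v)$ covering $S$, making $S$ a cone with vertex $\sigma(v)$. Contrapositively, $R_3$ and $R_4$ must lie in class \textup{(C)}, with $\ol R_i \cong \F_1$ and $\ol D \sim \Sigma_1 + f_1$.

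For part (2), I would count pinch points along $D=\{x=y=0\}$ using the tangent cone at a general point $[0:0:z_0:t_0] \in D$, which is the leading quadratic form of the defining polynomial in local coordinates. For $R_3$ this form is $z_0 x^2 + t_0 y^2$, non-degenerate for $z_0 t_0 \neq 0$ but collapsing to $x^2$ at $[0:0:1:0]$ and to $y^2$ at $[0:0:0:1]$, giving two pinch points. For $R_4$ the form factors as $x(z_0 x + t_0 y)$, a transverse pair of lines generically, degenerating to $x^2$ only at $[0:0:1:0]$ (one pinch; the leading form at $[0:0:0:1]$ remains $xy$, a transverse node). Since $\sigma|_{\ol D} \colon \ol D \to D \cong \P^1$ is a degree-$2$ morphism ramified exactly at the pinch points, an irreducible $\ol D$ would require an even positive number of ramifications by Riemann--Hurwitz. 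This is satisfied for $R_3$ (two pinches, so $\ol D \cong \P^1$ is an irreducible section of $\rho$ in $|\Sigma_1+f_1|$), but fails for $R_4$, forcing $\ol D = \Sigma_1 + f_1$ to decompose into two components meeting above the unique pinch.

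The main obstacle will be rigorously tying the number of components of $\ol D$ to the local singularity types: one must verify that each Whitney-umbrella (pinch) point of $R_i$ corresponds precisely to a simple ramification point of $\sigma|_{\ol D}$, and that transverse double loci contribute unramified sheeting. If a purely local-to-global argument is unsatisfactory, the alternative is to construct the normalization $\F_1 \to R_i$ explicitly (e.g., by parametrizing via the family of lines on $R_i$ passing through $D$, then resolving base points and blowing down contracted curves) and then read off $\ol D$ as the preimage of $D$ with its induced decomposition.
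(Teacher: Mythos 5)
Your proposal is correct, and while part (1) runs essentially parallel to the paper (which simply invokes the observation after Theorem \ref{thm:nnorm1} that class (E1) consists exactly of the cones, with $R_i$ a cone iff $i\leq 2$ --- your vertex-multiplicity check and your ``(E1) implies cone'' remark just make this explicit), part (2) takes a genuinely different route. The paper stays global: intersecting with $\sigma^*(-K_{R_i})\sim \Sigma_1+2f_1$ shows every line $\neq D$ has class $\Sigma_1$ or $f_1$, so $\ol D$ (of class $\Sigma_1+f_1$, with $(\ol D\cdot\Sigma_1)=0$) is irreducible iff some line of $R_i$ is disjoint from $D$; this is settled by exhibiting $\{z=y+t=0\}\subset R_3$ and by noting $R_4\setminus D\cong\A^2$ contains no complete curve. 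Your local pinch-point count is also valid: the tangent cones are as you say, and the degenerate points really are Whitney umbrellas (e.g.\ $x^2+y^3+xyt$ becomes $x'^2+y^2u$ after completing the square), so ramification of the degree-$2$ map $\ol D\to D$ occurs exactly there. One caveat: for $R_3$ parity alone does not yield irreducibility; you must add that a reducible $\ol D=\Sigma_1+(\text{fibre})$ produces exactly \emph{one} one-point fibre over $D$, so two pinch points force irreducibility. The paper's argument buys brevity with no local analysis; yours buys an explicit picture of the non-normal locus that works even when no convenient disjoint line presents itself.
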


\begin{proof}
(1) Since the class (E1) corresponds to the cones over curves, the assertion holds.\\
(2) Suppose that $i \in \{3, 4\}$. 
Then $R_{i}$ belongs to the class (C).
Let $L \neq D$ be a line in $R_i$ and write $L_{\ol R_{i}} \sim a\Sigma_{1} +b f_{1}$ with $a, b \in \Z$. 
Since $1=(-K_{R_i} \cdot L)=(-\sigma^*K_{R_i} \cdot L_{\ol R_{i}})= (\Sigma_{1}+2f_{1} \cdot a\Sigma_{1} +b f_{1}) =a+b$, we obtain $L_{\ol R_{i}} \sim \Sigma_{1}$ or $f_{1}$. 
Hence we have 
\begin{align}
\ol{D} \text{ is irreducible } 
&\iff \Sigma_{1} \cap \ol{D} = \emptyset 
\\
&\iff \text{There is a line in } R_{i} \text{ disjoint from } D.\nonumber
\end{align}
Now suppose that $i=3$. 
Then $\{z=y+t=0\}$ is a line in $R_{3}$ disjoint from $D=\{x=y=0\}$ and hence $\ol D$ is irreducible.
On the other hand, suppose that $i=4$.
Then $R_{4} \setminus D = R_{4} \cap \{x \neq 0\} \cong \A^{2}$ contains no proper curves and hence $\ol D$ is reducible. 
\end{proof}

\section{Curves in cubic surfaces}\label{sec:curve}

In this section, we will look closely at curves in cubic surfaces.

\subsection{Smooth rational curves in $G_{1}, G_{2}, G_{3}$ or $G_{5}$}\label{subsec:7.1}
In this subsection, we determine smooth rational curves in $G_{1}, G_{2}, G_{4}$ or $G_{5}$. 
For this, we use the following notation:
\begin{nota}\label{nota:normrat}
We assume that $S_{2}=G_{i}$ for some $i \in \{1,2,4,5\}$. 
Let $R \subset S_{2}$ be a smooth rational curve of degree $n$.
By \cite[Appendix: Configurations of the Singularity types]{Qia}, $\wt S_{2}$ is given by the successive blow-ups $\{\eta_{j} \colon S_{2,j} \to S_{2,j-1}\}_{j=1}^{6}$ with $S_{2,0} \coloneqq \P^{2}$ and $S_{2,6}=\wt S_{2}$ at points $p_{1}, \ldots, p_{6}$ which correspond to the bullets as in Figure \ref{fig:bl}.
Write $\eta \coloneqq \eta_{1} \circ \dots \circ \eta_{6} \colon \wt S_{2} \to \P^{2}$.
Set $H = \eta^{*}\mathcal{O}_{\P^2}(1)$, $E_{6} \coloneqq E_{\eta_{6}}$ and $E_{i} = (\eta_{6} \circ \dots \circ \eta_{i+1})^{*}E_{\eta_{i}}$ for $1 \leq i \leq 5$. 
Write $R_{\wt S_{2}}  \sim aH - \sum_{i=1}^{6} b_{i} E_{i}$ with $a, b_{1}, \dots, b_{6} \in \Z$.
\end{nota}

\begin{figure}[htbp]
\centering
\begin{minipage}{\textwidth}
\begin{center}
\includegraphics[clip, width=60mm]{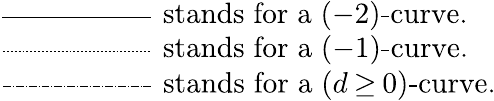}
\end{center}
\end{minipage}

\begin{minipage}{\textwidth}
\centering
\includegraphics[clip, width=120mm]{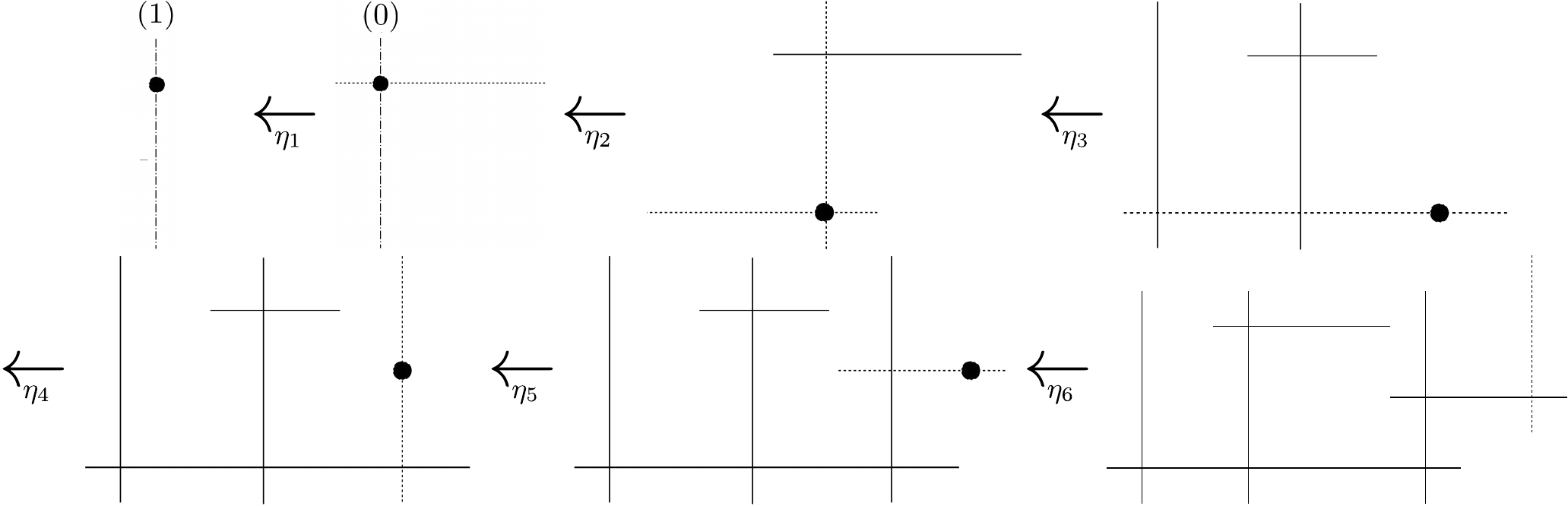}
\subcaption{The case $S_{2}=G_{1}$}
\end{minipage}

\begin{minipage}{\textwidth}
\centering
\includegraphics[clip,width=120mm]{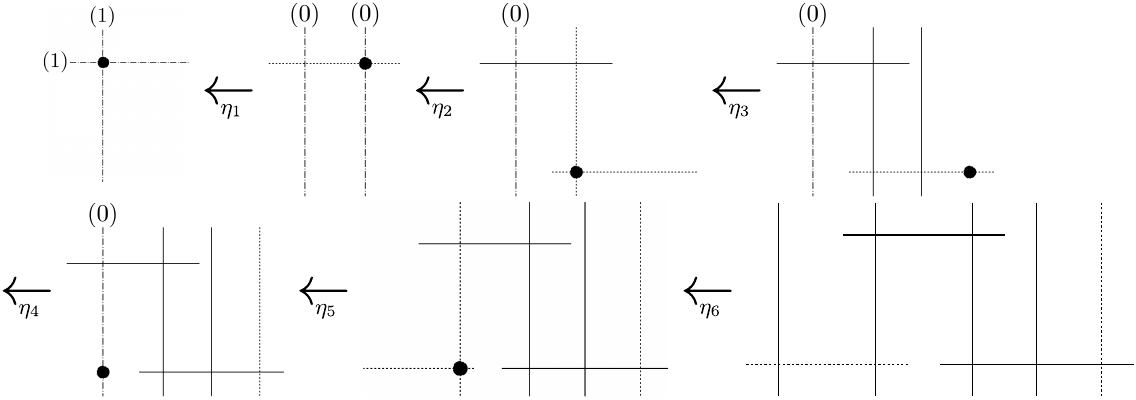}
\subcaption{The case $S_{2}=G_{2}$}
\end{minipage}

\begin{minipage}{\textwidth}
\centering
\includegraphics[clip,width=120mm]{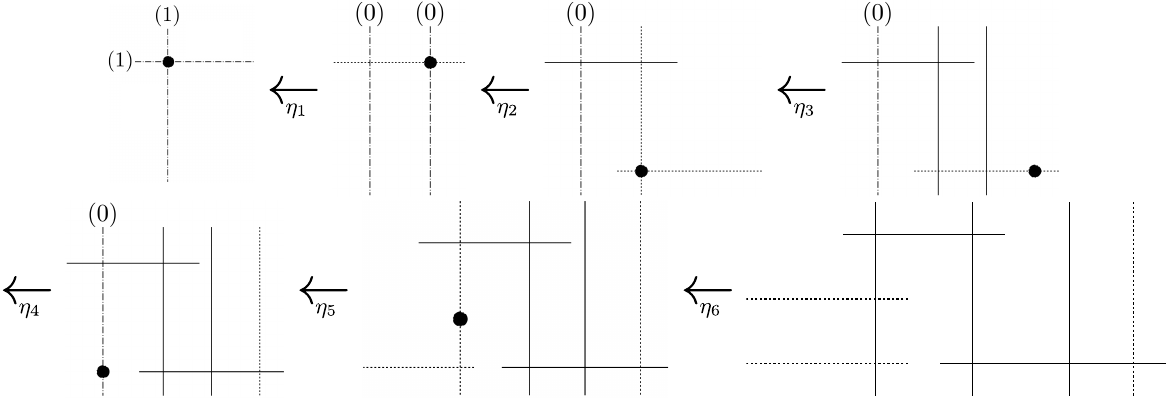}
\subcaption{The case $S_{2}=G_{4}$}
\end{minipage}

\begin{minipage}{\textwidth}
\centering
\includegraphics[clip,width=120mm]{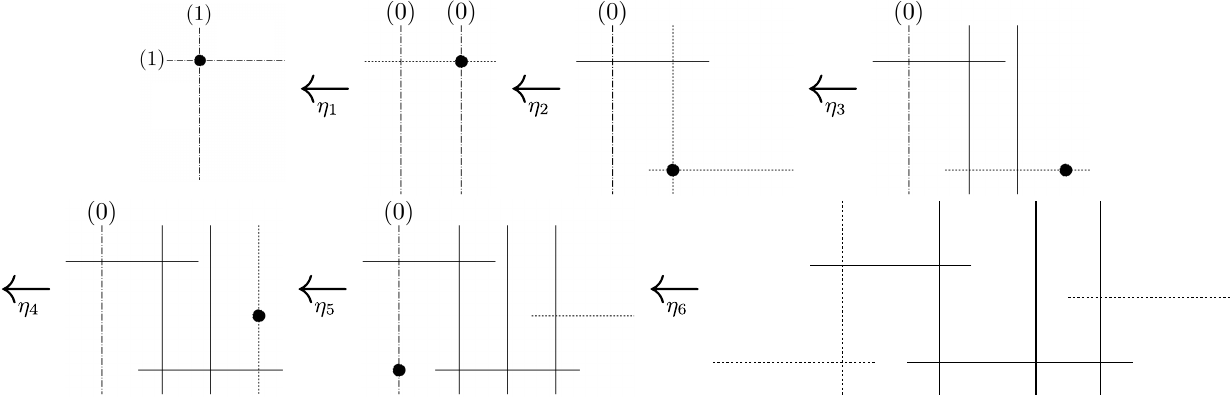}
\subcaption{The case $S_{2}=G_{5}$}
\end{minipage}
\caption{The construction of $\wt S_{2}$ when $S_{2}=G_{1}, G_{2}, G_{4}$ or $G_{5}$}
\label{fig:bl}
\end{figure}

\textbf{Notation for Figure \ref{fig:bl}}
\begin{itemize}
\item The upper left pictures represent the configurations of important lines in $\P^2$.
\item The lower right pictures represent the configurations of all the curves with negative self-intersection numbers in $\wt S_{2}$.
\end{itemize}

Table \ref{tb:negativecurve} gives all the linear equivalence classes of $(-2)$-curves in $\wt S_{2}$.
\begin{table}[htbp]
\centering
\caption{$(-2)$-curves in $\wt S_{2}$}
\label{tb:negativecurve}
\begin{tabular}{|l|l|}\hline
$i=$            &Linearly equivalence classes of $(-2)$-curves                              \\
                   &in $\wt S_{2}$ in the case where $S_{2}=G_{i}$\\ \hline
$1$                    & $H-E_{1}-E_{2}-E_{3}$, $E_{1}-E_{2}$, $E_{2}-E_{3}$,\\ 
                          &  $E_{3}-E_{4}$,\ \  $E_{4}-E_{5}$, $E_{5}-E_{6}$         \\ \hline
$2$                    & $H-E_{1}-E_{2}-E_{3}$, $E_{1}-E_{2}$, $E_{2}-E_{3}$,\\ 
                          & $H-E_{1}-E_{5}-E_{6}$, $E_{3}-E_{4}$, $E_{5}-E_{6}$      \\ \hline
$4$                    & $H-E_{1}-E_{2}-E_{3}$, $E_{1}-E_{2}$, $E_{2}-E_{3}$,\\ 
                          & $H-E_{1}-E_{5}-E_{6}$, $E_{3}-E_{4}$         \\ \hline
$5$                    & $H-E_{1}-E_{2}-E_{3}$, $E_{1}-E_{2}$, $E_{2}-E_{3}$,\\ 
                          & $E_{3}-E_{4}$, \ $E_{4}-E_{5}$                         \\ \hline
\end{tabular}
\end{table}

\begin{lem}\label{lem:3.2}
We follow Notation \ref{nota:normrat}. 
Suppose that $i=1,2$ or $4$.
Take $p$ as the singular point of $S_{2}$ when $i=1$ or $2$ and as the $A_{5}$-singularity in $S_{2}$ when $i=4$.
Then the projection $\pi \colon S_{2} \dashrightarrow \P^2$ from $p$ satisfies $\eta = \pi \circ \mu$.  
\end{lem}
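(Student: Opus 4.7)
The plan is to show $\pi \circ \mu = \eta$ by first extending $\pi \circ \mu$ to a morphism, then reducing to an equality on a dense open subset, and finally verifying this equality case by case via explicit computation. Since $\wt{S}_{2}$ is a smooth projective surface and $\P^{2}$ is projective, the rational map $\pi \circ \mu$ extends uniquely to a morphism $\psi \colon \wt{S}_{2} \to \P^{2}$. Both $\psi$ and $\eta$ are birational morphisms between smooth projective surfaces --- $\psi$ because the projection from a singular point of the cubic surface $S_{2}$ is birational (a generic line through $p$ meets $S_{2}$ in only one further point, since $p$ is a double point), and $\eta$ by construction from Notation \ref{nota:normrat} --- so to prove $\psi = \eta$ it suffices to show they agree on a dense open subset, equality then propagating to all of $\wt{S}_{2}$ by the separatedness of $\P^{2}$.

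A natural choice is the open set $U = \wt{S}_{2} \setminus \mu^{-1}(p)$; there $\mu$ is an isomorphism onto $S_{2} \setminus \{p\}$, and $\psi|_{U}$ is under this identification simply $\pi$. It is therefore enough to prove that the birational map $\eta \circ \mu^{-1} \colon S_{2} \dashrightarrow \P^{2}$ coincides with the projection $\pi$ from $p$. I would then handle the three cases $i = 1, 2, 4$ separately, reading off the defining equation of $G_{i}$ from Table \ref{tb:norm} and the singular point $p$ from Table \ref{tb:sing}: $p = [1\!:\!0\!:\!0\!:\!0]$ for $i = 1$ and $p = [0\!:\!0\!:\!0\!:\!1]$ for $i = 2, 4$. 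In each case, the inverse $\pi^{-1} \colon \P^{2} \dashrightarrow S_{2} \subset \P^{3}$ can be written down directly, and its minimal resolution is realized by iteratively blowing up $\P^{2}$ at its (possibly infinitely near) base points.

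The main obstacle, and the heart of the argument, is to identify the resulting six-step blow-up resolution of $\pi^{-1}$ with the configuration $p_{1}, \ldots, p_{6}$ specified in Figure \ref{fig:bl}, and to check that the $\mu$-exceptional $(-2)$-curves on the resulting surface are precisely those listed in Table \ref{tb:negativecurve}. For $G_{1}$ one expects the six blow-ups to trace out the $E_{6}$-chain; for $G_{2}$, the $A_{5}+A_{1}$-configuration; and for $G_{4}$, the $A_{5}$-chain supplemented by one additional blow-up needed to make $\pi^{-1}$ a morphism. Once these infinitely near points are matched with $p_{1}, \ldots, p_{6}$, the universal property of the blow-up identifies $\eta \circ \mu^{-1}$ with $\pi$, and hence $\psi = \eta$.
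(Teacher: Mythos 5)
There are two problems here, one local and one structural. The local one: your opening claim that ``since $\wt S_{2}$ is a smooth projective surface and $\P^{2}$ is projective, the rational map $\pi\circ\mu$ extends uniquely to a morphism'' is false as a general principle --- rational maps from smooth projective surfaces to projective targets need not be morphisms (the projection $\P^{2}\dashrightarrow\P^{1}$ from a point, or any Cremona transformation, gives a counterexample); a rational map to $\P^n$ is a morphism only when the mobile part of the defining linear system is base-point free, and establishing precisely that for $\pi\circ\mu$ is part of what the lemma asserts. This is recoverable, since equality of rational maps means equality on a dense open set and $\eta$ is a morphism, so your reduction to showing $\eta\circ\mu^{-1}=\pi$ as rational maps still yields the statement --- but the extension claim should be deleted, not invoked. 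The structural problem is more serious: everything after that reduction is a description of work to be done rather than the work itself. You correctly identify that ``the main obstacle, and the heart of the argument'' is to match the six-step resolution of the base locus of $\pi$ (or $\pi^{-1}$) with the specific configuration $p_{1},\ldots,p_{6}$ of Figure \ref{fig:bl}, and then you state what one ``expects'' in each of the three cases without carrying out a single one. Since $\wt S_{2}$ admits several distinct blow-downs to $\P^{2}$ and the lemma claims equality with the \emph{particular} $\eta$ fixed in Notation \ref{nota:normrat}, this matching of infinitely near points is exactly the content of the lemma and cannot be waved at; as written, the proposal is a plan, not a proof.

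For comparison, the paper's argument sidesteps explicit coordinates entirely. It takes the net $N\subset|-K_{S_{2}}|$ of hyperplane sections through $p$ that defines $\pi$, pulls it back to $\wt S_{2}$, and computes the mobile part of $\mu^{*}N$ by repeatedly stripping off fixed components: every $(-2)$-curve over $p$ lies in the fixed part, and after subtracting them one checks (using the classes in Table \ref{tb:negativecurve}) that further $(-2)$-curves still meet the residual system negatively, until one arrives at the base-point-free net $|H|$. Since $|H|$ defines $\eta$, this simultaneously proves that $\pi\circ\mu$ is a morphism and that it equals $\eta$. If you want to salvage your coordinate-based route, you would need to actually exhibit, for each of $i=1,2,4$, the base scheme of the net of planes through $p$ restricted to $S_{2}$ and verify blow-up by blow-up that it reproduces the configuration of Figure \ref{fig:bl}; the paper's intersection-theoretic computation is both shorter and already rigorous.
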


\begin{proof}
We give the proof only for the case where $i=1$; the same proof works for the case where $i=2$ or $4$.

Let $N \subset \left| -K_{S_{2}}\right|$ be the net defining $\pi$. 
Then the free part of $\mu^{*} N$ defines $\pi \circ \mu$. 
Since the sum of all the $(-2)$-curves are contained in the fixed part of $\mu^{*}N$, the free part of $\mu^{*} N$ is the same as that of
\begin{align}
N_{1} \coloneqq \mu^{*}N-(H-E_{2}-E_{3}-E_{6}) \subset \left| 2H-E_{1}-E_{4}-E_{5} \right|.
\end{align}
Since 
$(2H-E_{1}-E_{4}-E_{5} \cdot E_{3}-E_{4})=-1$, the fixed part of $N_{1}$ contains the $(-2)$-curve linearly equivalent to $E_{3}-E_{4}$.
Hence the free part of $N_{1}$ is the same as that of $N_{1}-(E_{3}-E_{4})$. 
We continue in this fashion to conclude that the free part of $\mu^{*} N$ is the same as that of
\begin{align}
\mu^{*}N-(2H-E_{1}-E_{2}-E_{3}-E_{4}-E_{5}-E_{6}) \subset \left| H \right|.
\end{align}
Since $\left| H \right|$ is a free net, $\pi \circ \mu$ is defined by $\left| H \right|$. 
Hence $\eta = \pi \circ \mu$.
\end{proof}

\begin{lem}\label{lem:G1curve}
We follow Notation \ref{nota:normrat}. 
Suppose that $S_{2}=G_{1}$.
Then $R_{\wt S_{2}}$ is linearly equivalent to either $E_{6}$ or $H-E_{1}$.
\end{lem}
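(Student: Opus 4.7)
The plan is to describe $R_{\wt S_{2}}$ via its class $aH - \sum_{i=1}^{6} b_{i} E_{i}$ and squeeze out enough numerical constraints to force either $R_{\wt S_{2}} \sim E_{6}$ or $R_{\wt S_{2}} \sim H - E_{1}$. First I would intersect $R_{\wt S_{2}}$ with each of the six $(-2)$-curves listed in Table \ref{tb:negativecurve} -- namely $C_{0} = H - E_{1} - E_{2} - E_{3}$ and $C_{j} = E_{j} - E_{j+1}$ for $j = 1, \ldots, 5$ -- and use that $R_{\wt S_{2}}$ is irreducible and distinct from every $(-2)$-curve (being the strict transform of a curve that is not $\mu$-contracted) to obtain $a \geq b_{1} + b_{2} + b_{3}$ and $b_{1} \geq b_{2} \geq \cdots \geq b_{6}$. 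Since $R$ is smooth rational and the strict transform of a smooth curve through a Du Val singularity remains smooth rational, $p_{a}(R_{\wt S_{2}}) = 0$, and adjunction then gives the Diophantine equation $(a-1)(a-2) = \sum_{i=1}^{6} b_{i}(b_{i} - 1)$.

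Next I would split on whether $R$ passes through the $E_{6}$ singular point $p \in S_{2}$. If $R \not\ni p$, then $R_{\wt S_{2}}$ is disjoint from the $\mu$-exceptional locus, so every $R_{\wt S_{2}} \cdot C_{j}$ vanishes, which forces $b_{1} = \cdots = b_{6} =: b$ and $a = 3b$; substituting into the Diophantine equation produces $3b^{2} - 3b + 2 = 0$, which has no real solutions, ruling out this case. If $R \ni p$, then $R$ is smooth at $p$, and the classical multiplicity formula for Du Val singularities gives $R_{\wt S_{2}} \cdot Z = \mult_{p} R = 1$, where $Z$ is the fundamental cycle of the resolution of the $E_{6}$ singularity. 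Our six $(-2)$-curves form the $E_{6}$ Dynkin diagram -- a chain $C_{1} - C_{2} - C_{3} - C_{4} - C_{5}$ with $C_{0}$ attached at $C_{3}$ -- and hence $Z = 2C_{0} + C_{1} + 2C_{2} + 3C_{3} + 2C_{4} + C_{5}$ with multiplicities $(2,1,2,3,2,1)$ on $(C_{0}, C_{1}, C_{2}, C_{3}, C_{4}, C_{5})$; the only way to achieve $R_{\wt S_{2}} \cdot Z = 1$ with all $R_{\wt S_{2}} \cdot C_{j}$ non-negative integers is therefore $R_{\wt S_{2}} \cdot C_{1} = 1$ with the other five intersections zero, or $R_{\wt S_{2}} \cdot C_{5} = 1$ with the other five zero.

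In each of these two subcases the vanishing of the remaining intersections, together with $a = b_{1} + b_{2} + b_{3}$ coming from $R_{\wt S_{2}} \cdot C_{0} = 0$, determines the $b_{i}$ and $a$ up to a single integer parameter $b$; substituting into the Diophantine equation then yields a quadratic in $b$ whose unique integer solution fixes the class. For $R_{\wt S_{2}} \cdot C_{1} = 1$ one gets $b_{1} = b+1$, $b_{2} = \cdots = b_{6} = b$, $a = 3b+1$, the equation $3b^{2} + b = 0$, and hence $b = 0$ with $R_{\wt S_{2}} \sim H - E_{1}$; for $R_{\wt S_{2}} \cdot C_{5} = 1$ one gets $b_{1} = \cdots = b_{5} = b+1$, $b_{6} = b$, $a = 3b+3$, the equation $3b^{2} + 5b + 2 = 0$, and hence $b = -1$ with $R_{\wt S_{2}} \sim E_{6}$. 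The main obstacle is justifying the multiplicity formula $\mult_{p} R = R_{\wt S_{2}} \cdot Z$; this is classical for Du Val singularities and I would cite it, but if an independent argument is preferred, each alternative subcase $R_{\wt S_{2}} \cdot C_{j} = 1$ for $j \in \{0, 2, 3, 4\}$ can be eliminated directly -- most importantly, the case $R_{\wt S_{2}} \sim H$ (corresponding to $j = 0$) is ruled out because the tangent cone of $G_{1}$ at $p$ is the double plane $\{y^{2} = 0\}$, so every hyperplane section of $G_{1}$ through $p$ has a double point at $p$ and cannot be smooth.
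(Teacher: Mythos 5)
Your proof is correct and reaches the paper's two classes $E_{6}$ and $H-E_{1}$, but the decisive step is genuinely different. The paper pins down $a$ via Lemma \ref{lem:3.2} (the minimal resolution factors through the projection from the singular point, so $a=\deg R$ or $\deg R-1$ according as $R$ misses or contains $\Sing S_{2}$) and then enumerates by hand the possible configurations of $R_{\wt S_{2}}\cap E_{\mu}$, killing all but two with the degree and genus formulas. You instead eliminate $\deg R$ entirely by combining the two formulas into the single adjunction identity $(a-1)(a-2)=\sum b_{i}(b_{i}-1)$, and you replace the configuration enumeration by the fundamental cycle $Z=2C_{0}+C_{1}+2C_{2}+3C_{3}+2C_{4}+C_{5}$ of the $E_{6}$ point: the identity $R_{\wt S_{2}}\cdot Z=\mult_{p}R=1$ together with $R_{\wt S_{2}}\cdot C_{j}\geq 0$ instantly leaves only the two subcases $R_{\wt S_{2}}\cdot C_{1}=1$ and $R_{\wt S_{2}}\cdot C_{5}=1$, and your quadratics $3b^{2}+b=0$ and $3b^{2}+5b+2=0$ are computed correctly. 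This is arguably cleaner (it avoids Lemma \ref{lem:3.2} and the four-case list, and automatically excludes tangential or multi-component intersections with $E_{\mu}$, which the paper's list handles only implicitly), at the cost of invoking one external fact: that for a curve through a rational surface singularity $\mult_{p}R=R_{\wt S_{2}}\cdot Z$. You are right that this is classical --- it follows from Artin's theorem $\mathfrak{m}_{p}\mathcal{O}_{\wt S_{2}}=\mathcal{O}_{\wt S_{2}}(-Z)$ by intersecting $R$ with a general hyperplane section through $p$ --- but you should cite it or include that one-line derivation; your proposed fallback (direct elimination of the remaining subcases) is only carried out for $R_{\wt S_{2}}\sim H$ via the tangent cone $\{y^{2}=0\}$, so as written the fundamental-cycle formula is load-bearing rather than optional.
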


\begin{proof}
%
By assumption, 
\begin{align}\label{eq:ab1'}
n = (R_{\wt S_{2}} \cdot -K_{\wt S_{2}})=(aH - \sum_{i=1}^{6} b_{i} E_{i} \cdot 3H - \sum_{i=1}^{6} E_{i}) = 3a-\sum_{i=1}^{6} b_{i}.
\end{align}
Since $R$ is smooth rational curve, the genus formula yields
\begin{align}\label{eq:ab2'}
n-2 = R_{\wt S_{2}}^{2} = a^{2}-\sum_{i=1}^{6} b_{i}^{2}.
\end{align}

Now suppose that $\Sing S_{2} \not \in R$.
Then $a=n$ by Lemma \ref{lem:3.2}.
We also have $b_{1}=b_{2}=b_{3}=b_{4}=b_{5}=b_{6}$ and $a=b_{1}+b_{2}+b_{3}$ since $R_{\wt S_{2}}$ is disjoint from each $(-2)$-curve.
Then (\ref{eq:ab2'}) yields 
$
0=a^{2}-\sum_{i=1}^{6}b_{i}^{2}-n+2=3b_{1}(b_{1}-1)+2,
$
which contradicts the assumption that $b_{1} \in \Z$.

Hence $\Sing S_{2} \in R$.
Then $a=n-1$ by Lemma \ref{lem:3.2}.
We also have $\sharp(R_{\wt S_{2}} \cap E_{\mu})=1$ since $R$ is smooth.
In particular, one of the following holds:
\begin{enumerate}
\item $b_{1}=b_{2}=b_{3}=b_{4}=b_{5}=b_{6}$ and $a=b_{1}+b_{2}+b_{3}+1$.
\item $b_{1}=b_{2}=b_{3}=b_{4}+1=b_{5}+1=b_{6}+1$ and $a=b_{1}+b_{2}+b_{3}+1$. 
\item $b_{1}= \cdots = b_{j}=b_{j+1}+1= \cdots =b_{6}+1$ and $a=b_{1}+b_{2}+b_{3}$ for some $1 \leq j \leq 5$. 
\item $b_{1}= \cdots = b_{j}=b_{j+1}+1=b_{j+2}+2= \cdots =b_{6}+2$ and $a=b_{1}+b_{2}+b_{3}$ for some $1 \leq j \leq 4$.
\end{enumerate}

If the case (1) (resp.\ (2)) holds, then $n-3a+\sum_{i=1}^{6}b_{i}=-1$ (resp.\ $-4$), a contradiction with (\ref{eq:ab1'}).
Similarly, the case (4) implies $n-3a+\sum_{i=1}^{6}b_{i}=-2$ when $j=1$ or $4$, and $-4$ when $j=2$ or $3$, a contradiction.

Hence the case (3) holds.
(\ref{eq:ab1'}) now yields $j=1$ or $5$.
Suppose that $j=1$. 
Then (\ref{eq:ab2'}) yields 
$
0=a^{2}-\sum_{i=1}^{6}b_{i}^{2}-n+2=(3b_{1}-2)(b_{1}-1).
$
Since $b_{1} \in \Z$, we obtain $b_{1}=1$. 
Hence $R_{\wt S_{2}} \sim H-E_{1}$.

Suppose that $j=5$. 
Then (\ref{eq:ab2'}) yields 
$
0=a^{2}-\sum_{i=1}^{6}b_{i}^{2}-n+2=b_{1}(3b_{1}-1).
$
Since $b_{1} \in \Z$, we obtain $b_{1}=0$. 
Hence $R_{\wt S_{2}} \sim E_{6}$.
\end{proof}

\begin{lem}\label{lem:G2curve}
We follow Notation \ref{nota:normrat}. 
Suppose that $S_{2}=G_{2}$.
Then $R_{\wt S_{2}}$ is linearly equivalent to one of the following:
\begin{align*}
E_{4}, E_{6}, H-E_{1}, H-E_{5}, 2H-E_{1}-E_{2}-E_{5}.
\end{align*}
\end{lem}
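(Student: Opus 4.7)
The plan is to follow closely the strategy of the proof of Lemma \ref{lem:G1curve}, now adapted to the more complicated singularity structure of $G_{2}$, which has singularities of types $A_{5}$ and $A_{1}$. Using $\mu^{*}(-K_{S_{2}}) = -K_{\wt S_{2}} = 3H - \sum_{i=1}^{6} E_{i}$ (since $\mu$ is crepant) together with the adjunction formula for the smooth rational curve $R_{\wt S_{2}}$, I obtain the same two fundamental equations as in that proof:
\begin{align*}
n &= 3a - \sum_{i=1}^{6} b_{i}, \\
n - 2 &= a^{2} - \sum_{i=1}^{6} b_{i}^{2}.
\end{align*}

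Next, using Table \ref{tb:negativecurve} and direct computation of intersection pairings on $\wt S_{2}$, I would verify that the five $(-2)$-curves $H-E_{1}-E_{5}-E_{6},\ E_{1}-E_{2},\ E_{2}-E_{3},\ E_{3}-E_{4},\ H-E_{1}-E_{2}-E_{3}$ form a chain (in this order) resolving the $A_{5}$-singularity at $[0\!:\!0\!:\!0\!:\!1]$, while the remaining $(-2)$-curve $E_{5}-E_{6}$ is the isolated curve resolving the $A_{1}$-singularity at $[1\!:\!0\!:\!0\!:\!0]$. Lemma \ref{lem:3.2} then tells us that $a=n$ if $R$ avoids the $A_{5}$-point and $a=n-1$ if $R$ contains it.

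The argument then splits into cases according to which of the two singular points lie on $R$. Because $R$ is smooth, whenever $R$ passes through a singular point its strict transform $R_{\wt S_{2}}$ meets the corresponding exceptional set transversally at exactly one point on exactly one component, and is otherwise disjoint from it. This produces in total $1 + 5 + 1 + 5 = 12$ sub-cases: neither singularity on $R$ (Case A); only the $A_{5}$-point on $R$, with five choices of chain component met (Case B); only the $A_{1}$-point (Case C); and both singularities, again with five choices of chain component (Case D). In every sub-case, the six intersection conditions $R_{\wt S_{2}} \cdot \ell \in \{0,1\}$ for the six $(-2)$-curves $\ell$ give a linear system that expresses all the $b_{i}$ in terms of a single integer parameter, and substituting the result into the genus identity $n-2=a^{2}-\sum b_{i}^{2}$ reduces to a single quadratic equation in that parameter.

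The main obstacle is purely the bookkeeping of sub-cases: in seven of the twelve the linear system forces some $b_{i}$ to be a half-integer, or else the resulting quadratic has no integer root (e.g.\ Case A yields the equation $3b^{2}-3b+2=0$ with negative discriminant). The surviving five sub-cases give precisely $H-E_{1}$ (Case B, component $E_{1}-E_{2}$), $E_{4}$ (Case B, component $E_{3}-E_{4}$), $E_{6}$ (Case D, component $H-E_{1}-E_{5}-E_{6}$), $2H-E_{1}-E_{2}-E_{5}$ (Case D, component $E_{2}-E_{3}$), and $H-E_{5}$ (Case D, component $H-E_{1}-E_{2}-E_{3}$). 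Each individual computation is elementary and parallels the one carried out for $G_{1}$ in the proof of Lemma \ref{lem:G1curve}.
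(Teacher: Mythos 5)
Your proposal follows the paper's strategy exactly: the same two equations coming from the degree and the genus formula, the dichotomy $a=n$ versus $a=n-1$ via Lemma \ref{lem:3.2} applied to the $A_{5}$-point, and a case analysis over which exceptional components $R_{\wt S_{2}}$ meets. I checked your identification of the $A_{5}$-chain and the $A_{1}$-curve, your count of twelve sub-cases, the claim that seven of them die (six by a half-integer, one by a negative discriminant), and the attribution of the five surviving classes to their sub-cases; all of this is consistent and reproduces the paper's list.

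The one step that does real work without justification is the sentence asserting that when $R$ passes through a singular point, $R_{\wt S_{2}}$ meets the exceptional set ``transversally at exactly one point on exactly one component.'' Smoothness of $R$ immediately gives only that $R_{\wt S_{2}}\cap\mu^{-1}(p)$ is a single point set-theoretically; a priori that point could be a node of the $A_{5}$-chain, in which case two adjacent components are each met with multiplicity one. The paper therefore keeps these configurations as extra cases (its cases (2), (4), (6)) and eliminates them with the degree equation; your enumeration silently discards them. Your stronger claim is in fact true: for a rational surface singularity one has $\mf{m}_{p}\mc{O}_{\wt S_{2}}=\mc{O}_{\wt S_{2}}(-Z)$ for the fundamental cycle $Z$, and since $Z$ is reduced for $A_{n}$ this gives $1=\mult_{p}R=(R_{\wt S_{2}}\cdot Z)=\sum_{\ell}(R_{\wt S_{2}}\cdot \ell)$, forcing exactly one component to be met, transversally. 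But as written this is an unproved assertion, and without it four more sub-cases must be examined. Either supply this argument or add and kill the node sub-cases as the paper does. A second, smaller omission: in the sub-case yielding $2H-E_{1}-E_{2}-E_{5}$ the quadratic has the two integer roots $b_{1}=0$ and $b_{1}=1$, and you need $a=(R_{\wt S_{2}}\cdot H)\geq 0$ (nefness of $H$) to discard $b_{1}=0$; ``half-integer or no integer root'' does not cover this case.
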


\begin{proof}
As in the proof of Lemma \ref{lem:G1curve}, we obtain (\ref{eq:ab1'}) and (\ref{eq:ab2'}).
On the other hand, $E_{5}-E_{6}$ corresponds to a $(-2)$-curve and $E_{6}$ is a $(-1)$-curve by construction.

Suppose that $b_{5}<0$. Then $(R_{\wt S_{2}} \cdot (E_{5}-E_{6})+E_{6})=b_{5}<0$. 
Hence $R_{\wt S_{2}} \sim E_{6}$ and $b_{5}=0$, a contradiction.
On the other hand, if $b_{6}<0$, then $(R_{\wt S_{2}} \cdot E_{6})=b_{6}<0$, which implies $R_{\wt S_{2}} \sim E_{6}$. 
In the rest of the proof, we may assume that $b_{5} \geq 0$ and $b_{6} \geq 0$.

Write $p \in S_{2}$ as the $A_{5}$-singularity of $S_{2}$.
Suppose that $p \not \in R$.
Then $a=n$ by Lemma \ref{lem:3.2}.
We also have $b_{1}=b_{2}=b_{3}=b_{4}$ and $a=b_{1}+b_{2}+b_{3}=b_{1}+b_{5}+b_{6}$ since $R_{\wt S_{2}}$ is disjoint from $E_{\mu}$. 
(\ref{eq:ab2'}) now yields 
$
0=a^{2}-\sum_{i=1}^{6}b_{i}^{2}-n+2=(b_{1}-1)(b_{1}-2)+2b_{5}b_{6}.
$
Since $b_{1} \in \Z$, we obtain $b_{5}b_{6}=0$ and $b_{1}=1$ or $2$.
On the other hand, we have $(R_{\wt S_{2}} \cdot E_{5}-E_{6})=b_{5}-b_{6}=2(b_{1}-b_{6})$.
Since $R$ is smooth, we obtain $b_{1}=b_{6}=b_{5}$. 
Hence $b_{5}b_{6}=b_{1}^{2}>0$, a contradiction. 

Hence $p \in R$.
Then $a=n-1$ by Lemma \ref{lem:3.2}.
We also have $\sharp(R_{\wt S_{2}} \cap \mu^{-1}(p))=1$ since $R$ is smooth. 
In particular, one of the following holds:
\begin{enumerate}
\item $b_{1}=b_{2}=b_{3}=b_{4}$ and $a=b_{1}+b_{2}+b_{3}+1=b_{1}+b_{5}+b_{6}$.
\item $b_{1}=b_{2}=b_{3}=b_{4}+1$ and $a=b_{1}+b_{2}+b_{3}+1=b_{1}+b_{5}+b_{6}$. 
\item $b_{1}=b_{2}=b_{3}=b_{4}$ and $a=b_{1}+b_{2}+b_{3}=b_{1}+b_{5}+b_{6}+1$.
\item $b_{1}=b_{2}+1=b_{3}+1=b_{4}+1$ and $a=b_{1}+b_{2}+b_{3}=b_{1}+b_{5}+b_{6}+1$. 
\item $b_{1}= \cdots = b_{j}=b_{j+1}+1= \cdots =b_{4}+1$ and $a=b_{1}+b_{2}+b_{3}=b_{1}+b_{5}+b_{6}$ for some $1 \leq j \leq 3$. 
\item $b_{1}= \cdots = b_{j}=b_{j+1}+1=b_{j+2}+2= \cdots =b_{4}+2$ and $a=b_{1}+b_{2}+b_{3}=b_{1}+b_{5}+b_{6}$ for some $1 \leq j \leq 2$.
\end{enumerate}

If one of the cases (2), (4) and (6) holds, then $n-3a+\sum_{i=1}^{6}b_{i}=-1$, a contradiction with (\ref{eq:ab1'}).

Suppose that the case (3) holds.
Then (\ref{eq:ab2'}) yields 
$
0=a^{2}-\sum_{i=1}^{6}b_{i}^{2}-n+2=b_{1}(b_{1}+1)+2b_{5}b_{6}.
$
Since $b_{1} \in \Z$, we obtain $b_{5}b_{6}=0$ and $b_{1}=-1$ or $0$.
In each case, we have $b_{5}+b_{6}=2b_{1}-1<0$, which contradicts the assumption that $b_{5} \geq 0$ and $b_{6} \geq 0$.
Hence one of the cases (1) and (5) holds.

Suppose that the case (1) holds.
Then (\ref{eq:ab2'}) yields 
$
0=a^{2}-\sum_{i=1}^{6}b_{i}^{2}-n+2=b_{1}(b_{1}-1)+2b_{5}b_{6}.
$
Since $b_{1} \in \Z$, we obtain $b_{5}b_{6}=0$ and $b_{1}=0$ or $1$.
On the other hand, we have $(R_{\wt S_{2}} \cdot E_{5}-E_{6})=b_{5}-b_{6}=2(b_{1}-b_{6})+1$.
Since $R$ is smooth, we obtain $b_{1}=b_{6}=b_{5}-1$.
Since $b_{5}b_{6}=b_{1}(b_{1}+1)=0$, we conclude that $b_{1}=0$ and $R_{\wt S_{2}} \sim H-E_{5}$.

Suppose that the case (5) holds.
Then (\ref{eq:ab2'}) yields 
$
0=a^{2}-\sum_{i=1}^{6}b_{i}^{2}-n+2=b_{1}(b_{1}-1)+2b_{5}b_{6}.
$
Since $b_{1} \in \Z$, we obtain $b_{5}b_{6}=0$ and $b_{1}=0$ or $1$.

Suppose that $j=1$ in addition.
Then $a=3b_{1}-2 \geq 0$ by the nefness of $H$.
Hence $b_{1}=1$ and $R_{\wt S_{2}} \sim H-E_{1}$.
Similarly, if $j=2$, then $b_{1}=1$ and $R_{\wt S_{2}} \sim 2H-E_{1}-E_{2}-E_{5}$.

Suppose that $j=3$ in addition.
Then 
$(R_{\wt S_{2}} \cdot E_{5}-E_{6})=b_{5}-b_{6}=2(b_{1}-b_{6})$. 
Since $R$ is smooth, we obtain $b_{1}=b_{5}=b_{6}$.
Since $b_{5}b_{6}=b_{1}^{2}=0$, we conclude that $b_{1}=0$ and $R_{\wt S_{2}} \sim E_{4}$.

Combining these results, we complete the proof.
\end{proof}

\begin{lem}\label{lem:G4curve}
We follow Notation \ref{nota:normrat}. 
Suppose that $S_{2}=G_{4}$.
Then $R_{\wt S_{2}}$ is linearly equivalent to one of the following for some $i \in \{5, 6\}$:
\begin{align*}
&E_{4}, E_{i}, H-E_{1}, H-E_{i}, 2H-E_{1}-E_{2}-E_{i},\\ 
&3H-E_{1}-E_{2}-E_{3}-E_{4}-2E_{i}, 3H-E_{1}-E_{2}-E_{3}-2E_{i}, \\
&4H-E_{1}-E_{2}-E_{3}-E_{4}-3E_{i}, 6H-2E_{1}-2E_{2}-2E_{3}-2E_{4}-4E_{i}.
\end{align*}
\end{lem}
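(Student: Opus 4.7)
The plan is to imitate the arguments of Lemmas \ref{lem:G1curve} and \ref{lem:G2curve}. As there, the degree and arithmetic-genus formulas applied to $R_{\wt S_{2}} \sim aH - \sum_{i=1}^{6} b_{i} E_{i}$ give the two identities
\begin{align*}
n = 3a - \sum_{i=1}^{6} b_{i}, \qquad n-2 = a^{2} - \sum_{i=1}^{6} b_{i}^{2}.
\end{align*}
Lemma \ref{lem:3.2}, applied to the projection from the $A_{5}$-singularity $p$ of $G_{4}$, gives $a=n$ when $p\notin R$ and $a=n-1$ when $p\in R$. These will be the universal inputs to the calculation.

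I would first dispose of the boundary situations where $R_{\wt S_{2}}$ coincides with a $(-1)$-curve on $\wt S_{2}$. A sign analysis against the $(-2)$-curves listed for $G_{4}$ in Table \ref{tb:negativecurve} (namely $H-E_{1}-E_{2}-E_{3}$, $E_{1}-E_{2}$, $E_{2}-E_{3}$, $H-E_{1}-E_{5}-E_{6}$, $E_{3}-E_{4}$) shows that, after excluding the cases where $R_{\wt S_{2}}$ is one of $E_{4}$, $E_{5}$ or $E_{6}$, we may assume $b_{4}, b_{5}, b_{6}\ge 0$. Next, imposing $(R_{\wt S_{2}}\cdot C)\ge 0$ against each remaining $(-2)$-curve, together with the smoothness constraint $\sharp\bigl(R_{\wt S_{2}}\cap\mu^{-1}(p)\bigr)\le 1$ at the $A_{5}$-singularity, reduces the tuples $(a,b_{1},\dots,b_{6})$ to a finite list of combinatorial patterns, each parametrised by the single integer $b_{1}$ (exactly as in the $G_{2}$ argument, but with the chain of length five coming from $A_{5}$).

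For each pattern I would substitute into the degree formula to rule out the patterns giving $n-3a+\sum b_{i}\ne 0$, then substitute into the genus formula to obtain a quadratic in $b_{1}$ whose integer solutions determine the admissible classes. Running through all surviving patterns in both branches $p\notin R$ and $p\in R$ produces the nine classes listed; the ambiguity $i\in\{5,6\}$ in the statement reflects the symmetry that swaps $E_{5}$ and $E_{6}$ while fixing the remaining exceptional configuration (equivalently, the $(-2)$-curve $H-E_{1}-E_{5}-E_{6}$ is symmetric in $E_{5}$ and $E_{6}$).

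The main obstacle is purely combinatorial bookkeeping. Compared with Lemma \ref{lem:G2curve}, the $A_{5}$-chain for $G_{4}$ is longer and its embedding in the Picard lattice involves the class $H-E_{1}-E_{5}-E_{6}$, which roughly doubles the number of admissible patterns and permits higher values of $a$ (up to $6$). The delicate points will be (i) to verify that the smoothness condition at $p$ rules out all but the patterns producing the classes in the statement, and (ii) to check integrality of the roots of each quadratic so that no spurious classes of high degree such as $6H-2E_{1}-2E_{2}-2E_{3}-2E_{4}-4E_{i}$ are missed or double-counted.
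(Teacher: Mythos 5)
Your proposal follows essentially the same route as the paper's proof: the same degree and genus identities, the dichotomy $a=n$ versus $a=n-1$ from Lemma \ref{lem:3.2}, the exclusion of the $(-1)$-curve classes $E_{4}, E_{5}, E_{6}$ to get $b_{5}, b_{6}\geq 0$, and then the finite enumeration of patterns cut out by the $(-2)$-curve intersections and the smoothness condition at the $A_{5}$-point, each resolved by a quadratic in $b_{1}$. One small correction to your closing remarks: the $A_{5}$-chain of $G_{4}$ is not longer than that of $G_{2}$ (both surfaces carry an $A_{5}$ singularity with the same chain through $H-E_{1}-E_{5}-E_{6}$); the extra classes for $G_{4}$ arise because $G_{2}$'s additional $A_{1}$-node $E_{5}-E_{6}$ is absent, so $E_{5}$ and $E_{6}$ become independent $(-1)$-curves that $R_{\wt S_{2}}$ may meet freely.
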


\begin{proof}
As in the proof of Lemma \ref{lem:G1curve}, we obtain (\ref{eq:ab1'}) and (\ref{eq:ab2'}).
On the other hand, $E_{5}$ and $E_{6}$ are $(-1)$-curves by construction.

If $b_{i}<0$ for some $i \in \{5, 6\}$, then $(R_{\wt S_{2}} \cdot E_{i})=b_{i}<0$ and hence $R_{\wt S_{2}} \sim E_{i}$.
In the rest of the proof, we may assume that $b_{5} \geq 0$ and $b_{6} \geq 0$.

Suppose that $\Sing S_{2} \not \in R$.
Then $a=n$ by Lemma \ref{lem:3.2}.
We have $b_{1}=b_{2}=b_{3}=b_{4}$ and $a=b_{1}+b_{2}+b_{3}=b_{1}+b_{5}+b_{6}$ since $R_{\wt S_{2}}$ is disjoint from $E_{\mu}$. 
As in the proof of Lemma \ref{lem:G2curve}, we also obtain $b_{5}b_{6}=0$ and $b_{1}=1$ or $2$.
Hence $R_{\wt S_{2}} \sim 2H-E_{1}-E_{2}-E_{3}-E_{4}-2E_{i}$ or $4H-2E_{1}-2E_{2}-2E_{3}-2E_{4}-4E_{i}$ for some $i \in \{5, 6\}$.

In the remainder of the proof, we assume that $\Sing S_{2} \in R$.
As in the proof of Lemma \ref{lem:G2curve}, one of the cases (1) and (5) holds.

Suppose that the case (1) holds.
Then $b_{5}b_{6}=0$ and $b_{1}=0$ or $1$, which implies $R_{\wt S_{2}} \sim H-E_{i}$ or $4H-E_{1}-E_{2}-E_{3}-E_{4}-3E_{i}$ for some $i \in \{5, 6\}$.

Suppose that the case (5) holds.
Then $b_{5}b_{6}=0$ and $b_{1}=0$ or $1$.
Suppose that the former holds in addition. 
Then $a=j-3$. 
By the nefness of $H$, we obtain $j=3$ and $R_{\wt S_{2}} \sim E_{4}$.
If the latter holds, we obtain $R_{\wt S_{2}} \sim H-E_{1}$ when $j=1$, $2H-E_{1}-E_{2}-E_{i}$ when $j=2$ and $3H-E_{1}-E_{2}-E_{3}-2E_{i}$ when $j=3$ for some $i \in \{5, 6\}$.

Combining these results, we complete the proof.
\end{proof}

\begin{lem}\label{lem:G5curve}
We follow Notation \ref{nota:normrat}. 
Suppose that $S_{2}=G_{5}$ and $n \leq 2$.
Then $R_{\wt S_{2}}$ is linearly equivalent to one of the following:
\begin{align*}
&E_{5}, E_{6}, H-E_{1}-E_{6}, H-E_{1}, H-E_{6}, \\
&3H-E_{1}-E_{2}-E_{3}-E_{4}-E_{5}-2E_{6}.
\end{align*} 
\end{lem}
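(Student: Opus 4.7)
The plan is to follow the template of Lemmas \ref{lem:G1curve}--\ref{lem:G4curve}, but to exploit the degree bound $n \leq 2$ to reduce the classification to a direct finite enumeration, bypassing any analogue of Lemma \ref{lem:3.2} for $G_{5}$.

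First I would record, as in the proof of Lemma \ref{lem:G1curve}, the two Diophantine equations
\begin{align*}
n = 3a - \sum_{i=1}^{6} b_{i}, \qquad n - 2 = a^{2} - \sum_{i=1}^{6} b_{i}^{2}.
\end{align*}
From the construction of $\wt S_{2}$ for $S_{2} = G_{5}$ (the absence of $E_{5} - E_{6}$ in the $(-2)$-curve list of Table \ref{tb:negativecurve} means $p_{6}$ does not lie on the strict transform of $E_{\eta_{5}}$), both $E_{5}$ and $E_{6}$ are $(-1)$-curves in $\wt S_{2}$. If $b_{j} < 0$ for some $j \in \{5,6\}$, then $R_{\wt S_{2}} \cdot E_{j} = b_{j} < 0$ forces $R_{\wt S_{2}} \sim E_{j}$. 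Henceforth I assume $b_{5}, b_{6} \geq 0$.

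Since $R$ is a proper curve on $S_{2}$, the strict transform $R_{\wt S_{2}}$ differs from each of the five $(-2)$-curves of $G_{5}$ listed in Table \ref{tb:negativecurve}, so non-negativity of its intersection with each of them yields
\begin{align*}
b_{1} \geq b_{2} \geq b_{3} \geq b_{4} \geq b_{5} \geq 0 \quad \text{and} \quad a \geq b_{1} + b_{2} + b_{3}.
\end{align*}
Combined with the two equations above, the Cauchy--Schwarz inequality $\left(\sum b_{i}\right)^{2} \leq 6 \sum b_{i}^{2}$ becomes $(3a - n)^{2} \leq 6(a^{2} - n + 2)$; for $n \in \{1,2\}$ and $a \in \Z_{\geq 0}$, this forces $a \leq 3$.

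The proof concludes with a finite case analysis over $(n,a) \in \{1,2\} \times \{0,1,2,3\}$: in each case I would enumerate the non-negative integer tuples $(b_{1}, \ldots, b_{6})$ satisfying both Diophantine equations and the inequalities above. I expect the only surviving tuples to be $(1,0,0,0,0,1)$, $(1,0,0,0,0,0)$, $(0,0,0,0,0,1)$, and $(1,1,1,1,1,2)$, yielding the classes $H - E_{1} - E_{6}$, $H - E_{1}$, $H - E_{6}$, and $3H - \sum_{i=1}^{5} E_{i} - 2E_{6}$ respectively; together with $E_{5}$ and $E_{6}$ from the initial disposal, these will exhaust the list in the statement. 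The main obstacle is simply the disciplined bookkeeping in this finite search, not any single conceptual difficulty.
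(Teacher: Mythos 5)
Your proposal is correct and takes essentially the same route as the paper's proof: the same two Diophantine equations, the same Cauchy--Schwarz bound, and a finite enumeration pruned by non-negativity of intersections with the five $(-2)$-curves, with the preliminary disposal of $E_{5}$ and $E_{6}$ as $(-1)$-curves (which the paper instead absorbs into the tuple $(1,0;-1,0^{5})$ of its list). I verified the enumeration: the four surviving tuples are exactly the ones you predict, and the extra upper bounds $b_{i}-b_{i+1}\leq 1$ and $a-b_{1}-b_{2}-b_{3}\leq 1$ that the paper also invokes are indeed not needed.
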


\begin{proof}
As in the proof of Lemma \ref{lem:G1curve}, we obtain (\ref{eq:ab1'}) and (\ref{eq:ab2'}).
The Cauchy-Schwarz inequality now shows that  
\begin{align}
a^2-(n-2) = \sum_{i=1}^6 b_{i}^2 \geq 6 \left(\frac{\sum_{i=1}^6 b_{i}}{6}\right)^2 =\frac{(3a-n)^2}{6}
\end{align}
and hence $2(n^{2}-3n+6) \geq 3(a-n)^{2}$.
Combining (\ref{eq:ab1'}), (\ref{eq:ab2'}) and this inequality, we conclude that 
the $8$-tuple $(n, a; b_{1}, \ldots, b_{6})$ is one of the following up to the action of  the symmetric group $\mathfrak{S}_{6}$ on the second factor of $\Z^{2} \oplus \Z^{6}$ as a permutation:
\begin{align}\label{eq:8tuples}
&(1,0; -1,0^{5}), &&(1,1; 1^{2},0^{4}), &&(1,2; 1^{5},0), \\
&(2,1; 1,0^{5}), &&(2, 2; 1^{4}, 0^{2}),&&(2, 3; 2, 1^{5}).\nonumber
\end{align}
On the other hand, we have $0 \leq b_{i}-b_{i+1} \leq 1$ for each $i \in \{1,2,3,4\}$ and $0 \leq a-b_{1}-b_{2}-b_{3} \leq 1$ since $R_{\wt S_{2}}$ intersects with each $(-2)$-curve with multiplicity at most one.
Combining (\ref{eq:8tuples}) and these inequalities, we get the assertion.
\end{proof}

\begin{lem}\label{lem:G1cusp}
We follow Notation \ref{nota:normrat}. 
Suppose that $S_{2}=G_{1}$.
If $R' \subset S_{2}$ is a cuspidal cubic, 
Then $R'_{\wt S_{2}} \sim H$.
\end{lem}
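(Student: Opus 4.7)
The plan is to adapt the template of Lemma \ref{lem:G1curve} to a cuspidal cubic. Writing $R'_{\wt S_{2}} \sim aH - \sum_{i=1}^{6} b_{i}E_{i}$, the degree condition $\deg R' = 3$ yields $3a - \sum b_{i} = 3$, and adjunction on $\wt S_{2}$ yields $a^{2} - \sum b_{i}^{2} = 2\,p_{a}(R'_{\wt S_{2}}) + 1$. Since $R'$ is an irreducible planar cubic with exactly one cusp, $p_{a}(R') = 1$; since the cusp has multiplicity two, a single blow-up at its vertex resolves it, so $p_{a}(R'_{\wt S_{2}}) = 0$ when the cusp coincides with $\Sing S_{2}$ and $p_{a}(R'_{\wt S_{2}}) = 1$ otherwise. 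By Lemma \ref{lem:3.2} together with the projection-formula argument used inside the proof of Lemma \ref{lem:G1curve}, $a = 3 - \mult_{\Sing S_{2}} R'$, so $a \in \{1, 2, 3\}$.

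I would then split the analysis into the three cases $\mult_{\Sing S_{2}} R' \in \{0, 1, 2\}$, corresponding to $(a, p_{a}(R'_{\wt S_{2}})) = (3, 1), (2, 1)$ and $(1, 0)$ respectively. In each case I would enumerate the integer solutions $(b_{1}, \dots, b_{6})$ of the two numerical equations, subject to the inequalities $b_{1} \geq b_{2} \geq \cdots \geq b_{6} \geq 0$ and $a \geq b_{1} + b_{2} + b_{3}$ coming from the nonnegativity of $R'_{\wt S_{2}} \cdot C$ for each of the six $(-2)$-curves $C$ listed in Table \ref{tb:negativecurve} and for the $(-1)$-curve $E_{6}$. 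The middle case is ruled out by a direct Cauchy--Schwarz violation, since one would need $\sum b_{i} = 3$ but $\sum b_{i}^{2} = 1$. The last case forces the unique solution $(a, b_{1}, \dots, b_{6}) = (1, 0, 0, 0, 0, 0, 0)$, whence $R'_{\wt S_{2}} \sim H$.

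The main obstacle is the first case $\mult_{\Sing S_{2}} R' = 0$: the numerical equations alone admit the solution $b_{1} = \cdots = b_{6} = 1$, corresponding to $R'_{\wt S_{2}} \sim 3H - \sum E_{i} \sim -K_{\wt S_{2}}$, and all of the intersection inequalities are satisfied (with equality for the $(-2)$-curves). Ruling this case out will require a more delicate geometric argument specific to $G_{1}$ --- for instance, showing that a cuspidal plane cubic $\pi(R') \subset \P^{2}$ cannot simultaneously avoid $\Sing S_{2}$ on $S_{2}$ and realize the successive tangency pattern with the infinitely-near chain $p_{1}, \dots, p_{6}$ dictated by the $E_{6}$ resolution, or alternatively exploiting the position of the unique line $\la y, z \ra \subset G_{1}$ relative to a putative cuspidal cubic not passing through $\Sing S_{2}$.
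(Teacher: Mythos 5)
Your numerical setup reproduces the paper's computation, and the case you flag as the ``main obstacle'' is exactly where your argument and the paper's diverge. The paper disposes of that case in one line: it asserts that, since $R'$ is Cartier (an irreducible cuspidal cubic is planar, hence a hyperplane section of $S_{2}$, hence anticanonical), one has $\Sing R'=\Sing S_{2}$; granting this, $R'_{\wt S_{2}}$ is a smooth rational curve, and the enumeration of solutions of $3a-\sum b_{i}=3$, $a^{2}-\sum b_{i}^{2}=1$ subject to $b_{1}\geq\cdots\geq b_{6}\geq 0$ and $a\geq b_{1}+b_{2}+b_{3}$ leaves only $(a;b_{1},\dots,b_{6})=(1;0^{6})$. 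So the paper never meets your case $\mult_{\Sing S_{2}}R'=0$, and your treatment of the cases $\mult_{\Sing S_{2}}R'\in\{1,2\}$ agrees with its computation.

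However, your inability to exclude the remaining case is not a defect of your method: the Cartier condition only shows that \emph{if} $R'$ passes through $\Sing S_{2}$ then it is singular there (a Cartier divisor smooth at a point forces the ambient normal surface to be smooth there); it does not force $R'$ to meet $\Sing S_{2}$ at all. Indeed, $R'=G_{1}\cap\{x=0\}=\{yt^{2}+z^{3}=0\}$ is an irreducible cuspidal cubic whose cusp $[0\!:\!1\!:\!0\!:\!0]$ is a smooth point of $G_{1}$ and which is disjoint from $\Sing G_{1}=\{[1\!:\!0\!:\!0\!:\!0]\}$; its strict transform is $\mu^{*}R'\sim 3H-\sum_{i=1}^{6}E_{i}\not\sim H$, realizing precisely the solution $b_{1}=\cdots=b_{6}=1$ that you could not rule out. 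The statement therefore requires the additional hypothesis $\Sing S_{2}\in R'$ (equivalently, that the cusp of $R'$ lies at $\Sing S_{2}$), which is exactly what is available at the unique place the lemma is invoked, namely case \textup{(CU)} in the proof of Proposition \ref{prop:normB21}, where Lemma \ref{lem:5.2} supplies $\Sing S_{2}\in\Sing F$. With that hypothesis added, only your case $\mult_{\Sing S_{2}}R'=2$ survives (multiplicity $1$ would make $R'$ a smooth Cartier curve through a singular point of $S_{2}$, and multiplicity $3$ would make $R'$ a cone over three points), and your computation there already yields $R'_{\wt S_{2}}\sim H$.
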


\begin{proof}
Since $R'$ is a Cartier divisor, we have $\Sing R'=\Sing S_{2}$.
Hence $R'_{\wt S_{2}}$ is a smooth rational curve.
Write $R'_{\wt S_{2}} = aH-\sum_{i=1}^{6}b_{i}E_{i}$ with $a, b_{1}, \ldots, b_{6} \in \Z$.
An analysis similar to that in the proof of Lemma \ref{lem:G5curve} shows that
$(n, a;$ $b_{1}$, $\ldots$, $b_{6})$ is one of the following up the $\mathfrak{S}_{6}$-action:
\begin{align}\label{eq:8tuples'}
&(3, 1; 0^{6}), &&(3, 2;1^{3},0^{3}), &&(3, 3; 2, 1^{4}, 0), &&(3, 4;2^{3},1^{3}), &&(3, 5;2^{6}).
\end{align}
On the other hand, we have $0 \leq b_{i}-b_{i+1} $ for each $i \in \{1,2,3,4, 5\}$ and $0 \leq a-b_{1}-b_{2}-b_{3}$ since $R'_{\wt S_{2}}$ is distinct from any $(-2)$-curves in $\wt S_{2}$.
Combining (\ref{eq:8tuples'}) and these inequalities, we get the assertion.
\end{proof}

\subsection{Smooth curves in the cones over elliptic curves}\label{subsec:7.2}
Next we investigate smooth curves in the cones over an elliptic curves.

\begin{lem}\label{lem:3.5}
Suppose that $S_{2}$ is the cone over an elliptic curve.
Let $R$ be a smooth curve in $S_{2}$.
Then $\deg R \neq 2$.
Moreover, $p_{a}(R)=0$ when $\deg R=1$, $p_{a}(R)=1$ when $\deg R=3$ or $4$, and $p_{a}(R)\geq 4$ when $\deg R  \geq 5$.
\end{lem}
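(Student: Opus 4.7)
The plan is to study $R$ via the minimal resolution $\tau \colon \wt S_2 \to S_2$, which is the blow-up of the vertex $p$. Since $S_2$ is the projective cone over a smooth plane cubic $E$, this identifies $\wt S_2$ with the geometrically ruled surface $\P(\mc{O}_E \oplus \mc{O}_E(3)) \to E$; the exceptional curve $\Sigma$ is the minimal section, isomorphic to $E$, with $\Sigma^2 = -3$. Let $f_0$ denote a fiber of the ruling, and let $H \subset \P^3$ be a generic hyperplane not through $p$, so that $H_{\wt S_2}$ is a section disjoint from $\Sigma$. A Chern class computation (using $K_E \sim 0$) gives the numerical identities
\begin{align*}
H_{\wt S_2} \equiv \Sigma + 3f_0, \qquad K_{\wt S_2} \equiv -2\Sigma - 3f_0,
\end{align*}
the second being verified by adjunction on $\Sigma$.

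Next I would write $R_{\wt S_2} \equiv a\Sigma + b f_0$ numerically and translate smoothness of $R$ into constraints on $(a,b)$. Irreducibility together with $R_{\wt S_2} \neq \Sigma, f_0$ forces $a \geq 0$, and for $a \geq 1$ the inequality $R_{\wt S_2} \cdot \Sigma = b - 3a \geq 0$ yields $b \geq 3a$. The crucial input is smoothness at $p$: an analytic local computation on the blow-up of $\mb{A}^3$ shows that a smooth curve through $p$ lifts to meet $\Sigma$ transversally at a single point, namely the tangent direction of $R$ at $p$, which lies in $E \subset \P^2$ because the tangent cone of $S_2$ at $p$ is $S_2$ itself. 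Hence $R_{\wt S_2} \cdot \Sigma \in \{0, 1\}$, giving $b = 3a$ when $p \notin R$ and $b = 3a+1$ when $p \in R$; the case $a = 0$ forces $R_{\wt S_2} = f_0$ and $b = 1$. Since $R \cap H$ and $R_{\wt S_2} \cap H_{\wt S_2}$ coincide (both avoid $\Sigma$), one reads off $\deg R = R_{\wt S_2} \cdot H_{\wt S_2} = b$, and adjunction yields
\begin{align*}
p_a(R) \;=\; p_a(R_{\wt S_2}) \;=\; 1 + (a-1)\left(b - \tfrac{3a}{2}\right).
\end{align*}

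The lemma will then follow by tabulation. The case $a = 0$ gives $(\deg R, p_a(R)) = (1, 0)$; for $a = 1$ the two possibilities $b \in \{3, 4\}$ give $(\deg R, p_a(R)) \in \{(3, 1), (4, 1)\}$; and for $a \geq 2$ one has $\deg R = b \in \{3a, 3a+1\} \subseteq \{6, 7, 9, 10, \ldots\}$, so in particular $\deg R \neq 2, 5, 8$, while the formula gives
\begin{align*}
p_a(R) \;\geq\; 1 + (2-1)(6 - 3) \;=\; 4,
\end{align*}
with equality realized at $(a,b) = (2, 6)$. This yields all three assertions. The main obstacle is the vertex analysis: because $S_2$ is normal but not $\Q$-factorial, pullback of Weil divisors through $\tau$ is not well-defined, but the smoothness of $R$ forces $R_{\wt S_2}$ to meet $\Sigma$ transversally, so the intersection numbers can be read off cleanly on $\wt S_2$.
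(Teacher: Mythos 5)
Your proposal is correct and follows essentially the same route as the paper: pass to the minimal resolution $\wt S_{2}$ (a $\P^{1}$-bundle over the elliptic curve with minimal section of self-intersection $-3$), write $R_{\wt S_{2}} \equiv a\Sigma + bf$, use smoothness of $R$ at the vertex to force $(R_{\wt S_{2}} \cdot \Sigma) = b - 3a \in \{0,1\}$, and then read off $\deg R = b$ and $p_a(R)$ from the genus formula. Your genus formula $p_a(R) = 1 + (a-1)\bigl(b - \tfrac{3a}{2}\bigr)$ agrees with the paper's two cases, and your extra justification of the transversality at the vertex is a welcome (correct) elaboration of the step the paper states without proof.
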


\begin{proof}
Take $C_{0}$ and $f$ as the minimal section and a fiber of $\P^{1}$-bundle structure on $\wt S_{2}$ respectively.
Write $R_{\wt S_{2}} \equiv a C_0 +bf$ with $a, b \in \Z_{\geq 0}$.
Since $R$ is smooth, we have $b-3a=(R_{\wt S_{2}} \cdot C_{0})=0$ or $1$. 
The genus formula now yields
\begin{align}
p_{a}(R_{\wt S_{2}}) = 
\begin{cases}
1+ \frac32 a(a-1)& \text{ if } b=3a>0,\\
1 + \frac12 (3a+2)(a-1)& \text{ if } b=3a+1.
\end{cases}
\end{align}
Hence $g_{a}(R)=0$ when $a=0$, $g_{a}(R)=1$ when $a=1$ and $g_{a}(R) \geq 4$ when $a \geq 2$.
We also have $\deg R =(-K_{S_{2}} \cdot R) = (C_0 +3f \cdot a C_0 +bf)=b$. 
Combining these results, we obtain the assertion.
\end{proof}

\subsection{Curves in non-normal cubic surfaces}\label{subsec:7.3}
Finally we investigate curves in non-normal cubic surfaces.

\begin{lem}\label{lem:3.6}
Suppose that $S_{2} = R_{1}$ or $R_{2}$. 
Let $R \subset S_{2}$ be a smooth curve distinct from $E$.
Then $\deg R \neq 2$.
Moreover, $p_{a}(R)=0$ when $\deg R \leq 4$, and $p_{a}(R) \geq 2$ when $\deg R \geq 5$.
\end{lem}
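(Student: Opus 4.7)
Plan: We work on the minimal resolution $\mu = \sigma \circ \tau \colon \wt{S}_{2} \to S_{2}$. By Lemma \ref{lem:2.16}(1), both $R_{1}$ and $R_{2}$ belong to class \textup{(E1)} of Theorem \ref{thm:nnorm2}, so $\ol{S}_{2} \cong S_{3} \subset \P^{4}$ (the cone over the twisted cubic) and $\wt{S}_{2} \cong \F_{3}$; the minimal section $\Sigma_{3}$ is the exceptional locus of $\tau$ and is contracted by $\mu$ onto the vertex $v \in S_{2}$. Moreover, $\mu^{*}(-K_{S_{2}}) \sim \Sigma_{3} + 3f_{3}$, since Theorem \ref{thm:nnorm2}\textup{(E1)} gives $\sigma^{*}(-K_{S_{2}}) \sim \mc{O}_{\P^{4}}(1)|_{S_{3}}$ and the hyperplane sections of $S_{3}$ pull back to divisors in $|\Sigma_{3} + 3f_{3}|$ on $\F_{3}$.

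Writing the strict transform of $R$ as $R_{\wt{S}_{2}} \sim a\Sigma_{3} + b f_{3}$ with $a, b \in \Z$, the projection formula together with $\mu_{*}R_{\wt S_{2}} = R$ yields $\deg R = R_{\wt S_{2}} \cdot (\Sigma_{3} + 3f_{3}) = b$. The key observation is that $\mu|_{R_{\wt S_{2}}} \colon R_{\wt S_{2}} \to R$ is a finite birational morphism onto a normal curve, hence an isomorphism by Zariski's Main Theorem. This has two consequences: $p_{a}(R) = p_{a}(R_{\wt S_{2}})$, and $|R_{\wt S_{2}} \cap \Sigma_{3}| \leq 1$, the unique possible intersection point being the preimage of $v$.

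To bound $R_{\wt S_{2}} \cdot \Sigma_{3} = b - 3a$, I claim that this intersection, when nonempty, must be transverse. Suppose $p \in R_{\wt S_{2}} \cap \Sigma_{3}$ has local intersection multiplicity $m$ with $\Sigma_{3}$. Any function on $S_{2}$ vanishing at $v$ pulls back to a function on $\wt{S}_{2}$ vanishing along $\Sigma_{3}$, so its restriction to $R_{\wt S_{2}}$ lies in the $m$-th power of the maximal ideal of the DVR $\mc{O}_{R_{\wt S_{2}}, p} \cong \C[[t]]$. Thus the image of $\mu^{*} \colon \mc{O}_{R, v} \to \mc{O}_{R_{\wt S_{2}}, p}$ is contained in $\C + (t^{m})$, and since $\mu^{*}$ is surjective we must have $m = 1$. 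Combined with $a \geq 0$ and $b \geq 3a$ (from irreducibility of $R_{\wt S_{2}}$ and $R_{\wt S_{2}} \neq \Sigma_{3}$), the admissible classes reduce to $(a, b) = (0, 1)$ and $(a, 3a), (a, 3a + 1)$ for $a \geq 1$.

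Finally, adjunction on $\F_{3}$ with $K_{\F_{3}} \sim -2\Sigma_{3} - 5f_{3}$ yields
\[
p_{a}(R) \;=\; p_{a}(R_{\wt S_{2}}) \;=\; \frac{(a-1)(2b - 3a - 2)}{2}.
\]
Enumerating: $(0, 1)$ gives $\deg R = 1$, $p_{a} = 0$; $(1, 3)$ and $(1, 4)$ give $\deg R \in \{3, 4\}$ with $p_{a} = 0$; and for $a \geq 2$, $\deg R = b \geq 6$ and $p_{a} \geq (a-1)(3a-2)/2 \geq 2$. Hence $\deg R = 2$ is unattainable, every admissible class with $\deg R \leq 4$ yields $p_{a} = 0$, and every admissible class with $\deg R \geq 5$ forces $a \geq 2$ and so $p_{a} \geq 2$. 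The main technical step is the transversality assertion $b - 3a \leq 1$; granted this, the remainder is a routine numerical check.
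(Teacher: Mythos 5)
Your proof is correct and follows essentially the same route as the paper's: write $R_{\wt S_{2}} \sim a\Sigma_{3}+bf_{3}$ on $\wt S_{2}\cong\F_{3}$, deduce $\deg R=b$ from $\mu^{*}(-K_{S_{2}})\sim\Sigma_{3}+3f_{3}$, bound $(R_{\wt S_{2}}\cdot\Sigma_{3})=b-3a\in\{0,1\}$ using smoothness of $R$, and finish by adjunction. The only difference is that you spell out the justification of the transversality bound (via $\mu|_{R_{\wt S_{2}}}$ being an isomorphism onto the normal curve $R$), which the paper simply asserts from the smoothness of $R$; your numerical formula agrees with the paper's case-by-case genus computation.
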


\begin{proof}
By Lemma \ref{lem:2.16}, $S_{2}$ belongs to the class (E1) as in Theorem \ref{thm:nnorm2}. 
Write $R_{\wt S_{2}} \sim a \Sigma_{3} +bf_{3}$ with $a, b \in \Z_{\geq 0}$.
Since $R$ is smooth, we have $b-3a=(R_{\wt S_{2}} \cdot \Sigma_{3})=0$ or $1$. 
The genus formula now yields
\begin{align}
p_{a}(R_{\wt S_{2}}) = 
\begin{cases}
1+ \frac12 a(3a-5)& \text{ if } b=3a>0,\\
\frac32 a(a-1)& \text{ if } b=3a+1.
\end{cases}
\end{align}
Hence $g_{a}(R)=0$ when $a \leq 1$ and $g_{a}(R) \geq 2$ when $a \geq 2$.
We also have $\deg R =(R_{\wt S_{2}} \cdot \mu^*(-K_{S_{2}})) =(a \Sigma_{3} +bf_{3} \cdot  \Sigma_{3} +3f_{3})=b$. 
Combining these results, we obtain the assertion.
\end{proof}

\begin{lem}\label{lem:3.7}
Suppose that $S_{2} = R_{3}$ or $R_{4}$.
Let $R \subset S_{2}$ be a curve distinct from $E$.
Write $R_{\wt S_{2}} \sim a \Sigma_{1} +bf_{1}$ with $a, b \in \Z_{\geq 0}$.
Then $\deg R =a+b$.
Moreover, each line in $R_{4}$ is the image of a fiber of the $\P^{1}$-bundle structure on $\wt S_{2} \cong \F_{1}$.
\end{lem}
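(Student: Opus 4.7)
The plan is to reduce everything to intersection theory on $\F_{1}$. By Lemma~\ref{lem:2.16}(1), both $R_{3}$ and $R_{4}$ belong to class~\textup{(C)} of Theorem~\ref{thm:nnorm2}, so $\ol S_{2}\cong \F_{1}$ is already smooth; thus the minimal resolution $\tau$ is an isomorphism, $\mu=\sigma$, and $\wt S_{2}\cong \F_{1}$, as stated. This also lets me use the relation $\mu^{*}(-K_{S_{2}})\sim \Sigma_{1}+2f_{1}$ directly from Theorem~\ref{thm:nnorm2}\textup{(C)}.

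For the degree formula, I would use $-K_{S_{2}}=\mc{O}_{\P^{3}}(1)|_{S_{2}}$ and pull back to $\wt S_{2}$:
\[
\deg R = (R \cdot -K_{S_{2}})_{S_{2}} = (R_{\wt S_{2}} \cdot \mu^{*}(-K_{S_{2}})) = (a\Sigma_{1}+bf_{1} \cdot \Sigma_{1}+2f_{1}) = a+b,
\]
using the standard intersection pairing on $\F_{1}$ ($\Sigma_{1}^{2}=-1$, $\Sigma_{1}\cdot f_{1}=1$, $f_{1}^{2}=0$). This is the routine part and needs no further commentary.

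For the claim about lines in $R_{4}$, I would first observe that by Lemma~\ref{lem:2.16}(2) the conductor $\ol D$ is reducible, and since every effective divisor on $\F_{1}$ of class $\Sigma_{1}+f_{1}$ splits uniquely as the unique minimal section plus a fiber (a short case check on $\Pic \F_{1}$), one has $\ol D = \Sigma_{1}+f_{0}$ for some fiber $f_{0}$. Since $\sigma$ is finite, it contracts no curve, so both $\sigma(\Sigma_{1})$ and $\sigma(f_{0})$ are one-dimensional subsets of the line $E$, hence equal to $E$; in particular $E$ itself is the image of the fiber $f_{0}$. Next let $L\subset R_{4}$ be a line with $L\neq E$. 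The degree formula gives $a+b=1$, so $(a,b)=(1,0)$ or $(0,1)$. The first case would force $L_{\wt S_{2}}=\Sigma_{1}$ by uniqueness of the minimal section, hence $L=\sigma(\Sigma_{1})=E$, a contradiction. Thus $(a,b)=(0,1)$, and $L_{\wt S_{2}}$ is a fiber of the ruling.

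The main obstacle, if any, is the last step: ruling out $L_{\wt S_{2}}\sim \Sigma_{1}$ depends crucially on the reducibility of $\ol D$ furnished by Lemma~\ref{lem:2.16}(2), which places $\Sigma_{1}$ inside the conductor and thereby forces $\sigma(\Sigma_{1})\subseteq E$. Once this is in hand, the other assertions follow from routine intersection computations.
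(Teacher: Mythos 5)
Your proof is correct and follows essentially the same route as the paper's: Lemma \ref{lem:2.16} places $S_{2}$ in class \textup{(C)}, the degree is computed by intersecting with $\mu^{*}(-K_{S_{2}}) \sim \Sigma_{1}+2f_{1}$ on $\F_{1}$, and the reducibility of $\ol D$ from Lemma \ref{lem:2.16}(2) forces $\mu(\Sigma_{1})=\mu(f_{0})=E$, whence every line in $R_{4}$ is the image of a fiber. One small wording slip: it is not true that \emph{every} effective divisor of class $\Sigma_{1}+f_{1}$ on $\F_{1}$ splits as $\Sigma_{1}$ plus a fiber (the general member of $|\Sigma_{1}+f_{1}|$ is irreducible, being the pullback of a line avoiding the blown-up point under $\F_{1}\to\P^{2}$); what you actually need and use is that every \emph{reducible} such divisor splits this way, which does follow from the case check on the effective cone once Lemma \ref{lem:2.16}(2) gives reducibility, so the argument stands.
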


\begin{proof}
By Lemma \ref{lem:2.16}, $S_{2}$ belongs to the class (C) as in Theorem \ref{thm:nnorm2}. 
Hence $\deg R =(R_{\wt S_{2}} \cdot \mu^*(-K_{S_{2}})) =(a \Sigma_{1} +bf_{1} \cdot \Sigma_{1} +2f_{1})=a+b$, which is the first assertion.
In particular, $R_{\wt S_{2}} \sim \Sigma_{1}$ or $f_{1}$ when $\deg R =1$.
If $S_{2}=R_{4}$, Lemma \ref{lem:2.16} shows that there is a curve $L \sim f_{1}$ such that $\mu(L)=\mu(\Sigma_{1})$.
Hence the second assertion holds.
\end{proof}

\section{The case $p_{a}(C) \geq 1$}\label{sec:ell}
In the rest of the paper, we prove Theorems \ref{thm:main3tuple} and \ref{thm:mainisom}.
In this section, we treat the case where $p_{a}(C) \geq 1$.

\begin{lem}\label{lem:4.2}
If $U$ is an affine homology $3$-cell, then $p_{a}(C) \leq 1$.
\end{lem}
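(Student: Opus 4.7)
The plan is to combine Lemma~\ref{lem:2.10} with elementary universal bounds on $\eu(F)$ and $\eu(S_{2})$. Rewriting Lemma~\ref{lem:2.10}(1) as
\[
2 p_{a}(C) = \eu(F) - \eu(S_{2}) + 2 - (N_{1} + N_{2} - N_{1 \cap 2})
\]
and using $N_{1} + N_{2} - N_{1 \cap 2} \geq 1$ from Lemma~\ref{lem:2.10}(2), I obtain the universal estimate $2 p_{a}(C) \leq \eu(F) - \eu(S_{2}) + 1$. So it suffices to bound $\eu(F)$ from above and $\eu(S_{2})$ from below.

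For the first bound, $F = S_{2}|_{S_{1}}$ is a (possibly non-reduced) cubic curve in $S_{1} \cong \P^{2}$, so a direct case analysis of plane cubics---smooth, nodal, cuspidal, conic plus a chord or tangent line, three lines in general or concurrent position, and the non-reduced degenerations---shows that $\eu(F) \leq 4$, the maximum being attained only when $F_{\mathrm{red}}$ is the union of three distinct concurrent lines.

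For the second bound I would split by the classification of \S\ref{sec:cubicsurface}. When $S_{2}$ is smooth, $\eu(S_{2}) = 9$. When $S_{2}$ is normal with only Du Val singularities, the minimal resolution is a smooth rational surface with $\eu = 9$, and the identity $\eu(S_{2}) = 9 - r$ (with $r$ the sum of Dynkin ranks of the singularities) combined with $r \leq 6$ for a cubic surface gives $\eu(S_{2}) \geq 3$; this can also be read off from Corollary~\ref{cor:B-L} and Table~\ref{tb:sing}. When $S_{2}$ is non-normal, Theorem~\ref{thm:nnorm1} reduces to $S_{2} \in \{R_{1}, R_{2}, R_{3}, R_{4}\}$, and the cone/conductor descriptions of Theorem~\ref{thm:nnorm2} and Lemma~\ref{lem:2.16} give $\eu(R_{1}) = 3$, $\eu(R_{2}) = 2$, $\eu(R_{3}) = 4$, $\eu(R_{4}) = 3$. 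In every one of these cases $\eu(S_{2}) \geq 2$, so $2 p_{a}(C) \leq 3$, whence $p_{a}(C) \leq 1$.

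The remaining possibility is that $S_{2}$ is normal but not rational, i.e., carries a Gorenstein singularity that is not Du Val (and hence elliptic). For a cubic surface in $\P^{3}$ this forces $S_{2}$ to be the cone over a smooth plane cubic, so $\eu(S_{2}) = 1$ and the Euler inequality yields only $p_{a}(C) \leq 2$. I would close this case by invoking Lemma~\ref{lem:3.5}, which says that every smooth curve on such a cone has arithmetic genus in $\{0, 1\} \cup \{4, 5, \ldots\}$, ruling out $p_{a}(C) = 2$. This elliptic-cone case is the main obstacle of the argument---the Euler estimate alone does not resolve it---while every other case falls out immediately.
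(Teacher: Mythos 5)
Your proof is correct and follows essentially the same route as the paper: the Euler number identity of Lemma \ref{lem:2.10} together with $\eu(F)\leq 4$ and a lower bound on $\eu(S_{2})$, with Lemma \ref{lem:3.5} disposing of the elliptic-cone case where the Euler estimate only gives $p_{a}(C)\leq 2$. The only difference is that you re-derive the bound on $\eu(S_{2})$ case by case from the classifications in \S\ref{sec:cubicsurface} (and your Euler numbers all check out), whereas the paper simply quotes \cite[Theorem 2.1]{Nag1} for $\eu(S_{2})\geq 1$ with equality exactly for the cone over an elliptic curve.
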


\begin{proof}
Combining Lemma \ref{lem:2.10} and \cite[Theorem 2.1]{Nag1}, we obtain
\begin{align}\label{eq:ell-1}
\eu(F) 
&= \eu(S_{2}) +2p_{a}(C)+ N_{1} +N_{2} - N_{1 \cap 2}-2\\ 
&\geq 1+2p_{a}(C)+1-2=2p_{a}(C).\nonumber
\end{align}
On the other hand, we have $4 \geq \eu(F)$ since $F \subset S_{1}$ is a plane cubic. Hence $2 \geq p_{a}(C)$ and it suffices to show that $p_{a}(C) \neq 2$.

Conversely, suppose that $p_{a}(C)=2$. 
Then $(\ref{eq:ell-1})$ shows that $\eu(S_{2})=1$ and hence $S_{2}$ is the cone over an elliptic curve by \cite[Theorem 2.1]{Nag1}. 
However, Lemma \ref{lem:3.5} shows that $S_{2}$ contains no smooth curve with $p_{a}=2$, a contradiction.
\end{proof}


\begin{lem}\label{lem:4.3}
If $U$ is an affine homology $3$-cell and $p_{a}(C) = 1$, then $S_{2}$ is the cone over an elliptic curve. 
\end{lem}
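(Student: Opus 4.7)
The approach is to combine the Euler-number identity of Lemma~\ref{lem:2.10} with the classification of cubic surfaces from Sections \ref{sec:cubicsurface}--\ref{sec:curve}. Substituting $p_a(C) = 1$ into Lemma~\ref{lem:2.10}(1) yields
\[
\eu(F) = \eu(S_2) + N_1 + N_2 - N_{1\cap 2},
\]
and, together with Lemma~\ref{lem:2.10}(2) and the inequality $\eu(F) \leq 4$ for the plane cubic $F \subset S_1 \cong \P^2$ (with equality iff $F$ is three distinct concurrent lines), this forces $\eu(S_2) \leq 3$. Combined with $\eu(S_2) \geq 1$ from \cite[Theorem 2.1]{Nag1} (and the fact that equality there holds precisely when $S_2$ is a cone over an elliptic curve), it suffices to rule out $\eu(S_2) \in \{2, 3\}$.

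A direct inspection using Theorem~\ref{thm:2.11}, Corollary~\ref{cor:B-L}, Theorem~\ref{thm:nnorm1}, Lemma~\ref{lem:2.16}, and Euler-number computations on the minimal resolution shows that the irreducible cubic surfaces with $\eu(S_2) \in \{2, 3\}$ are exactly $R_2$ (with $\eu = 2$) and $G_1, G_2, G_3, R_1, R_4$ (with $\eu = 3$). For $S_2 \in \{R_1, R_2\}$, which are cones over a cuspidal or nodal plane cubic, Lemma~\ref{lem:3.6} directly excludes the existence of any smooth curve of arithmetic genus one distinct from the conductor $E$; since $C$ is elliptic and $E$ is a line, this rules these out.

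For the remaining surfaces $G_1, G_2, G_3, R_4$, tightness of the Euler bound at $\eu(S_2) = 3$ forces $\eu(F) = 4$, and hence $F$ consists of three distinct concurrent lines in the plane $S_1$, each lying on $S_2$. Table~\ref{tb:line} eliminates $G_1, G_2$ (fewer than three lines altogether) and $G_3$ (its three lines pairwise meet at three distinct points, so are not concurrent). The remaining case $R_4$ is the main obstacle, because by Theorem~\ref{thm:nnorm2} it is of class (C) with $\wt{S_2} \cong \F_1$ and therefore carries infinitely many lines: by Lemma~\ref{lem:3.7} every line on $R_4$ other than the conductor $D$ is the image under the normalization $\sigma$ of a fiber of the $\P^1$-ruling on $\F_1$. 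A short analysis using that $\sigma|_{\Sigma_1} \colon \Sigma_1 \to D$ is an isomorphism then shows that two such lines are either disjoint or meet at a single point of $D$ whose location varies with the pair, precluding three concurrent lines. These contradictions leave $\eu(S_2) = 1$, so $S_2$ is a cone over an elliptic curve.
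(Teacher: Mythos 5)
Your proof is correct and follows essentially the same route as the paper: the bound $4 \geq \eu(F) = \eu(S_{2}) + N_{1}+N_{2}-N_{1\cap 2} \geq 1+\eu(S_{2})$ from Lemma~\ref{lem:2.10}, exclusion of the cones $R_{1}, R_{2}$ via Lemma~\ref{lem:3.6}, and exclusion of $G_{1}, G_{2}, G_{3}, R_{4}$ by forcing $F$ to be three concurrent coplanar lines, which none of these surfaces contains. The only difference is organizational --- you enumerate the candidate surfaces by their Euler number where the paper splits into the normal and non-normal cases --- and this changes nothing of substance.
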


\begin{proof}
As in the proof of Lemma \ref{lem:4.2}, we obtain
\begin{equation}\label{eq:301}
4 \geq \eu(F) = \eu(S_{2}) + N_{1} +N_{2} - N_{1 \cap 2} \geq 1 + \eu (S_{2}).
\end{equation}

Suppose that $S_{2}$ is non-normal. 
Since $S_{2}$ contains a smooth curve with $p_{a}=1$, Lemma \ref{lem:3.6} shows that $S_{2}$ belongs to the class (C).
Then (\ref{eq:301}) shows that $3 \geq \eu(F)-1 \geq \eu(S_{2})=\eu(\F_1)-\eu(\wt{E})+\eu(E)=6-\eu(\wt{E})$. 
Hence $\wt{E}$ is reducible and $F$ is the sum of three lines intersecting in one point.
However, $S_{2}$ has no such lines by Lemma \ref{lem:3.7}, a contradiction.

Hence $S_{2}$ is normal.
It suffices to exclude the case where $S_{2}$ is rational.
Conversely, suppose that $S_{2}$ is rational. 
Then $B_2(S_{2})=1$ and $\eu(F)=4$ by (\ref{eq:301}).
Hence $S_{2}$ is projectively equivalent to $G_{i}$ for some $i \in \{1,2,3\}$ by Corollary \ref{cor:B-L} and $F$ is the sum of three lines intersecting in one point.
Table \ref{tb:line} shows, however, that $S_{2}$ has no such lines, a contradiction.
\end{proof}

\begin{prop}\label{prop:highgenus}
If $U$ is an affine homology $3$-cell and $p_{a}(C) \geq 1$, then the case \textup{(a)} of Theorem \ref{thm:main3tuple} holds.
\end{prop}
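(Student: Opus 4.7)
The plan is to bootstrap from Lemmas \ref{lem:4.2} and \ref{lem:4.3} to pin down the geometry, and then read off the two remaining conditions of case \textup{(a)} from an Euler number identity.

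First, Lemma \ref{lem:4.2} combined with the hypothesis $p_{a}(C) \geq 1$ forces $p_{a}(C) = 1$, so $C$ is a smooth elliptic curve. Lemma \ref{lem:4.3} then says that $S_{2}$ is the projective cone over a smooth plane elliptic curve $E$; in particular $\eu(S_{2}) = 1$. Applying Lemma \ref{lem:3.5} to the smooth curve $C \subset S_{2}$ of arithmetic genus one excludes all degrees other than $3$ and $4$, giving the first assertion of case \textup{(a)}.

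Next, let $v$ denote the vertex of $S_{2}$. Feeding $\eu(S_{2}) = 1$ and $p_{a}(C) = 1$ into Lemma \ref{lem:2.10}(1) and combining with Lemma \ref{lem:2.10}(2) produces
\begin{align*}
\eu(F) \;=\; 1 + N_{1} + N_{2} - N_{1 \cap 2} \;\geq\; 2.
\end{align*}
On the other hand, if $v \notin S_{1}$ then projection from $v$, which realises $S_{2}\setminus\{v\}$ as an $\A^{1}$-bundle over $E$, restricts to an isomorphism $F \xrightarrow{\sim} E$, forcing $\eu(F) = 0$. This contradicts the inequality above, so $v \in S_{1}$, the second condition of case \textup{(a)}.

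Finally, with $v \in S_{1}$, choose coordinates so that $v = [0\!:\!0\!:\!0\!:\!1]$ and $S_{1} = \{x = 0\}$; then $S_{2}$ is cut out by a cubic $f(x,y,z)$ and $F = \{f(0,y,z) = 0\}$ factors as a product of linear forms in $y,z$, so $F_{\red}$ is the union of $k$ distinct concurrent lines through $v$ for some $k \in \{1,2,3\}$. A direct inclusion–exclusion (or the identification of $F_{\red}$ with a wedge of $k$ copies of $\P^{1}$) yields $\eu(F) = 1 + k$ and $B_{2}(F) = k$. Since $\Sing S_{2} = \{v\} \subset S_{1}$, both $N_{2}$ and $N_{1 \cap 2}$ equal $\sharp(C \cap \{v\})$ and hence agree; substituting back gives $N_{1} = \eu(F) - 1 = k = B_{2}(S_{1} \cap S_{2})$, the last condition of case \textup{(a)}. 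The step I expect to require the most care is the dichotomy $v \in S_{1}$ versus $v \notin S_{1}$; once it is settled, what remains is routine bookkeeping with a plane section of the cone passing through its vertex.
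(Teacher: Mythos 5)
Your proposal is correct and follows essentially the same route as the paper: reduce to $p_a(C)=1$ and $S_2$ a cone over an elliptic curve via Lemmas \ref{lem:4.2} and \ref{lem:4.3}, get the degree restriction from Lemma \ref{lem:3.5}, and use the Euler-number inequality $\eu(F)\geq 2$ from Lemma \ref{lem:2.10} to force $F$ to be a sum of rulings through the vertex, whence $N_2=N_{1\cap 2}$ and $B_2(F)=N_1$. Your explicit dichotomy on whether the vertex lies on $S_1$ is just a slightly more spelled-out version of the paper's one-line deduction.
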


\begin{proof}
By Lemmas \ref{lem:4.2} and \ref{lem:4.3}, we have $p_{a}(C)=1$ and $S_{2}$ is the cone over an elliptic curve. 
Lemma \ref{lem:3.5} now shows that $\deg C=3$ or $4$. 
On the other hand, (\ref{eq:301}) shows that $\eu(F) = 1 + N_{1} +N_{2} - N_{1 \cap 2} \geq 2$.
Hence $F$ is a sum of rulings.
In particular, $S_{1}$ contains the vertex of $S_{2}$ and hence $N_{2}=N_{1 \cap 2}$.
Therefore $B_{2}(F)=N_{1}=\sharp(C \cap S_{1})$, and the assertion holds.
\end{proof}

Next let us check that $U \cong \A^{3}$ if the case \textup{(a)} of Theorem \ref{thm:main3tuple} holds.  

\begin{prop}\label{prop:elldeg3}
Suppose that the case \textup{(a)} of Theorem \ref{thm:main3tuple} holds and $\deg C =3$. 
Then $U \cong \A^{3}$.
\end{prop}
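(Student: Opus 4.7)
The plan is to prove $U \cong \A^3$ by an explicit calculation in the affine chart $\P^3 \setminus S_1$, exploiting two features specific to this case: a smooth elliptic curve of degree three in $\P^3$ is necessarily contained in a plane $H_0$, and $S_2$ is the cone over $C$ from a vertex $v \in S_1 \setminus H_0$, so that in the chart where $S_1$ sits at infinity this cone becomes a cylinder.

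First I would choose homogeneous coordinates $[x:y:z:w]$ on $\P^3$ with $v=[0:0:0:1]$, $H_0=\{w=0\}$, and $S_1=\{x=0\}$; this is possible because $v \in S_1 \setminus H_0$ and $\GL_3$ acts transitively on the hyperplanes of $H_0 \cong \P^2$. Writing $f(x,y,z)$ for the smooth cubic form defining $C$ in $H_0$, we then have $S_2=\{f=0\}$. In the affine chart $\P^3 \setminus S_1 \cong \A^3$ with $(Y,Z,W)=(y/x,z/x,w/x)$, the surface $S_2$ becomes the cylinder $\Sigma=\{f(1,Y,Z)=0\}$, and $C':=C \cap \A^3=\{W=0,\,f(1,Y,Z)=0\}$ is a smooth affine plane cubic lying inside $\Sigma$.

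By Lemma \ref{lem:linequiv}, $D_1 \sim \vp^*\mc{O}_{\P^3}(1)$, so $\vp^{-1}(S_1)=D_1$ scheme-theoretically and hence $V \setminus D_1 \cong \Bl_{C'}\A^3$. I realize the blow-up as the subvariety
\begin{align*}
\Bl_{C'}\A^3 = \bigl\{((Y,Z,W),[s:t]) \in \A^3 \times \P^1 : sf(1,Y,Z)=tW\bigr\}.
\end{align*}
On the chart $\{s=1\}$ the blow-up relation reads $f(1,Y,Z)=tW$, so the pullback of the ideal of $\Sigma=\{f=0\}$ equals the ideal $(tW)$, giving $\vp^*\Sigma=V(t)+V(W)$ as a sum of divisors. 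Since $V(W)$ is supported on $\vp^{-1}(C')$, it coincides with the exceptional divisor $E_\vp$ on this chart, and therefore the strict transform $D_2$ equals $V(t)$; globally $D_2=\{t=0\} \subset \Bl_{C'}\A^3$.

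Consequently $U=(V \setminus D_1) \setminus D_2 = \Bl_{C'}\A^3 \setminus \{t=0\}$ is precisely the complementary chart $\{t \neq 0\}$; setting $t=1$, the defining relation becomes $W=sf(1,Y,Z)$, so this chart is parametrized freely by $(Y,Z,s) \in \A^3$, yielding $U \cong \A^3$. The only delicate point is the identification $D_2=V(t)$, which requires that $\Sigma$ meets $C'$ with multiplicity one; this is automatic from the smoothness of $C$, since then $(\partial_Y f,\partial_Z f)$ never vanishes on $C'$, making $\Sigma$ smooth along $C'$ and ensuring the clean decomposition $\vp^*\Sigma = D_2 + E_\vp$.
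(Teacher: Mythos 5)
Your proof is correct and is essentially the paper's argument: the paper likewise normalizes coordinates so that $S_1=\{x=0\}$, $S_2=\{f(x,y,z)=0\}$ and $C=\{t=0\}|_{S_2}$, and then identifies $U$ with the affine modification $\{wf(1,y,z)+t=0\}\subset\A^4\cong\A^3$, which is exactly your chart $\{t\neq 0\}$ of $\Bl_{C'}\A^3$. You have merely spelled out the blow-up computation that the paper compresses into two lines.
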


\begin{proof}
We can change the coordinate $\{x, y, z, t\}$ of $\P^{3}$ such that $S_{1}=\{x=0\}, S_{2} = \{f(x, y, z) = 0\}$ and $C=\{t=0\}|_{S_{2}}$ for some cubic form $f$.  
Hence $U \cong \{wf(1,y,z)+t=0\} \subset \A^4_{(y,z,t,w)}$. Therefore $U \cong \A^{3}$.
\end{proof}

\begin{prop}\label{prop:elldeg4}
Suppose that the case \textup{(a)} of Theorem \ref{thm:main3tuple} holds and $\deg C =4$. 
Then $U \cong \A^{3}$.
\end{prop}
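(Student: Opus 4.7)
The plan is to realize $V$ explicitly as a subvariety of $\P^3 \times \P^1$ using the pencil of quadrics through $C$, and then exhibit $U$ as $\A^3$ via a concrete change of variables. First, a projective transformation arranges the vertex $v$ of $S_2$ to be $[0:0:0:1]$ and $S_1 = \{x=0\}$, so that $S_2 = \{f(x,y,z)=0\}$ for a cubic $f$ whose zero locus in $\P^2_{[x:y:z]}$ is the base elliptic curve $E$ of $S_2$. A degree count for the projection $\pi \colon \P^3 \dashrightarrow \P^2$ from $v$ forces $v \in C$ (else the induced map $C \to E$ would have degree $4/3$), and then $\pi|_C \colon C \to E$ must be an isomorphism. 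Hence we may write $C = V(Q_1, Q_2)$ with $Q_i = A_i(x,y,z) + B_i(x,y,z) w$ (where $A_i$ is quadratic and $B_i$ is linear in $(x,y,z)$) and $f = A_1 B_2 - A_2 B_1$.

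The crucial observation is that the numerical condition $\sharp(C \cap S_1) = B_2(S_1 \cap S_2)$ of case \textup{(a)} forces $S_1$ to be tangent to $C$ at $v$, equivalently, the common zero $P = V(B_1, B_2) \in \P^2$ of $B_1, B_2$ -- which is the image of $v$ under $\pi$, i.e.\ the tangent direction of $C$ at $v$ -- lies on $\ell := \{x=0\} \subset \P^2$. Indeed, via the isomorphism $C \cong E$, the points of $(C \cap S_1) \setminus \{v\}$ match bijectively with the points of $E \cap \ell$ distinct from $P$; thus $N_1 = B_2(F) + 1$ when $P \notin \ell$ and $N_1 = B_2(F)$ when $P \in \ell$, and the hypothesis picks out the latter. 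Consequently the polynomials $b_i(y, z) := B_i(1, y, z)$ have no common zero in $\A^2_{y,z}$, and by the Nullstellensatz there exist $c_1, c_2 \in \C[y, z]$ with $c_1 b_1 + c_2 b_2 = 1$.

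Now realize $V = \{([x:y:z:w], [a:b]) \in \P^3 \times \P^1 : a Q_1 = b Q_2 \}$. The identity $af = -Q_2(aB_1 - bB_2)$ on $V$ identifies the strict transform of $S_2$ as $D_2 = V \cap \{aB_1 = bB_2\}$, while $D_1 = \vp^*\{x=0\}$ set-theoretically contains every exceptional fiber above $C \cap S_1$ (for every $p \in C \cap S_1$, the tangent space $T_p S_1$ surjects onto $N_{C/\P^3}|_p$). Hence $U \subseteq V \cap \{x \neq 0\}$, and in the affine chart $\{x=1\}$ the defining equation of $V$ is linear in $w$; solving for $w$ yields
\[ U \xrightarrow{\ \sim\ } U' := \{(y, z, [a:b]) \in \A^2 \times \P^1 : ab_1(y,z) \neq bb_2(y,z)\}. \]
Finally, define $r := (c_2 a + c_1 b)/(ab_1 - bb_2)$; the map $(y, z, [a:b]) \mapsto (y, z, r)$ is a morphism $U' \to \A^3$, and its inverse $(y, z, r) \mapsto (y, z, [-(c_1 + rb_2) : c_2 - rb_1])$ is also a morphism, where the Bezout identity guarantees that the target lies in $U'$ (one computes $ab_1 - bb_2 = -1$). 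Thus $U \cong U' \cong \A^3$. The main technical obstacle in this strategy is the tangency assertion in the second paragraph, which requires keeping careful track of the correspondence between $C \cap S_1$ and $E \cap \ell$ under $\pi$.
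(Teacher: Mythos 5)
Your proof is correct, but it takes a genuinely different route from the paper's. The paper also exploits the pencil of quadrics through $C$, via the quadric fibration $\psi \colon V \to \P^{1}$ defined by $|\vp^{*}\mc{O}_{\P^{3}}(2)-E_{\vp}|$, but then argues structurally: it first shows $U$ is contractible by the Kaliman--Zaidenberg affine-modification criterion, proves that $\psi|_{U}$ is an $\A^{1}\times\C^{*}$-fibration whose special fibers are $\A^{2}$, counts (by an Euler-number computation) that there is exactly one special fiber, and then concludes via results on quadric fibrations from \cite{Nag2}, the Bhatwadekar--Dutta theorem on residual variables, and Kaliman's theorem that a polynomial with general $\C^{2}$-fibers is a variable. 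Your argument replaces all of this machinery with an explicit change of coordinates: after the key (and correctly proved) observation that $\sharp(C\cap S_{1})=B_{2}(F)$ forces $T_{v}C\subset S_{1}$, hence that $b_{1},b_{2}$ have no common zero on $\{x\neq 0\}$, you present $U$ as the complement of the section $\{ab_{1}=bb_{2}\}$ in $\A^{2}\times\P^{1}$ and trivialize the resulting $\A^{1}$-bundle over $\A^{2}$ by the B\'ezout identity $c_{1}b_{1}+c_{2}b_{2}=1$. This is more elementary, self-contained, and produces explicit coordinates on $U$; the paper's route is less computational and is uniform with its treatment of the other cases. Two small repairs are worth recording: the parenthetical claim that $T_{p}S_{1}$ surjects onto $N_{C/\P^{3}}|_{p}$ for every $p\in C\cap S_{1}$ is false at $p=v$ (there $T_{v}C\subset T_{v}S_{1}$ by your own tangency statement), but the assertion it supports --- that $D_{1}=\vp^{-1}(S_{1})$ set-theoretically --- is immediate from $C\not\subset S_{1}$ since then the strict and total transforms of $S_{1}$ coincide; and the set-theoretic equality $D_{2}=V\cap\{aB_{1}=bB_{2}\}$ needs the remark that both are effective divisors in the same class, which your identity $af=-Q_{2}(aB_{1}-bB_{2})$ does supply once one compares it with $\vp^{*}S_{2}=D_{2}+E_{\vp}$.
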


\begin{proof}
Since $C$ is the complete intersection of two quadrics, there is a quadric fibration structure $\psi \colon V \to \P^{1}$ defined by $|\vp^{*}\mc{O}_{\P^{3}}(2) -E_{\vp}|$.

\noindent{\ul{Step 1}}: 
Let us show that $U$ is a contractible affine $3$-fold.
Let $R$ be a smooth member of $|\mc{O}_{\P^{3}}(1)|_{S_{2}}|$ and $R^{0} \coloneqq R \setminus (R \cap S_{1})$.
Then $S_{2}^{0} \coloneqq S_{2} \setminus F$ is an $\A^{1}$-bundle over $R^{0}$.
Since $\sharp(C \cap S_{1})=B_{2}(F)$, a curve $C^{0} \coloneqq C \setminus (C \cap S_{1}) \subset S_{2} \setminus F$ is a section of this $\A^{1}$-bundle.
\cite[Corollary 3.1]{K-Z} now shows that $U$ is a contractible affine $3$-fold because $U$ is the affine modification of $\A^{3}$ with the locus $(C^{0} \subset S_{2}^{0})$.

\noindent{\ul{Step 2}}: 
Let us show that each fiber of $\psi|_{V \setminus D_{2}}$ is isomorphic to $\A^{2}$.
Take $Q$ as a $\psi$-fiber.
Since $S_{2}$ contains no conic, there are lines $l_{1}$ and $l_{2}$ in $Q_{\P^{3}}$ (maybe $l_{1}=l_{2}$) such that $S_{2}|_{Q_{\P^{3}}}=C+l_{1}+l_{2}$ and hence $D_{2}|_{Q}=l_{1}+l_{2}$.

Suppose that $Q \cong \P^{1} \times \P^{1}$. 
Then $l_{1}$ and $l_{2}$ are distinct from each other since $l_{1}+l_{2}$ is a $(1, 1)$-divisor.
Hence $Q \cap (V \setminus D_{2})=\A^{2}$. 

In particular, $D_{2}$ is non-normal by the generic smoothness.
Since $2D_{2} \sim -K_{V}+Q$, \cite[Lemma 2.7]{Nag2} now shows that $\Sing D_{2}$ forms a $\psi$-section.

Suppose that $Q \cong \Q^{2}_{0}$. 
Then $D_{2}|_{Q}$ is singular at $\Sing D_{2} \cap Q$, which is distinct from the vertex of $Q$.
Hence $l_{1}=l_{2}$ and $Q \cap (V \setminus D_{2})=\A^{2}$.

\noindent{\ul{Step 3}}: 
Let us show that $\psi|_U$ is an $\A^{1} \times \C^*$-fibration whose special fibers are all isomorphic to $\A^{2}$. 
For this, it suffices to show that each $\psi$-fiber $Q$ satisfies $Q \cap D_{1} \cap (V \setminus D_{2}) \cong \A^{1}$ or $\emptyset$.

We note that $\Sing D_{2}$ is the same as the $\vp$-exceptional curve over the vertex of $S_{2}$ by the choice of $C$.
Hence $\Sing D_{2} \subset D_{1}$.
Thus $D_{1}|_{Q}$ is a member of $|-(1/2)K_{Q}|$ containing $\Sing D_{2} \cap Q$, and $D_{2}|_{Q}$ is the unique member of $|-(1/2)K_{Q}|$ singular at $\Sing D_{2} \cap Q$.
Hence $Q \cap D_{1} \cap (V \setminus D_{2}) = \A^{1}$ when $D_{1}|_{Q} \neq D_{2}|_{Q}$ and $\emptyset$ when $D_{1}|_{Q} = D_{2}|_{Q}$.

\noindent{\ul{Step 4}}: 
Finally let us show the assertion.
Let $m$ be the number of special fibers of $\psi|_U$.
By the contractibility of $U$, we have 
\begin{align}
1= \eu(U)=(\eu(\P^1)-m)\eu(\A^{1} \times \C^*)+m \eu(\A^{2})=m.
\end{align}
Take $Q$ as the unique $\psi$-fiber such that $Q \cap {U} \cong \A^{2}$.

On the other hand, since $D_{2}$ is non-normal, \cite[Theorem 4.2]{Nag2} shows that $U' \coloneqq V \setminus (D_{2} \cup Q) \cong \A^{3}$.
Then $\psi|_{U'} \colon U' \to \A^{1}$ is an $\A^{2}$-bundle by \cite[Main Theorem]{Kal}.
Since $D_{1} \cap U'$ is a sub $\A^{1}$-bundle of $\psi|_{U'}$, 
we obtain $U \setminus (U \cap Q)=U' \setminus (U' \cap D_{1}) \cong \A^{2} \times \C^*$ by \cite[Theorem B]{B-D}. 
\cite[Main Theorem]{Kal} now yields $U \cong \A^{3}$ because $U$ is a contractible affine $3$-fold and $U \cap Q \cong \A^{2}$.
\end{proof}

Combining Propositions \ref{prop:highgenus}--\ref{prop:elldeg4}, we complete the proof of Theorems \ref{thm:main3tuple} and \ref{thm:mainisom} when $p_{a}(C) \geq 1$.

\section{The case $p_{a}(C)=0$ and $S_{2}$ is normal}\label{sec:normal}
In the subsequent sections \S \ref{sec:normal}--\ref{sec:cont}, we prove Theorem \ref{thm:main3tuple} when $p_{a}(C)=0$.
In this section, 
we assume the following:
\begin{assu}
$p_{a}(C)=0$, $U$ is an affine homology $3$-cell and $S_{2}$ is normal.
\end{assu}

Under this assumption, we show that $(S_{1}, S_{2})$ is projectively equivalent to one of the pairs listed in the case \textup{(d)} or \textup{(e)} of Theorem \ref{thm:main3tuple}.
Firstly let us check the rationality and the singularity of $S_{2}$.

\begin{lem}\label{lem:5.1}
$S_{2}$ is rational. 
\end{lem}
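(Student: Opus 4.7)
The plan is to argue by contradiction, relying on the classical dichotomy that a normal cubic surface in $\P^{3}$ is either rational (with at worst DuVal singularities) or is projectively equivalent to the projective cone over a smooth plane cubic elliptic curve, a fact that is already used implicitly in the proof of Lemma \ref{lem:4.3}. Assuming $S_{2}$ is not rational, $S_{2}$ is such a cone; let $v$ denote its vertex. Since $p_{a}(C) = 0$, the curve $C \cong \P^{1}$ is a smooth rational curve contained in $S_{2}$, so Lemma \ref{lem:3.5} forces $\deg C = 1$. Hence $C$ is a line in $\P^{3}$ lying on $S_{2}$, and since every line on a cone over an elliptic curve is a ruling through the vertex, $C$ passes through $v$.

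First I would record the elementary combinatorial data: $N_{1} = \sharp(C \cap S_{1}) = 1$ (a line meets a plane not containing it in one point) and $N_{2} = \sharp(C \cap \Sing S_{2}) = 1$ (since $v \in C$ and $v$ is the unique singular point of $S_{2}$). Next I would compute $\eu(S_{2}) = 1$, for instance via the minimal resolution $\wt{S}_{2} \to S_{2}$, which is a $\P^{1}$-bundle over the base elliptic curve with a single negative section collapsed to $v$. Substituting into Lemma \ref{lem:2.10}(1) with $p_{a}(C) = 0$ reduces the identity to $\eu(F) = N_{1} + N_{2} - N_{1 \cap 2} - 1$.

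I would then split into two subcases according to whether $v \in S_{1}$. If $v \in S_{1}$, then $N_{1 \cap 2} = 1$, so the formula predicts $\eu(F) = 0$; but the section of a cone by a plane through its vertex is set-theoretically a union of rulings through $v$, whose reduction always satisfies $\eu \geq 2$. If $v \notin S_{1}$, then $N_{1 \cap 2} = 0$, so the formula predicts $\eu(F) = 1$; but projection from $v$ identifies $F$ with the base elliptic curve, forcing $\eu(F) = 0$. Either alternative yields a contradiction, so $S_{2}$ must be rational.

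The only technical step is verifying the two geometric descriptions of $F$ used in the subcase analysis, namely that $F$ is a union of lines through $v$ in the first subcase and is isomorphic to the base elliptic curve in the second; both are immediate from the cone structure of $S_{2}$, so I anticipate no serious obstacle beyond this routine check.
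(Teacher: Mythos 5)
Your proof is correct, but it follows a genuinely different route from the paper's. Both arguments reduce, via the Hidaka--Watanabe classification of normal Gorenstein del Pezzo surfaces (\cite[Theorem 2.2]{H-W}, which is the precise reference for the dichotomy you invoke), to excluding the case where $S_{2}$ is the cone over an elliptic curve, and both then use Lemma \ref{lem:3.5} to see that $C$ must be a ruling through the vertex $v$. From there you derive the contradiction purely from the Euler-number identity of Lemma \ref{lem:2.10}(1): with $\eu(S_{2})=1$, $p_{a}(C)=0$ and $N_{1}=N_{2}=1$ it forces $\eu(F)=1-N_{1\cap 2}\in\{0,1\}$, whereas the cone geometry gives $\eu(F)\geq 2$ when $v\in S_{1}$ (union of rulings) and $\eu(F)=0$ when $v\notin S_{1}$ ($F$ isomorphic to the base elliptic curve) --- your two subcases exactly match the paper's split into ``$F$ is a sum of rulings'' versus ``$F$ is smooth''. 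The paper instead computes $H_{*}(S_{2}\setminus(F\cup\Sing S_{2}),\Z)$ via the affine-modification machinery of \cite[Lemma 2.4]{Nag1} and contradicts that computation ($H_{2}\cong\Z^{3}$ in the smooth case, $H_{1}\cong\Z^{B_{2}(F)+1}\neq 0$ in the ruling case). Your argument is the more elementary one given that Lemma \ref{lem:2.10} is already in place; the paper's pays for the heavier input by establishing the homology formula (\ref{eq:P1irrational}), which it then reuses in Lemmas \ref{lem:5.2} and \ref{lem:5.3}, so the extra work is not wasted in context. The only cosmetic point is that the dichotomy should be credited to \cite[Theorem 2.2]{H-W} rather than to the proof of Lemma \ref{lem:4.3}.
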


\begin{proof}
We note that \cite[Lemma 2.4]{Nag1} still holds if it is just assumed that the affine modification $N$ is an affine homology $3$-cell. 
Hence it leads to
\begin{align}\label{eq:P1irrational}
H_{i}(S_{2} \setminus (F \cup \Sing S_{2}), \Z)=
\begin{cases}
\Z & i=0, \\
\Z^{\sharp (C \cap(\Sing S_{2} \cup F))-1}&i=1,\\
\Z^{\sharp(\Sing (S_{2} \setminus F))}&i=3,\\
0 & \text{otherwise}
\end{cases}
\end{align}
by setting 
$X= \P^3 \setminus S_{1} \cong \A^{3}, S= S_{2} \setminus F$ and $r=C \setminus (C \cap S_{1})$.

By \cite[Theorem 2.2]{H-W}, we only have to exclude the case where $S_{2}$ is the cone over an elliptic curve. 
Conversely, suppose that $S_{2}$ is the cone over an elliptic curve.
Then $C$ is a ruling of $S_{2}$ by Lemma \ref{lem:3.5}.

If $F$ is smooth, then $H_{2}(S_{2} \setminus (F \cup \Sing S_{2}), \Z) \cong \Z^{3}$ since $S_{2} \setminus (F \cup \Sing S_{2})$ is homeomorphic to $\C^* \times F$, a contradiction with (\ref{eq:P1irrational}).
Hence $F$ is the sum of rulings.
Then $H_{1}(S_{2} \setminus (F \cup \Sing S_{2}), \Z) \cong \Z^{B_{2}(F)+1}$ since $S_{2} \setminus (F \cup \Sing S_{2})=S_{2} \setminus F$ is homeomorphic to $\A^{1} \times (R \setminus (F \cap R))$ for some smooth member $R \in |-K_{S_{2}}|$.
On the other hand, both $C \cap F$ and $C \cap \Sing S_{2}$ coincides with the vertex of $S_{2}$.
(\ref{eq:P1irrational}) now yields $B_{2}(F)+1=0$, a contradiction. 
Hence we have the assertion.
\end{proof} 

\begin{lem}\label{lem:5.2}
$S_{2} \setminus F$ is smooth.
\end{lem}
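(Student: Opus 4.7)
The plan is to argue by contradiction. Set $\Sigma \coloneqq \Sing S_{2} \setminus F$ and $Y^{0} \coloneqq S_{2} \setminus (F \cup \Sing S_{2})$, and suppose $\Sigma \neq \emptyset$. The goal is to contradict the vanishing $H_{2}(Y^{0}, \Z) = 0$ that falls out of \eqref{eq:P1irrational}. By Lemma \ref{lem:5.1}, $S_{2}$ is a normal rational cubic surface and therefore has only Du Val singularities; let $\tau \colon \wt S_{2} \to S_{2}$ denote the minimal resolution, so that $\wt S_{2}$ is a smooth projective rational surface and $\tau$ identifies $Y^{0}$ with $\wt S_{2} \setminus D$, where $D \coloneqq \wt F \cup E_{\tau}$.

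Next I would compute $H_{2}(Y^{0}, \Z) \cong H^{2}(\wt S_{2}, D; \Z)$ via Poincar\'e--Lefschetz duality. Since $H^{1}(\wt S_{2}, \Z) = 0$ by rationality, the long exact sequence of the pair $(\wt S_{2}, D)$ produces a short exact sequence
\[
0 \to H^{1}(D; \Z) \to H^{2}(\wt S_{2}, D; \Z) \to \ker\!\bigl(H^{2}(\wt S_{2}) \to H^{2}(D)\bigr) \to 0.
\]
Vanishing of $H_{2}(Y^{0}, \Z)$ therefore forces both (i) $H^{1}(D; \Z) = 0$, so $D$ is a disjoint union of trees of rational curves, and (ii) the classes of irreducible components of $D$ span $H^{2}(\wt S_{2}; \Q)$ with respect to the intersection pairing. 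The analogous computation for $H_{3}$ gives $H_{3}(Y^{0}, \Z) \cong \Z^{c-1}$ where $c \coloneqq \sharp\pi_{0}(D)$; comparing with $H_{3}(Y^{0}, \Z) = \Z^{\sharp\Sigma}$ from \eqref{eq:P1irrational} yields $c = \sharp\Sigma + 1$, so each $p \in \Sigma$ corresponds to a connected component of $D$ disjoint from $\wt F$.

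The key observation is then that each such disjoint component $E_{\tau}^{(p)}$ with $p \in \Sigma$ is a Dynkin tree of $(-2)$-curves, whose classes span a negative-definite sublattice of $H^{2}(\wt S_{2}; \Q)$ orthogonal to $\tau^{\ast}H^{2}(S_{2}; \Q)$ and to every other irreducible component of $D$. The spanning condition (ii) therefore reduces to a spanning condition for the components of $F$ in $H^{2}(S_{2}; \Q)$, independently of $\Sigma$. Combining this with the connectivity identity $c = \sharp \Sigma + 1$, the tree condition (i), and the constraint that $F$ is a plane cubic in $S_{1} \cong \P^{2}$, I would go through the classification of normal rational cubic surfaces in \S\ref{sec:cubicsurface} and check that the existence of an extra connected Dynkin tree of $(-2)$-curves disjoint from $\wt F$ is incompatible with these numerical and positional constraints. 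The main obstacle is this concluding case-by-case verification over the singularity types in Tables \ref{tb:norm} and \ref{tb:sing} and the possible shapes of the plane cubic $F$, which has to be carried out carefully to ensure that no configuration slips past the argument.
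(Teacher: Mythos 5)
Your Lefschetz-duality reductions are fine, but the conditions you extract from $H_{2}$ and $H_{3}$ of $Y^{0}=S_{2}\setminus(F\cup\Sing S_{2})$ do not suffice to force $\Sigma=\emptyset$, so the deferred case-by-case verification is not a routine check: it would fail. Indeed, as you yourself observe, your spanning condition (ii) becomes independent of $\Sigma$ once the orthogonal Dynkin summands over $\Sigma$ are split off, and an extra Dynkin chain is a perfectly good tree, so condition (i) and the count $c=\sharp\Sigma+1$ cannot detect it either. A concrete configuration satisfying everything on your list with $\Sigma\neq\emptyset$: take $S_{2}=G_{3}$ and $S_{1}=\{t=0\}$, so that $F=3\{y=t=0\}$ is a triple line through exactly two of the three $A_{2}$-points of $G_{3}$ (Table \ref{tb:sing}). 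Then $D$ is a disjoint union of two trees of rational curves (a chain $A_{2}$--$\wt F$--$A_{2}$ and the $A_{2}$-chain over the third point), $c=2=\sharp\Sigma+1$, and the seven components of $D$ have block-diagonal Gram matrix of determinant $\pm 9\neq 0$, hence span the $7$-dimensional space $H^{2}(\wt S_{2};\Q)$. What you discarded is the torsion: \eqref{eq:P1irrational} also gives that $H_{1}(Y^{0},\Z)\cong\Coker\bigl(H^{2}(\wt S_{2};\Z)\to H^{2}(D;\Z)\bigr)$ is \emph{free}, which, since $H^{2}(\wt S_{2};\Z)$ is unimodular, is equivalent to the components of $D$ generating $H^{2}(\wt S_{2};\Z)$ over $\Z$ and not merely over $\Q$. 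Granting that, the Dynkin lattice over any $p\in\Sigma$ would be a unimodular orthogonal direct summand, forcing it to be $E_{8}$, which is impossible because a negative definite sublattice of a lattice of signature $(1,6)$ has rank at most $6$; in the $G_{3}$ example above the cokernel is $\Z/3\Z$, not free. With this extra input your global lattice-theoretic route does close up.

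For comparison, the paper argues locally rather than globally: a Mayer--Vietoris sequence with contractible neighbourhoods of the Du Val points shows that $H_{1}$ of the link of each point of $\Sing(S_{2}\setminus F)$ is free, hence trivial, hence the local fundamental group is perfect; by Brieskorn and Brenton--Drucker the singularity must then be of type $E_{8}$, which is eliminated by $9=\eu(\wt S_{2})\geq\eu(S_{2})+8n\geq 3+8n$. The two routes exploit the same obstruction --- $E_{8}$ is the unique Du Val singularity with unimodular resolution lattice, equivalently with perfect local fundamental group --- but without the freeness of $H_{1}(Y^{0},\Z)$ (or the local $\pi_{1}$ computation) your argument has a genuine hole.
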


\begin{proof}
Set $S \coloneqq S_{2} \setminus F$ and $\Sing S \coloneqq \{p_1, \dots, p_n\}$ $(n \geq 0)$.
For $1 \leq i \leq n$, $p_{i} \in S$ is a DuVal singularity and hence we can take an analytically open neighborhood $U_{i}$ of $p_{i}$ homeomorphic to the quotient of $\C^{2}$ by a finite group action.
In particular $U_{i}$ is contractible by \cite{KPTR89}.
We may also assume that $U_{i} \cap U_{j} =\emptyset$ whenever $i \neq j$.
Then the Mayer-Vietoris exact sequence for the open covering $\{ S \setminus \Sing S, \bigcup_{i=1}^{n} U_{i}\}$ gives the following exact sequence of homologies:
\begin{align}\label{diag:smooth}
\xymatrix@C=15pt@R=15pt{
&\cdots \ar[r] 
&{H_{2}(S \setminus \Sing S, \Z)}  \ar [r]
&H_{2}(S, \Z) 
\\
\ar [r] 
&{\bigoplus_{i=1}^{n}} {H_{1}(U_{i} \setminus \{p_i\}, \Z)} 
\ar [r]
&{H_{1}(S \setminus \Sing S, \Z)}  \ar [r]
& {\cdots}. 
}
\end{align}
As in the proof of Lemma \ref{lem:5.1}, \cite[Lemma 2.4]{Nag1} yields (\ref{eq:P1irrational}).
On the other hand, $H_{2}(S, \Z)$ is a free $\Z$-module since $S$ is an affine surface.
(\ref{diag:smooth}) now shows that, for each $i \geq 1$, $H_{1}(U_{i} \setminus \{p_i\}, \Z)$ is also a free $\Z$-module.
Hence $p_{i} \in S$ is the $E_8$-singularity for each $i \geq 1$ by \cite[Satz 2.8]{Brieskorn} and \cite[Theorem 1.4 (a)]{Brenton-Drucker}. 
Since 
$9 = \eu(\wt S_{2}) \geq \eu(S_{2})+8n \geq 3+8n,$
we obtain $n=0$ as desired.
\end{proof}

Next we discuss the intersection $F$.

\begin{lem}\label{lem:5.3}
It holds that $B_1(F)=0$ and $B_2(F)=B_2(S_{2}) + \sharp (C \cap F) -1$. 
Moreover, $B_2(S_{2}) \leq 3$.
\end{lem}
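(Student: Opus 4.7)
The plan is to combine Lefschetz duality on the smooth projective $4$-manifold $\wt S_{2}$ with the long exact sequence of the pair $(\wt S_{2},\wt S_{2}\setminus \wt F)$, where $\wt F := \mu^{-1}(F)$, to compute $B_{*}(\wt F)$, and then transfer this information to $F$ via Mayer--Vietoris for $\wt F = F_{\wt S_{2}}\cup E_{\mu}$. The final inequality $B_{2}(S_{2})\leq 3$ will then follow from the elementary bound $B_{2}(F)\leq 3$ for plane cubics.

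The needed inputs are all at hand. By Lemma \ref{lem:5.1}, $\wt S_{2}$ is a smooth projective rational surface, so $B_{*}(\wt S_{2})=(1,0,B_{2}(\wt S_{2}),0,1)$. By Lemma \ref{lem:5.2}, $\Sing S_{2}\subset F$; in particular $\mu$ restricts to an isomorphism $\wt S_{2}\setminus \wt F \xrightarrow{\sim} S_{2}\setminus F$, so (\ref{eq:P1irrational}) collapses to $B_{*}(S_{2}\setminus F)=(1,\sharp(C\cap F)-1,0,0)$. Moreover $F$ is connected as an ample hyperplane section of the normal projective surface $S_{2}$, and since $S_{2}$ is a normal cubic surface its singularities are Du Val, so each connected component of $E_{\mu}$ is a tree of $(-2)$-curves; in particular $B_{1}(E_{\mu})=0$ and $B_{2}(E_{\mu})=B_{2}(\wt S_{2})-B_{2}(S_{2})$. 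Lefschetz duality identifies $H_{k}(\wt S_{2},\wt S_{2}\setminus \wt F)\cong H^{4-k}(\wt F)$; feeding the above Betti numbers into the long exact sequence in degrees $2$ and $3$ forces $B_{1}(\wt F)=0$ and yields a short exact sequence
\begin{align*}
0\to \Z^{B_{2}(\wt S_{2})} \to \Z^{B_{2}(\wt F)} \to \Z^{\sharp(C\cap F)-1}\to 0,
\end{align*}
so $B_{2}(\wt F)=B_{2}(\wt S_{2})+\sharp(C\cap F)-1$.

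For the transfer to $F$, set $N := \sharp(F_{\wt S_{2}}\cap E_{\mu})$ and apply Mayer--Vietoris to the (connected) decomposition $\wt F = F_{\wt S_{2}}\cup E_{\mu}$. The segment of the sequence running through degrees $0$ and $1$, combined with $B_{1}(\wt F)=0$, gives both the injection $\Z^{B_{1}(F_{\wt S_{2}})}\hookrightarrow 0$ and a short exact sequence $0\to \Z^{N}\to \Z^{1+\sharp \Sing S_{2}}\to \Z\to 0$, so simultaneously $B_{1}(F_{\wt S_{2}})=0$ and $N=\sharp \Sing S_{2}$. Since $\Sing S_{2}\subset F$ forces each exceptional chain to meet $F_{\wt S_{2}}$ at least once, this second identity forces exactly one intersection per chain, and hence $\mu|_{F_{\wt S_{2}}}\colon F_{\wt S_{2}}\to F$ is a continuous bijection between compact Hausdorff spaces---thus a homeomorphism. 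Using the Mayer--Vietoris identity $B_{2}(\wt F)=B_{2}(F_{\wt S_{2}})+B_{2}(E_{\mu})$, one deduces $B_{1}(F)=0$ and $B_{2}(F)=B_{2}(S_{2})+\sharp(C\cap F)-1$. Finally, $F\subset \P^{2}$ is a cubic so $B_{2}(F)\leq 3$; since $\sharp(C\cap F)\geq 1$ by B\'ezout, we conclude $B_{2}(S_{2})\leq 3$. The main obstacle I expect is the Mayer--Vietoris bookkeeping, where the single vanishing $B_{1}(\wt F)=0$ must simultaneously produce $B_{1}(F_{\wt S_{2}})=0$ and the combinatorial identity $N=\sharp \Sing S_{2}$, which together make the bijectivity of $\mu|_{F_{\wt S_{2}}}$---and hence the transfer of $B_{*}$ from $F_{\wt S_{2}}$ back to $F$---possible.
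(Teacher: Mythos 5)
Your first half---Lefschetz duality together with the long exact sequence of the pair, giving $B_1(F_{\wt S_{2}}\cup E_{\mu})=0$ and $B_2(F_{\wt S_{2}}\cup E_{\mu})=B_2(\wt S_{2})+\sharp(C\cap F)-1$---is exactly the paper's computation (\ref{eq:FBetti1})--(\ref{eq:FBetti4}). The gap is in the transfer back to $F$. In your Mayer--Vietoris bookkeeping for $F_{\wt S_{2}}\cup E_{\mu}$ you take the degree-zero term to be $\Z^{1+\sharp\Sing S_{2}}$, i.e.\ you assume $F_{\wt S_{2}}$ is connected. That fails in cases that actually occur: when $F$ consists of three lines concurrent at the singular point of $S_{2}$ (case (L3) of Corollary \ref{cor:intersection}, realized for instance by $(\{y=0\},G_{9})$, where the $D_{4}$ point lies on all three components of $F$), the three strict transforms are pairwise disjoint in the minimal resolution, so $F_{\wt S_{2}}$ has three connected components. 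Writing $c$ for the number of components of $F_{\wt S_{2}}$, the identity that $B_1(F_{\wt S_{2}}\cup E_{\mu})=0$ actually forces is $\sharp(F_{\wt S_{2}}\cap E_{\mu})=c+\sharp\Sing S_{2}-1$, not $\sharp\Sing S_{2}$ (your claimed sequence $0\to\Z^{3}\to\Z^{2}\to\Z\to 0$ in the example above is not exact). Consequently ``one intersection per exceptional chain'' fails, $\mu|_{F_{\wt S_{2}}}$ is $3$-to-$1$ over the singular point in that example, and the homeomorphism $F_{\wt S_{2}}\cong F$ on which your equalities $B_{i}(F)=B_{i}(F_{\wt S_{2}})$ rest is false---the two spaces do not even have the same $B_{0}$.

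The conclusion can still be reached along your lines, but it needs an extra step you do not supply: the corrected identity says that the graph recording which points of $F_{\wt S_{2}}$ are identified by $\mu$ is a forest, so the quotient map $F_{\wt S_{2}}\to F$ changes $B_{0}$ but creates no new $1$-cycles, whence $B_{1}(F)=B_{1}(F_{\wt S_{2}})=0$ and $B_{2}(F)=B_{2}(F_{\wt S_{2}})$. The paper sidesteps this issue entirely: it uses the exact sequences of \cite[Lemma 2.5]{Nag1} relating $H^{*}(S_{2})$ to $H^{*}(F)\oplus H^{*}(\wt S_{2})$ and $H^{*}(F_{\wt S_{2}}\cup E_{\mu})$ (plus a second instance yielding $B_{1}(S_{2})=B_{3}(S_{2})=0$), which compares $F$ with $\wt S_{2}$ directly and never identifies $F$ with its strict transform. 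Your final step, $3\geq B_{2}(F)\geq B_{2}(S_{2})$, agrees with the paper.
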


\begin{proof}
First let us consider the following exact sequence of cohomologies:
\begin{align}\label{eq:FBetti1}
\xymatrix@C=15pt@R=5pt{
&\cdots \ar [r]&H^{1}(\wt S_{2}, \Z) \ar [r]
&H^{1}(F_{\wt S_{2}} \cup E_{\mu}, \Z)\\
\ar[r]
&H^{2}(\wt S_{2}, F_{\wt S_{2}} \cup E_{\mu}, \Z) \ar [r]
&H^{2}(\wt S_{2}, \Z) \ar [r]
&H^{2}(F_{\wt S_{2}} \cup E_{\mu}, \Z)\\
\ar[r]
&H^{3}(\wt S_{2}, F_{\wt S_{2}} \cup E_{\mu}, \Z) \ar [r]
&H^{3}(\wt S_{2}, \Z) \ar[r]
&\cdots.
}
\end{align}
Combining the Lefschetz duality, Lemma \ref{lem:5.2} and (\ref{eq:P1irrational}), we obtain 
\begin{align}\label{eq:FBetti2}
 H^{i}(\wt S_{2}, F_{\wt S_{2}} \cup E_{\mu}, \Z) 
\cong & H_{4-i}(\wt S_{2} \setminus (F_{\wt S_{2}} \cup E_{\mu})), \Z)\\
\cong & H_{4-i} (S_{2} \setminus F, \Z)\nonumber\\
\cong &
\begin{cases}
\Z &i=4\\
\Z^{\sharp (C \cap F)-1}&i=3\\
0& \mathrm{otherwise}.
\end{cases}
\nonumber
\end{align}
We also have $H^{1}(\wt S_{2}, \Z)=H^{3}(\wt S_{2}, \Z)=0$ since $S_{2}$ is rational. 
Combining these results, we conclude that 
\begin{align}\label{eq:FBetti4}
&B_{1}(F_{\wt S_{2}} \cup E_{\mu}) = 0,\ \ B_2(F_{\wt S_{2}} \cup E_{\mu}) = B_2(\wt S_{2}) + \sharp (C \cap F) -1.
\end{align} 

Next let us consider the following exact sequence of cohomologies given by \cite[Lemma 2.5]{Nag1}:
\begin{align}\label{eq:FBetti5}
\xymatrix@C=15pt@R=5pt{
0 \ar [r]
&H^{1}(S_{2}, \Z) \ar [r]
&H^{1}(\Sing S_{2}, \Z) \oplus H^{1}(\wt S_{2}, \Z) \ar[r] 
&H^{1}(E_{\mu}, \Z)
\\
\ar [r]
&H^{2}(S_{2}, \Z) \ar [r]
&H^{2}(\Sing S_{2}, \Z) \oplus H^{2}(\wt S_{2}, \Z) \ar[r] 
&H^{2}(E_{\mu}, \Z) 
\\
\ar[r]
&H^{3}(S_{2}, \Z) \ar[r]
&H^{3}(\Sing S_{2}, \Z) \oplus H^{3}(\wt S_{2}, \Z).
&
}
\end{align}
We note that the homomorphism $H^{2}(\wt S_{2}, \Z) \to H^{2}(E_{\mu}, \Z)$ has finite cokernel by the negative definiteness of $E_{\mu}$. Hence
\begin{align}\label{eq:FBetti6}
B_{1}(S_{2})=B_{3}(S_{2})=0.
\end{align}

\cite[Lemma 2.5]{Nag1} gives another exact sequence as follows:
\begin{align}\label{eq:FBetti7}
\xymatrix@C=15pt@R=5pt{
\cdots \ar [r]
&H^{1}(S_{2}, \Z) \ar [r]
&H^{1}({F}, \Z) \oplus H^{1}(\wt S_{2}, \Z) \ar[r] 
&H^{1}(F_{\wt S_{2}} \cup E_{\mu}, \Z)
\\
\ar [r]
&H^{2}(S_{2}, \Z) \ar [r]
&H^{2}({F}, \Z) \oplus H^{2}(\wt S_{2}, \Z) \ar[r] 
&H^{2}(F_{\wt S_{2}} \cup E_{\mu}, \Z) 
\\
\ar[r]
&H^{3}(S_{2}, \Z).
&
&
}
\end{align}
Combining $(\ref{eq:FBetti4})$, $(\ref{eq:FBetti6})$ and $(\ref{eq:FBetti7})$, we conclude that $B_1(F)=0$ and $B_2(F)=B_2(S_{2})+B_2(F_{\wt S_{2}} \cup E_{\mu}) - B_2(\wt S_{2})=B_2(S_{2}) + \sharp (C \cap F) -1$, which are the first and second assertions.
Since $3 \geq B_2(F) = B_2(S_{2}) + \sharp (C \cap F) -1 \geq B_{2}(S_{2})$, the third assertion follows.
\end{proof}

\begin{cor}\label{cor:projeq}
$S_{2}$ is projectively equivalent to $G_{i}$ for some $i \in \{1, \ldots, 12$, $14$, $15\}$ or $G_{13, p}$ for some $p \in T_{13}$.
\end{cor}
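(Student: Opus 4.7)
The plan is to observe that this statement is essentially an immediate consequence of the results already assembled in this section together with the Brundu--Logar classification recorded in Corollary \ref{cor:B-L}. By the running assumption of the section, $S_{2}$ is normal. By Lemma \ref{lem:5.1}, $S_{2}$ is rational. By Lemma \ref{lem:5.3}, $B_{2}(S_{2}) \leq 3$. Since $S_{2}$ is a cubic surface in $\P^{3}$ by the setup of Problem \ref{prob:main}, all the hypotheses of Corollary \ref{cor:B-L} are met, and applying that corollary yields the desired list of projective equivalence classes.

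There is essentially no obstacle here; the statement is a direct concatenation of already proven facts. The only thing worth verifying while writing up is that the three ingredients (normality, rationality, $B_{2}(S_{2})\le 3$) are all in place under the current assumptions of \S\ref{sec:normal}, and that Corollary \ref{cor:B-L} as stated genuinely covers all of $i\in\{1,\ldots,12,14,15\}$ and $p\in T_{13}$, which it does.

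Accordingly, the proof I would write is just two sentences: one invoking the hypotheses of the section together with Lemmas \ref{lem:5.1} and \ref{lem:5.3} to establish normality, rationality and the Betti number bound; and one applying Corollary \ref{cor:B-L} to conclude.
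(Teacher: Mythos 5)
Your proposal is correct and matches the paper's own proof, which simply combines Corollary \ref{cor:B-L} with Lemma \ref{lem:5.3} (with normality coming from the section's standing assumption and rationality from Lemma \ref{lem:5.1}, exactly as you note). Your write-up is just slightly more explicit about citing Lemma \ref{lem:5.1}, which is harmless.
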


\begin{proof}
Combining Corollary \ref{cor:B-L} and Lemma \ref{lem:5.3}, we have the assertion.
\end{proof}

\begin{cor}\label{cor:intersection}
One of the following holds:
\begin{enumerate}
\item[\textup{(CU)}] $F$ is a cuspidal cubic, and $(B_{2}(S_{2}), \sharp(C \cap F))=(1,1)$.
\item[\textup{(L1)}] $F$ is a non-reduced line of length three, and $(B_{2}(S_{2}), \sharp(C \cap F))=(1,1)$.
\item[\textup{(QL)}] $F$ is the sum of a smooth conic and its tangent line at a point, and $(B_{2}(S_{2}), \sharp(C \cap F))=(2,1)$ or $(1,2)$.
\item[\textup{(L2)}] $F$ is the sum of a line and a non-reduced line of length two, and $(B_{2}(S_{2}), \sharp(C \cap F))=(2,1)$ or $(1, 2)$.
\item[\textup{(L3)}] $F$ is the sum of three lines intersecting in one point, and $(B_{2}(S_{2}), \sharp(C \cap F))=(3,1), (2,2)$ or $(1, 3)$.
\end{enumerate}
\end{cor}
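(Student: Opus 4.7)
My plan is to translate this corollary into a topological question about plane cubics. By construction $F = S_{2}|_{S_{1}}$ is a divisor of degree three in $S_{1} \cong \P^{2}$, and Lemma \ref{lem:5.3} gives both $B_{1}(F) = 0$ and the identity $B_{2}(F) = B_{2}(S_{2}) + \sharp(C \cap F) - 1$ together with $B_{2}(S_{2}) \leq 3$. Since $S_{2}$ is rational by Lemma \ref{lem:5.1} we have $B_{2}(S_{2}) \geq 1$, and since $C \subset S_{2}$ is a positive-dimensional curve meeting the hyperplane $S_{1}$ we have $\sharp(C \cap F) = \sharp(C \cap S_{1}) \geq 1$. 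Thus the problem reduces to first listing all plane cubics whose (reduced) topological structure has $B_{1} = 0$, and then reading off which pairs $(B_{2}(S_{2}), \sharp(C \cap F))$ are compatible with the above formula and bounds.

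Next I would run through all projective equivalence classes of plane cubics and compute $B_{1}$ of the underlying reduced space. A smooth cubic has $B_{1} = 2$ and an irreducible nodal cubic has $B_{1} = 1$ (its normalization is $\P^{1}$ with two points identified). For reducible reduced cubics, a smooth conic meeting a line transversally in two points and three distinct lines in triangle position both have $B_{1} = 1$ by Mayer--Vietoris. All such configurations are excluded. The surviving possibilities are exactly: an irreducible cuspidal cubic (homeomorphic to $\P^{1}$, hence $B_{2}=1$), a non-reduced line of length three (reduced locus a single $\P^{1}$, $B_{2}=1$), a smooth conic plus its tangent line at one point (two spheres wedged at a single point, $B_{2}=2$), a line together with a non-reduced line of length two (two $\P^{1}$'s crossing at one point, $B_{2}=2$), and three concurrent lines (three $\P^{1}$'s sharing a single common point, $B_{2}=3$). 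These are precisely the cases (CU), (L1), (QL), (L2) and (L3).

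Finally, substituting each value of $B_{2}(F)$ into $B_{2}(F) = B_{2}(S_{2}) + \sharp(C \cap F) - 1$ subject to $1 \leq B_{2}(S_{2}) \leq 3$ and $\sharp(C \cap F) \geq 1$ yields exactly the numerical pairs stated: $(1,1)$ in (CU) and (L1); $(2,1)$ or $(1,2)$ in (QL) and (L2); and $(3,1)$, $(2,2)$ or $(1,3)$ in (L3).

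The only delicate step is the Betti-number bookkeeping for reducible configurations: one must carefully distinguish concurrent from non-concurrent triples of lines, and verify that for non-reduced cubics the underlying reduced space is what determines the $B_{i}$ (so the scheme-theoretic multiplicity is numerically invisible). Once that is in place the rest is pure arithmetic, so I expect no real obstacle beyond this topological check.
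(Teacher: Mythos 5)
Your proposal is correct and is essentially the paper's argument: the paper's proof of this corollary is literally "Since $F$ is a plane cubic, the assertion follows from Lemma \ref{lem:5.3}," and you have simply spelled out the implicit enumeration of plane cubics with $B_{1}=0$ together with the arithmetic from $B_{2}(F)=B_{2}(S_{2})+\sharp(C\cap F)-1$, $1\leq B_{2}(S_{2})\leq 3$ and $\sharp(C\cap F)\geq 1$. The topological bookkeeping (cuspidal cubic, triple line, tangent conic plus line, line plus double line, three concurrent lines being exactly the cubics with $B_{1}=0$ of the underlying reduced space) is carried out correctly.
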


\begin{proof}
Since $F$ is a plane cubic, the assertion follows from Lemma \ref{lem:5.3}.
\end{proof}

\begin{defi}
Let $\mathcal{L}(F_{\wt S_{2}}+E_{\mu})$ be the free $\Z$-module generated by irreducible components of $F_{\wt S_{2}}+E_{\mu}$.
The group homomorphism $\Theta: \mathcal{L}(F_{\wt S_{2}}+E_{\mu}) \rightarrow \mathrm{Pic}(\wt S_{2})$ is given as the quotient morphism by the linear equivalence.
\end{defi}

\begin{lem}\label{lem:theta}
The morphism $\Theta$ is surjective.
\end{lem}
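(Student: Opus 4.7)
The plan is to reduce the surjectivity of $\Theta$ to the vanishing of $\mathrm{Pic}(S_{2}\setminus F)$ via the standard Picard-excision sequence, and then compute this Picard group using topological input already collected in the proof of Lemma \ref{lem:5.1}.

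First I would invoke the excision exact sequence for divisor class groups of a smooth variety: for any smooth variety $X$ and any pure-codimension-one closed subset $Z=\bigcup_{i}\Gamma_{i}$ one has
\begin{equation*}
\mathcal{L}(Z)\xrightarrow{\Theta}\mathrm{Pic}(X)\to\mathrm{Pic}(X\setminus Z)\to 0.
\end{equation*}
Applying this with $X=\wt S_{2}$ and $Z=F_{\wt S_{2}}\cup E_{\mu}$, and noting that Lemma \ref{lem:5.2} implies $\Sing S_{2}\subset F$, so that $\mu$ identifies $\wt S_{2}\setminus(F_{\wt S_{2}}\cup E_{\mu})$ with $S_{2}\setminus F$, the problem reduces to showing $\mathrm{Pic}(S_{2}\setminus F)=0$.

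For this vanishing, I would observe that $S_{2}\setminus F=S_{2}\setminus(S_{2}\cap S_{1})$ is a closed subvariety of $\P^{3}\setminus S_{1}\cong\A^{3}$, and smooth by Lemma \ref{lem:5.2}; hence it is a smooth affine, and in particular Stein, complex variety. The exponential exact sequence then provides an isomorphism $c_{1}\colon \mathrm{Pic}(S_{2}\setminus F)\xrightarrow{\sim}H^{2}(S_{2}\setminus F,\Z)$. Finally, \eqref{eq:P1irrational} from the proof of Lemma \ref{lem:5.1} gives $H_{2}(S_{2}\setminus F,\Z)=0$ while $H_{1}(S_{2}\setminus F,\Z)$ is free, and the universal coefficient theorem then yields $H^{2}(S_{2}\setminus F,\Z)=0$.

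The only real subtleties are the justification that $\wt S_{2}\setminus(F_{\wt S_{2}}\cup E_{\mu})=S_{2}\setminus F$, which is immediate from $\Sing S_{2}\subset F$, and the identification $\mathrm{Pic}(S_{2}\setminus F)\cong H^{2}(S_{2}\setminus F,\Z)$, which is standard once one remarks that $S_{2}\setminus F$ is a smooth Stein space. Modulo these two verifications the argument is formal, so no serious obstacle is expected.
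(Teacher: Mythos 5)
Your overall strategy is the same as the paper's: both proofs reduce the surjectivity of $\Theta$ to the topological vanishing extracted from \eqref{eq:P1irrational} in the proof of Lemma \ref{lem:5.1} (namely $H_{2}(S_{2}\setminus F,\Z)=0$ and $H_{1}(S_{2}\setminus F,\Z)$ torsion-free), and your first two steps --- the excision sequence $\mathcal{L}(Z)\to\Pic(\wt S_{2})\to\Pic(\wt S_{2}\setminus Z)\to 0$ and the identification $\wt S_{2}\setminus(F_{\wt S_{2}}\cup E_{\mu})\cong S_{2}\setminus F$ via Lemma \ref{lem:5.2} --- are correct and are a clean algebraic repackaging of what the paper does with the homology sequence of the pair $(\wt S_{2},F_{\wt S_{2}}\cup E_{\mu})$.

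The gap is in the last step. The exponential sequence on a Stein space computes the \emph{analytic} Picard group: Cartan's Theorem B gives $H^{1}(\mathcal{O}^{\mathrm{an}})=H^{2}(\mathcal{O}^{\mathrm{an}})=0$ and hence $\Pic^{\mathrm{an}}(S_{2}\setminus F)\cong H^{2}(S_{2}\setminus F,\Z)$. But the group you need to kill is the \emph{algebraic} Picard group (equivalently $\Cl(S_{2}\setminus F)$, which is what the excision sequence produces), and for a smooth affine variety the natural map $\Pic^{\mathrm{alg}}\to\Pic^{\mathrm{an}}$ is in general neither surjective nor injective. A standard counterexample to injectivity is a once-punctured elliptic curve $E\setminus\{p\}$: it is smooth affine, every holomorphic line bundle on an open Riemann surface is trivial and $H^{2}(E\setminus\{p\},\Z)=0$, yet $\Cl(E\setminus\{p\})\cong\Pic^{0}(E)\neq 0$. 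So "Stein $+$ $H^{2}=0$" does not by itself yield $\Pic(S_{2}\setminus F)=0$. The vanishing is nevertheless true here, but the honest justification uses the rationality of $\wt S_{2}$: one has $\Pic(\wt S_{2})\cong H^{2}(\wt S_{2},\Z)\cong H_{2}(\wt S_{2},\Z)$, the cycle class map identifies $\Theta$ with the map $H_{2}(F_{\wt S_{2}}\cup E_{\mu},\Z)\to H_{2}(\wt S_{2},\Z)$, and the long exact sequence of the pair together with Lefschetz duality and \eqref{eq:P1irrational} shows that this map is surjective (equivalently, $\Coker\Theta$ injects into $H^{2}(S_{2}\setminus F,\Z)=0$). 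That repaired bridge is precisely the paper's proof, so once you fix this step your argument collapses into the published one rather than giving an independent shortcut.
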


\begin{proof}
We have the following exact sequence of homologies:
\begin{align}\label{eq:FBetti8}
\xymatrix@C=15pt@R=15pt{
H_{2}(F_{\wt S_{2}} \cup E_{\mu}, \Z) \ar [r]
&H_{2}(\wt S_{2} , \Z) \ar [r]
&H_{2}(\wt S_{2} , F_{\wt S_{2}} \cup E_{\mu}, \Z). 
}
\end{align}
We note that $H_{2}(F_{\wt S_{2}} \cup E_{\mu}, \Z) \cong \mathcal{L}(F_{\wt S_{2}} + E_{\mu})$.
Combining the universal coefficient theorem and the rationality of $\wt S_{2}$, we obtain
\begin{align}\label{eq:FBetti9}
H_{2}(\wt S_{2} , \Z) \cong H^2(\wt S_{2} , \Z) \cong \mathrm{Pic}(\wt S_{2}).
\end{align}
Combining the universal coefficient theorem and (\ref{eq:FBetti2}), we have 
\begin{align}\label{eq:FBetti10}
H_{2}(\wt S_{2} , F_{\wt S_{2}} \cup E_{\mu}, \Z)
\cong& \Z^{B_{2}(S_{2} \setminus F)} \oplus \{\text{torsion part of }H_{1}(S_{2} \setminus F, \Z)\}\\
\cong& 0.\nonumber
\end{align}
Therefore we can rewrite (\ref{eq:FBetti8}) as
\begin{align}\label{eq:FBetti11}
\xymatrix@C=15pt@R=15pt{
\mathcal{L}(F_{\wt S_{2}} + E_{\mu}) \ar [r]
&\Pic (\wt S_{2}) \ar [r]
&0,
}
\end{align}
which shows the surjectivity of $\Theta$.
\end{proof}

Now we can prove that $(S_{1}, S_{2})$ is projectively equivalent to one of the pairs listed in the case \textup{(d)} or \textup{(e)} of Theorem \ref{thm:main3tuple}.

\begin{prop}\label{prop:normB21}
Suppose that $B_{2}(S_{2})=1$.
Then $(S_{1}, S_{2})$ is projectively equivalent to 
$(\{y=0\}, G_{1})$ or $(\{y=0\}, G_{2})$.
\end{prop}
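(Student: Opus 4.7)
The plan proceeds in three stages: pin down $S_{2}$ up to projective equivalence, determine the shape of $F=S_{2}\cap S_{1}$, and then identify the hyperplane $S_{1}$.

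\textbf{Stage 1.} By Corollary~\ref{cor:projeq}, $S_{2}$ is projectively equivalent to some $G_{i,p}$ in the list. For a normal cubic with DuVal singularities, $B_{2}(S_{2})=7-r$, where $r$ is the total rank of the Dynkin diagrams of its singularities: indeed $B_{2}(\widetilde S_{2})=7$ and the minimal resolution contracts precisely $r$ exceptional $(-2)$-curves. Reading Table~\ref{tb:sing}, the equality $B_{2}(S_{2})=1$ holds exactly for $G_{1}$ (with $E_{6}$), $G_{2}$ (with $A_{5}+A_{1}$), and $G_{3}$ (with $3A_{2}$).

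\textbf{Stage 2.} By Corollary~\ref{cor:intersection} and $B_{2}(S_{2})=1$, the divisor $F$ falls into case (CU), (L1), (QL) with $\sharp(C\cap F)=2$, (L2) with $\sharp(C\cap F)=2$, or (L3) with $\sharp(C\cap F)=3$. Any line component of $F$ is a line of $S_{2}$, so Table~\ref{tb:line} limits the possibilities: $G_{1}$ contains only $\langle y,z\rangle$, ruling out (L2) and (L3); the two lines of $G_{2}$ both lie in the plane $\{y=0\}$, ruling out (L3); and the three lines of $G_{3}$ all lie in $\{y=0\}$ but form a triangle, so $F=G_{3}\cap\{y=0\}=\{xzt=0\}$ is a union of three non-concurrent lines with $B_{1}(F)=1$, contradicting Lemma~\ref{lem:5.3}. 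Therefore, if $S_{2}\simeq G_{3}$ then $S_{1}\ne\{y=0\}$, and $F$ contains at most one line of $G_{3}$.

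\textbf{Stage 3.} For $G_{1}$, case (L1) forces $F=3\langle y,z\rangle$. The gradient of $xy^{2}+yt^{2}+z^{3}$ at a generic smooth point $[a:0:0:b]$ of $\langle y,z\rangle$ is proportional to $(0,1,0,0)$, so the unique hyperplane realizing such a triple tangency is $\{y=0\}$, and indeed $G_{1}\cap\{y=0\}=3\langle y,z\rangle$. The remaining candidates (CU) and (QL) for $G_{1}$ are excluded by combining Lemmas~\ref{lem:G1cusp} and~\ref{lem:G1curve} (which enumerate the divisor classes of smooth rational and cuspidal cubics in $\widetilde G_{1}$) with Lemma~\ref{lem:2.10}: the allowed $(N_{1},N_{2},N_{1\cap 2})$ fail to match the topological data of $F$. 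The same template, using Lemma~\ref{lem:G2curve}, forces $S_{1}=\{y=0\}$ for $G_{2}$, the intersection $G_{2}\cap\{y=0\}=\{xz^{2}=0\}$ realizing case (L2). Finally, to rule out $G_{3}$ one works out the analogue of Lemmas~\ref{lem:G1curve}--\ref{lem:G5curve} via the $3A_{2}$ minimal resolution of $G_{3}$, and checks, for each remaining sub-case on $S_{1}$ (i.e.\ for each (CU), (L1), (QL), (L2) configuration with at most one line of $G_{3}$ in $S_{1}$), that no smooth rational $C\subset G_{3}$ can satisfy Lemma~\ref{lem:2.10}.

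The main obstacle is this last stage: excluding $G_{3}$ and the non-(L1)/(L2) candidates for $G_{1}$ and $G_{2}$ requires a systematic enumeration of smooth rational curves on each candidate cubic and a case-by-case verification that the topological constraints coming from Lemma~\ref{lem:2.10} are violated.
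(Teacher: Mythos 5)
Your overall skeleton matches the paper's: reduce to $G_{1}, G_{2}, G_{3}$ via Corollary~\ref{cor:projeq} and Table~\ref{tb:sing}, run through the five shapes of $F$ from Corollary~\ref{cor:intersection}, and use Table~\ref{tb:line} to pin down $S_{1}$. However, there are two genuine gaps. The first is your claimed mechanism for excluding the cases (CU) and (QL) for $G_{1}$. You assert that Lemmas~\ref{lem:G1cusp} and~\ref{lem:G1curve} combined with Lemma~\ref{lem:2.10} yield a contradiction because ``the allowed $(N_{1},N_{2},N_{1\cap 2})$ fail to match the topological data of $F$.'' They do not fail: with $\eu(S_{2})=3$ and $p_{a}(C)=0$, Lemma~\ref{lem:2.10}(1) gives $\eu(F)=1+N_{1}+N_{2}-N_{1\cap 2}$, and for (CU) the triples $(1,0,0)$, $(1,1,1)$ and for (QL) the triples $(2,0,0)$, $(2,1,1)$ are all admissible, so the Euler-number bookkeeping is consistent and no contradiction arises. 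The paper needs two different, finer tools here: for (CU), Lemma~\ref{lem:G1cusp} gives $F_{\wt S_{2}}\sim H$, and then $H$ together with the six $(-2)$-classes generates only an index-$3$ subgroup of $\Pic(\wt S_{2})$, contradicting the surjectivity of $\Theta$ (Lemma~\ref{lem:theta}), which is itself a consequence of $H_{2}(\wt S_{2}, F_{\wt S_{2}}\cup E_{\mu},\Z)=0$ and not of any Euler-number count; for (QL), one identifies $S_{2}\setminus F\cong \A^{1}\times\C^{*}$ with second projection induced by $|H-E_{1}|$, invokes the Kaliman--Zaidenberg $H_{1}$-isomorphism to force $(C_{\wt S_{2}}\cdot H-E_{1})=1$, and contradicts Lemma~\ref{lem:G1curve}, which gives $(C_{\wt S_{2}}\cdot H-E_{1})=0$ for both admissible classes $E_{6}$ and $H-E_{1}$. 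Neither argument is recoverable from the data you cite.

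The second gap is the exclusion of $G_{3}$, which you explicitly leave as ``the main obstacle'' requiring a new enumeration of smooth rational curves on $G_{3}$ analogous to Lemmas~\ref{lem:G1curve}--\ref{lem:G5curve}. That enumeration is both unexecuted and unnecessary (and, given the first gap, it is unclear it would even close the argument via Lemma~\ref{lem:2.10}). The paper disposes of $G_{3}$ with no curve theory at all: by Lemma~\ref{lem:5.2} the Cartier divisor $F$ must be singular at every point of $\Sing S_{2}$, and $\Sing G_{3}$ consists of three non-collinear points; but $\Sing F$ is a single point in cases (CU) and (QL), a single line in case (L1), and case (L2) requires two distinct lines of $G_{3}$, which forces $S_{1}=\{y=0\}$ and hence the triangle $F=\{xzt=0\}$, already excluded by Lemma~\ref{lem:5.3} (or by Corollary~\ref{cor:intersection}). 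The same singular-locus observation also shortens your treatment of (CU), (QL), (L1) for $G_{2}$. In short: the answer and the outer case division are right, but the two hardest exclusions rest on tools ($\Coker\Theta$ and the $H_{1}$-isomorphism from affine modification theory) that your proposal replaces with an Euler-number argument that does not suffice.
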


\begin{proof}
We may assume that $S_{2}=G_{i}$ for some $i \in \{1,2,3\}$ by Corollary \ref{cor:projeq}
and need only consider the five cases as in Corollary \ref{cor:intersection}.
Throughout the proof, we follow Notation \ref{nota:normrat}.

Suppose that the case (CU) holds.
Then Lemma \ref{lem:5.2} shows that the point $\Sing F$ contains $\Sing S_{2}$, which implies  $S_{2}=G_{1}$. 
Lemma \ref{lem:G1cusp} now yields $F_{\wt S_{2}} \sim H$. 
Hence $\Coker \Theta$ equals
\begin{align}
& \frac{\Z[H] \oplus \bigoplus_{i=1}^{6}\Z[E_{i}]}
{
\left(
\begin{array}{l}
H, H-E_{1}-E_{2}-E_{3}, E_{1}-E_{2}, \\
E_{2}-E_{3}, E_{3}-E_{4}, E_{4}-E_{5}, E_{5}-E_{6}
\end{array}
\right)
}
\cong \frac{\Z[E_{1}]}{(-3E_{1})}\cong  \Z/3\Z, \nonumber 
\end{align}
a contradiction with Lemma \ref{lem:theta}.

Suppose that the case (L1) holds.
Then Lemma \ref{lem:5.2} shows that 
$(S_{2}, F)=(G_{1},  3\la y, z \ra )$ or $(G_{2},  3\la y, z \ra )$. 
However it is easy to see that there is no element of $|-K_{G_{2}}|$ whose support is $\la y, z \ra $.
Hence the former holds and $(S_{1}, S_{2})=(\{y=0\}, G_{1})$. 

Suppose that the case (QL) holds.
Then Lemma \ref{lem:5.2} shows that the point $\Sing F$ contains $\Sing S_{2}$, which implies  $S_{2}=G_{1}$.
Lemma \ref{lem:G1curve} now yields that $F_{\wt S_{2}}$ is the sum of $E_{6}$ and a member of $|H-E_{1}|$.
Moreover, an easy computation shows that $S_{2} \setminus F \cong \wt S_{2} \setminus (F_{\wt S_{2}} \cup E_{\mu}) \cong \A^{1} \times \C^{*}$ and the $\P^{1}$-fibration on $\wt S_{2}$ associated with $|H-E_{1}|$ induces the second projection of $\A^{1} \times \C^{*}$.
On the other hand, \cite[Theorem 3.1]{K-Z} shows that the inclusion $C \setminus (C \cap S_{1}) \hookrightarrow S_{2} \setminus (S_{2} \cap S_{1})$ induces an isomorphism $H_{1}(C \setminus (C \cap S_{1}), \Z) \cong H_{1}(S_{2} \setminus (S_{2} \cap S_{1}), \Z)$.
Hence $(C_{\wt S_{2}} \cdot H-E_{1})=1$.
However, Lemma \ref{lem:G1curve} yields  $(C_{\wt S_{2}} \cdot H-E_{1})=0$, a contradiction.

Suppose that the case (L2) holds.
Then $S_{2} \neq G_{1}$ since $G_{1}$ has exactly one line.
Moreover, $S_{2} \neq G_{3}$ since all the three lines in $G_{3}$ are coplanar.
Hence $S_{2} =G_{2}$.
Since $G_{2}$ has exactly two lines $\la y, z \ra$ and $\la x, y \ra$, we conclude that $(S_{1}, S_{2}) =  (\{y=0\}, G_{2})$.

We note that the case (L3) cannot occur since both $G_{1}, G_{2}$ and $G_{3}$ do not contain three lines intersecting in one point.

Combining these results, we complete the proof.
\end{proof}

\begin{prop}\label{prop:normB22}
Suppose that $B_{2}(S_{2})=2$.
Then $(S_{1}, S_{2})$ is projectively equivalent to $(\{y=0\}, G_{4})$, $(\{z= \gamma y\}, G_{5})$ for some $\gamma \in \P^{1}$ or $(\{t=0\}, G_{6})$.
\end{prop}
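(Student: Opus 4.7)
The plan is to mimic the argument of Proposition~\ref{prop:normB21}. By Corollary~\ref{cor:projeq} combined with the formula $B_{2}(S_{2})=7-\sum(\text{rank of Du Val singularity})$ read off from Table~\ref{tb:sing}, the hypothesis $B_{2}(S_{2})=2$ forces $S_{2}$ to be projectively equivalent to one of $G_{4}, G_{5}, G_{6}, G_{7}, G_{8}$. By Corollary~\ref{cor:intersection}, the plane cubic $F$ falls into one of cases (QL), (L2), (L3), with $\sharp(C\cap F)$ equal to $1,1,2$ respectively.

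For each pair $(S_{2},\text{case})$ I would enumerate all admissible $(S_{1},F)$ by first invoking Lemma~\ref{lem:5.2}, which forces $F\supset\Sing S_{2}$. Combined with the explicit singular points (Table~\ref{tb:sing}) and the explicit lists of lines in each $G_{i}$ (Table~\ref{tb:line}), this narrows the candidate planes $S_{1}$ to a short explicit list in each case, since in (QL) and (L2) the line components of $F$ must be lines of $S_{2}$, and in (L3) the three concurrent lines of $F$ must all be lines of $S_{2}$ and must pass through a common point containing $\Sing S_{2}$.

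For each surviving candidate I would then apply the topological obstruction from Lemma~\ref{lem:theta}: using Lemmas~\ref{lem:G4curve}--\ref{lem:G5curve} (and direct analogues for $G_{6},G_{7},G_{8}$ by the same method of intersection with $(-2)$-curves) to compute the class $F_{\wt S_{2}}$, together with Table~\ref{tb:negativecurve} for the $(-2)$-curves, I form $\Coker\Theta$ exactly as in the (CU) part of Proposition~\ref{prop:normB21}, and eliminate every candidate for which this cokernel is non-zero. In the (L3) case where $\sharp(C\cap F)=2$, I further use the homology identification from \cite[Theorem~3.1]{K-Z} to impose a numerical constraint on the intersections of $C_{\wt S_{2}}$ with the components of $F_{\wt S_{2}}$, and match it against the admissible classes listed in Lemmas~\ref{lem:G4curve}--\ref{lem:G5curve}.

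The candidates that survive all these tests should be precisely the three pairs in the statement. The main obstacle will be the systematic exclusion of $G_{7}$ and $G_{8}$: each has three singular points which turn out to be coplanar, so essentially a single candidate plane $S_{1}$ is forced, but the induced $F$ then becomes a triangle of three lines rather than three concurrent lines, killing (L3), while the length of the list of lines in Table~\ref{tb:line} simultaneously excludes the (QL) and (L2) possibilities. Carrying out this bookkeeping, together with the cokernel-of-$\Theta$ computation that prunes the stray sub-cases inside $G_{4}, G_{5}, G_{6}$, constitutes the technical heart of the proof.
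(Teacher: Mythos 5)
Your proposal is correct and follows essentially the same route as the paper: reduce to $G_{4},\dots,G_{8}$ via Corollary \ref{cor:projeq}, run through the cases (QL), (L2), (L3) of Corollary \ref{cor:intersection}, constrain $S_{1}$ by Lemma \ref{lem:5.2} together with Tables \ref{tb:sing} and \ref{tb:line}, and eliminate the remaining stray candidates by computing $\Coker \Theta$ from Lemma \ref{lem:theta} using Lemmas \ref{lem:G4curve}, \ref{lem:G5curve} and Table \ref{tb:negativecurve}. The only cosmetic difference is that the paper disposes of $G_{6}$, $G_{7}$, $G_{8}$ by elementary incidence arguments (using that $\Sing S_{2} \subset \Sing F$, so in (QL) and (L3) the surface can have only one singular point and in (L2) its singular points must be collinear on the double line), so no $(-2)$-curve analysis beyond $G_{4}$ and $G_{5}$ is actually needed.
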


\begin{proof}
We may assume that $S_{2}=G_{i}$ for some $i \in \{4,5,6,7,8\}$ by Corollary \ref{cor:projeq}
and need only consider the cases (QL), (L2) and (L3) as in Corollary \ref{cor:intersection}.
Throughout the proof, we follow Notation \ref{nota:normrat}.

Suppose that the case (QL) holds.
Take $L$ and $Q$ as the line and the conic respectively such that $F=L + Q$.
Then Lemma \ref{lem:5.2} shows that $L \cap Q \supset \Sing S_{2}$ and hence $S_{2}=G_{4}$ or $G_{5}$. 

Suppose that $S_{2} = G_{4}$ in addition. 
Since $\mu^{*}(F)-L_{\wt S_{2}}-Q_{\wt S_{2}}$ must be a sum of $(-2)$-curves, Table \ref{tb:negativecurve} shows that $(L_{\wt S_{2}}, Q_{\wt S_{2}}) \sim (E_{4}, H-E_{1})$ or $(E_{j}, H-E_{j})$ for some $j \in \{5, 6\}$.
If the former holds, then $\Coker \Theta$ equals
\begin{align}
& \frac{\Z[H] \oplus \bigoplus_{i=1}^{6}\Z[E_{i}]}
{
\left(
\begin{array}{l}
E_{4}, H-E_{1}, H-E_{1}-E_{2}-E_{3}, E_{1}-E_{2}, \\
E_{2}-E_{3}, E_{3}-E_{4}, H-E_{1}-E_{5}-E_{6}
\end{array}
\right)
}
\cong  \frac{\Z[E_{5}] \oplus \Z[E_{6}]}{(-E_{5}-E_{6})}\cong  \Z. \nonumber 
\end{align}
If the latter holds, then 
$\Coker \Theta$ equals
\begin{align}
& \frac{\Z[H] \oplus \bigoplus_{i=1}^{6}\Z[E_{i}]}
{
\left(
\begin{array}{l}
E_{j}, H-E_{j}, H-E_{1}-E_{2}-E_{3}, E_{1}-E_{2}, \\
E_{2}-E_{3}, E_{3}-E_{4}, H-E_{1}-E_{5}-E_{6}
\end{array}
\right)
}
\cong  \frac{\Z[E_{1}]}{(-3E_{1})}\cong  \Z/3\Z. \nonumber 
\end{align}
These contradict Lemma \ref{lem:theta}.

Hence $S_{2} = G_{5}$. 
Since $\Sing S_{2} \in L$, Table \ref{tb:line} shows that $S_{1}=\{z=\gamma y\}$ for some $\gamma \in \C$ or $\{x=\gamma y\}$ for some $\gamma \in \C^{*}$. 
If the latter case holds, then Lemma \ref{lem:G5curve} yields $L_{\wt S_{2}} \sim H-E_{1}-E_{6}$ and $Q_{\wt S_{2}} \sim H-E_{1}$ since $Q_{\wt S_{2}}$ is disjoint from $\la x, t \ra_{\wt S_{2}}=E_{6}$.
Hence $\Coker \Theta$ equals
\begin{align}
& \frac{\Z[H] \oplus \bigoplus_{i=1}^{6}\Z[E_{i}]}
{
\left(
\begin{array}{l}
H-E_{1}-E_{6}, H-E_{1}, H-E_{1}-E_{2}-E_{3},  \\
E_{1}-E_{2}, E_{2}-E_{3}, E_{3}-E_{4}, E_{4}-E_{5}
\end{array}
\right)
}
\cong  \frac{\Z[H]}{(-2H)}\cong  \Z/2\Z, \nonumber 
\end{align}
a contradiction with Lemma \ref{lem:theta}.
Therefore $(S_{1}, S_{2})=(\{z=\gamma y\}, G_{5})$ for some $\gamma \in \C$.

Suppose that the case (L2) holds.
Take $L_{1}$ and $L_{2}$ as lines such that $F=L_{1} + 2L_{2}$.
Since all the three lines in $G_{4}$ are coplanar, we have $S_{2} \neq G_{4}$.
We also have $L_{2} \supset \Sing S_{2}$ by Lemma \ref{lem:5.2}. 
Hence $S_{2}=G_{5}$ or $G_{6}$.

Suppose that $S_{2}=G_{5}$ in addition.
Then the support of $F$ is either $\la x,y \ra  \cup  \la x,t \ra$ or $\la y,z \ra  \cup  \la x,y \ra$.
We note that $\la x,y \ra_{\wt S_{2}} \sim H-E_{1}-E_{6}$ and $\la x,t \ra_{\wt S_{2}} \sim E_{6}$.
If the former holds, then $\Coker \Theta$ equals
\begin{align}
& \frac{\Z[H] \oplus \bigoplus_{i=1}^{6}\Z[E_{i}]}
{
\left(
\begin{array}{l}
H-E_{1}-E_{6}, E_{6}, H-E_{1}-E_{2}-E_{3},  \\
E_{1}-E_{2}, E_{2}-E_{3}, E_{3}-E_{4}, E_{4}-E_{5}
\end{array}
\right)
}
\cong  \frac{\Z[H]}{(-2H)}\cong  \Z/2\Z, \nonumber 
\end{align}
a contradiction with Lemma \ref{lem:theta}. 
Hence $(S_{1}, S_{2})=(\{y=0\}, G_{5})$.

Suppose that $S_{2}=G_{6}$ in addition.
Then $L_{2}=\la y, t\ra$ since $L_{2} \supset \Sing S_{2}$
and $L_{1}=\la x, t \ra$ because $\la y, z\ra, \la x, y\ra$ and $\la y, t\ra$ are coplanar. 
Hence $(S_{1}, S_{2})=(\{t=0\}, G_{6})$.

Suppose that the case (L3) holds.
Then $S_{2}=G_{4}$ or $G_{5}$ because the point $\Sing F$ contains $\Sing S_{2}$ by Lemma \ref{lem:5.2}. 
Since all the three lines in $G_{5}$ are not coplanar, we have $S_{2} \neq G_{5}$.
Hence $(S_{1}, S_{2})=(\{y=0\}, G_{4})$.

Combining these results, we complete the proof.
\end{proof}

\begin{prop}\label{prop:normB23}
Suppose that $B_{2}(S_{2})=3$.
Then $(S_{1}, S_{2})$ is projectively equivalent to $(\{y=0\}, G_{9})$, $(\{y=0\}, G_{10})$ or $(\{x=t\}, G_{11})$.
\end{prop}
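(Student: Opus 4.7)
The plan is to follow closely the template of Propositions \ref{prop:normB21} and \ref{prop:normB22}. The first step is to narrow down the projective equivalence class of $S_2$. A normal rational cubic surface has minimal resolution a weak del Pezzo of degree $3$ (so Picard rank $7$), and therefore $B_2(S_2)=7-\sum_{p \in \Sing S_2}(\text{rank of the singularity at }p)$. Reading off Table \ref{tb:sing} in combination with Corollary \ref{cor:projeq}, the constraint $B_2(S_2)=3$ forces $S_2$ to be projectively equivalent to one of $G_9,G_{10},G_{11},G_{12},G_{14},G_{15}$, or $G_{13,p}$ for some $p\in T_{13}$.

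Next I invoke Corollary \ref{cor:intersection}: with $B_2(S_2)=3$ and $\sharp(C\cap F)\ge 1$, we are in case (L3), so $F$ is a sum of three lines meeting at one point and $\sharp(C\cap F)=1$. Together with Lemma \ref{lem:5.2}, this means we must find a plane $S_1$ whose intersection with $S_2$ consists of three concurrent lines of $S_2$ whose union contains $\Sing S_2$ set-theoretically.

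The remainder of the proof is a case-by-case search over the seven candidate surfaces, driven by Table \ref{tb:line}. For each $S_2$ and each candidate common point $p$ (singular or smooth), I enumerate the lines of $S_2$ through $p$ and check which triples are coplanar. The three pairs listed in the proposition come out as follows: for $G_9$ and $G_{10}$, three of the lines through the unique $D_4$-singularity $[0\!:\!0\!:\!0\!:\!1]$ lie in the plane $\{y=0\}$ (a direct substitution verifies $G_i|_{y=0}=xz(z-x)$ for $i=9,10$), giving the pairs $(\{y=0\},G_9)$ and $(\{y=0\},G_{10})$; for $G_{11}$, three of the four lines through the $A_4$-singularity $[0\!:\!0\!:\!1\!:\!0]$ span $\{x=t\}$ (confirmed by $G_{11}|_{x=t}=ty(t-y)$), giving $(\{x=t\},G_{11})$. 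For the remaining candidates $G_{12},G_{13,p},G_{14},G_{15}$, a direct enumeration shows that every coplanar triple of concurrent lines either fails to cover $\Sing S_2$ (for instance, on $G_{12}$ and on $G_{13,p}$ the triple $\la x,t\ra,\la x-t,y-z\ra,\la x-t,y-z+t\ra$ lies in $\{x=t\}$ but misses $[0\!:\!0\!:\!0\!:\!1]$), or the triple is not even concurrent (three coplanar lines in general position are excluded because $B_1(F)=0$ by Lemma \ref{lem:5.3}).

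In borderline configurations where the set-theoretic check is not immediate, the fall-back is the $\mathrm{Coker}\,\Theta$ computation already used for $G_4$ in Proposition \ref{prop:normB22}: from the alleged classes of $F_{\wt S_2}+E_{\mu}$ in $\Pic(\wt S_2)$, together with the lists of $(-2)$-curves analogous to Table \ref{tb:negativecurve}, one computes $\Coker\Theta$ and derives a nontrivial cokernel contradicting Lemma \ref{lem:theta}. The main obstacle is not conceptual but clerical: seven candidate surfaces must be handled, the $1$-parameter family $G_{13,p}$ has to be treated uniformly for every $[a:b]\in\P^1$, and $G_{15}$ with its nine lines through four $A_1$-points offers many triples to sift through.
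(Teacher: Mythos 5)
Your skeleton is the same as the paper's: Corollary \ref{cor:projeq} plus $B_{2}(S_{2})=3$ to get the seven candidates, Corollary \ref{cor:intersection} to land in case (L3), and Lemma \ref{lem:5.2} plus the tables to pin down $(S_{1},S_{2})$; and your explicit verifications ($G_{i}|_{y=0}=xz(z-x)$ for $i=9,10$ and $G_{11}|_{x=t}=ty(t-y)$) are correct. The one place where you diverge, and where your write-up is weaker, is the elimination of $G_{12}$, $G_{13,p}$, $G_{14}$, $G_{15}$. You only extract from Lemma \ref{lem:5.2} that the union of the three lines contains $\Sing S_{2}$, which forces you into a genuine enumeration of coplanar concurrent triples --- an enumeration you assert but do not carry out for $G_{14}$, $G_{15}$, and the whole family $G_{13,p}$. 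The paper instead observes that since $F$ is a Cartier divisor on $S_{2}$ cut by a plane, every singular point of $S_{2}$ lying on $F$ is automatically a singular point of $F$; combined with Lemma \ref{lem:5.2} this gives $\Sing S_{2}=\Sing F$, and in case (L3) the latter is a single point (the concurrency point). Table \ref{tb:sing} then kills $G_{12}$, $G_{13,p}$, $G_{14}$, $G_{15}$ instantly, since each has at least two singular points, and for $G_{9}$, $G_{10}$, $G_{11}$ the lines through the unique singular point determine $S_{1}$ from Table \ref{tb:line}. Your route would eventually reach the same conclusion (and your fall-back via $\Coker\Theta$ and Lemma \ref{lem:theta} is a legitimate safety net), but as written the load-bearing step for four of the seven candidates is an unexecuted search; strengthening ``union contains $\Sing S_{2}$'' to ``$\Sing F=\Sing S_{2}$'' removes that burden entirely.
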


\begin{proof}
We may assume that $S_{2}=G_{i}$ for some $i \in \{9, \ldots 12, 14, 15\}$ or $G_{13, p}$ for some $p \in T_{13}$ by Corollary \ref{cor:projeq}.
Moreover, the case (L3) holds by Corollary \ref{cor:intersection}.
Lemma \ref{lem:5.2} now shows that the point $\Sing F$ is the same as $\Sing S_{2}$.
Therefore Tables \ref{tb:sing} and \ref{tb:line} give the assertion.
\end{proof}

\section{The case $p_{a}(C)=0$ and $S_{2}$ is non-normal}\label{sec:nonnormal}

In this section, 
we assume the following:
\begin{assu}
$p_{a}(C)=0$, $U$ is an affine homology $3$-cell and $S_{2}$ is non-normal.
\end{assu}

Then $S_{2}$ is projectively equivalent to $R_{i}$ for some $i \in \{1, 2, 3, 4\}$ by Theorem \ref{thm:nnorm1}.
In this section, we show that $(C, S_{1}, S_{2})$ belongs to one of the six cases of Theorem \ref{thm:main3tuple} when $i \leq 2$, and that $(S_{1}, S_{2})$ is projectively equivalent to one of the pairs listed in the case \textup{(d)} or \textup{(e)} of Theorem \ref{thm:main3tuple} when $i \geq 3$.


\begin{prop}\label{prop:R1}
Suppose that $S_{2}=R_{1}$.
Then one of the following occurs:
\begin{itemize}
\item $S_{1}=\{x=0\}$.
\item $S_{1}=\{z=0\}$ and $\deg C=1$.
\item $S_{1}=\{y=\gamma x\}$ for some $\gamma \in \A^1$, $\deg C=3$ or $4$, and $\sharp (F \cap C)=B_{2}(F)$.
\end{itemize}
In particular, the second case gives the triplet $(C, S_{1}, S_{2})$ projectively equivalent to the subvarieties as in Example \ref{ex:nonA3}.
\end{prop}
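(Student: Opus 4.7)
The plan combines the degree restriction of Lemma~\ref{lem:3.6}, the Euler-characteristic identity of Lemma~\ref{lem:2.10}, and an explicit enumeration of the plane sections of the cone $R_1$.

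First, since $S_2 = R_1$ is non-normal of class~(E1) (Lemma~\ref{lem:2.16}) and $p_a(C) = 0$ with $C \ne E$, Lemma~\ref{lem:3.6} forces $\deg C \in \{1, 3, 4\}$. The projection from the vertex $v = [0{:}0{:}0{:}1]$ exhibits $R_1 \setminus \{v\}$ as an $\A^1$-bundle over the cuspidal cubic $\{t = x^2z + y^3 = 0\}$, so $\eu(R_1) = 3$, and Lemma~\ref{lem:2.10}(1) gives
\[
\eu(F) = 1 + N_1 + N_2 - N_{1 \cap 2},
\]
with Lemma~\ref{lem:2.10}(2) constraining the triple $(N_1, N_2, N_{1\cap 2})$ whenever $N_1 + N_2 - N_{1\cap 2} = 1$.

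Next, I enumerate $F = S_1 \cap R_1$. If $v \notin S_1$, then $F$ is a cuspidal cubic with $\eu(F) = 2$. If $v \in S_1$, writing $S_1 = \{ax+by+cz = 0\}$ and eliminating one variable reduces $F$ to the factorization of a homogeneous cubic in two variables; a direct calculation yields $F = 3E$ exactly when $S_1 = \{x=0\}$, $F = 3L$ with $L \ne E$ exactly when $S_1 = \{z = 0\}$ (the only triple-line section distinct from $3E$, corresponding to $a = b = 0$), $F = 2E + L_\gamma$ exactly when $S_1 = \{y = \gamma x\}$ for some $\gamma \in \A^1$, and otherwise $F$ is a union of three rulings distinct from $E$ with $\eu(F) \in \{3, 4\}$. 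I then compute the incidence of $C$ with $E$: a line $C$ in $R_1$ is a ruling through $v$ meeting $E$ only at $v$, while for $\deg C \in \{3, 4\}$ the divisor-class calculation on $\wt R_1 = \F_3$ (using Lemma~\ref{lem:2.16}(1), which forces $\wt E$ to be the doubled ruling over the cusp) controls $\sharp(C \cap E)$ and the position of these points on $E$ relative to $v$.

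Substituting this incidence data into the Euler identity excludes all hyperplanes outside the three listed families. For instance, if $v \notin S_1$ and $\deg C = 1$, one obtains $(N_1, N_2, N_{1\cap 2}) = (1, 1, 0)$, giving $\eu(F) = 3 \ne 2$; analogous direct numerical checks dispatch the remaining nonlisted hyperplanes. In the surviving case $S_1 = \{z = 0\}$, the identity forces $\deg C = 1$ (degree $3$ or $4$ would place the unique point of $C \cap E$ outside $\{z = 0\}$, producing $N_1 = 0$), and a linear change of coordinates preserving $(R_1, \{z=0\})$ carries the ruling $C$ through $v$ to $\{x=y=-z\}$, matching Example~\ref{ex:nonA3}. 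In the surviving case $S_1 = \{y = \gamma x\}$, the inclusion $E \subset S_1$ gives $N_{1\cap 2} = N_2$, so $\eu(F) = 3$ becomes $N_1 = 2 = B_2(F)$; and $\deg C = 1$ is excluded since it would give $N_1 = 1$.

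The principal obstacle will be the osculating configurations that survive the Euler-number count: the cases $v \notin S_1$ with $\deg C \in \{3, 4\}$, and the cases $v \in S_1$ with $E \not\subset S_1$, $S_1 \ne \{z=0\}$. In both situations one can a priori engineer high-order contact between $S_1$ and $C$ at a point of $E$ so that the Euler identity is satisfied. Ruling these out requires a homological refinement in the spirit of Lemma~\ref{lem:5.1}: one computes $H_*(R_1 \setminus (F \cup E), \Z)$ using the $\A^1$-bundle structure over the smooth locus of the cuspidal base and checks incompatibility with $U$ being an affine homology $3$-cell.
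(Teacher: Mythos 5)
Your first two reductions match the paper exactly: Lemma \ref{lem:3.6} gives $\deg C \in \{1,3,4\}$, the computation $\eu(R_{1})=3$ turns Lemma \ref{lem:2.10}(1) into $\eu(F)=1+N_{1}+N_{2}-N_{1\cap 2}$, and your enumeration of the plane sections of the cone (cuspidal cubic when $v\notin S_{1}$; $3E$, $3L$, $2E+L_{\gamma}$, or rulings avoiding $E$ when $v\in S_{1}$) is correct and is implicitly what the paper uses. Your handling of $\deg C=1$ and of the $E\subset F$ cases is also fine, and the final coordinate change matching Example \ref{ex:nonA3} is the paper's.

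The genuine gap is precisely the two configurations you yourself flag as ``the principal obstacle'': (i) $v\notin S_{1}$ with $\deg C\in\{3,4\}$, where the Euler identity forces $(N_{1},N_{2},N_{1\cap 2})=(1,1,1)$, i.e.\ $C\cap F=C\cap E=F\cap E=\{p\}$ with $p\neq v$; and (ii) $v\in S_{1}$, $E\not\subset S_{1}$, $\deg C=4$, where the identity forces $(N_{1},N_{2})=(B_{2}(F),1)$. You propose to dispose of these by computing $H_{*}(R_{1}\setminus(F\cup E),\Z)$ ``in the spirit of Lemma \ref{lem:5.1}'', but you never carry this out, and it is not the right tool: Lemma \ref{lem:5.1}'s machinery is set up under the normality assumption on $S_{2}$, and the obstruction here is not homological but geometric --- these configurations simply do not exist. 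The paper closes (i) by a local tangency argument: $(C_{\wt S_{2}}\cdot F_{\wt S_{2}})=\deg C\geq 3$ concentrated at the single point $\mu^{-1}(p)$ forces $C_{\wt S_{2}}$ to be tangent to $F_{\wt S_{2}}$ there; since $F$ has its cusp at $p$, the differential of $\mu$ kills the tangent direction of $F_{\wt S_{2}}$ at $\mu^{-1}(p)$, hence also that of $C_{\wt S_{2}}$, so $C$ would be singular at $p$ --- contradicting smoothness. It closes (ii) by observing that $N_{1}=B_{2}(F)$ and $N_{2}=1$ force both $F_{\wt S_{2}}$ and $\wt E$ to contain the unique fiber of $\F_{3}\to\P^{1}$ through the point $C_{\wt S_{2}}\cap\Sigma_{3}$, whence $E\subset F$, contradicting $E\not\subset S_{1}$. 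Without arguments of this kind (or a completed homological computation), your proof does not establish the proposition; note also that for $\deg C=3$ with $v\in S_{1}$ and $E\not\subset S_{1}$ the plain Euler count already gives $B_{2}(F)+1=B_{2}(F)$, so only the two cases above genuinely need extra input.
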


\begin{proof}
Since $p_{a}(C)=0$, Lemma \ref{lem:3.6} shows that $\deg C=1,3$ or $4$.
Moreover, $F$ is either a cuspidal cubic or the sum of rulings. 

Suppose that $F$ is a cuspidal cubic. 
Then Lemma \ref{lem:2.10} (1) yields $N_{1}+N_{2}-N_{1 \cap 2}=1$.
On the other hand, we have $N_{2}=\sharp(C \cap E) =\sharp(C \cap \{x=0\}) \geq 1$.
Hence $N_{1}=N_{2}=N_{1 \cap 2}=1$ by Lemma \ref{lem:2.10} (2).
Since $F \cap E$ also consists of one point, say $p$, we conclude that $C \cap F=C \cap E=F \cap E=\{p\}$.
We note that $p$ is distinct from the vertex of $S_{2}$.

If $\deg C=1$, then $C=E$ since $C$ is the ruling containing $p$, a contradiction with the assumption in Problem \ref{prob:main}. 
Hence $\deg C  \geq 3$. 
Since 
$(C_{\wt S_{2}} , F_{\wt S_{2}}) =\deg C \geq 3$, 
the curve $C_{\wt S_{2}}$ intersects with $F_{\wt S_{2}}$ at $\mu^{-1}(p)$ tangentially.
Since $\mu$ contracts the tangent direction of $F_{\wt S_{2}}$ at $\mu^{-1}(p)$, 
it also contracts that of $C_{\wt S_{2}}$ at $\mu^{-1}(p)$.
Hence $C$ is singular at $p$, a contradiction.

Therefore $F$ is the sum of rulings.
Lemma \ref{lem:2.10} (1) now shows that
\begin{align}\label{eq:cuspcone}
N_{1}+N_{2} -N_{1\cap 2}=B_{2}(F).
\end{align}

Suppose that $\deg C =1$.
Then both $C \cap F$ and $C \cap E$ is the vertex of $S_{2}$.
Hence $N_{1}=N_{2}=N_{1 \cap 2}=1$ and $B_{2}(F)=1$.
Therefore $S_{1}=\{x=0\}$ or $\{z=0\}$.

Suppose that $\deg C  = 3$.
Then $N_{1}=B_{2}(F)$ and $N_{2}=1$.
On the other hand, $N_{1 \cap 2}=1$ if $E \subset F$ and $0$ otherwise.
(\ref{eq:cuspcone}) now shows that $E \subset F$ and hence $S_{1}=\{y= \gamma x\}$ for some $ \gamma \in \P^{1}$. 

Suppose that $\deg C =4$. Since $C_{\wt S_{2}}  \sim  \Sigma_{3}  +4f_{3}$, we have
\begin{align}
N_{1}&=
\left\{
\begin{array}{ll}
B_{2}(F)+1&\mathrm{if}\: F_{\wt S_{2}} \cap C_{\wt S_{2}}  \cap \Sigma_{3}= \emptyset \\
B_{2}(F)  &\mathrm{if}\: F_{\wt S_{2}} \cap C_{\wt S_{2}} \cap \Sigma_{3} \neq \emptyset,\\
\end{array}
\right.
\\
N_{2}&=
\left\{
\begin{array}{ll}
2\ \ \ \ \ \ \ \ \ \ \ \ \ &\mathrm{if}\: \wt E \cap C_{\wt S_{2}}   \cap \Sigma_{3}= \emptyset \\
1&\mathrm{if}\: \wt E \cap C_{\wt S_{2}}   \cap \Sigma_{3} \neq \emptyset,\\
\end{array}
\right.
\\
N_{1 \cap 2}&=
\left\{
\begin{array}{ll}
N_{2}\ \ \ \ \ \ \ \ \ \ \ &\mathrm{if}\: F \supset E\\
1&\mathrm{if}\:F \not \supset E.\\
\end{array}
\right. 
\end{align}

Assume that $F \not \supset E$. 
Then (\ref{eq:cuspcone}) implies $(N_{1}, N_{2})=(B_{2}(F), 1)$. 
Hence $F_{\wt S_{2}} \cap (C_{\wt S_{2}} \cap \Sigma_{3}) \neq \emptyset$ and $\wt E \cap (C_{\wt S_{2}} \cap \Sigma_{3}) \neq \emptyset$.
This implies, however, that $\wt E \subset \F_{3}$ is the fiber of $\P^{1}$-bundle passing through the point $C_{\wt S_{2}} \cap \Sigma_{3}$ and $F_{\wt S_{2}}$ contains such a fiber.
Hence $F \supset E$, a contradiction. 

Therefore $F \supset E$ and $S_{1}=\{y=\gamma x\}$ for some $\gamma \in \P^1$.
(\ref{eq:cuspcone}) now shows that $\sharp(C \cap F)=B_{2}(F)$.

Combining these results, we obtain the first assertion.
Now suppose that $S_{1}=\{z=0\}$ and $\deg C=1$.
Then $C=\{y=ax, z=-a^{2}y\}$ for some $a \in \C^{*}$.
Now take $f$ as the automorphism of $\P^{3}_{[x:y:z:t]}$ such that $f^{*}(x)=x$, $f^{*}(y)=a^{-1}y$, $f^{*}(z)=a^{-3} z$ and $f^{*}(t)=t$.
Then $f(S_{i})=S_{i}$ for $i \in \{1, 2\}$ and $f(C)=\{x=y=-z\}$, and the second assertion holds.
\end{proof}

\begin{prop}\label{prop:R2}
Suppose that $S_{2}=R_{2}$.
Then $S_{1}=\{y= \gamma x\}$ for some $\gamma \in \P^1$, $\deg C =3$ or $4$, and $ \sharp (F \cap C)=B_{2}(F)+1$. 
\end{prop}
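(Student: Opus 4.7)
The argument will parallel that of Proposition \ref{prop:R1}, with one new smoothness input that forces $N_2 \geq 2$ as soon as $\deg C \geq 3$. Set $v = [0\!:\!0\!:\!0\!:\!1]$ and $E = \{x = y = 0\}$; then $R_2$ is the projective cone with vertex $v$ over the nodal plane cubic $C_0 = \{x^2 z + y^3 + y^2 z = 0\}$, so $\eu(R_2) = 1 + \eu(C_0) = 2$ and Lemma \ref{lem:2.10}(1) reduces to $\eu(F) = N_1 + N_2 - N_{1 \cap 2}$. By Lemma \ref{lem:3.6}, $\deg C \in \{1, 3, 4\}$, and $F = S_2|_{S_1}$ is either a nodal cubic (when $v \notin S_1$) or a sum of concurrent lines through the image of $v$ in $S_1$ (when $v \in S_1$). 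For $\deg C \in \{3, 4\}$, Lemma \ref{lem:3.6} gives $C_{\wt S_2} \cdot f_3 = 1$, so $C_{\wt S_2}$ is a section of $\F_3 \to \ol C_0 \cong \P^1$; as $\mu|_{C_{\wt S_2}} \colon C_{\wt S_2} \to C$ is a birational morphism between smooth curves it is an isomorphism, and hence $C \to C_0$ is the normalization of $C_0$. The two preimages of the node give distinct points $p_1, p_2 \in C$ that lie in $E$ and remain distinct in $R_2$ (since $C \hookrightarrow R_2$ is an embedding); therefore $N_2 \geq 2$ whenever $\deg C \in \{3, 4\}$.

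I first exclude the nodal $F$ case. Then $\eu(F) = 1$ and Lemma \ref{lem:2.10}(2) forces $(N_1, N_2, N_{1 \cap 2}) \in \{(1,0,0), (1,1,1)\}$, both of which require $N_2 \leq 1$. This contradicts the observation above when $\deg C \in \{3, 4\}$; and for $\deg C = 1$, $C \cap E = \{v\}$ gives $N_2 = 1$ while $v \notin S_1$ forces $N_{1 \cap 2} = 0$, so the resulting triple $(1, 1, 0)$ is not in the list. Hence $v \in S_1$, $F$ is a sum of concurrent lines, and $\eu(F) = B_2(F) + 1$, so the formula becomes $N_1 + N_2 - N_{1 \cap 2} = B_2(F) + 1$. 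The case $\deg C = 1$ is then immediately eliminated because $C \cap S_1 = \{v\}$ forces $N_1 = N_2 = N_{1 \cap 2} = 1$ and hence $B_2(F) = 0$, an impossibility.

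It remains to show $S_1 \supset E$. Assume instead that $S_1 \cap E = \{v\}$; then $E \notin F$, so every ruling of $F$ is smooth. For $\deg C = 3$, $v \notin C$ gives $N_{1 \cap 2} = 0$ and the observation gives $N_2 = 2$; each smooth ruling of $F$ meets $C$ at a unique non-vertex point and distinct rulings yield distinct intersection points (since two rulings share only $v \notin C$), so $\sharp(F \cap C) = B_2(F)$, contradicting the formula's $N_1 = B_2(F) - 1$. For $\deg C = 4$, $v \in C$ gives $N_{1 \cap 2} = 1$; let $L^*$ denote the ruling of $R_2$ tangent to $C$ at $v$. A case analysis on whether $C_{\wt S_2} \cap \Sigma_3$ lies on one of the two fibers comprising $\wt E$ shows that $N_2 = 2$ exactly when $L^* = E$ and $N_2 = 3$ otherwise; meanwhile each smooth ruling $L \neq L^*$ of $F$ contributes two distinct points $\{v, \mu(Q)\}$ to $C \cap F$ while $L^*$ contributes only $\{v\}$, so $\sharp(F \cap C) = B_2(F)$ if $L^* \in F$ and $B_2(F) + 1$ otherwise. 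Since $E \notin F$ here, every resulting combination conflicts with $N_1 = B_2(F) + 2 - N_2$. Therefore $S_1 \supset E$, equivalently $S_1 = \{y = \gamma x\}$ for some $\gamma \in \P^1$; then $C \cap E \subset S_1$ gives $N_{1 \cap 2} = N_2$, and the formula collapses to $\sharp(F \cap C) = N_1 = B_2(F) + 1$. The principal difficulty is the combinatorial bookkeeping for $\deg C = 4$ in the preceding step, where the position of $L^*$ relative to $F$ drives each sub-case.
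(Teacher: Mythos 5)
Your proof is correct and follows essentially the same route as the paper's: the Euler--number identity of Lemma \ref{lem:2.10} specialized to $\eu(R_{2})=2$, exclusion of the irreducible $F$ and of $\deg C=1$, and the same case analysis on $(N_{1},N_{2},N_{1\cap 2})$ versus $B_{2}(F)+1$ when $F\not\supset E$, for both $\deg C=3$ and $\deg C=4$. The one place you diverge is cosmetic but welcome: where the paper outsources the exclusion of the nodal $F$ to ``an analysis similar to Proposition \ref{prop:R1}'' (which there required a tangency argument at the cusp), you kill it directly via $N_{2}\geq 2$ coming from the two branches of the node on the conductor line, which is a cleaner way to fill in that step.
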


\begin{proof}
An analysis similar to that in the proof of Proposition \ref{prop:R1} shows that $\deg C=1,3$ or $4$ and $F$ is the sum of rulings.
Lemma \ref{lem:2.10} (1) now shows that
\begin{align}\label{eq:nodalcone}
N_{1}+N_{2} -N_{1\cap 2}=B_{2}(F)+1 \geq 2.
\end{align}

Suppose that $\deg C =1$.
Then both $C \cap F$ and $C \cap E$ is the vertex of $S_{2}$.
Hence $N_{1}=N_{2}=N_{1 \cap 2}=1$, a contradiction with (\ref{eq:nodalcone}).

Suppose that $\deg C  = 3$ and $F \not \supset E$.
Then $N_{1}=B_{2}(F)$,  $N_{2}=2$ and $N_{1 \cap 2}=0$, a contradiction with (\ref{eq:nodalcone}).

Suppose that $\deg C =4$ and $F \not \supset E$. 
Then we have
\begin{align}
N_{1}&=
\left\{
\begin{array}{ll}
B_{2}(F)+1&\ \mathrm{if}\: F_{\wt S_{2}} \cap C_{\wt S_{2}}  \cap \Sigma_{3}= \emptyset \\
B_{2}(F)   &\ \mathrm{if}\: F_{\wt S_{2}} \cap C_{\wt S_{2}}  \cap \Sigma_{3} \neq \emptyset,\\
\end{array}
\right.
\\
N_{2}&=
\left\{
\begin{array}{ll}
3\ \ \ \ \ \ \ \ \ \ \ \ \ \ &\mathrm{if}\: \wt E \cap C_{\wt S_{2}}  \cap \Sigma_{3}= \emptyset \\
2&\mathrm{if}\: \wt E \cap C_{\wt S_{2}}  \cap \Sigma_{3} \neq \emptyset,\\
\end{array}
\right.
\\
N_{1 \cap 2}&=1.
\end{align}
As in the proof of Proposition \ref{prop:R1}, these equations yield $F \supset E$, a contradiction.

Hence $\deg C \geq 3$ and $F \supset E$.
In particular, $S_{1}=\{y=\gamma x\}$ for some $\gamma \in \P^1$ and  $\sharp(C \cap F)=B_{2}(F)+1$ by (\ref{eq:nodalcone}).
\end{proof}

Next we prove lemmas needed to treat the case where $S_{2}=R_{3}$.

\begin{lem}\label{lem:5.18} 
Suppose that $S_{2}=R_{3}$. Then the following hold:
\begin{enumerate}
\item[\textup{(1)}]  The morphism $\mu_*:H_{2}(\F_1, \Z) \rightarrow H_{2}(S_{2}, \Z)$ maps $[\Sigma_{1}] +[f_{1}]$ to $2[E]$.
\item[\textup{(2)}] $H_{i}(S_{2}, \Z)=\Z$ for  $i=0,4$, $\Z^2$ for $i=2$, and $0$ otherwise.
\item[\textup{(3)}] $\{ [E], \mu_{*}[f_{1}] \}$ is a $\Z$-basis of $H_{2}(S_{2}, \Z)$.
\end{enumerate}
\end{lem}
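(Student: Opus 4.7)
By Theorem~\ref{thm:nnorm2} and Lemma~\ref{lem:2.16}~(1), $R_{3}$ belongs to class (C), so $\ol{S}_{2} \cong \F_{1}$ is already smooth. Hence $\wt{S}_{2} = \ol{S}_{2} \cong \F_{1}$ with $\mu = \sigma$, and Lemma~\ref{lem:2.16}~(2) says $\wt{E}$ is irreducible; Theorem~\ref{thm:nnorm2} then gives $\wt{E} \sim \Sigma_{1} + f_{1}$.

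For (1), the $1$-cycle $\mu_{*}[\wt{E}]$ equals $m[E]$, where $m = \deg(\mu|_{\wt{E}})$. Since $E$ is a line on the cubic surface $S_{2}$, $(-K_{S_{2}} \cdot E) = 1$; the projection formula together with $\sigma^{*}(-K_{S_{2}}) \sim \Sigma_{1} + 2f_{1}$ (Theorem~\ref{thm:nnorm2}) then yields
\[
m = (-K_{S_{2}} \cdot \mu_{*}\wt{E}) = (\sigma^{*}(-K_{S_{2}}) \cdot \wt{E}) = (\Sigma_{1} + 2f_{1}) \cdot (\Sigma_{1} + f_{1}) = 2.
\]
Since $[\wt{E}] = [\Sigma_{1}] + [f_{1}]$ in $H_{2}(\wt{S}_{2}, \Z)$, this proves (1).

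For (2) and (3), I observe that the closed inclusion $\wt{E} \hookrightarrow \wt{S}_{2}$ together with $\mu|_{\wt{E}} \colon \wt{E} \to E$ realizes $S_{2}$ as a topological pushout, whence the Mayer--Vietoris sequence
\[
\cdots \to H_{i}(\wt{E}, \Z) \to H_{i}(E, \Z) \oplus H_{i}(\wt{S}_{2}, \Z) \to H_{i}(S_{2}, \Z) \to H_{i-1}(\wt{E}, \Z) \to \cdots.
\]
Since $\wt{E} \cong E \cong \P^{1}$ and $\wt{S}_{2} \cong \F_{1}$ all have vanishing odd-degree homology, and since (1) identifies the map $H_{2}(\wt{E}) \to H_{2}(E) \oplus H_{2}(\wt{S}_{2})$ as sending the generator to $\pm(2, 1, 1)$ with respect to the ordered basis $([E], [\Sigma_{1}], [f_{1}])$, a degree-by-degree inspection of the sequence (noting the vector $(2, 1, 1)$ is primitive) yields the Betti numbers in (2). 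Moreover, the same sequence exhibits $H_{2}(S_{2}, \Z)$ as generated by $[E]$, $\mu_{*}[\Sigma_{1}]$, $\mu_{*}[f_{1}]$ subject to the single relation $\mu_{*}[\Sigma_{1}] + \mu_{*}[f_{1}] = 2[E]$ from (1); eliminating $\mu_{*}[\Sigma_{1}]$ exhibits $\{[E], \mu_{*}[f_{1}]\}$ as a $\Z$-basis, proving (3).

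The main obstacle is the degree identification $\deg(\mu|_{\wt{E}}) = 2$, which I dispatched via the projection formula above; the rest is a routine Mayer--Vietoris unwinding that I expect to proceed without complication.
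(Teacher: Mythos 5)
Your proof is correct, and its topological core coincides with the paper's: both arguments rest on the fact that the normalization realizes $S_{2}$ as the adjunction space $\F_{1}\cup_{\ol E}E$, which is exactly what makes the relative groups $H_{i}(\F_{1},\ol E,\Z)$ and $H_{i}(S_{2},E,\Z)$ isomorphic. The paper keeps the two long exact sequences of pairs separate (a ladder with the excision isomorphisms $\zeta_{i}$, citing Spanier), computes $H_{i}(\F_{1},\ol E,\Z)$ by Lefschetz duality, and obtains the basis in (3) by constructing an explicit section of $H_{2}(S_{2},\Z)\to H_{2}(S_{2},E,\Z)$; you splice that ladder into the Mayer--Vietoris sequence of the pushout, which lets you avoid duality altogether (every term in sight is the homology of $\P^{1}$ or $\F_{1}$) and lets (3) drop out as the cokernel presentation $\Z^{3}/\langle 2[E]-\mu_{*}[\Sigma_{1}]-\mu_{*}[f_{1}]\rangle$ rather than via a splitting. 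For (1) the paper simply asserts $\theta_{2}=2\cdot\mathrm{id}$ ``by the construction of $R_{3}$'' (the normalization is $2{:}1$ over the conductor), while you extract $\deg(\mu|_{\wt E})=2$ from the projection formula, $\deg E=1$, and $(\Sigma_{1}+2f_{1})\cdot(\Sigma_{1}+f_{1})=2$; both are fine, and yours has the merit of not appealing to the explicit equation. Two small points to tidy: you should record why $H_{1}(\wt E,\Z)=0$, i.e.\ why $\wt E\cong\P^{1}$ (it is irreducible by Lemma~\ref{lem:2.16} and $p_{a}(\Sigma_{1}+f_{1})=0$ by adjunction on $\F_{1}$), since this is what makes the odd-degree terms vanish; and the Mayer--Vietoris map actually sends the generator to $(2[E],\,-[\Sigma_{1}]-[f_{1}])$ rather than $\pm(2,1,1)$, though the subgroup it generates is the same primitive line and the relation you then impose in (3) is the correct one, so the conclusion is unaffected.
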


\begin{proof}
\noindent(1) For $i \geq 0$, we have the following commutative diagram with exact rows:
\begin{align}\label{diag:R3}
\xymatrix@C=15pt@R=15pt{
\cdots \ar[r]
&{H_{i+1}(\ol{E},\F_1,\Z)} \ar [r] \ar [d]^-{\zeta_{i+1}}
&{H_{i}(\ol{E}, \Z)} \ar [r]^-{\delta_{i}} \ar [d]^-{\theta_{i}} 
&{H_{i}(\F_1, \Z)} \ar [r]^-{\epsilon_{i}} \ar [d]^-{\mu_{*}} 
&{H_{i}(\ol{E}, \F_1, \Z)} \ar [r] \ar [d]^-{\zeta_{i}} 
&\cdots\\
\cdots \ar[r]
&{H_{i+1}(E,S_{2}, \Z)}\ar [r]
&{H_{i}(E, \Z)} \ar [r]^-{\delta'_{i}}
&{H_{i}(S_{2}, \Z)} \ar [r]^-{\epsilon'_{i}}
&{H_{i}(E, S_{2}, \Z)} \ar [r]
&\cdots.
}
\end{align}
Then $\theta_{2}= 2 \times \mathrm{id}_{\Z}$ by the construction of $R_{3}$.
Hence $\mu_{*}([\Sigma_{1}]+[f_{1}])=\mu_{*}[\ol E]=2[E]$.

\noindent (2)
By \cite[Chap.4, Sect.8, Theorem 9]{sp66}, $\zeta_{i}$ is an isomorphism for $i \geq 0$. 
By Lefschetz duality, we have $H_{i}(\ol{E}, \F_1, \Z) \cong H^{4-i}_{c}(\F_{1} \setminus \ol{E}, \Z) \cong H^{4-i}(\P^{1}, \Z)$.
Hence the bottom of (\ref{diag:R3}) gives the assertion.

\noindent (3)
Since $H_{2}(\F_1, \Z)$ is generated by fundamental classes $[\ol{E}]=[ \Sigma_{1} ]+[f_{1}]$ and $[f_{1}]$, $H_{2}(\ol{E},\F_1,\Z)$ is identified as the free $\Z$-module generated by $[f_{1}]$.
Hence $\epsilon_{2}$ has a section, say $s$. 
Therefore $\mu_{*} \circ s \circ \zeta_{2}^{-1}$ is a section of $\epsilon'_{2}$ and we have the assertion. 
\end{proof}

\begin{lem}\label{lem:5.19}
Suppose that $S_{2}=R_{3}$ and $N_{1}+N_{2}-N_{1 \cap 2}=1$.
Then the canonical map $\xi \colon H_{2}(F, \Z) \rightarrow H_{2}(S_{1},  \Z) \oplus H_{2}(S_{2}, \Z)$ is injective.
If $H_{1}(F, \Z)=0$ in addition, then $\Coker \xi \cong \Z$.
\end{lem}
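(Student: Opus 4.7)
Both assertions will be extracted from the Mayer--Vietoris exact sequence for the cover of $W := S_1 \cup S_2$ by $S_1, S_2$ with $S_1 \cap S_2 = F$. Using $H_1(\P^2) = 0$, $H_1(S_2) = H_3(S_2) = 0$ (Lemma~\ref{lem:5.18}), and $H_3(F) = 0$ (since $F$ is a curve), the relevant portion of the sequence reads
\begin{equation*}
0 \to H_3(W, \Z) \to H_2(F, \Z) \xrightarrow{\xi} H_2(S_1, \Z) \oplus H_2(S_2, \Z) \to H_2(W, \Z) \to H_1(F, \Z) \to 0.
\end{equation*}
Hence injectivity of $\xi$ is equivalent to $H_3(W, \Z) = 0$, and, when $H_1(F) = 0$, $\Coker \xi \cong H_2(W, \Z)$. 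Both claims thus reduce to proving $H_3(W, \Z) = 0$ and $H_2(W, \Z) \cong \Z$.

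I will compute $H_*(W)$ via the complement $U := \P^3 \setminus W$, exploiting the blow-up $\vp \colon V \to \P^3$. Since $\vp$ is an isomorphism outside $E_\vp$, one has $U \cong U_0 \setminus E^\circ$, with $U_0 := V \setminus (D_1 \cup D_2)$ the given affine homology $3$-cell and $E^\circ := U_0 \cap E_\vp$ an open subset of the ruled surface $E_\vp \cong \F_n \to C$. The compactly-supported cohomology long exact sequence for the open--closed decomposition $U_0 = U \sqcup E^\circ$, combined with Poincaré duality on $U_0, U, E^\circ$ and the acyclicity of $U_0$ (which forces $H^j_c(U_0)=0$ except at $j=6$), produces canonical isomorphisms $H_k(E^\circ, \Z) \cong H_{k+1}(U, \Z)$ for all $k \geq 0$.

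The core geometric step is showing $E^\circ \cong \A^2$. By Lemma~\ref{lem:2.10}(2) the hypothesis $N_1 + N_2 - N_{1 \cap 2} = 1$ forces $(N_1, N_2, N_{1 \cap 2}) \in \{(1,0,0), (1,1,1)\}$; in both cases $D_1 \cap E_\vp$ is a single $\vp$-fiber of $E_\vp \to C$. A careful local analysis of the blow-up of $\P^3$ along $C$---delicate when $C$ meets $\Sing S_2$, where the non-normal structure of $R_3$ along its singular line $E$ enters---shows that $D_2 \cap E_\vp$ contributes a single irreducible section of $E_\vp \to C$. Hence $E^\circ$ is the complement in $\F_n$ of a fiber and a section, so $E^\circ \cong \A^2$. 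This local identification is the main obstacle of the proof.

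With $E^\circ \cong \A^2$ one has $H_k(E^\circ, \Z) = 0$ for $k \geq 1$ and $H_0(E^\circ) = \Z$, whence $H_i(U, \Z) = 0$ for $i \geq 2$ and $H_1(U) = \Z$ (all torsion-free). The universal coefficient theorem then yields $H^j(U, \Z) = 0$ for $j \in \{2, 3, 4\}$. Via the Lefschetz duality isomorphism $H_k(\P^3, W) \cong H^{6-k}(U)$, the long exact sequence of the pair $(\P^3, W)$ collapses to $0 \to H_3(W) \to 0$ and $0 \to H_2(W) \to H_2(\P^3) = \Z \to 0$, delivering $H_3(W, \Z) = 0$ and $H_2(W, \Z) \cong \Z$ and thereby completing the proof of both parts of the lemma.
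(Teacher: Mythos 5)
Your proof is correct and follows essentially the same route as the paper: both reduce the lemma to computing $H_2$ and $H_3$ of $S_1\cup S_2$ via the Mayer--Vietoris sequence together with Lemma \ref{lem:5.18}, both extract $E_\vp\cap U\cong\A^2$ from Lemma \ref{lem:2.10}(2), both deduce $H_i(\P^3\setminus(S_1\cup S_2),\Z)=0$ for $i\ge 2$ from the acyclicity of $U$ (you via compactly supported cohomology and Poincar\'e duality, the paper via the Thom isomorphism for the pair $(U,E_\vp\cap U)$ --- the same computation), and both finish with Lefschetz duality for the pair $(\P^3,S_1\cup S_2)$. The only difference is cosmetic and slightly in your favor: the homological form $H_k(\P^3,S_1\cup S_2)\cong H^{6-k}(\P^3\setminus(S_1\cup S_2))$ yields $H_3(S_1\cup S_2)=0$ and $H_2(S_1\cup S_2)\cong\Z$ in one step, whereas the paper passes through $H^2$ and $H^3$ of the union, the universal coefficient theorem, and an extra observation that the torsion module $H_3(S_1\cup S_2)$ embeds into the free module $H_2(F)$.
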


\begin{proof}
By Lemma \ref{lem:2.10} (2), we have $(N_{1}, N_{2}, N_{1 \cap 2})=(1, 0, 0)$ or $(1,1,1)$.
In particular $\sharp (C \cap (E \cup F))=1$.
Hence $G \coloneqq E_{\vp} \cap U$ is isomorphic to $\A^{2}$.
Applying the Thom isomorphism to the pair $(U, G)$, we get the following exact sequence:

\begin{align}
\xymatrix@C=15pt@R=15pt{
{\cdots} \ar [r]
&{H_{i-1}(\A^{2}, \Z)} \ar [r] 
&{H_{i}(U \setminus G, \Z)} \ar [r] 
&{H_{i}(U, \Z)} 
\ar [r] 
& {H_{i-2}(\A^{2}, \Z)} \ar [r] 
&{\cdots}.
}
\end{align}
Since $U \setminus G \cong \P^3 \setminus (S_{1} \cup S_{2})$, this sequence shows that $H_{i}(\P^3 \setminus (S_{1} \cup S_{2}),\Z) \cong 0$ for $i \geq 2$. 
On the other hand, the Lefschetz duality gives the following exact sequence:

\begin{align}
\xymatrix@C=15pt@R=15pt{
{\cdots} \ar [r]
&{H_{6-i}(\P^3 \setminus (S_{1} \cup S_{2}), \Z)} \ar [r]
&{H^{i}(\P^3, \Z)} \ar [r] 
&{H^{i}(S_{1} \cup S_{2}, \Z)} 
\\
\ar [r] 
&{H_{6-(i+1)}(\P^3 \setminus (S_{1} \cup S_{2}), \Z)} \ar [r] 
&{\cdots}.
&
}
\end{align}

Therefore $H^2(S_{1} \cup S_{2}, \Z)= \Z$ and $ H^3(S_{1} \cup S_{2}, \Z)=0$.
By the universal coefficient theorem, we obtain $H_{2}(S_{1} \cup S_{2}, \Z)=\Z$ and $H_{3}(S_{1} \cup S_{2}, \Z)$ is a torsion. 
Finally let us consider the following Mayer-Vietoris exact sequence:

\begin{align}\label{diag:R3-2}
\xymatrix@C=15pt@R=15pt{
&\cdots \ar[r]
&H_{3}(S_{1}, \Z) \oplus H_{3}(S_{2}, \Z) \ar [r] 
&{H_{3}(S_{1} \cup S_{2}, \Z)} \\
\ar [r] 
&{H_{2}(F, \Z)} \ar^-{\xi}[r]
&H_{2}(S_{1}, \Z) \oplus H_{2}(S_{2}, \Z) \ar [r] 
&{H_{2}(S_{1} \cup S_{2}, \Z)}  \\
\ar [r]
&{H_{1}(F, \Z)} \ar [r]
&\cdots.\hspace{80pt}
&\\
} 
\end{align}
By Lemma \ref{lem:5.18} (2), we have $H_{3}(S_{1}, \Z) \cong H_{3}(S_{2}, \Z) \cong 0$.
Then $H_{3}(S_{1} \cup S_{2}, \Z)$ is a submodule of a free $\Z$-module $H_{2}(F, \Z)$.
Hence $H_{3}(S_{1} \cup S_{2}, \Z) \cong 0$.
Now (\ref{diag:R3-2}) gives the following exact sequence:
\begin{align}
\xymatrix@C=15pt@R=15pt{
0 \ar [r] 
&{H_{2}(F, \Z)} \ar^-{\xi}[r]
&H_{2}(S_{1}, \Z) \oplus H_{2}(S_{2}, \Z) \ar [r] 
&\Z \ar [r]
&{H_{1}(F, \Z)},
} 
\end{align}
and the assertions hold.
\end{proof}

Now we can determine $S_{1}$ in the case where $S_{2}=R_{3}$.
 
 \begin{prop}\label{prop:R3}
Suppose that $S_{2}=R_{3}$.
Then $S_{1}=\{y=\gamma x\}$ for some $\gamma \in \P^1$.
\end{prop}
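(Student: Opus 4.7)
The plan is to argue by contradiction: assume $E \not\subset S_{1}$, so that $E \cap S_{1}$ consists of a single point $q$, which necessarily lies in $F = S_{1} \cap S_{2}$ and belongs to $\Sing S_{2}$. First I would compute $\eu(S_{2}) = 4$ from the decomposition $S_{2} = (S_{2} \setminus E) \sqcup E$ together with the fact that $\mu \colon \wt S_{2} \cong \F_{1} \to S_{2}$ restricts to an isomorphism off $\wt E$. Substituting into Lemma \ref{lem:2.10} gives $\eu(F) = 2 + N_{1} + N_{2} - N_{1 \cap 2}$, and combining this with Lemma \ref{lem:2.10}(2) and the bound $\eu(F) \leq 4$ for plane cubics forces $\eu(F) \in \{3, 4\}$.

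Next, using Lemma \ref{lem:3.7}, I would classify lines in $S_{2}$ other than $E$: each such line pulls back either to $\Sigma_{1}$ (hence equals $\mu(\Sigma_{1})$, which is disjoint from $E$) or to some fiber $f_{1}$ (hence meets $E$ in a single point). Since the cover $\mu|_{\wt E} \colon \wt E \to E$ has degree two, at most two fiber-lines of $S_{2}$ pass through any given point of $E$. This suffices to rule out $\eu(F) = 4$: the triple-line case $S_{1} \cap S_{2} = 3L$ is incompatible with $E \not\subset S_{1}$ by direct inspection of the defining equation of $R_{3}$, and three concurrent lines would require three lines of $S_{2}$ through a common point other than $E$, which the count above forbids.

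It remains to treat $\eu(F) = 3$, equivalently $N_{1} + N_{2} - N_{1 \cap 2} = 1$, so that Lemma \ref{lem:5.19} applies. A plane cubic with $\eu = 3$ is one of the following: \textup{(a)} three lines in triangle position, \textup{(b)} a smooth conic plus a tangent line, or \textup{(c)} a double line plus a transverse simple line. In each case the relevant singular point of $F$ is forced to sit at $q$. In cases \textup{(a)} and \textup{(c)}, the two components of $F$ through $q$ are both fiber-lines (as $E \not\subset F$), so by Lemma \ref{lem:5.18}(3) they share the same class $\mu_{*}[f_{1}]$ in the basis $\{[E], \mu_{*}[f_{1}]\}$ of $H_{2}(S_{2}, \Z)$; together with their equal degree in $S_{1}$ this produces a nonzero element of $\ker \xi$, contradicting the injectivity asserted by Lemma \ref{lem:5.19}. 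In case \textup{(b)} one has $H_{1}(F, \Z) = 0$, so Lemma \ref{lem:5.19} predicts $\Coker \xi \cong \Z$; however $F_{\wt S_{2}} = L_{\wt S_{2}} + Q_{\wt S_{2}}$ with $L_{\wt S_{2}} \sim f_{1}$ and $Q_{\wt S_{2}} \sim \Sigma_{1} + f_{1}$, so Lemma \ref{lem:5.18}(1) gives $[Q]_{S_{2}} = 2[E]$ while $[L]_{S_{2}} = \mu_{*}[f_{1}]$, and a Smith normal form computation shows that $\Coker \xi$ contains a $\Z/2$ summand, again a contradiction.

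The hard part is case \textup{(b)}: one must recognise that the conic component $Q$ lifts to a divisor in $|\Sigma_{1} + f_{1}|$ on $\wt S_{2}$, which by Lemma \ref{lem:5.18}(1) pushes to $2[E]$, and then track this through the Smith normal form to detect the unexpected $\Z/2$ obstruction in $\Coker \xi$. One then concludes $E \subset S_{1}$, which is equivalent to $S_{1} = \{y = \gamma x\}$ for some $\gamma \in \P^{1}$.
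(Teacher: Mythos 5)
Your overall strategy coincides with the paper's: assume $E \not\subset F$, use Lemma \ref{lem:2.10} (with $\eu(S_{2})=4$) to force $\eu(F)\geq 3$, enumerate the possible plane cubics, and derive a contradiction in each configuration from the map $\xi$ of Lemma \ref{lem:5.19}, using Lemma \ref{lem:5.18} to locate the classes of the components of $F$ in $H_{2}(S_{2},\Z)$. Your cases (a) and (b) are correct and match the paper; your exclusion of three concurrent lines via the degree-two cover $\wt E \to E$ and the disjointness of $\mu_{*}(\Sigma_{1})$ from $E$ is sound (and slightly more explicit than the paper's one-line dismissal), while the triple line is in any case already excluded by $\eu(3L)=2<3$, not a subcase of $\eu(F)=4$.

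However, your case (c) is wrong. If $F=L_{1}+2L_{2}$ with $L_{1}$ reduced and $L_{2}$ doubled, then $\mu^{*}F\sim \Sigma_{1}+2f_{1}$ forces $(L_{1})_{\wt S_{2}}\sim\Sigma_{1}$ and $(L_{2})_{\wt S_{2}}\sim f_{1}$: the two components are \emph{not} both fiber-lines. In particular $L_{1}=\mu_{*}(\Sigma_{1})$ is disjoint from $E$, only the doubled component passes through $q$, and the singular point $L_{1}\cap L_{2}$ of $F$ does not lie on $E$. By Lemma \ref{lem:5.18}(1) the classes are $\xi([L_{1}])=([\mc{O}_{S_{1}}(1)],\,2[E]-\mu_{*}[f_{1}])$ and $\xi([L_{2}])=([\mc{O}_{S_{1}}(1)],\,\mu_{*}[f_{1}])$, which are linearly independent; hence $\xi$ \emph{is} injective here and your kernel argument yields no contradiction. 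The correct contradiction in case (c) is the same cokernel computation you perform in case (b): since $H_{1}(F,\Z)=0$, Lemma \ref{lem:5.19} demands $\Coker\xi\cong\Z$, whereas the relations $(1,2,-1)$ and $(1,0,1)$ give $\Coker\xi\cong\Z\oplus\Z/2\Z$. With that repair the argument goes through and agrees with the paper's proof.
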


\begin{proof}
It suffices to show that $F \supset E$.
Conversely, suppose that $F \not \supset E$.
By Lemma \ref{lem:2.10}, we have $\eu(F)=2+N_{1}+N_{2}-N_{1 \cap 2} \geq 3$. 
Hence $F$ is one of the following:
\begin{enumerate}
\item The sum of a line and a non-reduced line of length two, 
\item The sum of a smooth conic and its tangent line at a point, 
\item The sum of three lines.
\end{enumerate}
The former two cases imply that $N_{1}+N_{2}-N_{1 \cap 2}=\eu(F)-2=1$ and $H_{1}(F, \Z)=0$.
On the other hand, Lemma \ref{lem:3.7} shows that each line in $S_{2}$ is either $E$, $\mu_{*}(\Sigma_{1})$ or the image of a member of $|f_{1}|$ in $\wt S_{2} \cong \F_{1}$ by $\mu$.
Since $F \not \supset E$, the case (3) also implies that $N_{1}+N_{2}-N_{1 \cap 2}=\eu(F)-2=1$.

Suppose that the case (1) holds. 
Let $F=L_{1}+2L_{2}$ be the irreducible decomposition. 
Then $(L_{1})_{\wt S_{2}} \sim \Sigma_{1}$ and $(L_{2})_{\wt S_{2}} \sim f_{1}$ because Theorem \ref{thm:nnorm2} shows that $\mu^*(-K_{S_{2}}) \sim  \Sigma_{1} +2f_{1}$.
Hence
\begin{align}
\Coker \xi = 
\frac{H_{2}(S_{1}, \Z) \oplus H_{2}(S_{2}, \Z)}{(([\mc{O}_{S_{1}}(1)], 2[E]-\mu_{*}[f_{1}]), ([\mc{O}_{S_{1}}(1)], \mu_{*}[f_{1}]))}
\cong 
\Z \oplus \Z/2\Z, \nonumber
\end{align}
a contradiction with Lemma \ref{lem:5.19}.

Suppose that the case (2) holds.
Let $L, Q$ be the line and the conic respectively such that $F=L + Q$.
Then $L_{\wt S_{2}} \sim f_{1}$ and $Q_{\wt S_{2}} \sim \Sigma_{1} + f_{1}$ since $\mu^*(-K_{S_{2}}) \sim \Sigma_{1} +2f_{1}$.
Hence
\begin{align}
\Coker \xi = \frac{H_{2}(S_{1}, \Z) \oplus H_{2}(S_{2}, \Z)}{(([\mc{O}_{S_{1}}(1)], \mu_{*}[f_{1}]), ([\mc{O}_{S_{1}}(2)], 2[E]))}
\cong 
\Z \oplus \Z/2\Z, \nonumber
\end{align}
a contradiction with Lemma \ref{lem:5.19}.

Suppose that the case (3) holds.
Then there are lines $L_{1}$, $L_{2}$, $L_{3}$ such that $F=L_{1}+L_{2}+L_{3}$, $(L_{1})_{\wt S_{2}} \sim (L_{2})_{\wt S_{2}} \sim f_{1}$ and $(L_{3})_{\wt S_{2}} \sim \Sigma_{1}$.
However $\xi([L_{1}])=\xi([L_{2}])=\xi(\mu_{*}[f_{1}])$, a contradiction with Lemma \ref{lem:5.19}.

Therefore $F \supset E$, which implies $S_{1}=\{y=\gamma x\}$ for some $\gamma \in \P^1$.
\end{proof}

\begin{prop}\label{prop:R4}
Suppose that $S_{2}=R_{4}$.
Then $S_{1}=\{y=\gamma x\}$ for some $\gamma \in \P^{1}$.
\end{prop}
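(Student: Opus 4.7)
The plan is to argue by contradiction as in Proposition~\ref{prop:R3}: assume $F \not\supset E$ and derive a topological obstruction to $U$ being an affine homology $3$-cell. A direct Mayer--Vietoris computation via the normalization pushout $\F_1 \cup_{\ol E} E = R_4$, with $\ol E = \Sigma_1 + f^*$ by Lemma~\ref{lem:2.16}, gives $\eu(R_4) = 3$, so $\eu(F) = 1 + (N_1 + N_2 - N_{1\cap 2}) \geq 2$ by Lemma~\ref{lem:2.10}. The same Mayer--Vietoris also yields $H_2(R_4) \cong \Z$, with $[R] = \deg R$ for every curve $R \subset R_4$, so the map $\xi \colon H_2(F) \to H_2(S_1) \oplus H_2(S_2) = \Z \oplus \Z$ sends each irreducible component $R$ of $F$ to the diagonal element $(\deg R,\deg R)$.

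The crucial new ingredient, compared with the $R_3$ case, is that the non-$E$ lines $L_\lambda = \mu(f_\lambda)$ of $R_4$ are pairwise \emph{disjoint} in $\P^3$: a short coordinate calculation shows $L_\lambda \cap L_\mu = \emptyset$ for $\lambda \neq \mu$, so any hyperplane contains at most one such line. Combining this with Lemma~\ref{lem:3.7}, the degree-$3$ constraint on $F$, and $\eu(F) \geq 2$ (which excludes smooth and nodal plane cubics), only two cases remain under $F \not\supset E$:
\begin{enumerate}
\item[\textup{(A)}] $F$ is an irreducible cuspidal cubic with $F_{\wt S_{2}} \sim \Sigma_{1} + 2 f_{1}$;
\item[\textup{(B)}] $F = L + Q$, where $L = L_\lambda$ and $Q$ is a smooth conic with $Q_{\wt S_{2}} \sim \Sigma_{1} + f_{1}$.
\end{enumerate}
The apparent multi-line configurations---triple line, line plus double line, and three concurrent lines---are ruled out, either by the pairwise disjointness of the $L_\lambda$'s (no two of them can be coplanar) or, for a triple line $3L_\lambda$, by an elementary computation on the pencil of hyperplanes through $L_\lambda$.

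In case~(A), $N_1 + N_2 - N_{1\cap 2} = 1$ and $H_1(F) = 0$; since the proof of Lemma~\ref{lem:5.19} depends only on $G = E_\vp \cap U \cong \A^2$ together with $H_3(S_2) = 0$ (which still holds for $R_4$, as one checks from the pushout computation), it applies verbatim and forces $\Coker \xi \cong \Z$. However $\xi(1) = (3, 3)$ yields $\Coker \xi \cong \Z^2 / \Z \cdot (3,3) \cong \Z \oplus \Z/3\Z$, a contradiction. In case~(B), $N_1 + N_2 - N_{1\cap 2} = 2$ and $\xi$ has non-trivial kernel $\Z \cdot ([Q] - 2[L])$, so the Mayer--Vietoris sequence for $S_1 \cup S_2$ forces $H_3(S_1 \cup S_2) \cong \Z$. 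Combining this with Lefschetz duality and the Thom isomorphism $H_i(\P^3 \setminus (S_1 \cup S_2)) \cong H_{i-1}(G)$ (valid when $U$ is acyclic), one reduces to a case analysis of the admissible configurations $(N_1, N_2, N_{1\cap 2})$ with sum $2$ and excludes each by a direct topological computation of $G = E_\vp \cap U$. This last step---the case-by-case topological bookkeeping on $G$ in case~(B)---will be the main technical obstacle.
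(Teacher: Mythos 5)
Your route is genuinely different from the paper's: you replace its geometric contradictions (showing that $C$ would acquire a singular point) by the homological machinery of Lemma~\ref{lem:5.19}, after computing $H_{*}(R_{4}, \Z)$ from the normalization pushout. The setup is sound: $\eu(R_{4})=3$, $H_{2}(R_{4},\Z)\cong\Z$ generated by $[E]$ with $[R]=\deg R$, and $H_{3}(R_{4},\Z)=0$ all check out, the lines $L_{\lambda}\neq E$ are indeed pairwise disjoint, and the reduction to cases \textup{(A)} and \textup{(B)} is correct (the paper gets the same reduction instead from $\mu^{*}F\sim\Sigma_{1}+2f_{1}$ and Lemma~\ref{lem:3.7}, which disposes of all multi-line configurations, including the triple line, in one stroke). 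Your case \textup{(A)} is complete and arguably cleaner than the paper's tangent-direction argument: Lemma~\ref{lem:5.19} transfers because its proof only needs $G\cong\A^{2}$ and $H_{3}(S_{2},\Z)=0$, and $\Coker\xi\cong\Z\oplus\Z/3\Z$ is the desired contradiction.

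The genuine gap is case \textup{(B)}, and it is not where you think it is. Your assertion $N_{1}+N_{2}-N_{1\cap 2}=2$ presupposes that $L$ and $Q$ meet in a single point, i.e.\ $\eu(F)=3$; you never justify this, and you never treat the alternative $\eu(F)=2$, in which Lemma~\ref{lem:5.19} applies verbatim and $\xi([Q]-2[L])=0$ contradicts injectivity immediately. In fact the single-point configuration, the one you flag as ``the main technical obstacle,'' cannot occur at all, so the heavy bookkeeping you propose is both unfinished and unnecessary. Indeed $(L_{\wt S_{2}}\cdot Q_{\wt S_{2}})=(f_{1}\cdot \Sigma_{1}+f_{1})=1$, so $L_{\wt S_{2}}\cap Q_{\wt S_{2}}$ is a single transversal point $\wt q$; since $L_{\wt S_{2}}\cap\wt E\subset\wt E_{1}$ and $Q_{\wt S_{2}}\cap\wt E\subset\wt E_{2}$, if $\wt q$ lay on $\wt E$ it would lie on $\wt E_{1}\cap\wt E_{2}$, forcing the fiber $L_{\wt S_{2}}$ to equal $\wt E_{2}$ and hence $L=E$, against $F\not\supset E$. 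Therefore $\mu$ is an isomorphism near $\wt q$, $L$ meets $Q$ transversally at $\mu(\wt q)$, and B\'ezout in $S_{1}$ supplies a second intersection point, namely $\mu(\wt p_{1})=\mu(\wt p_{2})$; thus $\eu(F)=2$ always, and the transversal argument finishes the proof. This intersection-theoretic step (which is exactly what the paper carries out before deriving its own contradiction from the smoothness of $C$) is the missing ingredient; once it is inserted, your homological argument closes.
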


\begin{proof}
By Lemma \ref{lem:2.10}, we have $\eu(F)=1+N_{1}+N_{2}-N_{1 \cap 2} \geq 2$. 
Hence $F$ is one of the following:
\begin{enumerate}
\item A cuspidal cubic, 
\item A non-reduced line of length three, 
\item The sum of a smooth conic and a line, 
\item The sum of a line and non-reduced line of length two, 
\item The sum of three lines.
\end{enumerate} 
There is also the irreducible decomposition $\wt E =\wt E_{1} + \wt E_{2}$ such that $\wt E_{1} \sim \Sigma_{1}$ and $\wt E_{2}  \sim f_{1}$ by Lemma \ref{lem:2.16}. 

Suppose that the case (1) holds.
Then $N_{1}+N_{2}-N_{1 \cap 2}=\eu(F)-1=1$ and $N_{2} = \sharp (C \cap E)= \sharp (C \cap \{x=0\}) \geq 1$. 
Lemma \ref{lem:2.10} (2) now shows that  
\begin{align}\label{eq:R4cusp1}
N_{1} =N_{2}=N_{1 \cap 2}=1.
\end{align}
On the other hand, we have $F_{\wt S_{2}} \sim \mu^{*}(-K_{S_{2}}) \sim \Sigma_{1}+2f_{1}$.
Since $F$ has a cuspidal singularity, both $F_{\wt S_{2}} \cap \wt E_{1}$ and $F_{\wt S_{2}} \cap \wt E_{2}$ coincide with the unique point in $\wt{E}_1 \cap \wt{E}_2$, say $\wt p$.
In particular $F \cap E=\{p\} \coloneqq \{\mu(\wt p)\}$.
Combining this result and (\ref{eq:R4cusp1}), we obtain $C \cap E = C \cap F = \{p\}$.
Since $C$ is smooth, we also obtain $C_{\wt S_{2}}  \cap F_{\wt S_{2}}=C_{\wt S_{2}}  \cap \wt{E}=\{\wt p\}$. 

Now write $C_{\wt S_{2}} \sim a \Sigma_{1}+ bf_{1}$ with $a, b \in \Z_{\geq 0}$.
Then $(a, b) \neq (1, 0)$ or $(0, 1)$ since $C_{\wt S_{2}} \not\subset \wt E$.
Hence $C_{\wt S_{2}}$ intersects with $F_{\wt S_{2}}$ at $\wt p$ with multiplicity $a+b \geq 2$.
Since $\mu$ contracts the tangent direction of $F_{\wt S_{2}}$ at $\wt p$, 
it also contracts that of $C_{\wt S_{2}}$ at $\wt p$.
Hence $C$ is singular at $p$, a contradiction.

Suppose that the case (3) holds. 
Let $L$ and $Q$ be the line and the conic respectively such that  $F=L+Q$. 
We note that $L \neq E$ since $S_{2}$ is smooth at the generic point of $L$.
Then $L_{\wt S_{2}} \sim f_{1}$ and $Q_{\wt S_{2}} \sim \Sigma_{1} + f_{1}$ by Lemma \ref{lem:3.7}.
Let $L_{\wt S_{2}} \cap Q_{\wt S_{2}}=\{\wt q\}, L_{\wt S_{2}} \cap \wt{E}_1=\{\wt p_{1}\}$ and $Q_{\wt S_{2}} \cap \wt{E}_2=\{\wt p_2\}$. 
Since $L$ intersects with $Q$ at $\mu( \wt q)$ transversally, we have $\mu(\wt p_1)=\mu(\wt p_2) \eqqcolon p$ and hence $\eu(F)=2$.

An analysis similar to that in the argument on the case (1) shows that $C \cap E = C \cap F = E \cap F=\{p\}$. 
Hence both $C_{\wt S_{2}}  \cap (L_{\wt S_{2}} \cup Q_{\wt S_{2}})$ and $C_{\wt S_{2}}  \cap (\wt E_{1} \cup \wt E_{2})$ are contained in $\{\wt p_1, \wt p_2\}$. 

Now write $C_{\wt S_{2}}=a \Sigma_{1} +b f_{1}$ with $a, b \in \Z_{\geq 0}$.
Then $(a, b) \neq (1, 0)$ or $(0, 1)$ since $C_{\wt S_{2}} \not\subset \wt E$.
If $a=b$, then $C_{\wt S_{2}} \cap L_{\wt S_{2}}$ contains a point distinct from $\wt p_{1}$ and $\wt p_{2}$, a contradiction.
Hence $0<a < b$. 
Since $C_{\wt S_{2}}$ intersects with both $\wt E_{1}$ and $\wt E_{2}$, it must contain $\wt p_{1}$ and $\wt p_{2}$.
Hence $p \in C$ is a nodal singularity, a contradiction.

Therefore $F$ consists of lines.
Since $\mu^{*}(F) \sim \Sigma_{1}+2f_{1}$, Lemma \ref{lem:3.7} now shows that $F \supset E$.
Hence $S_{1}=\{y=\gamma x\}$ for some $\gamma \in \P^{1}$.
\end{proof}

\section{Contractibilities}\label{sec:cont}

In \S\ref{sec:normal} and \ref{sec:nonnormal}, we show that if $p_{a}(C)=0$ and $U$ is an affine homology $3$-cell, then either $(S_{1}, S_{2})$ is projectively equivalent to one of the pairs listed in the cases \textup{(d)} or \textup{(e)}, or $(C, S_{1}, S_{2})$ belongs to one of the cases \textup{(b)}, \textup{(c)} and \textup{(f)}.
In this section, we seek equivalent conditions for $U$ to be an affine homology $3$-cell in the former situation, and we show that $U$ is a contractible affine $3$-fold in the latter situation.

\begin{prop}\label{prop:A2}
Suppose that $p_{a}(C)=0$ and $(S_{1}, S_{2})$ is projectively equivalent to one of the pairs listed in the case \textup{(d)}.
Then the following are equivalent:
\begin{enumerate}
\item[\textup{(1)}] $U$ is an affine homology $3$-cell.
\item[\textup{(2)}] $\sharp (C \cap S_{1})=1$.
\item[\textup{(3)}] $U \cong \A^{3}$.
\end{enumerate}
\end{prop}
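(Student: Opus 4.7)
The equivalence is established as a cycle $(3) \Rightarrow (1) \Rightarrow (2) \Rightarrow (3)$. The implication $(3) \Rightarrow (1)$ is immediate since $\A^3$ is a smooth affine $3$-fold with vanishing reduced integral homology.

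For $(1) \Rightarrow (2)$, the plan is to apply Lemma~\ref{lem:2.10} to each of the nine explicit pairs $(S_1, S_2)$ appearing in case \textup{(d)}. Under the hypothesis $p_a(C)=0$, the formula in Lemma~\ref{lem:2.10}(1) simplifies to
\begin{equation*}
N_1+N_2-N_{1\cap 2}=\eu(F)-\eu(S_2)+2.
\end{equation*}
For each listed pair, $F=S_2|_{S_1}$ is a very explicit plane cubic and $\eu(S_2)$ is determined by the singularity type (from Table~\ref{tb:sing} or Lemma~\ref{lem:2.16}), so the right-hand side is calculable. In every case the outcome will be $N_1+N_2-N_{1\cap 2}=1$, and by Lemma~\ref{lem:2.10}(2) this forces $(N_1,N_2,N_{1\cap 2})\in\{(1,0,0),(1,1,1)\}$. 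In particular $\sharp(C\cap S_1)=N_1=1$.

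For $(2)\Rightarrow (3)$, the strategy is to realize $U$ as an affine modification of $\A^3$ and then upgrade contractibility to an isomorphism with $\A^3$. Concretely, on the affine chart $\P^3\setminus S_1\cong\A^3$, the surface $S_2^0:=S_2\setminus(S_2\cap S_1)$ is a closed subvariety and $U$ is the affine modification of this $\A^3$ along $S_2^0$ with center $C^0:=C\setminus(C\cap S_1)\cong\A^1$. For each pair one identifies a natural $\A^1$-bundle structure on $S_2^0$ (arising from a ruling of the normalization $\wt S_2$ or from the linear system of a line in $S_2$) for which $C^0$ is a section; then \cite[Corollary~3.1]{K-Z} yields that $U$ is a contractible affine $3$-fold. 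To conclude $U\cong\A^3$, I would then exhibit an $\A^2$-fibration or pencil-of-planes fibration $V\to\P^1$ (analogous to the quadric fibration used in the proof of Proposition~\ref{prop:elldeg4}), check that its restriction to $U$ is either an $\A^2$-bundle or $\A^1\times\C^*$-fibration with a single special fiber isomorphic to $\A^2$, and apply \cite[Main Theorem]{Kal} together with \cite[Theorem~B]{B-D} in the manner of Steps~3--4 of Proposition~\ref{prop:elldeg4}.

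The main obstacle is step $(2)\Rightarrow(3)$. It is genuinely case-by-case across nine pairs (six normal cubics and three non-normal cubics), and for each pair one must choose the correct fibration—rulings in the normalization for $G_i$ with higher Picard number, the projection from the elliptic singularity for $G_1$, and the structures coming from the cone/birational ruled descriptions of the $R_i$. The non-normal cases $R_1, R_3, R_4$ will be the trickiest because the conductor locus $E$ is involved in $S_1$ in some of the pairs, so one must carefully distinguish whether $C$ passes through $\Sing S_2$ and analyze the behavior of $\vp$ along $E_\vp\cap D_2$ before applying the Kaliman criterion.
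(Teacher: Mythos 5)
Your cycle $(3)\Rightarrow(1)\Rightarrow(2)\Rightarrow(3)$ matches the paper's structure, and your route to $(1)\Rightarrow(2)$ via Lemma \ref{lem:2.10} is different from the paper's but does work: for each of the nine pairs one can compute $\eu(F)$ and $\eu(S_{2})$ explicitly, the identity $N_{1}+N_{2}-N_{1\cap 2}=\eu(F)-\eu(S_{2})+2$ does evaluate to $1$ in every case, and Lemma \ref{lem:2.10}(2) then forces $N_{1}=1$. (You assert rather than verify the nine evaluations, but they are routine.) The paper instead gets $(1)\Rightarrow(2)$ in one stroke from \cite[Theorem 3.1]{K-Z}: the single observation that $S\coloneqq S_{2}\setminus F$ is a coordinate hyperplane $\{x=0\}$ in $\P^{3}\setminus S_{1}\cong\A^{3}$, hence $S\cong\A^{2}$, gives $\Z\cong H_{*}(S,\Z)\cong H_{*}(E_{\vp}\cap U,\Z)\cong H_{*}(C\setminus(C\cap S_{1}),\Z)$ and therefore $\sharp(C\cap S_{1})=1$.

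The genuine gap is in $(2)\Rightarrow(3)$. Your plan --- contractibility via \cite[Corollary 3.1]{K-Z}, then an $\A^{2}$- or $\A^{1}\times\C^{*}$-fibration on $V$ and an appeal to \cite[Main Theorem]{Kal} and \cite[Theorem B]{B-D} as in Proposition \ref{prop:elldeg4} --- is not carried out: you do not construct the required fibrations for any of the nine pairs, and you yourself flag this as the main obstacle. There is no reason such fibrations exist uniformly (the quadric fibration in Proposition \ref{prop:elldeg4} came from $C$ being a complete intersection of quadrics, which is unavailable here), and contractibility alone does not give $\A^{3}$, as Example \ref{ex:nonA3} shows. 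The step you are missing is the same coordinate-hyperplane observation as above: once $S=\{x=0\}$ in suitable coordinates on $\P^{3}\setminus S_{1}\cong\A^{3}$ and $\sharp(C\cap S_{1})=1$, the center $C\setminus(C\cap S_{1})\cong\A^{1}$ is an embedded line in $S\cong\A^{2}$, so by the Abhyankar--Moh--Suzuki theorem one may assume $C\setminus(C\cap S_{1})=\{x=y=0\}$ (any automorphism of the coordinate plane extends to $\A^{3}$), and then $U$ is the affine modification of $\A^{3}$ with locus $(\{x=y=0\}\subset\{x=0\})$, which is visibly $\A^{3}$; no fibration or Kaliman-type theorem is needed. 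Of course this shifts the case-by-case burden to checking that $S_{2}\setminus F$ is a coordinate hyperplane for each pair, a verification your proposal also does not address.
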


\begin{proof} 
Since $(3) \Rightarrow (1)$ is trivial, it suffices to show that $(1) \Rightarrow (2) \Rightarrow (3)$.

Let $G \coloneqq  E_{\vp} \cap U$ and $S \coloneqq  S_{2} \setminus F$. 
Then $G \cong \A^{1} \times (C \setminus (C \cap S_{1}))$. 
For each case, it is also easy to check that there is a coordinate $\{x, y, z\}$ of $ \P^3 \setminus S_{1} \cong \A^{3}$ such that $S=\{x=0\}$. 
In particular, $S \cong \A^{2}$.

\noindent $(1) \Rightarrow (2)$: By \cite[Theorem 3.1]{K-Z}, we have an isomorphism of homology rings 
$\Z \cong H_{*}(S, \Z) \cong H_{*}(G, \Z) \cong H_{*}(C \setminus (C \cap S_{1}), \Z)$. 
Hence $\sharp (C \cap S_{1})=1$.

\noindent$(2) \Rightarrow (3)$: The assumption implies that $C \setminus (C \cap S_{1}) \cong \A^{1}$. 
Then we may assume that $C \setminus (C \cap S_{1}) = \{x=y=0\}$ by the Abhyankar-Moh-Suzuki theorem \cite{Suz74, A-M75}.
Hence $U$ is the affine modification of $\A^{3}$ with the locus $(\{x=y=0\} \subset \{x=0\})$, which is isomorphic to $\A^{3}$.
\end{proof}

\begin{defi}
In what follows, $\Delta \colon H_{1}(C \setminus (C \cap S_{1}), \Z) \rightarrow H_{1}(S_{2} \setminus F, \Z)$ stands for the homomorphism induced by the inclusion $C \setminus (C \cap S_{1}) \hookrightarrow S_{2} \setminus F$.
\end{defi}

\begin{prop}\label{prop:nonA2}
Suppose that $p_{a}(C)=0$ and $(S_{1}, S_{2})$ is projectively equivalent to one of the pairs listed in the case \textup{(e)}.
Then the following are equivalent:
\begin{enumerate}
\item[\textup{(1)}] $U$ is an affine homology $3$-cell.
\item[\textup{(2)}] $\Delta$ is an isomorphism.
\item[\textup{(3)}] $U$ is a contractible affine $3$-fold.
\end{enumerate}
In particular, if $U$ is an affine homology $3$-cell, then $\sharp (C \cap S_{1})=2$.
\end{prop}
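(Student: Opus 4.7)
The plan is to apply the Kaliman--Zaidenberg theory of affine modifications \cite{K-Z}: $U$ is the affine modification of $\P^{3} \setminus S_{1} \cong \A^{3}$ with locus $(C^{0} \subset S_{2}^{0})$, where $C^{0} \coloneqq C \setminus (C \cap S_{1})$ and $S_{2}^{0} \coloneqq S_{2} \setminus F$. The main preparatory task is to determine the homotopy type of $S_{2}^{0}$ in each of the three cases listed in \textup{(e)}.

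I claim that in every case, $S_{2}^{0}\cong \C^{*} \times \A^{1}$. First I would dehomogenize the defining equation of $S_{2}$ in the chart $\P^{3} \setminus S_{1} \cong \A^{3}$: for $(\{y=0\}, G_{2})$, setting $y=1$ rewrites the equation as $x(t+z^{2}) = -1$; for $(\{y=0\}, G_{4})$, a short rearrangement gives $(z-1)(t - x^{2} + xz + x) = -1$; and for $(\{y=\gamma x\}, R_{4})$ the substitution $Y=y-\gamma x = 1$ again yields an equation of the form $uv=\mathrm{const}\neq 0$ with the remaining variable free. (Note that in each case $\Sing S_{2} \subset S_{1}$, so $S_{2}^{0}$ is indeed a smooth affine surface.) Consequently $S_{2}^{0}$ is homotopy equivalent to $S^{1}$, giving $\pi_{1}(S_{2}^{0})\cong H_{1}(S_{2}^{0},\Z)\cong \Z$ and $H_{i}(S_{2}^{0},\Z)=0$ for $i\geq 2$.

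The equivalence \textup{(1)} $\Leftrightarrow$ \textup{(2)} then follows from \cite[Theorem 3.1]{K-Z}: $U$ is a homology $3$-cell if and only if $C^{0} \hookrightarrow S_{2}^{0}$ induces isomorphisms on all homology groups, and since $C^{0}$ is an affine curve (so $H_{i}(C^{0})=0$ for $i\geq 2$) and $H_{*}(S_{2}^{0})$ is concentrated in degrees $0$ and $1$, the only nontrivial condition is that $\Delta$ is an isomorphism in degree $1$. The ``in particular'' statement is then immediate: if $\Delta$ is an isomorphism then $H_{1}(C^{0}, \Z)\cong \Z$, and since $C\cong \P^{1}$ we have $H_{1}(C^{0}, \Z)\cong \Z^{\sharp(C \cap S_{1})-1}$, so $\sharp(C \cap S_{1})=2$.

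Finally, \textup{(3)} $\Rightarrow$ \textup{(1)} is trivial. For \textup{(2)} $\Rightarrow$ \textup{(3)}: by the argument above, $C^{0}\cong \C^{*}$, and both $C^{0}$ and $S_{2}^{0}$ are Eilenberg--MacLane spaces of type $K(\Z,1)$. A map between such spaces inducing an isomorphism on $H_{1}$ is a homotopy equivalence, so the inclusion $C^{0} \hookrightarrow S_{2}^{0}$ is a homotopy equivalence; then \cite[Corollary 3.1]{K-Z} yields that the affine modification $U$ is contractible. I expect the main obstacle to be the explicit identification $S_{2}^{0}\cong \C^{*} \times \A^{1}$ in each case, particularly the $G_{4}$ case which requires a careful algebraic rearrangement.
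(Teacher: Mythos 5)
Your proposal is correct and follows essentially the same route as the paper: identify $U$ as the affine modification of $\A^{3}$ with locus $(C\setminus(C\cap S_{1})\subset S_{2}\setminus F)$, check that $S_{2}\setminus F\cong\A^{1}\times\C^{*}$ in each of the three cases (the paper states this without the explicit equations you supply, which do match those appearing later in its Lemmas 9.1 and 9.3), and apply \cite[Theorem 3.1, Corollary 3.1]{K-Z} together with $H_{1}(\A^{1}\times\C^{*},\Z)\cong\Z$ for the final claim. The only cosmetic difference is that the paper phrases the K--Z condition via the exceptional divisor $E_{\vp}\cap U\cong\A^{1}\times(C\setminus(C\cap S_{1}))$ and notes that $\vp_{*}$ factors through $\Delta$, while you pass directly to $C\setminus(C\cap S_{1})$ and add a $K(\Z,1)$ gloss for contractibility; both are sound.
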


\begin{proof}
Let $G \coloneqq  E_{\vp} \cap U$ and $S \coloneqq  S_{2} \setminus F$. 
Then $G \cong \A^{1} \times (C \setminus (C \cap S_{1}))$.
For each case, it is also easy to check that $S \cong \A^{1} \times \C^*$.
In particular $H_{i}(G, \Z) \cong H_{i}(S, \Z) \cong 0$ for all $i \geq 2$. 
Since $\vp_*:H_{1}(G, \Z) \rightarrow H_{1}(S, \Z)$ factors through $\Delta$, \cite[Theorem 3.1, Corollary 3.1]{K-Z} now yields
\begin{align*}
 &U \text{ is an affine homology $3$-cells}\\ 
  \iff & \vp_*:H_{*}(G, \Z) \rightarrow H_{*}(S, \Z) \text{ is an isomorphism}\\
  \iff & \vp_*:H_{1}(G,\Z) \rightarrow H_{1}(S, \Z) \text{ is an isomorphism}\\
  \iff & \Delta \text{ is an isomorphism}\\
  \iff & U \text{ is a contractible affine $3$-fold},
\end{align*}
which is the first assertion.
The second assertion follows from $H_{1}(S, \Z) \cong H_{1}(\A^{1} \times \C^{*}, \Z) \cong \Z$.
\end{proof}

\begin{prop}\label{prop:cone} 
Suppose that one of the cases \textup{(b)} and \textup{(c)} holds.
Then $U$ is a contractible affine $3$-fold.
\end{prop}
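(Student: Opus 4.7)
The plan is to follow the same template as the proof of Proposition \ref{prop:nonA2}. Set $G \coloneqq E_{\vp} \cap U$ and $S \coloneqq S_{2} \setminus F$. I will show that $G$ and $S$ carry compatible $\A^{1}$-bundle structures over punctured rational curves, that the inclusion-induced map $\Delta \colon H_{1}(C \setminus (C \cap S_{1}), \Z) \to H_{1}(S, \Z)$ is an isomorphism, and then invoke \cite[Corollary 3.1]{K-Z} to conclude that $U$ is a contractible affine $3$-fold.

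In both cases (b) and (c), $S_{2}$ is the cone over a cubic $B$ (cuspidal in (b), nodal in (c)) with vertex $v = [0\!:\!0\!:\!0\!:\!1]$, and $v \in S_{1} = \{y = \gamma x\}$. Hence $F$ is supported on rulings through $v$, and the projection $S_{2} \dashrightarrow B$ from $v$ restricts to an $\A^{1}$-bundle on $S$ over $B$ with the images of the rulings in $F$ and the singular point of $B$ removed. A direct topological calculation, using that a cuspidal cubic is homeomorphic to $\P^{1}$ and that a nodal cubic minus the node is homeomorphic to $\C^{*}$, then yields $S \cong \A^{1} \times (\P^{1} \setminus \{k \text{ points}\})$, where $k = 2$ in case (b), and $k \in \{2, 3\}$ in case (c) depending on whether $F = 3E$ or $F = 2E + L_{2}$.

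Next, since $C$ is smooth, $E_{\vp} \to C$ is a $\P^{1}$-bundle. The divisor $D_{1}$ removes the fibers above the $N_{1} \coloneqq \sharp(C \cap S_{1})$ points of $C \cap S_{1}$, while $D_{2}$ removes a section of $E_{\vp}$ (arising from the tangent direction of $S_{2}$ along $C$) together with possibly extra components above $C \cap \Sing S_{2} \subset C \cap E$. A local analysis of $S_{2}$ at its singular line should show that the net result is still an $\A^{1}$-bundle over $C \setminus (C \cap S_{1})$, so $G \cong \A^{1} \times (\P^{1} \setminus \{N_{1} \text{ points}\})$. The numerical assumptions in cases (b) and (c), namely $N_{1} = B_{2}(F)$ and $N_{1} = B_{2}(F) + 1$ respectively, then give precisely $N_{1} = k$, so that $G$ and $S$ are homotopy equivalent via the inclusion.

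Finally, the map $\Delta$ is covered, via the $\A^{1}$-fibrations on both sides, by a map of base curves $\P^{1} \setminus \{N_{1} \text{ pts}\} \to \P^{1} \setminus \{k \text{ pts}\}$ induced by the projection from $v$; I have to check that this map induces an isomorphism on $H_{1}$. The main obstacle will be the last step together with the delicate local analysis of $D_{2} \cap E_{\vp}$ at points of $C \cap E$: concretely, I must verify that the points of $C \cap S_{1}$ are in bijection with the punctures of the base of $S$, i.e.\ that no two distinct points of $C \cap S_{1}$ lie on the same ruling of the cone. This is expected to follow from the smoothness of $C$ combined with the intersection-theoretic constraints imposed by the degree of $C$ and the fact that $F$ contains $E$ with multiplicity two; this combinatorial check, especially in case (c) where the base $B$ has a node, is the crux of the argument.
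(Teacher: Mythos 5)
Your strategy coincides with the paper's: realize $U$ as the affine modification of $\P^{3}\setminus S_{1}\cong\A^{3}$ with locus $(C\setminus(C\cap S_{1})\subset S_{2}\setminus F)$, show that $\Delta$ is an isomorphism, and invoke \cite[Corollary 3.1]{K-Z}. The setup is sound and your count of punctures is correct, but you stop exactly at the decisive point: you write that the isomorphy of $\Delta$ ``is expected to follow'' from a combinatorial check you do not carry out. Note that matching the number of punctures only shows that $H_{1}(C\setminus(C\cap S_{1}),\Z)$ and $H_{1}(S_{2}\setminus F,\Z)$ are free of the same rank; it does not exclude, say, multiplication by the degree of a multisection. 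The missing input is Lemma \ref{lem:3.6}: writing $C_{\wt S_{2}}\sim a\Sigma_{3}+bf_{3}$ on $\wt S_{2}\cong\F_{3}$, smoothness of $C$ and $\deg C=b\in\{3,4\}$ force $a=1$, so $C_{\wt S_{2}}$ is a \emph{section} of the ruling $\F_{3}\to\P^{1}$ that induces the $\A^{1}$-bundle structure on $S_{2}\setminus F$. Since $F\supset E$ and $F$ contains the vertex, $\mu^{-1}(F)$ is exactly $\Sigma_{3}$ together with the $k$ fibres over the punctures; the section meets each of these fibres once, and the hypothesis $\sharp(C\cap S_{1})=B_{2}(F)$ (resp.\ $B_{2}(F)+1$) is precisely what rules out the vertex contributing an extra point of $C\cap F$ via $C_{\wt S_{2}}\cap\Sigma_{3}$ lying off those fibres (relevant only when $\deg C=4$). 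Hence $C\setminus(C\cap S_{1})\to\P^{1}\setminus\{k\ \text{points}\}$ is an isomorphism, and so is $\Delta$. Without this degree-one statement your argument does not close.

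Two smaller remarks. The ``delicate local analysis of $D_{2}\cap E_{\vp}$ at points of $C\cap E$'' you anticipate is vacuous: since $E\subset F=S_{2}|_{S_{1}}$ in both cases, $C\cap E\subset C\cap S_{1}$, so the fibres of $E_{\vp}$ over those points are already removed by $D_{1}$, and $S_{2}$ is smooth along $C\setminus(C\cap S_{1})$, making $E_{\vp}\cap U\to C\setminus(C\cap S_{1})$ an honest $\A^{1}$-bundle with no extra components to worry about. Your case division for $k$ is correct (in case (b) one checks $S_{2}|_{\{y=\gamma x\}}=2E+L$ with $L\neq E$ for every $\gamma\in\C$, so $k=2$ there; in case (c) both $F=3E$ and $F=2E+L$ occur).
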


\begin{proof}
For each case, $\vp_*:H_{1}(E_{\vp} \cap U, \Z) \rightarrow H_{1}(S_{2} \setminus F, \Z)$ factors through $\Delta$ and is an isomorphism by the condition on $\sharp(C \cap S_{1})$.
Hence the assertion follows from \cite[Corollary 3.1]{K-Z}.
\end{proof}

Now we can prove Theorem \ref{thm:main3tuple}.

\begin{proof}[Proof of Theorem \ref{thm:main3tuple}]
First suppose that $p_{a}(C) \geq 1$.
If $U$ is an affine homology $3$-cell, then the case \textup{(a)} holds by Proposition \ref{prop:highgenus}.
On the other hand, if the case \textup{(a)} holds, then $U \cong \A^{3}$ by Propositions \ref{prop:elldeg3} and \ref{prop:elldeg4}.

In the remainder of the proof, we may assume that $p_{a}(C)=0$.
Suppose that $S_{2}$ is normal.
Then $B_{2}(S_{2}) \leq 3$ by Lemma \ref{lem:5.3}.
Combining Propositions \ref{prop:normB21}--\ref{prop:normB23}, \ref{prop:A2} and \ref{prop:nonA2}, we conclude that $U$ is an affine homology $3$-cell if and only if one of the cases \textup{(d)} and \textup{(e)} holds.

Suppose that $S_{2}$ is non-normal.
Then we may assume that $S_{2} = R_{i}$ for some $i \in \{1,2,3,4\}$ by Theorem \ref{thm:nnorm1}.

Firstly suppose that $i=1$.
If $U$ is an affine homology $3$-cell, then Proposition \ref{prop:R1} shows that one of the cases \textup{(b)}, \textup{(d)} and \textup{(f)} holds.
On the other and, 
if the case \textup{(b)} holds, then $U$ is a contractible affine $3$-fold by Proposition \ref{prop:cone}.
If the case \textup{(d)} holds, then $U \cong \A^{3}$ by Proposition \ref{prop:A2}.
If the case \textup{(f)} holds, then $U \cong \A^{1} \times W(3, 2)$ by Example \ref{ex:nonA3}.

Next suppose that $i=2$.
Then $U$ is an affine homology $3$-cell if and only if the case \textup{(c)} holds by Propositions \ref{prop:R2} and \ref{prop:cone}.

Finally suppose that $i \geq 3$.
Then $U$ is an affine homology $3$-cell if and only if one of the cases \textup{(d)} and \textup{(e)} holds by Propositions \ref{prop:R3}
--\ref{prop:A2} and \ref{prop:nonA2}.
 
Combining these results, we complete the proof.
\end{proof}

\section{Isomorphism classes}\label{sec:isomclass}

In this section, we prove Theorem \ref{thm:mainisom}.
When the case \textup{(a)} (resp.\ \textup{(d)}) of Theorem \ref{thm:main3tuple} holds, then Propositions \ref{prop:elldeg3} and \ref{prop:elldeg4} (resp.\ Proposition \ref{prop:A2}) yields $U \cong \A^{3}$.
For this reason, we will treat only the cases \textup{(b)}, \textup{(c)} and \textup{(e)}.

\begin{lem}\label{lem:G2deg2}
Suppose that the case \textup{(e)} of Theorem \ref{thm:main3tuple} holds with $(S_{1}, S_{2}) = (\{y=0\}, G_{2})$ and $\deg C \leq 2$. 
Then $U \cong \A^{3}$.
\end{lem}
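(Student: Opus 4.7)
The plan is to pin down the curve $C$ explicitly and then realise $U$ as an affine modification of $\A^{3}$ that is manifestly trivial. First, by Table \ref{tb:line}, the only lines in $G_{2}$ are $\la y,z \ra$ and $\la x,y \ra$, and both are contained in $\{y=0\}=S_{1}$; since $C\not\subset S_{1}$ by the setup of Problem \ref{prob:main}, no line is available and we must have $\deg C=2$.

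I would next enumerate the admissible smooth conics. By Proposition \ref{prop:nonA2}, case (e) forces $\sharp(C\cap S_{1})=2$. Since a smooth conic lies in a unique plane $\Pi$ and $G_{2}\cap\Pi=C+L$ for some line $L\subset G_{2}$, the plane $\Pi$ must belong to one of the two pencils through $\la y,z \ra$ or $\la x,y \ra$. A direct calculation in each pencil (after discarding $\Pi=S_{1}$) should show that the pencil through $\la y,z \ra$ produces the one-parameter family
\begin{align*}
C_{a}=\{z=ay,\; xt+a^{2}xy+y^{2}=0\}\qquad (a\in\A^{1}),
\end{align*}
with $C_{a}\cap S_{1}=\{[1\!:\!0\!:\!0\!:\!0],\,[0\!:\!0\!:\!0\!:\!1]\}$, whereas the other pencil yields conics tangent to $S_{1}$ (so $\sharp(C\cap S_{1})=1$) and is excluded.

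Finally I would compute $U$ in coordinates. On the affine chart $\A^{3}_{(x,z,t)}=\{y\neq 0\}$ one has $S_{2}\cap\A^{3}=\{xt+xz^{2}+1=0\}$, so the polynomial automorphism $u=t+z^{2}$ of $\A^{3}$ converts the datum to $S_{2}\cap\A^{3}=\{xu+1=0\}$ and $C_{a}\cap\A^{3}=V(xu+1,\,z-a)$. Following the Kaliman--Zaidenberg description of $U$ used in the proof of Proposition \ref{prop:A2}, $U$ is the affine modification of $\A^{3}$ along the locus $V(xu+1,z-a)\subset V(xu+1)$, so
\begin{align*}
\mathcal{O}(U)\;=\;\C[x,z,u]\!\left[\tfrac{z-a}{xu+1}\right]\;\cong\;\C[x,z,u,v]\big/\bigl(v(xu+1)-(z-a)\bigr),
\end{align*}
and eliminating $z$ via $z=a+v(xu+1)$ yields $\mathcal{O}(U)\cong\C[x,u,v]$, whence $U\cong\A^{3}$. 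The main obstacle I foresee is the middle step of classifying the relevant conics and verifying that the family $\{C_{a}\}$ exhausts them up to the stabiliser of $(S_{1},S_{2})$; the concluding modification argument is then a short algebraic manipulation.
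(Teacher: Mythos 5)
Your proposal is correct, and the concluding affine-modification computation is essentially identical to the paper's (the paper writes $U\cong\{w(xt+xz^{2}+1)+z-a=0\}\cong\{w(xt+1)+z=0\}\cong\A^{3}$, which is your elimination of $z$ after the shear $u=t+z^{2}$). Where you genuinely diverge is in pinning down $C$. The paper works on the minimal resolution: it identifies $S_{2}\setminus F\cong\A^{1}\times\C^{*}$ with the $\P^{1}$-fibration $|H-E_{1}|$ inducing the second projection, uses the hypothesis that $\Delta$ is an isomorphism to force $(C_{\wt S_{2}}\cdot H-E_{1})=1$, and then reads off $C_{\wt S_{2}}\sim H-E_{5}$ from Lemma \ref{lem:G2curve}; the conclusion $\deg C=2$ and the linear hull $\{z=ay\}$ then fall out of intersection numbers with $(-2)$-curves. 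You instead argue entirely in $\P^{3}$: both lines of $G_{2}$ lie in $S_{1}=\{y=0\}$ so $\deg C=2$, every smooth conic on $G_{2}$ is residual to a line in a plane section, and the two pencils of planes through $\la y,z\ra$ and $\la x,y\ra$ give respectively the family $C_{a}$ (meeting $S_{1}$ in two points) and conics tangent to $S_{1}$, the latter killed by $\sharp(C\cap S_{1})=2$ from Proposition \ref{prop:nonA2}. Your route is more elementary and self-contained (it only uses the weaker numerical consequence $\sharp(C\cap S_{1})=2$ of case (e), not the full isomorphism condition on $\Delta$, and it bypasses Lemma \ref{lem:G2curve} entirely), at the cost of an explicit case analysis of plane sections; the paper's route is more systematic and is the one that scales to the companion lemmas for $G_{4}$ and for higher-degree $C$, where the resolution-side bookkeeping does more work. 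Your classification of the conics checks out (for $\Pi=\{z=ay\}$ the residual conic $xt+a^{2}xy+y^{2}=0$ is smooth and meets $\{y=0\}$ in $[1\!:\!0\!:\!0\!:\!0]$ and $[0\!:\!0\!:\!0\!:\!1]$; for $\Pi=\{x=by\}$, $b\neq 0$, the residual conic meets $\{y=0\}$ only at $[0\!:\!0\!:\!0\!:\!1]$), so there is no gap.
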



\begin{proof}
First let us determine the linear equivalence class of $C$ using Notation \ref{nota:normrat}.
It is easy to check that $S_{2} \setminus F \cong \wt S_{2} \setminus (F_{\wt S_{2}} \cup E_{\mu}) \cong \A^{1} \times \C^{*}$ and 
the $\P^{1}$-fibration on $\wt S_{2}$ associated with $|H-E_{1}|$ induces 
the second projection of $\A^{1} \times \C^{*}$.
Since $\Delta$ is an isomorphism, we have $(C_{\wt S_{2}} \cdot H-E_{1})=1$.
Lemma \ref{lem:G2curve} now yields $C_{\wt S_{2}} \sim H-E_{5}$.
In particular, $\deg C=2$.

We have $(C_{\wt S_{2}} \cdot E_{5}-E_{6})=(C_{\wt S_{2}} \cdot H-E_{1}-E_{2}-E_{3})=1$. 
Hence $\Sing S_{2} \subset C$ and the linear hull $P \subset \P^{3}$ of $C$ is written as $\{z=ay\}$ for some $a \in \C$.
Thus $U$ is the affine modification of $\P^{3} \setminus S_{1} \cong \A^{3}_{(x,z,t)}$ with the locus 
\begin{align*}
(\{xt+xz^2+1=0, z=a\} \subset \{xt+xz^2+1=0\}).
\end{align*}
Therefore we have
\begin{align*}
U 
&\cong \{w(xt+xz^2+1)+z-a=0\} \subset \A^{4}_{(x, z, t, w)}\\
&\cong \{w(xt+1)+z=0\} \cong \A^{3}
\end{align*}
as desired.
\end{proof}

\begin{lem}\label{lem:G2deg3}
Suppose that the case \textup{(e)} of Theorem \ref{thm:main3tuple} holds with $(S_{1}, S_{2}) = (\{y=0\}, G_{2})$ and $\deg C \geq 3$.
Then $U \cong \A^{3}$.
\end{lem}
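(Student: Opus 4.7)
The plan is to follow the template of Lemma \ref{lem:G2deg2}: pin down the class $C_{\wt S_{2}}$ via Lemma \ref{lem:G2curve}, bring $C$ into a projective normal form using the automorphisms of $(S_{1},S_{2})$, and then realize $U$ as an affine modification of $\A^{3}$ that collapses to $\A^{3}$ after a triangular change of coordinates.

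First, Lemma \ref{lem:G2curve} lists five possible classes for $C_{\wt S_{2}}$, of respective degrees $1,1,2,2,3$. The hypothesis $\deg C\geq 3$ forces $C_{\wt S_{2}}\sim 2H-E_{1}-E_{2}-E_{5}$ and $\deg C=3$. The pairing $(C_{\wt S_{2}}\cdot H-E_{1})=2-1=1$ is compatible with $\Delta$ being an isomorphism, by the same argument as in Lemma \ref{lem:G2deg2}. Intersecting $C_{\wt S_{2}}$ with the $(-2)$-curves of Table \ref{tb:negativecurve}, one computes $(C_{\wt S_{2}}\cdot E_{2}-E_{3})=(C_{\wt S_{2}}\cdot E_{5}-E_{6})=1$ and all other intersections vanish, which shows that $C$ meets the $A_{5}$-singularity $[0\!:\!0\!:\!0\!:\!1]$ and the $A_{1}$-singularity $[1\!:\!0\!:\!0\!:\!0]$ of $G_{2}$. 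Combined with $\sharp(C\cap S_{1})=2$ from Proposition \ref{prop:nonA2}, this gives $C\cap S_{1}=\{[1\!:\!0\!:\!0\!:\!0],[0\!:\!0\!:\!0\!:\!1]\}$.

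Second, I parametrize $C$ as the image of a map $\P^{1}\to\P^{3}$, $[s\!:\!u]\mapsto[\phi_{1}\!:\!\phi_{2}\!:\!\phi_{3}\!:\!\phi_{4}]$ by cubic forms, with $[1\!:\!0]\mapsto[1\!:\!0\!:\!0\!:\!0]$ and $[0\!:\!1]\mapsto[0\!:\!0\!:\!0\!:\!1]$. The divisibility by $s$ and $u$ of the appropriate coordinates, together with the condition that $\phi_{2}$ vanishes only at these two parameter values, restricts $\phi_{2}$ to $\alpha s^{2}u$ or $\beta su^{2}$; one of these cases is excluded because substitution into $xyt+xz^{2}+y^{3}$ forces a vanishing coefficient incompatible with $C$ being a twisted cubic. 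The remaining case leaves two free parameters, and these are killed using the three-dimensional subgroup of $\Aut(\P^{3})$ stabilizing both $G_{2}$ and $\{y=0\}$ (generated by the two-dimensional torus $[x\!:\!y\!:\!z\!:\!t]\mapsto[\mu^{3}\nu^{-2}x\!:\!\mu y\!:\!\nu z\!:\!\nu^{2}\mu^{-1}t]$ and the shear $[x\!:\!y\!:\!z\!:\!t]\mapsto[x\!:\!y\!:\!z-\gamma y\!:\!t+2\gamma z-\gamma^{2}y]$). After this normalization $C$ becomes
\[
[s\!:\!u]\longmapsto[s^{3}\!:\!s^{2}u\!:\!su^{2}\!:\!-u^{2}(s+u)].
\]

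Third, in the affine chart $\A^{3}_{(x,z,t)}=\P^{3}\setminus S_{1}$ with $x=X/Y,z=Z/Y,t=T/Y$, one reads off $S_{2}\cap\A^{3}=\{xt+xz^{2}+1=0\}$ and $C\cap\A^{3}=\{xz-1=0,\,t+z+z^{2}=0\}$. The identity
\[
xt+xz^{2}+1=x(t+z+z^{2})-(xz-1)
\]
shows that $U$ is the affine modification
\[
U\cong\{w(xt+xz^{2}+1)=t+z+z^{2}\}\subset\A^{4}_{(x,z,t,w)}.
\]
The triangular automorphism $(x,z,t,w)\mapsto(x,z,t+z^{2},w)$ of $\A^{4}$ converts the defining equation into $w(xt'+1)=t'+z$, which solves for $z$ as a polynomial in $(x,t',w)$, giving $U\cong\A^{3}_{(x,t',w)}$.

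The main obstacle is the normalization in the second paragraph: one has to verify that the action of the three-dimensional automorphism group of $(G_{2},\{y=0\})$ on the residual parameters $(\gamma,\delta)$ of the parametrization acts with a single orbit (on the set $\delta\neq 0$). All the other steps are modelled directly on the proof of Lemma \ref{lem:G2deg2} and reduce to explicit polynomial manipulations once the normal form for $C$ is fixed.
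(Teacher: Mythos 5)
Your proposal is correct, and the first half (forcing $C_{\wt S_{2}} \sim 2H-E_{1}-E_{2}-E_{5}$, $\deg C=3$, $\Sing S_{2}\subset C$, $C\cap S_{1}=\Sing S_{2}$) coincides with the paper's. The second half takes a genuinely different route. The paper never normalizes $C$: it produces an auxiliary quadric $Q\supset C$ with $S_{2}|_{Q}=C+2\langle x,y\rangle+\langle y,z\rangle$, reads off $Q=\{xz+yl(x,y)=0\}$ for an \emph{undetermined} linear form $l$, and then runs the affine-modification computation $\{w(xt+xz^{2}+1)+xz+l(x,1)=0\}\cong\A^{3}$ uniformly in $l$, so no orbit analysis is needed. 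You instead put $C$ itself into the normal form $[s^{3}:s^{2}u:su^{2}:-u^{2}(s+u)]$ using the stabilizer of $(\{y=0\},G_{2})$, which makes the final computation completely explicit but shifts all the work into the transitivity claim you flag at the end. That claim does hold: writing a parametrization with $\phi_{1}=s^{3}$, $\phi_{2}=\alpha s^{2}u$ (the case $\phi_{2}=\beta su^{2}$ is indeed contradictory), $\phi_{3}=su(as+bu)$, the equation of $G_{2}$ forces $\phi_{4}=\alpha^{-1}u\bigl(-a^{2}s^{2}-(2ab+\alpha^{3})su-b^{2}u^{2}\bigr)$ with $\alpha,b\neq 0$; your shear $z\mapsto z-\gamma y$, $t\mapsto t+2\gamma z-\gamma^{2}y$ kills $a$, and the torus combined with a reparametrization $u\mapsto\lambda u$ rescales $(\alpha,b)$ to $(1,1)$, giving a single orbit. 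With that verification supplied, your argument is complete; the remaining identities ($x(t+z+z^{2})-(xz-1)=xt+xz^{2}+1$, hence $(f,g)=I(C\cap\A^{3})$, and the substitution $t\mapsto t-z^{2}$ reducing to $w(xt+1)=t+z$) are correct. The trade-off is that the paper's argument is shorter and avoids classifying curves up to the automorphism group, while yours yields an explicit projective normal form for $C$.
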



\begin{proof}
By Lemma \ref{lem:G2curve}, we obtain $C_{\wt S_{2}}  \sim 2H-E_{1}-E_{2}-E_{5}$ in Notation \ref{nota:normrat}.
In particular, $\deg C=3$.
Then $\Sing S_{2} \subset C$ since $(C_{\wt S_{2}} \cdot E_{2}-E_{3})=(C_{\wt S_{2}} \cdot E_{5}-E_{6})=1$.

Take a general point $p \in \la x, y\ra$ and $Q$ as the quadric surface containing $C$, $p$ and a general point of $\la y, z\ra$.
Then $Q$ contains $\la y, z\ra$ since $\sharp(\la y, z\ra \cap Q) \geq 3$.
Moreover, $Q$ contains $\la x, y\ra$ because Lemma \ref{lem:G2curve} shows that $\la x, y\ra$ is the unique curve of degree at most two containing $p$.
Hence $S_{2}|_{Q}=C+ \la x, y\ra + \la y, z\ra + L$ for some line $L \subset S_{2}$.
If $L= \la y,z \ra $, then $C \sim -K_{S_{2}}$, a contradiction.
Hence $S_{2}|_{Q}=C+ 2\la x, y\ra + \la y, z\ra$ and we can write 
$Q=\{xz+yl(x,y)=0\}$ for some linear form $l$.
Thus $U$ is the affine modification of $\P^{3} \setminus S_{1} \cong \A^{3}_{(x,z,t)}$ with the locus 
\begin{align*}
\left(
\left\{
xt+xz^2+1=0,
xz+l(x,1)=0
\right\} 
\subset 
\{xt+xz^2+1=0\} 
\right).
\end{align*}
Therefore we have
\begin{align*}
\ U \cong &\{w(xt+xz^2+1)+xz+l(x,1)=0\} \subset \A^{4}_{(x, z, t, w)}\\
\cong & \{w(xt+1)+xz+l(x,1)=0\}\\
\cong &\{w+xz+l(x,1)=0\}\cong \A^{3}.
\end{align*}
as desired.
\end{proof}

\begin{lem}\label{lem:G4deg2}
Suppose that the case \textup{(e)} of Theorem \ref{thm:main3tuple} holds with $(S_{1}, S_{2}) =  (\{y=0\}, G_{4})$ and $\deg C \leq 2$.
Then $U \cong \A^{3}$.
\end{lem}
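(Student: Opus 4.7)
The plan is to follow the strategy of Lemma \ref{lem:G2deg2} almost verbatim. First, in the language of Notation \ref{nota:normrat}, I would check directly that $S_{2}\setminus F \cong \wt S_{2}\setminus (F_{\wt S_{2}}\cup E_{\mu}) \cong \A^{1}\times \C^{*}$, and that the second projection is induced by the $\P^{1}$-fibration on $\wt S_{2}$ associated with the pencil $|H-E_{1}|$. Concretely, this pencil corresponds to the hyperplanes $\{z=\lambda y\}\subset\P^{3}$ through the line $\la y,z\ra$; the residual of $\{z=\lambda y\}\cap G_{4}$ after removing $\la y,z\ra\subset F$ is a conic in $G_{4}$ that parametrises the fibration, and a direct dehomogenisation of the equation of $G_{4}$ in the chart $\{y\ne 0\}$ realises $S_{2}\setminus F$ as $\A^{1}\times\C^{*}$ with $\C^{*}$-coordinate $(z-y)/y$.

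Since case \textup{(e)} of Theorem \ref{thm:main3tuple} forces $\Delta$ to be an isomorphism (Proposition \ref{prop:nonA2}), we must have $(C_{\wt S_{2}}\cdot H-E_{1})=1$, i.e., $C_{\wt S_{2}}$ is a section of the pencil. Combining this with Lemma \ref{lem:G4curve} and $\deg C\le 2$, and noting that every line in $G_{4}$ lies inside $\{y=0\}=S_{1}$ by Table \ref{tb:line}, so that the line classes $E_{4},E_{5},E_{6}$ are excluded, the only remaining candidates are $C_{\wt S_{2}}\sim H-E_{5}$ and $H-E_{6}$. After possibly renumbering the blow-up centres $p_{5},p_{6}$, we may assume $C_{\wt S_{2}}\sim H-E_{5}$; in particular $\deg C=2$ and $C$ is a smooth conic. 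A short intersection computation with the $(-2)$-curves listed in Table \ref{tb:negativecurve} then shows that $C_{\wt S_{2}}$ meets only the $(-2)$-curve $H-E_{1}-E_{2}-E_{3}$, with multiplicity one, so that $\Sing S_{2}\in C$ and the tangent direction of $C$ at $\Sing S_{2}$ is determined by the position of $p_{5}$. This pins down the plane $P\subset\P^{3}$ spanned by $C$ as a hyperplane through $[0\!:\!0\!:\!0\!:\!1]$ of a specific linear form, up to the action of $\Aut(\P^{3})$ stabilising the pair $(S_{1},S_{2})$.

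With $C$ made explicit, the final step is to exhibit $U$ as the affine modification of $\P^{3}\setminus S_{1}\cong\A^{3}_{(X,Z,T)}$ with locus $(C^{0}\subset S_{2}\setminus F)$, write the result as a hypersurface in $\A^{4}_{(X,Z,T,w)}$, and simplify it by the same sort of linear change of variables that collapses the $G_{2}$ computations of Lemmas \ref{lem:G2deg2} and \ref{lem:G2deg3} to $\A^{3}$. The main obstacle is the middle step: translating the abstract class $C_{\wt S_{2}}\sim H-E_{5}$ into enough geometric data on $C\subset\P^{3}$ (incidence at $\Sing S_{2}$ together with the prescribed tangent direction) to recover the explicit equation of the plane $P$, since without this one cannot write down the locus of the affine modification concretely. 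Once $P$ is in hand, the concluding algebraic simplification is a routine chain of substitutions as in Lemma \ref{lem:G2deg2}.
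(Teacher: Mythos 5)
Your outline follows the paper's proof: realise $S_{2}\setminus F\cong\A^{1}\times\C^{*}$ via the pencil $|H-E_{1}|$, deduce $(C_{\wt S_{2}}\cdot H-E_{1})=1$ from the isomorphy of $\Delta$, combine this with Lemma \ref{lem:G4curve} and $\deg C\le 2$ to get $C_{\wt S_{2}}\sim H-E_{i}$ for some $i\in\{5,6\}$ (hence $\deg C=2$ and $\Sing S_{2}\in C$), and finish by an explicit affine modification. Up to and including the identification of the class of $C_{\wt S_{2}}$ your argument is correct.

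However, the step you yourself flag as ``the main obstacle'' is a genuine gap, and the route you sketch would not close it. Knowing only that $C$ passes through $\Sing S_{2}$ with a prescribed tangent direction determines the plane $P=\la C \ra$ at best up to a pencil, and the appeal to the stabiliser of $(S_{1},S_{2})$ in $\Aut(\P^{3})$ to normalise $P$ to ``a specific linear form'' is neither justified nor how the paper proceeds. The missing idea is to use the $(-1)$-curve $E_{i}$ rather than the $(-2)$-curves: since $(C_{\wt S_{2}}\cdot E_{i})=(H-E_{i})\cdot E_{i}=1$, and since the line $\mu_{*}(E_{i})$ also passes through $\Sing S_{2}$ (its strict transform $E_{i}$ meets the opposite end of the $A_{5}$-chain from the end met by $C_{\wt S_{2}}$), one gets $\sharp(C\cap\mu_{*}(E_{i}))=2$; a line meeting the plane of a conic in two points lies in that plane, so $\mu_{*}(E_{i})\subset P$ and $S_{2}|_{P}=C+\mu_{*}(E_{i})$. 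One then checks $\la y,z\ra_{\wt S_{2}}\sim E_{4}$ (because $-K_{\wt S_{2}}-3E_{j}$ is not a sum of $(-2)$-curves for $j=5,6$), so that $\mu_{*}(E_{i})$ is $\la x,y\ra$ or $\la x-z,y\ra$ and $P=\{x=ay\}$ (resp.\ $\{x-z=ay\}$) for some $a\in\C$; the free parameter $a$ is simply carried into the hypersurface equation $w((z-1)(t-x^{2}+x(z+1))+1)+x-a=0$ in $\A^{4}$ and absorbed by a change of variables, giving $\{w(zt+1)+x=0\}\cong\A^{3}$. Without identifying the residual line of $S_{2}|_{P}$ in this way you cannot write down the centre of the affine modification, so the concluding computation cannot be carried out.
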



\begin{proof}
As in the proof of Lemma \ref{lem:G2deg2}, the $\P^{1}$-fibration on $\wt S_{2}$ associated with $|H-E_{1}|$ induces an $\A^{1}$-bundle $S_{2} \setminus F \cong \A^{1} \times \C^{*} \to \C^{*}$.
Then $(C_{\wt S_{2}} \cdot H-E_{1})=1$ since $\Delta$ is an isomorphism.
Lemma \ref{lem:G4curve} now yields $C_{\wt S_{2}} \sim H-E_{i}$ for some $i \in \{5, 6\}$.
In particular, $\deg C=2$.

Since $(C_{\wt S_{2}} \cdot H-E_{1}-E_{2}-E_{3})=(C_{\wt S_{2}} \cdot E_{i})=1$, we have $\sharp (C \cap \mu_{*}(E_{i}))=2$.
Hence the linear hull $P \subset \P^{3}$ of $C$ satisfies $S_{2}|_{P}=C+\mu_{*}(E_{i})$.
On the other hand, we have $3\la y, z\ra =\{z=y\}|_{S_{2}} \in |-K_{S_{2}}|$.
Since we can not write $-K_{\wt S_{2}}-3E_{j}$ as a sum of $(-2)$-curves when $j=5$ or $6$, 
we obtain $\la y, z\ra_{\wt S_{2}} \sim E_{4}$.
Hence $\mu_{*}(E_{i})$ is either $\la x, y\ra$ or $\la x-z, y\ra$.

Suppose that $\mu_{*}(E_{i})=\la x, y\ra$.
Then $P=\{x=ay\}$ for some $a \in \C$ and 
$U$ is the affine modification of $\P^{3} \setminus S_{1} \cong \A^{3}_{(x, z, t)}$ with the locus 
\begin{align*}
\left(
\begin{array}{ll}
\{x=a, x^2-x^2z-x+xz^2+1-t+zt=0\} \\
\subset \{x^2-x^2z-x+xz^2+1-t+zt=0\}
\end{array}
\right).
\end{align*}
Hence 
\begin{align*}
U 
&\cong \{w((z-1)(t-x^{2}+x(z+1))+1)+x-a=0\} \subset \A^{4}_{(x, z, t, w)}\\
&\cong \{w(zt+1)+x=0\} \cong \A^{3}.
\end{align*}
The same conclusion can be drawn for the case where $\mu_{*}(E_{i})=\la x-z, y\ra$.
\end{proof}

\begin{lem}\label{lem:G4deg3}
Suppose that the case \textup{(e)} of Theorem \ref{thm:main3tuple} holds with $(S_{1}, S_{2}) =  (\{y=0\}, G_{4})$ and $\deg C \geq 3$. 
Then $U \cong \A^{3}$.
\end{lem}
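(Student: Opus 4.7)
The plan is to follow the template of Lemma \ref{lem:G2deg3}: determine the linear equivalence class of $C_{\wt S_{2}}$, exhibit a quadric $Q$ that cuts $S_{2}$ along $C$ plus lines supported in $F$, and then simplify the resulting affine modification of $\A^{3}$.

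For the first step, the $\A^{1}$-bundle structure $S_{2}\setminus F\cong\A^{1}\times\C^{*}$ is induced by the pencil $|H-E_{1}|$, exactly as in the opening of the proof of Lemma \ref{lem:G4deg2}, so the hypothesis that $\Delta$ is an isomorphism forces $(C_{\wt S_{2}}\cdot H-E_{1})=1$. Going through the list in Lemma \ref{lem:G4curve}, only the class $2H-E_{1}-E_{2}-E_{i}$ for some $i\in\{5,6\}$ satisfies this and has degree at least three; in particular $\deg C=3$. Intersecting $C_{\wt S_{2}}$ with each $(-2)$-curve in Table \ref{tb:negativecurve} shows that $E_{2}-E_{3}$ is the only one it meets, and with multiplicity one, so $C$ passes through the $A_{5}$-singularity $[0:0:0:1]$ of $G_{4}$.

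For the second step, I would take $Q$ to be the quadric through $C$ together with a general point on each of two of the three lines making up $F = \la y,z \ra \cup \la x,y \ra \cup \la y,x-z \ra$; since the space of quadrics through a twisted cubic is three-dimensional, such a $Q$ is essentially unique. Mimicking the counting argument of Lemma \ref{lem:G2deg3}, each chosen line meets $C$ at $[0:0:0:1]$ (possibly with higher multiplicity) and then at the prescribed general point, accounting for at least three intersections with $Q$ and thereby forcing $Q$ to contain both lines. The remaining component of $Q\cdot S_{2}-C$ is then identified with a further line of $F$ (or a doubling of a chosen line), the alternative $C\sim-K_{S_{2}}$ being ruled out by $\deg C=3$. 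After relabeling, this should produce $Q$ in a normal form $\{xz+y\,\ell(x,y,z,t)=0\}$ for some linear form $\ell$.

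Finally, setting $y=1$ gives $f_{S_{2}}(x,z,t)=(z-1)\bigl(x(z+1)-x^{2}+t\bigr)+1$, and the affine modification description reads
\[
U\cong\{w\,f_{S_{2}}(x,z,t)+f_{Q}(x,z,t)=0\}\subset\A^{4}_{(x,z,t,w)}.
\]
A $t$-substitution absorbing the quadratic part of $f_{S_{2}}$, followed by a substitution of the form $z\mapsto z-tw$ as in the last display chain of Lemma \ref{lem:G2deg3}, should reduce the equation to $\{w+g(x,z)=0\}\cong\A^{3}$. The main difficulty lies in step~2: one has to identify the strict transforms of the three lines of $F$ among the $(-1)$-curves $E_{4},E_{5},E_{6}$ of $\wt S_{2}$ using Figure \ref{fig:bl}(c), and then verify the exact intersection pattern of $C_{\wt S_{2}}$ with them so that the counting argument really forces $Q$ to contain the two chosen lines. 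Once $Q$ is explicit, the simplification above should be routine.
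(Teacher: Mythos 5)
Your overall strategy is the paper's: compute $(C_{\wt S_{2}}\cdot H-E_{1})=1$ from the hypothesis on $\Delta$, read off $C_{\wt S_{2}}\sim 2H-E_{1}-E_{2}-E_{i}$ from Lemma \ref{lem:G4curve}, produce a quadric $Q$ with $S_{2}|_{Q}=C+(\text{lines})$, and simplify the affine modification. The first step is correct and matches the paper. The gap is in your step 2, in two places. First, your intersection count does not force $\la y,z\ra\subset Q$: since $\la y,z\ra_{\wt S_{2}}\sim E_{4}$ and $(C_{\wt S_{2}}\cdot E_{4})=0$, the curve $C$ meets $\la y,z\ra$ only at the $A_{5}$-point, so $Q\cap\la y,z\ra$ is only guaranteed to contain two points (that point and your general point $p$), which is consistent with $\la y,z\ra\not\subset Q$. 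The counting argument genuinely works only for $\mu_{*}(E_{i})$, where $\sharp(C\cap\mu_{*}(E_{i}))=2$. The paper gets $\la y,z\ra\subset Q$ differently: the residual divisor $Q|_{S_{2}}-C-\mu_{*}(E_{i})$ has degree two and contains the general point $p\in\la y,z\ra$, and Lemma \ref{lem:G4curve} shows $\la y,z\ra$ is the only curve in $S_{2}$ of degree at most two through such a point.

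Second, and more seriously, you do not pin down which line gets doubled. Your parenthetical ``or a doubling of a chosen line'' allows $S_{2}|_{Q}=C+2\mu_{*}(E_{i})+\la y,z\ra$, in which case $Q=\{yl(x,y)+yt+zx=0\}$ and your normal form $\{xz+y\,\ell\}$ has $\ell$ involving $t$; the concluding substitutions (absorbing the quadratic part of $f_{S_{2}}$ and then $x\mapsto x-tw$) then do not eliminate the extra $t$-term, and the reduction to $\{w+g(x,z)=0\}$ breaks down. The paper excludes this configuration by an argument you are missing: in that case the projection $\mathrm{pr}_{t}$ restricted to $C\setminus(C\cap S_{1})$ is dominant of degree two onto $\C^{*}$, so $\Delta=2\times\mathrm{id}_{\Z}$, contradicting the case \textup{(e)} hypothesis. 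Only after this exclusion does one get $S_{2}|_{Q}=C+\mu_{*}(E_{i})+2\la y,z\ra$ and $Q=\{yl(y,z)+x(z-y)=0\}$, with no $t$-term, so that the final coordinate changes work. (Also, your remark that $C\sim-K_{S_{2}}$ is ruled out ``by $\deg C=3$'' is off, since $\deg(-K_{S_{2}})=3$ as well; the contradiction is that $C$ is not a hyperplane section.)
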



\begin{proof}
As in the proof of lemma \ref{lem:G4deg2}, we have $(C_{\wt S_{2}} \cdot H-E_{1})=1$.
Lemma \ref{lem:G4curve} now yields $C_{\wt S_{2}} \sim 2H-E_{1}-E_{2}-E_{i}$ for some $i \in \{5, 6\}$.
In particular, $\deg C=3$.
Since $(C_{\wt S_{2}} \cdot E_{2}-E_{3})=(C_{\wt S_{2}} \cdot E_{i})=1$, we have $\Sing S_{2} \in C$ and $\sharp (C \cap \mu_{*}(E_{i}))=2$.

Take a general point $p \in \la y, z\ra$ and $Q$ as the quadric surface containing $C$, $p$ and a general point of $\mu_{*}(E_{i})$.
Then $Q$ contains $\mu_{*}(E_{i})$ since $\sharp(Q \cap \mu_{*}(E_{i})) \geq 3$.
Moreover, $Q$ contains $\la y, z\ra$ because Lemma \ref{lem:G4curve} shows that $\la y, z\ra$ is the unique curve of degree at most two containing $p$.
Hence $S_{2}|_{Q} = C+ \mu(E_{i}) + \la y, z\ra + L$ for some line $L \subset S_{2}$.
If $L$ is the other line in $S_{2}$, then we have $C \sim -K_{S_{2}}$, a contradiction.

We have checked that 
$\mu_{*}(E_{i})$ is either $\la x, y\ra$ or $\la x-z, y\ra$
in the proof of Lemma \ref{lem:G4deg2}.
Now suppose that $\mu_{*}(E_{i})=\la x,y \ra$.
Assume that $L=\la x,y \ra$ in addition.
Then we can write $Q = \{yl(x, y)+yt+zx=0\}$ for some  linear form $l$.
Now take $f$ as the automorphism of $\P^{3} \setminus S_{1} \cong \A^{3}_{(x, z, t)}$ such that $f^{*}(x) = x$, $f^{*}(z) = z-1$ and $f^{*}(t) = t -x^{2} + x(z+1)$.
Then we have
\begin{align*}
f(S_{2} \setminus F)
&=\{zt+1=0\} \cong \A^{1}_{(x)} \times \C^{*},\\
f(C \setminus (C \cap S_{1}))
&=
\left\{
zt+1=0,
l(x, 1)+t+x^{2}-x=0
\right\}
\end{align*}
in $\P^{3} \setminus S_{1} \cong \A^{3}_{(x, z, t)}$.
The projection $\mathrm{pr}_{t} \colon f(S_{2} \setminus F) \to \A^{1}_{(t)} \setminus \{0\}$ is the same as the second projection of $\A^{1} \times \C^{*}$.
Since the morphism $f(C \setminus (C \cap S_{1})) \to \C^{*}$ induced by $\mathrm{pr}_{t}$ is a dominant map of degree two, we have $\Delta = 2 \times \mathrm{id}_{\Z}$, which contradicts the assumption that $(C, S_{1}, S_{2})$ belongs to the case \textup{(e)}.

Therefore $S_{2}|_{Q} = C+ \la x,y \ra + 2\la y, z\ra$.
We can write $Q=\{yl(y,z)+x(z-y)=0\}$ for some linear form $l$.
Thus $U$ is the affine modification of $\P^{3} \setminus S_{1} \cong \A^{3}_{(x,z,t)}$ with the locus 
\begin{align*}
\left(
\left\{
\begin{array}{l}
x^2-x^2z-x+xz^2+1-t+zt=0,\\
l(1,z)+x(z-1)=0
\end{array}
\right\} 
\subset 
\left\{
\begin{array}{l}
x^2-x^2z-x+xz^2\\
+1-t+zt=0
\end{array}
\right\} 
\right).
\end{align*}
Therefore $U$ is isomorphic to
\begin{align*}
 &\{w((z-1)(t-x^2+x(z+1))+1)+l(1,z)+x(z-1)=0\} \subset \A^{4}_{(x, z, t, w)}\\
\cong & \{w(zt+1)+l(1,z+1)+xz=0\}\\
\cong &\{w+xz+l(1,z+1)=0\}\cong \A^{3}.
\end{align*}
The same conclusion can be drawn for the case where $\mu_{*}(E_{i}) = \la x-z, y\ra$.
\end{proof}

\begin{lem}\label{lem:R4}
Suppose that the case \textup{(e)} of Theorem \ref{thm:main3tuple} holds with $(S_{1}, S_{2}) =  (\{y=\gamma x\}, R_{4})$ for some $\gamma \in \C$. 
Then $U \cong \A^{3}$.
\end{lem}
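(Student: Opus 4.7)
My plan is to pin down the curve $C$ explicitly, reduce via automorphisms to a normal form, and then exhibit $U \cong \A^{3}$ by realizing $U$ as an affine modification of $\A^{3}$ and applying a polynomial change of coordinates, in the spirit of Lemma~\ref{lem:G4deg3}.

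First I will argue that $C$ must be a smooth conic. Since $p_{a}(C)=0$ and $\sharp(C\cap S_{1})=2$ by Proposition~\ref{prop:nonA2}, Lemma~\ref{lem:3.7} combined with the numerical constraint $\deg C=a+b$ and the smoothness/irreducibility of $C$ forces $C_{\wt S_{2}}\sim \Sigma_{1}+f_{1}$ on $\wt S_{2}\cong \F_{1}$, hence $\deg C=2$. Each smooth conic in $R_{4}$ lies in a unique plane $P$ with $R_{4}\cap P=C+L$ for some line $L\subset R_{4}$, and the lines of $R_{4}$ are $E$ together with the one-parameter family $L_{\alpha}=\{y=\alpha x,\ z=-\alpha^{3}x-\alpha t\}$ ($\alpha\in \C$), as one sees by parametrising linearly and substituting. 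This yields a two-parameter family $C=C_{\alpha,\mu'}$, where $\mu'$ encodes the choice of plane through $L_{\alpha}$.

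Next I will exploit automorphisms. A direct check shows that $\phi_{c}\colon (x,y,z,t)\mapsto (x,y,z-cy,t+cx)$ preserves $R_{4}$ and every $S_{1}=\{y=\gamma x\}$, and pulling back the plane $P_{\alpha,\mu'}$ shows that $\phi_{c}$ sends $C_{\alpha,\mu'}$ to $C_{\alpha,\mu'+c}$; hence WLOG $\mu'=0$. In the chart $\A^{3}=\P^{3}\setminus S_{1}$ with coordinates $X=x/(y-\gamma x)$, $Z=z/(y-\gamma x)$, $T=t/(y-\gamma x)$, the surface $S_{2}\setminus F$ becomes $\{f_{1}=0\}$ with $f_{1}=X^{2}Z+(1+\gamma X)^{3}+X(1+\gamma X)T$. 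Eliminating $Z$ between the plane equation for $P_{\alpha,0}$ and the conic equation produces a polynomial $\tilde q(X,T)$ with leading behaviour $\tilde q=XT+(\text{polynomial in }X)$ whose constant term is $1$. The crucial verification is $I(C^{0})=(f_{1},\tilde q)$; this reduces to showing that $X$ is a unit in $\C[X,Z,T]/(\tilde q)$, which is immediate because $\tilde q=0$ yields an explicit polynomial inverse for $X$. Consequently $U\cong \{yf_{1}-\tilde q=0\}\subset \A^{4}_{(X,Z,T,y)}$.

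Finally, imitating the trick of Lemma~\ref{lem:G4deg3}, I will introduce $Q:=f_{1}$ as a new variable (legitimate since $\partial_{Z}f_{1}=X^{2}$ lets one recover $Z$) to rewrite the defining equation as $Q(1-yu_{2})=\tilde q$, where $u_{2}=1+(\gamma-\alpha)X$ is the linear factor appearing in the decomposition $f_{1}=X^{2}W+u_{2}\tilde q$ after the affine change $W=Z+(\text{linear})$. Substituting $Q=1+Xs$ will convert the relation into a graph $T=\Psi(X,y,s)$ for an explicit polynomial $\Psi$, yielding an isomorphism $\A^{3}_{(X,y,s)}\to U$ analogous to the one carried out by hand for the pair $(\{y=0\},G_{2})$ in Lemma~\ref{lem:G2deg2}. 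The main obstacle will be making this substitution work uniformly in the two parameters $(\alpha,\gamma)$: the sub-case $\alpha=\gamma$ is a direct analogue of the previous lemmas because then $u_{2}=1$, while for $\alpha\neq \gamma$ the factor $u_{2}$ obstructs the naive substitution, and one must absorb $u_{2}$ into a redefinition of $s$ (and check scheme-theoretically that the exceptional line $\{u_{2}=0, W=0\}$ appearing in $\{f_{1}=0\}\cap\{W=0\}$ is not in the strict transform being removed) so that the resulting morphism is a genuine isomorphism onto $U$ rather than onto a dense open subset.
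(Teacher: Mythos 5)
Your argument has a genuine gap at the very first step. You claim that the hypotheses force $C$ to be a smooth conic, i.e.\ $C_{\wt S_{2}}\sim \Sigma_{1}+f_{1}$. But the condition that $\Delta$ is an isomorphism only controls the coefficient of $\Sigma_{1}$: since $S_{2}\setminus F\cong \A^{1}\times\C^{*}$ with the $\C^{*}$-factor coming from the base of the $\P^{1}$-bundle $\wt S_{2}\cong\F_{1}\to\P^{1}$, the isomorphism on $H_{1}$ forces $(C_{\wt S_{2}}\cdot f_{1})=1$, i.e.\ $C_{\wt S_{2}}\sim \Sigma_{1}+bf_{1}$, but leaves $b$ free. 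Lemma \ref{lem:3.7} gives no further restriction, and smoothness of $C$ in $\P^{3}$ does not force $b=1$ either: for $b=2$ one has $(C_{\wt S_{2}}\cdot\wt E_{1})=(C_{\wt S_{2}}\cdot\wt E_{2})=1$, and as long as the two intersection points map to distinct points of $E$ (and $C$ passes through $E\cap L$ for the residual line $L\subset F$ so that $\sharp(C\cap S_{1})=2$), one obtains a smooth twisted cubic satisfying all the hypotheses of case \textup{(e)}. So your normal form misses all curves of degree $\geq 3$, and the two-parameter family $C_{\alpha,\mu'}$ does not exhaust the cases to be treated. Note that the paper's proof deliberately never fixes $\deg C$; it only uses that $C_{P}$ is a section of the $\P^{2}$-bundle $p\colon P\to\P^{1}$ obtained by blowing up $E$.

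Separately, your final step is not actually carried out: you acknowledge that for $\alpha\neq\gamma$ the factor $u_{2}$ ``obstructs the naive substitution'' and that one ``must absorb $u_{2}$ into a redefinition of $s$'' and perform a scheme-theoretic check, without doing either. As written this is a plan, not a proof, and it is precisely in this sub-case that the explicit-coordinates strategy becomes delicate. For comparison, the paper sidesteps all of this: after blowing up $E$ and then $C_{P}$, it considers the induced morphism $f\colon U\to \A^{1}$ coming from the $\P^{2}$-bundle, verifies fiber by fiber (using that $C_{P}$ is a $p$-section and the combinatorics of where $C_{P}$ meets $E_{\psi}$ and $\Sigma_{1}$) that every fiber of $f$ is isomorphic to $\A^{2}$, and concludes $U\cong\A^{3}$ by Kaliman's theorem \cite{Kal}. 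That route is uniform in $\deg C$ and avoids the case analysis your substitution would require.
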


\begin{proof}
Take $\psi \colon P \to \P^{3}$ as the blow-up along $E$.
Then $P$ has a $\P^{2}$-bundle structure $p \colon P \to \P^{1}$ associated with $|\psi^{*}\mc{O}_{\P^{3}}(1)-E_{\psi}|$.
Write $F_{t}$ as the $p$-fiber over $t \in \P^{1}$ and take $\infty \in \P^{1}$ as the point such that $F_{\infty}=(S_{1})_{P}$.
Since $(S_{2})_{P}$ is linearly equivalent to $\mc{O}_{\P^{3}}(3)-2E_{\psi}$, it is a sub $\P^{1}$-bundle.
Hence $(S_{2})_{P}=\wt S_{2}$.
We have $(S_{2})_{P} \setminus ((S_{2})_{P} \cap ((S_{1})_{P} \cup E_{\psi})) \cong \A^{1} \times \C^{*}$ by assumption and $p$ induces the second projection of $\A^{1} \times \C^{*}$.
Since $\Delta$ is an isomorphism, we have $(C_{P} \cdot \psi^{*}\mc{O}_{\P^{3}}(1)-E_{\psi})=1$.
Hence $C_{P}$ is a $p$-section.

On the other hand, $E_{\psi}|_{(S_{2})_{P}}$ is reducible by Lemma \ref{lem:2.16}.
Hence there is the unique point, say $0 \in \P^{1}$, such that $E_{\psi}|_{(S_{2})_{P}}$ is the sum of $\Sigma_{1}$ and $F_{0}|_{(S_{2})_{P}}$.
Since $\sharp(C_{P} \cap ((S_{1})_{P} \cup E_{\psi}))=\sharp(C \cap S_{1})=2$, we obtain $C_{P} \cap \Sigma_{1} \subset (S_{1})_{P} \cup F_{0}$.
Moreover, $E_{\psi}$ contains $C_{P} \cap F_{a}$ if $a=0$, and only if $a=0$ or $\infty$.

Now take $\chi \colon W \to P$ as the blow-up along $C_{P}$.
By construction, $U$ is the same as the affine modification of $P \setminus ((S_{1})_{P} \cup E_{\psi})$ with the locus $(C_{P} \setminus (C_{P} \cap ((S_{1})_{P} \cup E_{\psi})) \subset (S_{2})_{P} \setminus ((S_{2})_{P} \cap ((S_{1})_{P} \cup E_{\psi})))$.
Hence $U \cong U' \coloneqq W \setminus ((S_{1} \cup S_{2})_{W} \cup (E_{\psi})_{W})$.
On the other hand, we have a morphism $f \coloneqq (p \circ \chi)|_{U'} \colon U' \to \P^{1} \setminus \{ \infty \} \cong \A^{1}$.
By virtue of \cite[Main Theorem]{Kal}, it suffices to show that each $f$-fiber is isomorphic to $\A^{2}$ in order to prove $U \cong \A^{3}$.

Let $a \in \A^{1} \setminus \{0\}$ be a point.
Then $C_{P} \cap F_{a} \not \in E_{\psi}$.
Hence $f^{-1}(a)$ is the affine modification of $F_{a} \setminus (F_{a} \cap E_{\psi}) \cong \A^{2}$ with the locus 
$(C_{P} \cap F_{a} 
\subset 
(S_{2})_{P} \cap F_{a} \setminus ((S_{2})_{P} \cap E_{\psi} \cap F_{a}) \cong \A^{1})$, 
which is isomorphic to $\A^{2}$.

On the other hand, we have $C_{P} \cap F_{0} \in E_{\psi}$.
Hence $f^{-1}(0) \cong F_{0} \setminus (F_{0} \cap ((S_{2})_{P} \cup E_{\psi}))$.
Since $(S_{2})_{P}|_{F_{0}} = E_{\psi}|_{F_{0}}$, we obtain $f^{-1}(0) \cong \A^{2}$.
Hence we have the assertion.
\end{proof}

\begin{lem}\label{lem:R1}
Suppose that the case \textup{(b)} of Theorem \ref{thm:main3tuple} holds. 
Then $U \cong \A^{3}$.
\end{lem}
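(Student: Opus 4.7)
My plan is to follow the strategy of Lemma~\ref{lem:R4}, adapted to the cone structure of $R_1$. By Lemma~\ref{lem:3.6}, $\deg C \in \{3,4\}$; I treat both cases together. Take $\psi \colon P \to \P^3$ as the blow-up along the conductor line $E = \{x = y = 0\}$, so that $P$ carries a $\P^2$-bundle structure $p \colon P \to \P^1$ associated with $|\psi^*\mc{O}_{\P^3}(1) - E_\psi|$, and fix $\infty \in \P^1$ with $F_\infty = (S_1)_P$. The strict transform $(S_2)_P \sim \psi^*\mc{O}_{\P^3}(3) - 2E_\psi$ meets each fiber $F_t \cong \P^2$ in a line, so $p|_{(S_2)_P}$ is a $\P^1$-fibration over $\P^1$. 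By case (b) and the argument underlying Proposition~\ref{prop:cone}, $\Delta$ is an isomorphism, hence $(S_2)_P \setminus ((S_2)_P \cap ((S_1)_P \cup E_\psi)) \cong \A^1 \times \C^*$ with $p$ giving the second projection.

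Parameterizing $C$ via the normalization $\pi|_C \colon C \to K$ of the cuspidal cubic $K = \{x^2z + y^3 = 0\} \subset \P^2$ (where $\pi$ is the projection of $\P^3$ from the vertex $v = [0:0:0:1]$ of $R_1$), I obtain a parameterization $[s:u] \mapsto [s^3 : -s^2u : u^3 : h(s,u)]$ of $C$ for some polynomial $h$. A direct computation then gives $p|_{C_P}([s:u]) = [u:s]$, so $p|_{C_P}$ is an isomorphism and $C_P$ is a $p$-section, both when $\deg C = 3$ and $\deg C = 4$. Let $\chi \colon W \to P$ be the blow-up along $C_P$ and set $U' := W \setminus ((S_1 \cup S_2)_W \cup (E_\psi)_W)$. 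Then $U \cong U'$ by construction of the affine modification, and $f := (p \circ \chi)|_{U'} \colon U' \to \P^1 \setminus \{\infty\} \cong \A^1$ is a morphism whose fibers we must show are all isomorphic to $\A^2$, in order to apply \cite[Main Theorem]{Kal}.

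For $a \in \A^1$ distinct from the special point $t_*$ corresponding to the plane $\{x = 0\}$, the fiber $f^{-1}(a)$ is the affine modification of $F_a \setminus (F_a \cap E_\psi) \cong \A^2$ along the point $C_P \cap F_a$ in the line $L_a := (S_2)_P \cap F_a$ (which differs from $E_\psi|_{F_a}$), and this modification is isomorphic to $\A^2$. At $a = t_*$, where $R_1 \cap \{x = 0\} = 3E$ forces $L_{t_*} = E_\psi|_{F_{t_*}}$, a short additional argument is required; the key input is Lemma~\ref{lem:2.16}, which says that $E_\psi|_{(S_2)_P}$ is a single (non-reduced) fiber of $p|_{(S_2)_P}$ because $R_1$ belongs to class (E1). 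The main technical point I expect is verifying that this degeneracy does not spoil the $\A^2$-structure of $f^{-1}(t_*)$: this is the principal difference from the proof of Lemma~\ref{lem:R4}, where the analogous divisor $E_\psi|_{(S_2)_P}$ was reducible rather than non-reduced, and once handled the conclusion $U \cong \A^3$ follows by Kaliman's theorem.
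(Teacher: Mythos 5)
Your strategy coincides with the paper's: the paper proves this lemma by importing $\psi$, $p$, $F_{t}$ and $\infty$ verbatim from Lemma~\ref{lem:R4}, observing that $(S_{2})_{P}=\wt S_{2}$ is a sub-$\P^{1}$-bundle with $E_{\psi}|_{(S_{2})_{P}}=\Sigma_{3}+2F_{0}|_{(S_{2})_{P}}$, that $C_{P}$ is a $p$-section, and then repeating the fiberwise analysis. However, there is one genuine gap, and it sits exactly where the numerical hypothesis of case \textup{(b)} has to enter. You assert that for every $a\neq t_{*},\infty$ the fiber $f^{-1}(a)$ is the affine modification of $F_{a}\setminus(F_{a}\cap E_{\psi})\cong\A^{2}$ at the point $C_{P}\cap F_{a}$ of the line $L_{a}$; this presupposes $C_{P}\cap F_{a}\notin E_{\psi}$, which you never verify. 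Set-theoretically $E_{\psi}\cap(S_{2})_{P}=\Sigma_{3}\cup \bigl(F_{t_{*}}|_{(S_{2})_{P}}\bigr)$, and $\Sigma_{3}$ (the curve over the vertex) is a \emph{section} of $p|_{E_{\psi}}$, so it meets every fiber. When $\deg C=4$ one has $(C_{\wt S_{2}}\cdot\Sigma_{3})=1$, so $C_{P}$ genuinely meets $\Sigma_{3}$ at a point, and a priori that point could lie in a fiber $F_{b}$ with $b\neq t_{*},\infty$; in that case $f^{-1}(b)$ is $\Bl_{\mathrm{pt}}\P^{2}$ minus the strict transforms of two distinct lines through the blown-up point, which is not $\A^{2}$, and Kaliman's theorem cannot be applied. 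Excluding this is where $\sharp(C\cap S_{1})=B_{2}(F)=2$ is used: since $E\subset S_{1}$ one has $(S_{1})_{P}\cup E_{\psi}=\psi^{-1}(S_{1})$, hence $\sharp\bigl(C_{P}\cap((S_{1})_{P}\cup E_{\psi})\bigr)=\sharp(C\cap S_{1})=2$, and these two points are already accounted for by $C_{P}\cap F_{\infty}$ and $C_{P}\cap F_{t_{*}}$ (the latter lies on $F_{t_{*}}|_{(S_{2})_{P}}\subset E_{\psi}$), forcing $C_{P}\cap\Sigma_{3}\subset F_{t_{*}}\cup F_{\infty}$. Your appeal to ``$\Delta$ is an isomorphism'' is aimed only at the surface $S_{2}\setminus F$, so as written the defining condition of case \textup{(b)} is never used where it is actually needed.

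Two secondary points. First, the parameterization $[s:u]\mapsto[s^{3}:-s^{2}u:u^{3}:h(s,u)]$ is only homogeneous when $\deg h=3$, hence only produces the case $\deg C=3$; for $\deg C=4$ the first three coordinates must carry a common linear factor. The clean way to see that $C_{P}$ is a $p$-section in both degrees is $(C_{\wt S_{2}}\cdot f_{3})=1$, which follows from the computation $C_{\wt S_{2}}\sim\Sigma_{3}+(\deg C)f_{3}$ in the proof of Lemma~\ref{lem:3.6}. Second, the special fiber requires no idea beyond Lemma~\ref{lem:R4}: since $C_{P}\cap F_{t_{*}}\in E_{\psi}$, the exceptional curve of $(F_{t_{*}})_{W}\to F_{t_{*}}$ is contained in $(E_{\psi})_{W}$, so $f^{-1}(t_{*})\cong F_{t_{*}}\setminus\bigl(F_{t_{*}}\cap((S_{2})_{P}\cup E_{\psi})\bigr)=\P^{2}\setminus(\text{a line})\cong\A^{2}$ exactly as there; the non-reducedness of $2F_{t_{*}}|_{(S_{2})_{P}}$ (which, by the way, is not a consequence of Lemma~\ref{lem:2.16}(2) --- that statement concerns $R_{3},R_{4}$ --- but of the fact that the normalization of a cuspidal cubic has a single point over the cusp) plays no role in that computation.
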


\begin{proof}
Take $\psi \colon P \to \P^{3}$, $p \colon P \to \P^{1}$, $F_{t}$ and $\infty \in \P^{1}$ as in Lemma \ref{lem:R4}.
Then $(S_{2})_{P}$ is a sub $\P^{1}$-bundle and hence $(S_{2})_{P}=\wt S_{2}$.
On the other hand, $E_{\psi}|_{(S_{2})_{P}}$ consists of $\Sigma_{3}$ and a non-reduced member of $|2f_{3}|$.
Hence there is the unique point, say $0 \in \P^{1}$, such that $E_{\psi}|_{(S_{2})_{P}}=\Sigma_{1}+2F_{0}|_{(S_{2})_{P}}$.

We note that $C_{P}$ is a $\psi$-section.
Since $\sharp(C_{P} \cap ((S_{1})_{P} \cup E_{\psi}))=\sharp(C \cap S_{1})=B_{2}(F)=2$, we obtain $C_{P} \cap \Sigma_{1} \subset (S_{1})_{P} \cup F_{0}$.
Moreover, $E_{\psi}$ contains $C_{P} \cap F_{a}$ if $a=0$, and only if $a=0$ or $\infty$.
Therefore analysis similar to that in the proof of Lemma \ref{lem:R4} shows that $U \cong \A^{3}$.
\end{proof}

\begin{lem}\label{lem:R2}
Suppose that the case \textup{(c)} of Theorem \ref{thm:main3tuple} holds. 
Then $U \cong \A^{3}$.
\end{lem}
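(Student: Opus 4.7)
The plan is to adapt the proofs of Lemmas \ref{lem:R1} and \ref{lem:R4} almost verbatim; the only new input is the geometric behaviour of the conductor for $R_2$ versus $R_1$. Let $E = \la x, y\ra$ be the conductor line of $R_2$, and take $\psi \colon P \to \P^3$ as the blow-up along $E$, with the induced $\P^2$-bundle $p \colon P \to \P^1$ given by $|\psi^{*}\mc O_{\P^3}(1) - E_{\psi}|$. Write $F_t$ for the $p$-fibre over $t$ and choose $\infty \in \P^1$ so that $F_{\infty} = (S_1)_P$. Since $(S_2)_P \sim \psi^{*}\mc O_{\P^3}(3) - 2 E_{\psi}$ has degree one along every $p$-fibre, it is a sub $\P^1$-bundle, hence identified with $\wt S_2 \cong \F_3$.

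The main point of departure from case \textup{(b)} is that $R_2$ is the cone over a \emph{nodal} (rather than cuspidal) cubic. By Lemma \ref{lem:2.16} combined with Theorem \ref{thm:nnorm2}, the normalised conductor splits as a sum of two distinct rulings, so $E_{\psi}|_{(S_2)_P} = \Sigma_3 + F_0|_{(S_2)_P} + F_1|_{(S_2)_P}$ for two distinct points $0, 1 \in \P^1 \setminus \{\infty\}$. (Indeed, a direct computation with the equation $x^2 z + y^2(y+z) = 0$ in the plane $\{\alpha x + \beta y = 0\}$ shows that the residual line to $2E$ coincides with $E$ exactly when $\alpha^2 + \beta^2 = 0$, giving precisely two exceptional values of $t$.) I would next verify, using $E \sb S_1$, the linear-equivalence data from Lemma \ref{lem:3.6}, and the hypothesis $\sharp(C \cap S_1) = B_2(F) + 1 = 3$, that $(C_P \cdot \psi^{*}\mc O_{\P^3}(1) - E_{\psi}) = 1$, so that $C_P$ is a $p$-section; and then, from $\sharp(C_P \cap ((S_1)_P \cup E_{\psi})) = \sharp(C \cap S_1) = 3$ together with $C_P \cap F_{\infty} \sb (S_1)_P$, deduce that $C_P$ meets $(S_1)_P \cup E_{\psi}$ only over the three points $\{0, 1, \infty\}$, with $C_P \cap F_a \in E_{\psi}$ precisely when $a \in \{0, 1\}$.

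Given these inputs, the remainder is literally the final step of the proofs of Lemmas \ref{lem:R1} and \ref{lem:R4}. Let $\chi \colon W \to P$ be the blow-up along $C_P$, so that $U$ is identified with $U' \coloneqq W \setminus ((S_1 \cup S_2)_W \cup (E_{\psi})_W)$ via the affine-modification description. The fibration $f \coloneqq (p \circ \chi)|_{U'} \colon U' \to \A^1 = \P^1 \setminus \{\infty\}$ has every fibre isomorphic to $\A^2$: for $a \in \A^1 \setminus \{0, 1\}$, $f^{-1}(a)$ is the affine modification of $F_a \setminus (F_a \cap E_{\psi}) \cong \A^2$ along the single point $C_P \cap F_a$ sitting on the affine line $(S_2)_P \cap F_a \setminus E_{\psi}$, hence is $\A^2$; for $a \in \{0, 1\}$ the containment $(S_2)_P|_{F_a} \sb E_{\psi}|_{F_a}$ reduces $f^{-1}(a)$ to $F_a \setminus (F_a \cap E_{\psi}) \cong \A^2$. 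Kaliman's theorem \cite[Main Theorem]{Kal} now gives $U \cong U' \cong \A^3$.

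The hard part will be the second paragraph: precisely pinning down the decomposition of $E_{\psi}|_{(S_2)_P}$ on $\F_3$ as having two reduced fibre components over distinct points (as opposed to one non-reduced component, which was the situation in case \textup{(b)}), and then checking that the ``extra'' intersection accounted for by the $+1$ in $\sharp(C \cap S_1) = B_2(F) + 1$ is exactly what makes $C_P$ a $p$-section rather than a multi-section. Once those two facts are in place, the fibrewise analysis in the last paragraph is formal and identical to the closing argument of Lemma \ref{lem:R4}.
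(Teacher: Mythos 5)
Your proposal follows the paper's proof essentially verbatim: blow up the conductor line $E$, identify $(S_2)_P$ with $\wt S_2 \subset P$ as a sub $\P^1$-bundle, observe that for $R_2$ (cone over a \emph{nodal} cubic) the restriction $E_\psi|_{(S_2)_P}$ is $\Sigma_3$ plus a \emph{reduced} member of $|2f_3|$, i.e.\ two distinct fibre components $F_0|_{(S_2)_P}+F_1|_{(S_2)_P}$, check that $C_P$ is a $p$-section, and run the fibrewise $\A^2$-analysis of Lemma \ref{lem:R4} together with \cite[Main Theorem]{Kal}. All of that is correct and is exactly what the paper does.

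The one inaccuracy is your assertion that the two special points satisfy $0,1\in\P^1\setminus\{\infty\}$, together with the ensuing count $\sharp(C\cap S_1)=B_2(F)+1=3$. In case \textup{(c)} the plane $S_1=\{y=\gamma x\}$ ranges over \emph{all} $\gamma\in\P^1$, so $S_1$ may itself be one of the two exceptional planes (those with $\alpha^2+\beta^2=0$ in your computation, for which $S_2|_{S_1}=3E$); the paper explicitly notes that ``$\infty$ may coincide with $0$ or $1$.'' In that situation $B_2(F)=1$ and $\sharp(C\cap S_1)=2$, so your three points $\{0,1,\infty\}$ collapse to two. This does not break the argument — $C_P$ is still a $p$-section by the linear-equivalence data of Lemma \ref{lem:3.6}, and the fibration over $\A^1=\P^1\setminus\{\infty\}$ then has only one special fibre, over which the same identity $(S_2)_P|_{F_a}=E_\psi|_{F_a}$ gives $f^{-1}(a)\cong\A^2$ — but you should not assume the three points are distinct, and you should phrase the intersection count as $\sharp(C\cap S_1)=B_2(F)+1$ rather than fixing it at $3$.
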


\begin{proof}
Take $\psi \colon P \to \P^{3}$, $p \colon P \to \P^{1}$, $F_{t}$ and $\infty \in \P^{1}$ as in Lemma \ref{lem:R4}.
Then $(S_{2})_{P}$ is a sub $\P^{1}$-bundle and hence $(S_{2})_{P}=\wt S_{2}$.
On the other hand, $E_{\psi}|_{(S_{2})_{P}}$ consists of $\Sigma_{3}$ and a reduced member of $|2f_{3}|$.
Hence there are two point, say $0, 1 \in \P^{1}$, such that $E_{\psi}|_{(S_{2})_{P}}=\Sigma_{1}+F_{0}|_{(S_{2})_{P}}+F_{1}|_{(S_{2})_{P}}$.
We note that $\infty$ may coincide with $0$ or $1$.

By the choice of $C$, $C_{P}$ is a $\psi$-section.
Since $\sharp(C_{P} \cap ((S_{1})_{P} \cup E_{\psi}))=\sharp(C \cap S_{1})=B_{2}(F)+1$, we obtain $C_{P} \cap \Sigma_{1} \subset (S_{1})_{P} \cup F_{0} \cup F_{1}$.
Moreover, $E_{\psi}$ contains $C_{P} \cap F_{a}$ if $a=0$ or $1$, and only if $a=0, 1$ or $\infty$.
Therefore analysis similar to that in the proof of Lemma \ref{lem:R4} shows that $U \cong \A^{3}$.
\end{proof}

Combining Propositions \ref{prop:elldeg3}, \ref{prop:elldeg4}, \ref{prop:A2}, and Lemmas \ref{lem:G2deg2}--\ref{lem:R2}, we complete the proof of Theorem \ref{thm:mainisom}.

\section*{Acknowledgement}
The author is greatly indebted to Professor Hiromichi Takagi, his supervisor, for his encouragement, comments, and suggestions. 
He wishes to express his gratitude to Professor Takashi Kishimoto
for his helpful comments and suggestions. 
He also would like to express his gratitude to Doctor Akihiro Kanemitsu and Doctor Takeru Fukuoka for their helpful comments.

This work was supported by JSPS KAKENHI Grant Number JP19J14397 and 
the Program for Leading Graduate Schools, MEXT, Japan.


\begin{thebibliography}{KPR89}

\bibitem[AF03]{A-F}
Makoto Abe and Mikio Furushima.
\newblock On non-normal del {P}ezzo surfaces.
\newblock {\em Math. Nachr.}, 260:3--13, 2003.

\bibitem[AM75]{A-M75}
Shreeram~S. Abhyankar and Tzuong~Tsieng Moh.
\newblock Embeddings of the line in the plane.
\newblock {\em J. Reine Angew. Math.}, 276:148--166, 1975.

\bibitem[BD89]{Brenton-Drucker}
Lawrence Brenton and Daniel Drucker.
\newblock Perfect graphs and complex surface singularities with perfect local
  fundamental group.
\newblock {\em Tohoku Math. J. (2)}, 41(4):507--525, 1989.

\bibitem[BD93]{B-D}
S.~M. Bhatwadekar and Amartya~K. Dutta.
\newblock On residual variables and stably polynomial algebras.
\newblock {\em Comm. Algebra}, 21(2):635--645, 1993.

\bibitem[BL98]{B-L}
M.~Brundu and A.~Logar.
\newblock Parametrization of the orbits of cubic surfaces.
\newblock {\em Transform. Groups}, 3(3):209--239, 1998.

\bibitem[Bri68]{Brieskorn}
Egbert Brieskorn.
\newblock Rationale {S}ingularit\"{a}ten komplexer {F}l\"{a}chen.
\newblock {\em Invent. Math.}, 4:336--358, 1967/68.

\bibitem[BW79]{B-W}
J.~W. Bruce and C.~T.~C. Wall.
\newblock On the classification of cubic surfaces.
\newblock {\em J. London Math. Soc. (2)}, 19(2):245--256, 1979.

\bibitem[Fuj79]{Fuj79}
Takao Fujita.
\newblock On {Z}ariski problem.
\newblock {\em Proc. Japan Acad. Ser. A Math. Sci.}, 55(3):106--110, 1979.

\bibitem[Fuj82]{Fuj}
Takao Fujita.
\newblock On the topology of noncomplete algebraic surfaces.
\newblock {\em J. Fac. Sci. Univ. Tokyo Sect. IA Math.}, 29(3):503--566, 1982.

\bibitem[HW81]{H-W}
Fumio Hidaka and Keiichi Watanabe.
\newblock Normal {G}orenstein surfaces with ample anti-canonical divisor.
\newblock {\em Tokyo J. Math.}, 4(2):319--330, 1981.

\bibitem[Kal02]{Kal}
Sh. Kaliman.
\newblock Polynomials with general {$\bold C^2$}-fibers are variables.
\newblock {\em Pacific J. Math.}, 203(1):161--190, 2002.

\bibitem[Kis05]{Kis}
Takashi Kishimoto.
\newblock Compactifications of contractible affine 3-folds into smooth {F}ano
  3-folds with {$B_2=2$}.
\newblock {\em Math. Z.}, 251(4):783--820, 2005.

\bibitem[KPR89]{KPTR89}
Hanspeter Kraft, Ted Petrie, and John~D. Randall.
\newblock Quotient varieties.
\newblock {\em Adv. Math.}, 74(2):145--162, 1989.

\bibitem[KZ99]{K-Z}
Sh. Kaliman and M.~Zaidenberg.
\newblock Affine modifications and affine hypersurfaces with a very transitive
  automorphism group.
\newblock {\em Transform. Groups}, 4(1):53--95, 1999.

\bibitem[LPS11]{lps11}
Wanseok Lee, Euisung Park, and Peter Schenzel.
\newblock On the classification of non-normal cubic hypersurfaces.
\newblock {\em J. Pure Appl. Algebra}, 215(8):2034--2042, 2011.

\bibitem[MM82]{M-M}
Shigefumi Mori and Shigeru Mukai.
\newblock Classification of {F}ano {$3$}-folds with {$B_{2}\geq 2$}.
\newblock {\em Manuscripta Math.}, 36(2):147--162, 1981/82.

\bibitem[MS80]{M-S80}
Masayoshi Miyanishi and Tohru Sugie.
\newblock Affine surfaces containing cylinderlike open sets.
\newblock {\em J. Math. Kyoto Univ.}, 20(1):11--42, 1980.

\bibitem[Nag18]{Nag1}
Masaru Nagaoka.
\newblock Fano compactifications of contractible affine 3-folds with trivial
  log canonical divisors.
\newblock {\em Internat. J. Math.}, 29(6):1850042, 33, 2018.

\bibitem[Nag19]{Nag2}
Masaru Nagaoka.
\newblock On compactifications of affine homology 3-cells into quadric
  fibrations.
\newblock {\em arXiv preprint arXiv:1906.10626}, 2019.

\bibitem[Rei94]{Reid94}
Miles Reid.
\newblock Nonnormal del {P}ezzo surfaces.
\newblock {\em Publ. Res. Inst. Math. Sci.}, 30(5):695--727, 1994.

\bibitem[Spa66]{sp66}
Edwin~H. Spanier.
\newblock {\em Algebraic topology}.
\newblock McGraw-Hill Book Co., New York-Toronto, Ont.-London, 1966.

\bibitem[Suz74]{Suz74}
Masakazu Suzuki.
\newblock Propri\'{e}t\'{e}s topologiques des polyn\^{o}mes de deux variables
  complexes, et automorphismes alg\'{e}briques de l'espace {${\bf C}^{2}$}.
\newblock {\em J. Math. Soc. Japan}, 26:241--257, 1974.

\bibitem[tDP90]{tDP}
Tammo tom Dieck and Ted Petrie.
\newblock Contractible affine surfaces of {K}odaira dimension one.
\newblock {\em Japan. J. Math. (N.S.)}, 16(1):147--169, 1990.

\bibitem[Ye02]{Qia}
Qiang Ye.
\newblock On {G}orenstein log del {P}ezzo surfaces.
\newblock {\em Japan. J. Math. (N.S.)}, 28(1):87--136, 2002.

\end{thebibliography}
\end{document}